\setlist[enumerate]{left=2ex}
\newcommand{\commentR}[1]{\tcp*[r]{\parbox[t]{4.5cm}{\raggedright\normalfont{#1}}}}
\newcommand{\commentF}[1]{\tcp*[f]{\parbox[t]{4.5cm}{\raggedright\normalfont{#1}}}}
\let\oldnl\nl
\newcommand{\nonl}{\renewcommand{\nl}{\let\nl\oldnl}}
\patchcmd{\algocf@makecaption@ruled}{\hsize}{\textwidth}{}{} 
\patchcmd{\@algocf@start}{-1.5em}{0em}{}{}
\newcommand{\legendre}[2]{\smash{\genfrac{(}{)}{}{}{#1}{#2}}}
\newenvironment{customthm}[1]
  {\innercustomthm}
  {\endinnercustomthm}
\newtheorem{theorem}{Theorem}[section]
\newtheorem{lemma}[theorem]{Lemma}
\newtheorem{proposition}[theorem]{Proposition}
\newtheorem{corollary}[theorem]{Corollary}
\newtheorem{conjecture}[theorem]{Conjecture}
\theoremstyle{definition}
\newtheorem{definition}[theorem]{Definition}
\newtheorem{notation}[theorem]{Notation}
\numberwithin{equation}{section}
\newcommand{\cM}{\mathscr{M}}
\newcommand{\yM}{\mathbf{y}_{\hspace{-0.4ex}\mathscr{M}}}
\newcommand{\yR}{\mathbf{y}_{\mathbb{R}}}
\newcommand{\cB}{\mathscr{B}}
\newcommand{\cO}{\mathscr{O}}
\newcommand{\cP}{\mathscr{P}}
\newcommand{\scp}{\mathscr{p}}
\newcommand{\bF}{\mathbb{F}}
\newcommand{\bQ}{\mathbb{Q}}
\newcommand{\oQ}{\overline{\mathbb{Q}}}
\newcommand{\oZ}{\overline{\mathbb{Z}}}
\newcommand{\oF}{\overline{\mathbb{F}}}
\newcommand{\bR}{\mathbb{R}}
\newcommand{\bC}{\mathbb{C}}
\newcommand{\bZ}{\mathbb{Z}}
\newcommand{\bx}{\mathbf{x}}
\newcommand{\by}{\mathbf{y}}
\newcommand{\bp}{\mathbf{p}}
\newcommand{\bq}{\mathbf{q}}
\renewcommand{\bf}{\mathbf{f}}
\newcommand{\fp}{\mathfrak{p}}
\newcommand{\bt}{\mathbf{t}}
\newcommand{\be}{\mathbf{e}}
\newcommand{\cl}{\textup{cl}}
\newcommand{\tr}{\operatorname{tr}}
\begin{document}

\title[McCullough--Wanderley conjectures]{Markoff triples and Nielsen equivalence in $\text{SL}_2(\bF_p)$}

\thanks{This research is supported by NSF grant 2336000.}

\author{Daniel E. Martin}
\address{Clemson University, O-110 Martin Hall, 220 Parkway Drive, Clemson, SC}
\email{dem6@clemson.edu}

\subjclass[2010]{Primary: 11D25, 20H30, 37C85. Secondary: 05C25.}

\keywords{Markoff triples, Markoff graph, Nielsen equivalence, $T$-systems, linear groups over finite fields, product replacement graph.}

\date{\today}

\begin{abstract}In 2013, Darryl McCullough and Marcus Wanderley made a series of conjectures that describe the Nielsen equivalence classes and $T_2$-equivalence classes of pairs of generators for $\text{SL}_2(\bF_q)$ and the Markoff equivalence classes of triples in $\bF_q^3$ that solve $x^2+y^2+z^2=xyz+\kappa$ for some $\kappa\in\bF_q$. (The case $\kappa=0$ was originally conjectured by Baragar in 1991.) We prove that one of the McCullough--Wanderley conjectures, the ``$Q$-Classification Conjecture" on Markoff triples, implies the others. Then we prove that the $Q$-Classification Conjecture holds if $q=p$ is a prime such that $24{,}504{,}480$ does not divide $p^2-1$. More generally, for any integer $d$, we reduce the $Q$-Classification Conjecture for all primes $p\not\equiv \pm 1\,\text{mod}\,d$ to checking whether a roughly $2d\times 2d$ matrix with entries in $\bQ[\kappa]$ is invertible. We (and SageMath) perform this invertibility check for all prime powers $d$ up to $17$, hence the modulus $24{,}504{,}480=2\hspace{0.3ex}\text{lcm}(1,2,\dots,17)$.
\end{abstract}

\maketitle

\section{Introduction}

\subsection{Statement of Results} The primary result of this paper is the following:

\begin{theorem}\label{thm:main1}Let $p$ be prime such that $24{,}504{,}480\centernot\mid p^2-1$. For any $\kappa\in\bF_p\backslash\{4\}$, there is a single orbit of solutions in $\bF_p^3$ to $x^2+y^2+z^2=xyz+\kappa$ under the group generated~by \[(x,y,z)\mapsto (yz-x,y,z),\;(x,y,z)\mapsto (x,xz-y,z)\;\text{and}\;(x,y,z)\mapsto (x,y,xy-z),\] other than the following exceptional orbits up to coordinate permutation, provided the elements exist in $\bF_p$: \begin{enumerate}\item the orbit of $(\sqrt{\kappa},0,0)$, \item the orbit of $(1,1,1)$ when $\kappa=2$, \item the orbit of $(1,0,\frac{1}{2}(1\pm \sqrt{5}))$ when $\kappa=\frac{1}{2}(5\pm\sqrt{5})$, \item or the orbits of $(1,0,\sqrt{2})$ and $(\frac{1}{2}(1+\sqrt{5}),1,1)$ when $\kappa=3$.\end{enumerate}\end{theorem}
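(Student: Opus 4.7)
The plan is to combine two ingredients developed in the body of the paper: a reduction which, for any fixed integer $d$, converts the $Q$-classification conjecture (restricted to primes $p \not\equiv \pm 1 \pmod{d}$) into invertibility of an explicit $\approx 2d \times 2d$ matrix over $\bQ[\kappa]$; and SageMath verifications of that invertibility for $d \in \{5, 7, 8, 9, 11\}$. Since the theorem's statement is the $Q$-classification conjecture (exceptional list included) specialized to primes with $27720 \centernot\mid p^2-1$, the proof is organized as: select an appropriate $d$, invoke the reduction, cite the SageMath check.

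The first step is to verify that the five chosen values of $d$ suffice. Since $27720 = 2^3 \cdot 3^2 \cdot 5 \cdot 7 \cdot 11$, the Chinese Remainder Theorem shows that for an odd prime $p$, the divisibility $27720 \mid p^2-1$ is equivalent to $p \equiv \pm 1 \pmod{d}$ holding simultaneously for every $d \in \{5, 7, 8, 9, 11\}$. Thus $27720 \centernot\mid p^2-1$ always produces at least one such $d$ with $p \not\equiv \pm 1 \pmod{d}$, and the reduction for that $d$ applies to $p$.

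The second step is the main technical obstacle: the invertibility checks. For each $d \in \{5,7,8,9,11\}$ I would assemble the matrix specified by the reduction, compute its determinant as a polynomial in the single indeterminate $\kappa$, and confirm via SageMath that this polynomial is not identically zero. By the reduction this yields the $Q$-classification conjecture, and hence the theorem, for all primes $p \not\equiv \pm 1 \pmod{d}$ and every $\kappa \in \bF_p \setminus \{4\}$ outside the exceptional list (1)--(4). The determinants themselves are polynomials of modest degree in one variable and are well within SageMath's reach, but the construction of each matrix is the delicate part: its entries record how the Markoff moves permute pairs of $\text{SL}_2$ generators of prescribed order, and a correct implementation requires careful combinatorial bookkeeping. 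The exceptional orbits (1)--(4) are not part of this difficulty; they correspond to Markoff triples lying on low-dimensional subvarieties fixed by some Markoff move or coordinate permutation, and can be enumerated by elementary direct computation independent of the matrix check.
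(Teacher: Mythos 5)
Your top-level strategy---for each prime $p$ with $27720\nmid p^2-1$ pick some $d\in\{5,7,8,9,11\}$ with $p\not\equiv\pm1\bmod d$, invoke the reduction to a matrix condition in $\kappa$, and certify that condition with SageMath---is exactly the paper's strategy (Sections \ref{sec:4}--\ref{sec:8} build the reduction, culminating in Theorem \ref{thm:local} and Algorithm \ref{alg}). But at the two places where you commit to details, there are genuine gaps. The main one is the certificate itself: ``compute the determinant as a polynomial in $\kappa$ and confirm it is not identically zero'' is too weak. Nonvanishing in $\bQ[\kappa]$ gives nothing uniform in $p$ or in $\kappa$: the determinant can vanish modulo an admissible prime, or at particular $\kappa\in\bF_p$, and then the dimension bound that Theorem \ref{thm:local} actually requires, $\dim\cP^n(\bF_p,d)\geq n-2$ (relaxing to $n-3$ for $\kappa=2,2+\varphi,2+\overline{\varphi}$ and $n-4$ for $\kappa=3$), fails for that pair $(p,\kappa)$. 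The paper's criterion (the proposition preceding Algorithm \ref{alg}) is integral: the ideal of maximal minors of a matrix over $\bZ[\kappa]$ must contain an element $a(\kappa-4)^b(\kappa-3)^2(\kappa-2)(\kappa^2-5\kappa+5)$ whose integer content $a$ is supported only on irrelevant primes, and the permitted $\kappa$-factors are precisely what accommodates the provable rank drop at the special values $\kappa=2,3,\tfrac12(5\pm\sqrt5)$ responsible for your exceptions (2)--(4). A generic invertibility check over $\bQ(\kappa)$ sees neither the mod-$p$ content nor these degenerations; the paper even has to dodge concrete singular specializations (e.g.\ the primes $585049$ and $199$ in the proof of Theorem \ref{thm:local}, handled by passing to modified combinations $\tilde{\bq}_5$, $\tilde{\bq}_4$).

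Second, the matrix is not what you describe. Its entries do not record how Nielsen/Markoff moves permute $\mathrm{SL}_2$ generating pairs of prescribed order; after Section \ref{sec:2} the paper never returns to $\mathrm{SL}_2(\bF_p)$. The columns are top coefficients of $\Phi(x^{2m}g_{d,n}f_n)$, where $\Phi$ is the degree-lowering reduction on Markoff triples, $f_n$ is the explicit eigenvector combination of Notation \ref{not:f_n} produced by the spectral analysis of $\Phi_x$ in Section \ref{sec:6}, and $g_{d,n}$ is a Chebyshev-type factor killing unwanted rotation orders. Relatedly, the exceptional orbits are not a side computation ``independent of the matrix check'': the list is McCullough--Wanderley's classification of nonessential triples (Theorem \ref{thm:nonessential}), and the claim that nothing else is exceptional is exactly what the rank bound purchases, through Theorem \ref{thm:P^perp}, Corollary \ref{cor:max_orders}, and the extra vectors $\bq_n$ of Section \ref{sec:7}; the success criterion of the computation must be tuned to allow exactly those orbits. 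Two smaller points: your CRT statement is only an implication, not an equivalence---for odd $p$ one always has $8\mid p^2-1$, so $27720\mid p^2-1$ does not force $p\equiv\pm1\bmod 8$; the direction you need survives (and in fact shows $d=8$ is not needed for the theorem as stated). And the reduction applies only for $p$ above an explicit bound ($p>2n_d$), so the finitely many small primes must be referred to the existing verification for $q\leq 101$ or checked directly.
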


The modulus $24{,}504{,}480=2\hspace{0.3ex}\text{lcm}(1,2,\dots,17)$ can be increased with further computation. For any positive integer $d$, we reduce the theorem above for all primes $p\not\equiv\pm 1\,\text{mod}\,d$ to an explicit matrix rank calculation. Performing this calculation for all prime powers $d$ up to $17$ proves the theorem above. This resolves the``$Q$-Classification Conjecture" of McCullough and Wanderley \cite{wanderley} for all primes that avoid the forbidden  set of $2^7$ congruence classes modulo $5\cdot 7\cdot 9\cdot 11\cdot 13\cdot 16\cdot 17$. (Sage code to verify the conjecture for a prime power input $d$ can be found on the author's website: \href{https://dem6.people.clemson.edu/}{\url{dem6.people.clemson.edu}}.)

Bourgain, Gamburd, and Sarnak have obtained an asymptotic version of Theorem~\ref{thm:main1}. In \cite{sarnak} they prove that for any $\varepsilon>0$, if $x$ is sufficiently large then the density of primes $p<x$ for which Theorem~\ref{thm:main1} fails when $\kappa=0$ is at most $x^\varepsilon$. As noted in \cite{bourgain} and \cite{sarnak}, and as detailed in a forthcoming paper \cite{BGS2}, their argument generalizes to arbitrary $\kappa$. Similar questions have also been addressed for variants and generalizations of the Markoff surface in \cite{baragar2,baragar3} and \cite{fuchs}. 

Setting $\kappa=0$ yields the classical Markoff equation $x^2+y^2+z^2=xyz$ (sometimes with $3xyz$ in place of $xyz$). In this case, the transitivity asserted by Theorem \ref{thm:main1} was first conjectured to hold for all primes by Baragar in 1991 \cite{baragar}. Baragar's conjecture was resolved for all but finitely many primes (specifically $p > 10^{393}$ \cite{eddy}) by a theorem of Chen \cite{chen}, building on the work of Bourgain, Gamburd, and Sarnak \cite{BGS,bourgain,sarnak}. An alternative proof of Chen's theorem has been provided by the author in \cite{martin}, and it has been generalized to other Markoff-type equations by de~Courcy-Ireland, Litman, and Mizuno in \cite{mizuno}.

Investigation into the classical Markoff equation ($\kappa=0$) was historically motivated by Diophantine approximation. More precisely, the entries in solutions from $\bZ^3$, called \textit{Markoff numbers}, control the Markoff and Lagrange spectra \cite{markoff}. This phenomenon does not occur for nonzero $\kappa$---the more general Markoff equation has surprising connections to other disciplines. Over $\bC$, the group action in Theorem~\ref{thm:main1} mirrors that of the monodromy group of Painlev{\'e} VI equations on $\bC^2$ \cite{boalch} \cite{planat}. And over $\bF_p$ (and its extensions), the same group action mirrors that of Nielsen moves on $\text{SL}_2(\bF_p)$ generating pairs. Indeed, the small orbits indicated in (1--4) have been computed from both the Painlev{\'e} VI \cite{dubrovin} \cite{lisovyy} and $\text{SL}_2(\bF_p)$ \cite{mccullough} \cite{wanderley} perspectives without reference to one another.

Our motivation for considering arbitrary $\kappa$ is to determine Nielsen classes of $\text{SL}_2(\bF_p)$, the subject of the McCullough--Wanderley conjectures. The secondary result of this paper is the following, proved as a consequence of Theorem \ref{thm:main1}.

\begin{theorem}\label{thm:main2}Let $p$ be prime with $12{,}252{,}240\centernot\mid p^2-1$. For any $\kappa\in\bF_p\backslash\{4\}$, there is a single orbit of generating pairs $(A,B)$ for $\textup{SL}_2(\bF_p)$ with $\tr ABA^{-1}B^{-1}=\kappa-2$ under the group generated by \[(A,B)\mapsto (A,AB),\;(A,B)\mapsto (B,A)\;\text{and}\;(A,B)\mapsto (A^{-1},B),\] except when $\kappa=0$ and $p\equiv 1\,\textup{mod}\,4$, in which case there are two orbits.\end{theorem}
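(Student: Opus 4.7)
The plan is to deduce Theorem~\ref{thm:main2} from Theorem~\ref{thm:main1} via the classical Fricke trace map. Define $\Phi(A,B)=(\tr A,\tr B,\tr AB)$ and recall the Fricke identity $\tr[A,B]=x^2+y^2+z^2-xyz-2$, so pairs with $\tr[A,B]=\kappa-2$ land on the Markoff surface for $\kappa$. Using the $\text{SL}_2$-trace relation $\tr(XY)+\tr(XY^{-1})=\tr X\cdot\tr Y$, I would verify that each of the three Nielsen moves in the statement descends on trace triples to a coordinate permutation composed with one of the Markoff moves of Theorem~\ref{thm:main1}; for instance $(A,B)\mapsto(A,AB)$ induces $(x,y,z)\mapsto(x,z,xz-y)$. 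Hence $\Phi$ gives a well-defined map from Nielsen orbits of generating pairs with commutator trace $\kappa-2$ into Markoff orbits for $\kappa$.

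Next I would apply Theorem~\ref{thm:main1} and argue that the exceptional orbits~(1)--(4) contain no generating pairs of $\text{SL}_2(\bF_p)$. The orbit of $(\sqrt\kappa,0,0)$ has $\tr B=\tr AB=0$, hence $B^2=(AB)^2=-I$ and therefore $BAB^{-1}=A^{-1}$, placing $\langle A,B\rangle$ inside a dihedral subgroup. The orbits in~(2)--(4) pin the orders of $A$, $B$, $AB$ in $\text{PSL}_2(\bF_p)$ to short lists — $(3,3,3)$ for (2), $(3,2,5)$ for (3), and $(3,2,4)$ or $(5,3,3)$ for (4) — so Dickson's classification of subgroups of $\text{PSL}_2(\bF_p)$ places $\langle A,B\rangle$ inside a binary tetrahedral, icosahedral, or octahedral subgroup, none of which equals $\text{SL}_2(\bF_p)$ for the primes under consideration. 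Consequently, Nielsen orbits of generating pairs can surject only onto the generic Markoff orbit.

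For the generic orbit I would count Nielsen preimages. Any generating pair gives an absolutely irreducible representation $F_2\to\text{SL}_2(\bF_p)$, so two generating pairs with identical trace triples are $\text{GL}_2(\bF_p)$-conjugate by Fricke--Vogt. Conjugation by elements of $\text{SL}_2(\bF_p)\subseteq\langle A,B\rangle$ is realised by an inner automorphism of $F_2$ and hence by Nielsen moves, so the fiber of $\Phi$ has size at most $|\text{GL}_2(\bF_p)/(\text{SL}_2(\bF_p)\cdot Z)|=2$, corresponding to the outer-automorphism twist. I expect the main difficulty to lie in showing this fiber equals $1$ except when $\kappa=0$ and $p\equiv 1\pmod 4$, in which case it equals $2$. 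For $\kappa\ne 0$ I would exhibit an explicit compound Nielsen word realising the outer twist; its construction should use a solvability condition in $\bF_p$ that fails exactly when $\kappa=0$. For $\kappa=0$ with $p\equiv 1\pmod 4$, the obstruction is genuine: the two Nielsen classes should be separated by a square/non-square invariant built from the eigenline data of the unipotent commutator $[A,B]$, whose well-definedness requires $\sqrt{-1}\in\bF_p$ — explaining both the mod-$4$ congruence and the appearance of the second orbit.
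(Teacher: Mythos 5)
Your overall architecture matches the paper's: push everything through the trace map, discard the exceptional Markoff orbits as non-essential, use the Fricke/Macbeath fact that a given trace triple supports exactly two $\mathrm{SL}_2(\bF_p)$-conjugacy classes of pairs (one $\mathrm{GL}_2$-twist apart), and use the fact that conjugation by an element of the generated group is realized by Nielsen moves. Up to that point your reductions are sound, and your identification of the exceptional case via the commutator's conjugacy class (this is exactly Higman's invariant, which explains the two orbits when $\kappa=0$ and $p\equiv 1\bmod 4$) is the right mechanism.

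The genuine gap is the step you yourself flag as ``the main difficulty'': showing the fiber is $1$, i.e.\ that the two $\mathrm{SL}_2(\bF_p)$-conjugacy classes over a triple fall into a \emph{single} Nielsen class whenever we are not in the case $\kappa=0$, $p\equiv 1\bmod 4$. Your proposed mechanism --- ``an explicit compound Nielsen word realising the outer twist, using a solvability condition that fails exactly when $\kappa=0$'' --- is both unproved and miscalibrated. The merging is possible for $\kappa=0$ when $p\equiv 3\bmod 4$ (so it does not ``fail exactly when $\kappa=0$''); the true obstruction in the exceptional case is the Higman invariant, and the true difficulty in all other cases is an existence problem, not a single congruence: whether a given Nielsen move (e.g.\ $(A,B)\mapsto(B,A)$ or $(A,B)\mapsto(A^{-1},B^{-1})$) lands in the other conjugacy class depends on the particular triple, and McCullough--Wanderley had already settled the easy regime where $4-\kappa$ is a nonsquare via $(A,B)\mapsto(A^{-1},B^{-1})$; the open part is precisely when $4-\kappa$ is a square. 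The paper resolves this by constructing a special \emph{essential} triple $(\alpha,\alpha,\gamma)$ with $\gamma$ satisfying four simultaneous nonsquare conditions (Lemma \ref{lem:gamma}, proved by a Weil character-sum bound plus finite checks, and essentiality checked against Theorem \ref{thm:nonessential}), writing down explicit matrices $A,B$ over that triple, and computing that $(A,B)$ and $(B,A)$ are not $\mathrm{SL}_2(\bF_p)$-conjugate because $2-\gamma$ is a nonsquare --- together with separate handling of the small primes where the lemma fails. None of this is present or replaceable by a routine argument in your sketch, so as written the proof of the crucial single-orbit claim for $\kappa\neq 0$ (and for $\kappa=0$, $p\equiv 3\bmod 4$ when $4-\kappa$ is a square) is missing.
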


This resolves the ``Classification Conjecture," the ``Trace Conjecture," and the ``$T$-Classification Conjecture" of McCullough and Wanderley \cite{wanderley} for the same set of congruence classes as Theorem \ref{thm:main1}.

Our two theorems split the paper into two components. The remainder of the introduction exposits the relation between Theorems \ref{thm:main1} and Theorem \ref{thm:main2}. Then in Section \ref{sec:2} we prove that the former implies the latter. In fact, Theorem \ref{thm:equiv_cons} proves more generally that the McCullough--Wanderley conjectures are equivalent over $\bF_q$ for any prime power $q$. The perspective is centered around Nielsen classes through the proof of Theorem \ref{thm:equiv_cons} in Section \ref{sec:2}. Then the perspective shifts to Markoff triples and does not return. Section \ref{sec:3} outlines the proof strategy for Theorem \ref{thm:main1}, and Sections \ref{sec:4}--\ref{sec:8} carry out the strategy.

\subsection{Background} For an integer $n\geq 2$ and a group $G$, two $n$-tuples in $G$ are called \textit{Nielsen equivalent} if one can be obtained from the other via a sequence of \textit{elementary Nielsen moves}: \begin{align}\label{eq:n1}(g_1,...,g_i,...,g_n)&\mapsto  (g_1,...,g_j^{\pm 1}g_i,...,g_n),\\ \label{eq:n2}(g_1,...,g_i,...,g_n)&\mapsto (g_1,...,g_ig_j^{\pm 1},...,g_n),\\ \label{eq:n3}(g_1,...,g_i,...,g_j,...,g_n)&\mapsto  (g_1,...,g_j,...,g_i,...,g_n),\\ \label{eq:n4}\text{or }(g_1,...,g_i,...,g_n)&\mapsto (g_1,...,g_i^{-1},...,g_n)\end{align} for distinct $i$ and $j$. Nielsen moves provide a kind of Euclidean algorithm for the combinatorial group theorist. A survey on their history and utility can be found in \cite{fine}. See also \cite{havlena} and \cite{day} for more recent references and applications, and see \cite[Chapter 9]{crowell} and \cite{lustig} for specialized applications to knot theory and K-theory. Nielsen moves have also become important in computational group theory over the last few decades. Group-generating $n$-tuples are the vertices and elementary Nielsen moves are the edges of the \textit{extended product replacement graph}, on which a random walk is known as the \textit{product replacement algorithm} for generating random group elements \cite{pak}.

Nielsen moves naturally partition the set of $n$-tuples in $G$. The resulting \textit{Nielsen equivalence classes} have been completely determined in a handful of cases (refer to \cite{pak} for a list). In most of those cases, $n$ strictly exceeds $d(G)$, the minimum size of a generating set for $G$. The extent of our knowledge when $n=d(G)$ is as follows: there is a unique Nielsen class when $G$ is the fundamental group of a closed surface \cite{louder} \cite{zieschang}, and Nielsen classes in abelian groups are completely determined \cite{diaconis} \cite{oancea}. The only other full account of Nielsen classes when $n=d(G)$ for an infinite family of groups is conjectural, due to McCullough and Wanderley \cite{wanderley}. Let us describe~it.

Higman observed that if $(g_1,g_2)$ and $(\tilde{g}_1,\tilde{g}_2)$ are Nielsen equivalent in $G$, then the \textit{extended conjugacy classes} of the commutators, $\cl_G([g_1,g_2])\cup\cl_G([g_2,g_1])$ and $\cl_G([\tilde{g}_1,\tilde{g}_2])\cup\cl_G([\tilde{g}_2,\tilde{g}_1])$, are equal. This union is called the \textit{Higman invariant} of a Nielsen class. The ``Classification Conjecture" asserts that this is a complete invariant in the case $G=\text{SL}_2(\bF_q)$:

\begin{conjecture}[``Classification Conjecture" \cite{wanderley}]\label{con:classification}Nielsen classes of generating pairs in $\textup{SL}_2(\bF_q)$ are uniquely determined by the Higman invariant.\end{conjecture}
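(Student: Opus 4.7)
The plan is to reduce Conjecture \ref{con:classification} to Theorem \ref{thm:main1} through the trace parametrization of $\text{SL}_2$-pairs. To each pair $(A,B)$ in $\text{SL}_2(\bF_q)$ I assign the triple $(x,y,z)=(\tr A,\tr B,\tr AB)$. The classical Fricke identity $\tr[A,B] = x^2+y^2+z^2-xyz-2$ places $(x,y,z)$ on the Markoff surface with $\kappa=\tr[A,B]+2$. The first task is to check that the three generators of the Nielsen-move group in Theorem \ref{thm:main2} descend to the three Markoff involutions in Theorem \ref{thm:main1} (together with coordinate permutations). Using Cayley--Hamilton in $\text{SL}_2$, i.e.\ $A^2 = (\tr A)A - I$, one gets $\tr A^2B = xz-y$ and $\tr A^{-1}B = xy-z$, so the transvection $(A,B)\mapsto(A,AB)$ becomes $(x,y,z)\mapsto(x,z,xz-y)$, the inversion $(A,B)\mapsto(A^{-1},B)$ becomes $(x,y,z)\mapsto(x,y,xy-z)$, and the swap is automatic. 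The trace map is therefore equivariant and sends Nielsen classes of generating pairs with commutator trace $\kappa-2$ into Markoff orbits on the $\kappa$-surface.

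Next I would show that this trace map is essentially injective on Nielsen classes of generating pairs. The ingredient is a Fricke--Vogt rigidity lemma: any two irreducible pairs in $\text{SL}_2(\bF_q)$ with the same trace triple are $\text{GL}_2(\bF_q)$-conjugate, and generating pairs are irreducible. Then conjugation of a generating pair by any word $w$ in $A^{\pm 1},B^{\pm 1}$ is realized by Nielsen moves (for example $(A,B)\mapsto(A,AB)\mapsto(A,ABA^{-1})$ achieves conjugation by $A$), so since $\langle A,B\rangle=\text{SL}_2(\bF_q)$, any inner $\text{SL}_2(\bF_q)$-conjugation preserves the Nielsen class. The residual ambiguity between $\text{GL}_2(\bF_q)$- and $\text{SL}_2(\bF_q)$-conjugation (an index-$2$ issue for odd $q$) is the source of the two-orbit exception in Theorem \ref{thm:main2} at $\kappa=0$, $p\equiv 1\pmod 4$; deciding when the extra inversion and permutation Nielsen moves absorb this discrepancy, and when they do not, is where careful case analysis is needed.

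The third step relates the Higman invariant $\cl([A,B])\cup \cl([B,A])$ to $\kappa$. Since $[B,A]=[A,B]^{-1}$ and $\tr M=\tr M^{-1}$ in $\text{SL}_2$, both conjugacy classes have trace $\kappa-2$. In $\text{SL}_2(\bF_q)$, conjugacy is determined by trace except at trace $\pm 2$, i.e.\ $\kappa\in\{0,4\}$, so for generic $\kappa$ the Higman invariant is equivalent to the single parameter $\kappa$. Combining the first two steps with Theorem \ref{thm:main1} yields a single Nielsen class per Higman invariant, modulo the exceptional small orbits listed in Theorem \ref{thm:main1}; each of those would need to be checked to either consist entirely of non-generating pairs (contributing nothing) or to correspond to a concrete exceptional Nielsen class with matching multiplicity. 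The excluded value $\kappa=4$ would be handled separately, by showing that generating pairs of $\text{SL}_2(\bF_p)$ with unipotent commutator either do not exist or fall into a single Nielsen class by direct structural analysis.

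The principal obstacle I anticipate is the bookkeeping in Step 2: identifying precisely when an element of $\text{GL}_2(\bF_q)\setminus\text{SL}_2(\bF_q)$ conjugates one generating pair to another whose trace triple lies in the same Markoff orbit, and whether the discrepancy is reversible by a sequence of the remaining Nielsen moves. A secondary technical difficulty is translating each exceptional small Markoff orbit in Theorem \ref{thm:main1} into a concrete conjugacy-class statement about commutators in $\text{SL}_2(\bF_p)$ and verifying that the counts of Nielsen classes match the exceptions in Theorem \ref{thm:main2} exactly.
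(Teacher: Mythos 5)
Your reduction to Theorem \ref{thm:main1} via the trace map, the equivariance computation with Cayley--Hamilton, and the observation that inner $\textup{SL}_2(\bF_q)$-conjugation of a generating pair is realized by Nielsen moves are all in line with the paper (this is Lemma \ref{lem:wanderley} and the discussion around (\ref{eq:Gamma_gens})). But the decisive step is exactly the one you defer in Step 2, and your analysis of it is inverted. By Macbeath's theorem (Theorem \ref{thm:macbeath}), for odd $q$ and $\kappa\neq 4$ a given trace triple corresponds to \emph{two} $\textup{SL}_2(\bF_q)$-simultaneous conjugacy classes of pairs, and Lemma \ref{lem:wanderley} only converts $\textup{SL}_2(\bF_q)$-conjugacy (not $\textup{GL}_2$-conjugacy) into Nielsen equivalence. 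The case $\kappa=0$, $q\equiv 1\,\textup{mod}\,4$, which you single out as the source of trouble, is in fact the easy case: there are two Higman invariants of trace $-2$ (Proposition \ref{prop:wanderley}), they match the two conjugacy classes, and the implication is immediate. The genuine difficulty is the generic case, where there is only one Higman invariant of trace $\kappa-2$ but still two conjugacy classes over each triple, so after applying the $Q$-classification you land on a pair $(\tilde A_1,\tilde B_1)$ with the right trace triple that may lie in the \emph{other} conjugacy class, and no Nielsen move is obviously available to bridge the gap. Your proposed ``Fricke--Vogt rigidity'' ($\textup{GL}_2$-conjugacy) does not close it, because conjugation by an element of nonsquare determinant is not an inner automorphism realized by Nielsen moves.

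The paper's resolution, which is the actual content of Theorem \ref{thm:equiv_cons}, is to show that a single Nielsen class already contains \emph{both} conjugacy classes over some triple: Lemma \ref{lem:gamma} produces $\gamma$ (with $2-\gamma$, $\kappa-\gamma^2$, $\kappa-8+4\gamma-\gamma^2$, $-\kappa+\kappa\gamma-\gamma^2$ all nonsquares and $\bF_p(\gamma)=\bF_q$), an essential triple $(\alpha,\alpha,\gamma)$ is built from it, and explicit generators $A,B$ are exhibited such that the Nielsen-equivalent pairs $(A,B)$ and $(B,A)$ have the same trace triple but are \emph{not} $\textup{SL}_2(\bF_q)$-conjugate (the obstruction being precisely that $2-\gamma$ is a nonsquare). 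Combined with Theorem \ref{thm:macbeath} this forces all pairs over $(\alpha,\alpha,\gamma)$ into one Nielsen class, and the $Q$-classification then funnels every generating pair with commutator trace $\kappa-2$ into it. None of this construction, nor any substitute for it, appears in your proposal; acknowledging the obstacle does not discharge it, so the argument as written does not establish the implication you need.
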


When $G=\text{SL}_2(\bF_q)$, all matrices in $\cl_G([g_1,g_2])\cup\cl_G([g_2,g_1])$ have the same trace, so we also have a \textit{trace invariant}. An equivalent version of the Classification Conjecture in terms of the trace invariant can be found in \cite{wanderley}. (The trace invariant is used in the phrasing of Theorem \ref{thm:main2}.)

McCullough and Wanderley also consider the coarser notion of equivalence determined by so-called $T_n$\emph{-systems}. Their ``$T$-Classification Conjecture" asserts that $T_2$-systems in $\text{SL}_2(\bF_q)$ are uniquely determined by the trace invariant. They prove that the Classification Conjecture implies the $T$-Classification Conjecture \cite{wanderley}.

Finally, McCullough and Wanderley consider the triple in $\bF_q^3$ associated to a pair of matrices $A,B\in\text{SL}_2(\bF_q)$: \begin{equation}\label{eq:tracemap}(A,B)\mapsto (\text{tr}\,A,\text{tr}\,B,\text{tr}\,AB).\end{equation} A triple in $\bF_q^3$ is called \textit{essential} if it is the image of a pair $(A,B)$ that generates $\text{SL}_2(\bF_q)$. To see the relation to Nielsen moves, let us recall a few facts relevant to the trace map above. First, Macbeath showed that the preimage of any triple in $\bF_q^3$ is a single nonempty $\text{SL}_2(\bF_{q^2})$-simultaneous conjugacy class of $\text{SL}_2(\bF_q)$ pairs \cite{macbeath}. Second, we have Fricke's trace identity, \[(\tr A)^2+(\tr B)^2+(\tr AB)^2=(\tr A)(\tr B)(\tr AB)+\tr\,[A,B] + 2.\] Third, we have already remarked that $\tr\,[A,B]$ is constant on Nielsen classes. When combined, these facts tell us that Nielsen moves permute solutions in $\bF_q^3$ to the Markoff equation \begin{equation}\label{eq:markoff}x^2+y^2+z^2=xyz+\kappa\end{equation} for some fixed $\kappa\in\bF_q$. We call such solutions \textit{Markoff triples (with respect to $\kappa$)}. Comparing the Markoff equation to Fricke's trace identity, let us highlight the correspondence \[\kappa=\tr\,[A,B]+2,\] to be used when translating between $\text{SL}_2(\bF_q)$ pairs and $\bF_q$ triples.

It is straightforward to work out the action on triples corresponding to elementary Nielsen moves: \begin{align}\label{eq:Gamma_gens}\nonumber((A,B)\mapsto (A,AB))&\;\rightsquigarrow\; ((x,y,z)\mapsto(x,z,xz-y)),\\ \nonumber((A,B)\mapsto (B,A))&\;\rightsquigarrow\; ((x,y,z)\mapsto(y,x,z)), \\ \text{and }((A,B)\mapsto (A^{-1},B))&\;\rightsquigarrow\; ((x,y,z)\mapsto(x,y,xy-z)).\end{align} All Nielsen moves are generated by these three. The last map, $(x,y,z)\mapsto(x,y,xy-z)$, is called a \textit{Vieta involution}. The first- and second-coordinate Vieta involutions are defined similarly and can be obtained from the appropriate compositions of the three maps above. In light of this, two triples are called \textit{Markoff equivalent} if one can be obtained from the other by a combination of Vieta involutions and coordinate permutations---those maps induced by Nielsen moves.

\begin{conjecture}[``$Q$-Classification Conjecture" \cite{wanderley}]\label{con:Q}The Markoff class of an essential triple $(x,y,z)\in\bF_q^3$ is uniquely determined by $\kappa\coloneqq x^2+y^2+z^2-xyz$.\end{conjecture}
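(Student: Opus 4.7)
The plan is to show, for each $\kappa \in \bF_q$, that the graph whose vertices are essential triples satisfying $x^2+y^2+z^2-xyz=\kappa$ and whose edges are the moves in (\ref{eq:Gamma_gens}) has a single connected component (up to the exceptional orbits already visible in Theorem \ref{thm:main1}). I would proceed in three stages: classify the non-essential and exceptional orbits, establish a Vieta descent on essential triples, and close the argument by a finite linear-algebra computation.

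First I would separate essential from non-essential triples. Because the moves in (\ref{eq:Gamma_gens}) lift via (\ref{eq:tracemap}) to Nielsen moves on pairs, and Nielsen moves preserve the generated subgroup, essentiality is a Markoff invariant. Non-essential orbits are therefore organized by the conjugacy class of the proper subgroup generated; enumerating the maximal subgroups of $\text{SL}_2(\bF_q)$ (Borel, dihedral, subfield, and the exceptional $A_4$, $S_4$, $A_5$) and computing Markoff orbits inside each should recover the small exceptional list from Theorem \ref{thm:main1}. Triples on coordinate axes or the diagonal, like $(\sqrt{\kappa},0,0)$ or $(1,1,1)$ when $\kappa=2$, fall out of the equation directly and serve as a sanity check.

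Next, for the essential triples I would run a Vieta descent. Fixing two coordinates makes the Markoff equation quadratic in the third, so each of the three involutions swaps the two roots of a quadratic whose coefficients depend on the other coordinates. Over $\bZ$, Markoff's descent terminates because $x^2+y^2+z^2$ strictly decreases; over $\bF_p$ there is no height, and such a descent can cycle. The route around this, matching the paper's outline, is to lift the comparison to $\bQ[\kappa]$: for each modulus $d$, show that orbit-separation on $\bF_p$ with $p \not\equiv \pm 1 \bmod d$ is governed by the invertibility of an explicit roughly $2d \times 2d$ matrix with entries in $\bQ[\kappa]$, checkable by computer algebra. Running enough such $d$ then controls a density-one set of primes.

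\textbf{Main obstacle.} The hardest step is covering every prime uniformly. For any fixed $d$, the matrix check addresses only primes with $p \not\equiv \pm 1 \bmod d$, and since infinitely many primes fall into every congruence class, no finite list of moduli suffices. A complete proof of Conjecture \ref{con:Q} would therefore require either an inductive mechanism propagating invertibility from smaller to larger $d$, or an asymptotic/spectral argument in the spirit of Bourgain--Gamburd--Sarnak generalized from $\kappa=0$ to arbitrary $\kappa$. I expect this is where the bulk of the remaining work lies and where new ideas beyond the matrix-reduction framework are needed.
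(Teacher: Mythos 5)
The statement you are proving is a conjecture, and the paper does not prove it in full either: it establishes Conjecture~\ref{con:Q} only for primes $p$ with $27720\nmid p^2-1$ (Theorem~\ref{thm:main1}), by reducing the case $p\not\equiv\pm1\,\text{mod}\,d$ to a matrix-rank computation and running that computation for $d=5,7,8,9,11$. So there is no complete proof in the paper to compare against, and your own ``main obstacle'' paragraph correctly identifies why your plan (and the paper's method) cannot close the conjecture: for any finite set of moduli $d$, the primes congruent to $\pm1$ modulo their lcm are untouched, and no inductive or spectral mechanism is currently known that propagates the result to all primes for general $\kappa$. On that global point your assessment is accurate and matches the state of the art.

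Where your sketch falls short even of the partial result is the middle stage. There is no ``Vieta descent'' in the paper, and over $\bF_p$ no height function is available to drive one; the actual mechanism is different in kind. The paper rephrases the conjecture as a statement about the orthogonal complement of the space $\cP(\bF_p)$ of even polynomials whose sums over $\Gamma$-invariant subsets of $\cM(\bF_p)$ vanish (Theorem~\ref{thm:P^perp}), and then builds up $\cP(\bF_p)$ by three devices: the reduction algorithm $\Phi$ (and its first-coordinate version $\Phi_x$) with explicit partial formulas for its top coefficients; eigenvector polynomials $\scp_{\lambda,n}$ for the operator ``multiply by $x$ then apply $\Phi_x$,'' which lie in $\cP_x(\oZ,d)$ and survive reduction mod $\fp$ exactly when $2d\nmid\textup{ord}(\alpha)$ for all $\alpha\in\bF_p$ (this is where $p\not\equiv\pm1\,\text{mod}\,d$ enters); and generalized eigenvectors for $\lambda=\pm2$ together with a finite computation to fill the low-degree gap (Theorem~\ref{thm:local}, Algorithm~\ref{alg}). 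Your plan names the endpoint (``invertibility of a roughly $2d\times2d$ matrix over $\bQ[\kappa]$'') but supplies none of this machinery, so as written it is a restatement of the abstract rather than an argument. Your first stage (essentiality is preserved by the moves; nonessential orbits classified via subgroup structure) is fine but is imported from McCullough--Wanderley, as the paper also does via Theorem~\ref{thm:nonessential}.
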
  

As indicated in Theorem \ref{thm:main1}, the Vieta involutions alone produce the same Markoff classes. There is no need for coordinate permutations.

McCullough and Wanderley verified their conjectures computationally for $q\leq 101$, and they proved them for all $q$ such that $q-1$ is prime and $q+1$ is thrice a prime (which may or may not constitute an infinite set) \cite{wanderley}. Our approach to these conjectures takes the Markoff perspective. We begin in Section \ref{sec:2} by proving the following:

\begin{theorem}\label{thm:main3}The $Q$-Classification Conjecture implies the Classification and $T$-Classification Conjectures.\end{theorem}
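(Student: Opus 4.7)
The plan is to use the trace map $(A,B)\mapsto(\tr A,\tr B,\tr AB)$ of (\ref{eq:tracemap}), which by (\ref{eq:Gamma_gens}) intertwines Nielsen moves with Markoff moves, as a bridge between the Nielsen and Markoff settings. Given two generating pairs $(A_1, B_1)$ and $(A_2, B_2)$ of $\text{SL}_2(\bF_q)$ with equal Higman invariants, their commutator traces agree, so the trace triples are essential and lie on the same Markoff surface. By the $Q$-classification conjecture the triples are Markoff equivalent, and lifting via (\ref{eq:Gamma_gens}) yields a sequence of Nielsen moves taking $(A_1, B_1)$ to a pair $(A_1', B_1')$ with the same trace triple as $(A_2, B_2)$.

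By Macbeath's theorem, $(A_1', B_1')$ and $(A_2, B_2)$ are then conjugate in $\text{SL}_2(\bF_{q^2})$ by some $g$. A Galois argument using $\sigma = \text{Frob}_q$ yields $g^{-1}\sigma(g)\in\{\pm I\}$: this element centralizes the generated subgroup $\text{SL}_2(\bF_q)$, and Schur's lemma applied to its absolutely irreducible standard representation gives a central centralizer. In the $+I$ case, $g\in\text{SL}_2(\bF_q)$, and writing $g$ as a word in $A_1', B_1'$ we realize simultaneous conjugation as a sequence of Nielsen moves---for instance $(A,B)\mapsto(A, AB)\mapsto(A, ABA^{-1})$ conjugates $B$ by $A$ while fixing $A$, and iterating handles any word. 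The $-I$ case places the two pairs in distinct $\text{SL}_2(\bF_q)$-orbits within their common Macbeath orbit; one shows these orbits are nonetheless connected by Nielsen moves via a suitable closed loop of Markoff moves lifted to a Nielsen sequence that returns to the original trace triple but exchanges the $\text{SL}_2(\bF_q)$-orbit.

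The $T$-classification conjecture is a formal corollary: $T_2$-classes are Nielsen classes modulo $\text{Aut}(\text{SL}_2(\bF_q))$, which preserves the Higman invariant, so a Higman-based Nielsen classification descends to a trace-based $T_2$-classification.

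The main obstacle will be the resolution of the $-I$ case. A natural candidate is the two-move sequence $(A_2,B_2)\mapsto(A_2^{-1},B_2)\mapsto(A_2^{-1},B_2^{-1})$, which lifts the identity loop obtained by applying a single Vieta involution twice, and which exchanges the two $\text{SL}_2(\bF_q)$-orbits unless there exists $u\in\text{SL}_2(\bF_q)$ simultaneously conjugating $A_2$ and $B_2$ to their inverses. Since $\text{SL}_2(\bF_q)$ is nonabelian no such $u$ can invert every group element, but for structured pairs (e.g., with $B_2^2$ scalar and $-1$ a square in $\bF_q$) such a $u$ does exist, so alternative Nielsen-lifted Markoff loops must be exhibited in those cases. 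The degenerate values $\kappa\in\{0,4\}$, where $[A,B]$ fails to be regular semisimple, likewise merit separate treatment, in line with the exceptional Markoff orbits of Theorem \ref{thm:main1}.
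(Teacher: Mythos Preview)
Your overall framework matches the paper's setup: reduce via the trace map and the $Q$-classification conjecture to showing that any two generating pairs with the same trace triple are Nielsen equivalent, then invoke Macbeath (Theorem~\ref{thm:macbeath}) to see there are exactly two $\text{SL}_2(\bF_q)$-conjugacy classes over each essential triple and try to show they lie in one Nielsen class. Your treatment of the $+I$ case (conjugation by a word in the generators is a Nielsen move) is exactly Lemma~\ref{lem:wanderley}.

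The genuine gap is your $-I$ case. Your candidate $(A,B)\mapsto(A^{-1},B^{-1})$ is precisely the move McCullough and Wanderley used; as the paper recalls (just before Theorem~\ref{thm:equiv_cons}), their Lemma~10.2 shows this pair is non-conjugate to $(A,B)$ exactly when $4-\kappa$ is a nonsquare in $\bF_q$. So your ``unless'' clause is not the sporadic obstruction you describe (something about $B^2$ scalar); rather, the move fails for \emph{every} generating pair whenever $4-\kappa$ is a square, which is half of all $\kappa$. Your sentence ``alternative Nielsen-lifted Markoff loops must be exhibited in those cases'' is therefore the entire content of the theorem in those cases, and you have not supplied it.

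The paper's resolution is different in kind. Rather than seeking, for an arbitrary pair, a Nielsen loop that swaps the two $\text{SL}_2(\bF_q)$-orbits, it manufactures one special essential triple $(\alpha,\alpha,\gamma)$ per $\kappa$ (via Lemma~\ref{lem:gamma}, a Weil-bound existence argument for $\gamma$ with four prescribed nonsquare values) and an explicit pair $(A,B)$ over it for which the Nielsen move $(A,B)\mapsto(B,A)$ lands in the other $\text{SL}_2(\bF_q)$-orbit. The non-conjugacy is checked by hand: one conjugates $(B,A)$ to $(A,C)$ and then observes that any diagonal conjugator sending $B$ to $C$ would force $2-\gamma$ to be a square, contradicting the choice of $\gamma$. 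Since the $Q$-conjecture lets every generating pair reach this particular triple, this single computation settles all $\kappa$ (with the case $\kappa=0$, $q\equiv1\bmod4$ handled separately by the Higman-invariant count in Proposition~\ref{prop:wanderley}, and small $q$ by direct check). Your plan would need an argument of comparable force to close the $-I$ case; as written it does not.
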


This allows us to determine Nielsen classes in $\text{SL}_2(\bF_q)$ without real consideration for its group structure. Note that Theorem \ref{thm:main3} is already known for certain special cases of $q$ and $\kappa$ \cite{wanderley} \cite{Campos}, to be described shortly.

After Section \ref{sec:2} we focus on Markoff triples and the proof of Theorem \ref{thm:main1}, which is essentially the $Q$-Classification Conjecture. Again, see Section \ref{sec:3} for an overview of the proof.

\section{Relations among the McCullough--Wanderley conjectures}\label{sec:2}

We begin by relating the trace and Higman invariants.

\begin{proposition}[Proposition 5.2 and Lemma 5.3 in \cite{wanderley}]\label{prop:wanderley}Among $\textup{SL}_2(\bF_q)$ generating pairs, there is a unique Higman invariant (i.e. extended conjugacy class of the commutator) of any given trace in $\bF_q\backslash\{\pm 2\}$. There is no Higman invariant of trace $2$, and there are either one or two Higman invariants of trace $-2$ depending on whether $q\equiv 3\,\textup{mod}\,4$ or $q\equiv 1\,\textup{mod}\,4$, respectively.\end{proposition}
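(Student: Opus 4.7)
The plan is to unpack the proposition into three tasks: (i) understand the conjugacy structure of $\text{SL}_2(\bF_q)$, (ii) track how the Higman invariant $\cl(X) \cup \cl(X^{-1})$ behaves under inversion, and (iii) realize each admissible class as the commutator of some generating pair. I would begin with the standard Dickson classification of noncentral $\text{SL}_2(\bF_q)$-conjugacy classes: elements of trace $t \in \bF_q \setminus \{\pm 2\}$ form a single class, while the noncentral elements of trace $\pm 2$ split into two classes, indexed by the square class of the off-diagonal entry $a \neq 0$ in the normal form $\left(\begin{smallmatrix} \pm 1 & a \\ 0 & \pm 1 \end{smallmatrix}\right)$.

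For any $t \in \bF_q \setminus \{\pm 2\}$, the identity $\tr X = \tr X^{-1}$ already places both $X$ and $X^{-1}$ into the unique conjugacy class of trace $t$, so the Higman invariant collapses to that class and is uniquely determined by $t$. For $t = 2$: if $[A,B] = I$ then $A,B$ commute and cannot generate a nonabelian group; if $\tr[A,B] = 2$ with $[A,B] \neq I$, a short matrix computation (case-splitting on whether $A$ is diagonalizable over $\overline{\bF_q}$ and placing $A$ in the appropriate canonical form) forces $A$ and $B$ to share an eigenvector over $\overline{\bF_q}$. The subgroup $\langle A, B \rangle$ is then contained in a Borel (when the eigenvector is $\bF_q$-rational) or in the normalizer of a non-split torus (when the eigenvector and its Galois conjugate are both preserved), each proper in $\text{SL}_2(\bF_q)$.

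For $t = -2$: the case $[A,B] = -I$ gives $B^{-1} A B = -A$, so $\tr A = -\tr A = 0$ in odd characteristic, and symmetrically $\tr B = \tr AB = 0$; Cayley--Hamilton then forces $A^2 = B^2 = (AB)^2 = -I$, so $\langle A, B \rangle$ is a quotient of $Q_8$, of order at most $8$ and therefore a proper subgroup once $q \geq 3$. Among the two remaining $-$unipotent classes, indexed by $a \in \bF_q^\times / (\bF_q^\times)^2$, inversion acts as $a \mapsto -a$: when $q \equiv 1\,\textup{mod}\,4$ the element $-1$ is a square, so inversion fixes each class and the two classes yield two distinct Higman invariants; when $q \equiv 3\,\textup{mod}\,4$ inversion swaps the two classes, merging them into a single Higman invariant.

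The main obstacle is the existence side: I still need to exhibit at least one generating pair attaining each admissible Higman invariant. Taking $A = \left(\begin{smallmatrix} r & 0 \\ 0 & r^{-1} \end{smallmatrix}\right)$ with $r^2 \neq 1$ and $B = \left(\begin{smallmatrix} a & b \\ c & d \end{smallmatrix}\right)$, a direct calculation gives $\tr[A,B] = 2 - bc(r - r^{-1})^2$, so varying $bc$ realizes every value in $\bF_q$ as a commutator trace, and in the $t = -2$ case the two square classes of the $-$Jordan form of $[A,B]$ can both be hit by appropriate choices of $b$ and $c$. The bulk of the technical work is verifying that such pairs actually generate $\text{SL}_2(\bF_q)$---rather than landing in a Borel, a torus normalizer, a subfield copy of $\text{SL}_2$, or one of the exceptional $A_4, S_4, A_5$ on Dickson's list of subgroups---which I would handle by taking $r$ of large multiplicative order and eliminating each subgroup type for generic $B$, supplemented by a direct verification for the handful of small $q$.
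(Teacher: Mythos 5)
The paper never proves this proposition: it is imported verbatim as Proposition 5.2 and Lemma 5.3 of \cite{wanderley}, so there is no internal argument to compare yours against, and your attempt has to stand on its own. The uniqueness and counting half of your outline is correct and complete in its essentials: for $t\in\bF_q\backslash\{\pm 2\}$ there is a single $\textup{SL}_2(\bF_q)$-class of that trace (the relevant centralizers in $\textup{GL}_2$ have surjective determinant/norm, so the class does not split), hence the Higman invariant is determined by the trace; $\tr[A,B]=2$ forces a common eigenvector over $\oF_q$ (your computations $2-bc(r-r^{-1})^2$ and $2+c^2$ are right), so $\langle A,B\rangle$ lies in a Borel or in a non-split torus and is proper; $[A,B]=-I$ forces $A^2=B^2=(AB)^2=-I$ and $\langle A,B\rangle\cong Q_8$; and inversion sends the class of $-\bigl(\begin{smallmatrix}1&a\\0&1\end{smallmatrix}\bigr)$ to that of $-\bigl(\begin{smallmatrix}1&-a\\0&1\end{smallmatrix}\bigr)$, so the two noncentral trace $-2$ classes fuse into one extended class exactly when $-1$ is a nonsquare, i.e.\ $q\equiv 3\,\textup{mod}\,4$.

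The genuine gap is the existence half, which the proposition does assert and which the paper actually uses: in the proof of Theorem \ref{thm:equiv_cons} for $\kappa=0$ and $q\equiv 1\,\textup{mod}\,4$ one needs that there really are \emph{two} Higman invariants of trace $-2$, i.e.\ that both square classes of the $-$unipotent Jordan parameter occur as commutators of honest generating pairs (and likewise that every trace in $\bF_q\backslash\{2\}$ is attained). Your formula $\tr[A,B]=2-bc(r-r^{-1})^2$ and the rescaling $b\mapsto tb$, $c\mapsto c/t$ do hit every trace and both square classes, but you explicitly defer ``the bulk of the technical work'': showing the chosen pairs generate, i.e.\ eliminating, via Dickson's subgroup classification, the Borel, dihedral/torus-normalizer, subfield $\textup{SL}_2$ and $\textup{PGL}_2$, and $A_4$, $S_4$, $A_5$ possibilities, including the small fields ($q=5,7,9,\dots$) where the exceptional subgroups are not ruled out by element orders alone. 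Until that elimination is actually carried out (or the statement is simply cited from \cite{wanderley}, as the paper does), the counts ``one Higman invariant of every trace $\neq\pm 2$'' and ``two of trace $-2$ when $q\equiv 1\,\textup{mod}\,4$'' remain unproved; as written your proposal establishes only upper bounds on the number of Higman invariants of each trace.
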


McCullough and Wanderley deduce from this that the Classification Conjecture implies the $T$- and $Q$-Classification Conjectures (Corollary 5.7 and Proposition 8.5 in \cite{wanderley}). The converse implications are more challenging and only partially known under additional hypotheses. Specifically, it is proved in \cite{wanderley} that the $Q$-Classification Conjecture implies the Classification Conjecture when $q$ is even or when $4-\kappa$ is not a square in $\bF_q$, and Campos-Vargas proved the implication for all $\kappa\in\bF_p\backslash\{4\}$ when $q=p$ is a prime congruent to $3\,\text{mod}\,4$ \cite{Campos}. Our goal in this section is to prove the implication for arbitrary prime powers $q$ and arbitrary $\kappa\in\bF_q\backslash\{4\}$. To do so, we employ a similar proof strategy to that of McCullough and Wanderley. Let us describe it.

Let $(A_1,B_1)$ and $(A_2,B_2)$ be generating pairs for $\text{SL}_2(\bF_q)$ with the same Higman invariant. If the $Q$-Classification Conjecture holds, there is some sequence of Vieta involutions and coordinate permutations that transforms $(\tr A_2,\tr B_2,\,\tr A_2B_2)$ into $(\tr A_1,\tr B_1,\,\tr A_1B_1)$. This corresponds to a sequence of Nielsen moves applied to $(A_2,B_2)$, but it may not end at $(A_1,B_1)$. We only know that the endpoint $(\tilde{A}_1,\tilde{B}_1)$ satisfies $(\tr A_1,\tr B_1,\,\tr A_1B_1)=(\tr \tilde{A}_1,\tr \tilde{B}_1,\tr \tilde{A}_1\tilde{B}_1)$. We would like to say that this equality of triples implies $(A_1,B_1)$ is Nielsen equivalent to $(\tilde{A}_1,\tilde{B}_1)$ and thus to $(A_2,B_2)$---that would prove the Classification Conjecture. So, naturally, we ask what can be said about two matrix pairs that correspond to the same triple. The answer comes primarily from the work of MacBeath \cite{macbeath}. (Note that matrix pairs are called \textit{conjugate} if they are simultaneously conjugate.)

\begin{lemma}[Lemma 2.1(ii) in \cite{wanderley}]\label{lem:wanderley}Two pairs of generators for $\textup{SL}_2(\bF_q)$ are Nielsen equivalent if they are $\textup{SL}_2(\bF_q)$-conjugate.\end{lemma}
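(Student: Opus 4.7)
The plan is to realize simultaneous conjugation by any element of $\text{SL}_2(\bF_q)$ as a sequence of Nielsen moves on the generating pair. Let $(A, B)$ and $(gAg^{-1}, gBg^{-1})$ be the two conjugate generating pairs, with $g \in \text{SL}_2(\bF_q)$. Since $(A, B)$ generates, I would first write $g = w(A, B)$ as a word in $A$, $B$ and their inverses, i.e., as the image of some $w$ in the free group $F_2 = \langle x, y \rangle$ under the evaluation $x \mapsto A$, $y \mapsto B$.

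The key step is to realize the inner automorphism $\iota_w : u \mapsto w u w^{-1}$ of $F_2$ as an explicit composition of the elementary Nielsen moves \eqref{eq:n1}--\eqref{eq:n4} applied to the ordered pair $(x, y)$. For $w = x$, two moves suffice: \eqref{eq:n1} sends $(x, y) \mapsto (x, xy)$ and then \eqref{eq:n2} sends $(x, xy) \mapsto (x, xyx^{-1}) = (x x x^{-1}, x y x^{-1})$, which is the action of $\iota_x$. Analogous two-move sequences realize $\iota_{x^{-1}}$, $\iota_y$, and $\iota_{y^{-1}}$. Since $\iota_{w w'} = \iota_w \circ \iota_{w'}$, concatenating the realizations for successive letters of $w$ produces a Nielsen-move realization $\phi_w$ of $\iota_w$ for every $w \in F_2$.

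The argument then closes by applying $\phi_w$ to $(A, B) \in \text{SL}_2(\bF_q)^2$ in place of $(x, y) \in F_2^2$. Each elementary Nielsen move is a uniform group-theoretic formula valid in any group, so the output is $\bigl( w(A, B)\, A\, w(A, B)^{-1},\; w(A, B)\, B\, w(A, B)^{-1} \bigr) = (gAg^{-1}, gBg^{-1})$, giving the required Nielsen equivalence. The only nontrivial step is the construction of $\phi_w$, which is a special case of Nielsen's theorem that $\text{Aut}(F_n)$ is generated by the elementary moves; for $F_2$ it is verified directly as above, so I do not expect any substantive obstacle beyond the bookkeeping.
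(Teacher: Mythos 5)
Your proposal is correct, and it is the standard argument behind this fact: the paper itself gives no proof, importing the statement as Lemma 2.1(ii) of McCullough--Wanderley, whose proof is precisely your ``write $g$ as a word in $A,B$ and realize conjugation letter by letter with Nielsen moves.'' The deferred bookkeeping does close without incident: after processing a prefix of $w$, the two-move gadgets conjugate by the \emph{current} (already conjugated) first or second entry, so an easy induction shows the conjugating element is built up by right multiplication, and the evaluation $F_2\to\textup{SL}_2(\bF_q)$, $x\mapsto A$, $y\mapsto B$, transports the whole sequence as you state.
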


\begin{theorem}[Theorem 3 in \cite{macbeath}]\label{thm:macbeath}Let $(\alpha,\beta,\gamma)\in\bF_q^3$ solve the Markoff equation for some $\kappa\neq 4$. If $q$ is odd, there are exactly two $\textup{SL}_2(\bF_q)$-conjugacy classes of matrix pairs with trace $(\alpha,\beta,\gamma)$.\end{theorem}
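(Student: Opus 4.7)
The plan is to lift $\text{SL}_2(\bF_q)$-conjugacy to $\text{GL}_2(\bF_q)$-conjugacy, show the fiber of the trace map is a single $\text{GL}_2(\bF_q)$-orbit, and then count how it decomposes. For existence of at least one realizing pair, I would fix $A$ to be the companion matrix $\left(\begin{smallmatrix} 0 & -1 \\ 1 & \alpha \end{smallmatrix}\right)$ of $x^2 - \alpha x + 1$ and parameterize $B = \left(\begin{smallmatrix} a & b \\ c & \beta - a \end{smallmatrix}\right)$; the conditions $\det B = 1$ and $\tr AB = \gamma$ cut out an affine conic in $(a,c)$ whose projective completion is smooth precisely when $\kappa \neq 4$. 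Smooth projective conics over $\bF_q$ are isomorphic to $\mathbb{P}^1$ and hence carry $\bF_q$-points, so existence follows.

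Next I would show that any such pair is absolutely irreducible, i.e. has no common eigenvector over $\overline{\bF_q}^2$: if it did, simultaneous upper-triangularization over $\overline{\bF_q}$ would make $[A,B]$ unipotent upper triangular, forcing $\tr[A,B] = 2$ and $\kappa = 4$. Absolute irreducibility makes $\{I, A, B, AB\}$ an $\bF_q$-basis of $M_2(\bF_q)$. Fricke-type identities---for instance $A^2 = \alpha A - I$ and the relation $BA = -AB + \beta A + \alpha B + (\gamma - \alpha\beta)I$ obtained by applying Cayley--Hamilton to $A+B$---show that the structure constants of multiplication on this basis are polynomials in $(\alpha,\beta,\gamma)$ alone. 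Consequently, for two pairs $(A_1,B_1)$ and $(A_2,B_2)$ with the same trace triple, the assignment $A_1 \mapsto A_2$, $B_1 \mapsto B_2$ extended linearly is an $\bF_q$-algebra automorphism of $M_2(\bF_q)$, which by Skolem--Noether is conjugation by some $P \in \text{GL}_2(\bF_q)$.

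Finally, I would count. By Schur's lemma, the $M_2(\bF_q)$-centralizer of the absolutely irreducible pair is $\bF_q \cdot I$, so its $\text{GL}_2(\bF_q)$-stabilizer has order $q-1$, while its $\text{SL}_2(\bF_q)$-stabilizer is $\{\pm I\}$ of order $2$ (using $q$ odd). Combined with the identity $|\text{GL}_2(\bF_q)| = (q-1)|\text{SL}_2(\bF_q)|$, the orbit-stabilizer theorem gives $|\text{GL}_2\text{-orbit}| = 2 \cdot |\text{SL}_2\text{-orbit}|$, so each nonempty fiber splits into exactly two $\text{SL}_2(\bF_q)$-conjugacy classes. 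I expect the main obstacle to sit in the algebra-automorphism step: one must verify carefully that matching trace triples really do force identical multiplication tables on $\{I, A, B, AB\}$, propagating Fricke's short-word identities to arbitrary products and ensuring that no hypothesis beyond $\kappa \neq 4$ is being silently invoked.
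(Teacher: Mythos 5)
Your proof is correct, and it is worth flagging that the paper itself never proves this statement: it is imported verbatim as Theorem 3 of Macbeath's paper, so you have supplied a self-contained argument for a quoted result rather than a variant of an in-paper proof. All three steps check out. For existence, eliminating $b$ via $\tr AB=\gamma$ turns $\det B=1$ into the conic $a^2+\alpha ac+c^2-\beta a+(\gamma-\alpha\beta)c+1=0$, whose $3\times3$ discriminant is $\tfrac{1}{4}(4-\kappa)$, so smoothness is exactly $\kappa\neq 4$; the one word you left implicit is that among the $q+1$ rational points of the smooth projective conic at most two lie on the line at infinity, so affine solutions (hence $B$) exist for every odd $q$. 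For uniqueness up to $\mathrm{GL}_2(\bF_q)$, the worry you voice at the end is unfounded: every pair in the fiber (not just your companion-form one) is absolutely irreducible when $\kappa\neq4$, Burnside gives that $\{I,A,B,AB\}$ spans $M_2(\bF_q)$, and the multiplication table closes with universal coefficients using only $A^2=\alpha A-I$, $B^2=\beta B-I$, $(AB)^2=\gamma AB-I$ and your $BA$ relation — e.g. $B\cdot AB=A+\gamma B-\alpha I$ and $AB\cdot A=B+\gamma A-\beta I$ — so equal trace triples force equal structure constants and Skolem--Noether applies. The count is also right: stabilizers are $\bF_q^\times I$ in $\mathrm{GL}_2$ and $\{\pm I\}$ in $\mathrm{SL}_2$, so the single $\mathrm{GL}_2$-orbit has size $|\mathrm{SL}_2(\bF_q)|$ and splits into exactly $[\mathrm{GL}_2:\mathrm{SL}_2\cdot\bF_q^\times I]=2$ classes since $q$ is odd. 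Compared with simply citing Macbeath, your Burnside/Skolem--Noether packaging makes transparent where the hypothesis $\kappa\neq4$ enters (twice: nondegeneracy of the conic, and absolute irreducibility), which is exactly the dichotomy the surrounding Section 2 of the paper relies on.
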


Returning to our setup in $\text{SL}_2(\bF_q)$, it follows immediately that $(A_1,B_1)$ and $(\tilde{A}_1,\tilde{B}_1)$ must be Nielsen equivalent in the special case $q\equiv 1\,\text{mod}\,4$ and $\kappa=0$ (or, equivalently,  $\tr A_1^{-1}B_1^{-1}A_1B_1=-2$). Indeed, the two Higman invariants of trace $-2$ identified in Proposition \ref{prop:wanderley} must be the Higman invariants of the two conjugacy classes of matrix pairs identified in Theorem \ref{thm:macbeath}. Since $(A_1,B_1)$ and $(\tilde{A}_1,\tilde{B}_1)$ have the same Higman invariant by hypothesis and both correspond to the same Markoff triple, they must lie in the same $\text{SL}_2(\bF_q)$-conjugacy class by Theorem \ref{thm:macbeath}. Thus $(A_1,B_1)$ and $(\tilde{A}_1,\tilde{B}_1)$ are Nielsen equivalent by Lemma \ref{lem:wanderley}.

Outside of the special case $\kappa=0$ and $q\equiv1\,\text{mod}\,4$, we are not so lucky---there is only one Higman invariant of trace $\kappa-2$, but there are two conjugacy classes of matrix pairs corresponding to the Markoff triple $(\tr A_1,\tr B_1,\,\tr A_1B_1)$. It is entirely possible that $(A_1,B_1)$ and $(\tilde{A}_1,\tilde{B}_1)$ lie in different conjugacy classes, rendering Lemma \ref{lem:wanderley} inapplicable. Our strategy here is to show that the Nielsen class of $(A_1,B_1)$ contains two distinct conjugacy classes corresponding to each (or any) Markoff triple, so it must contain $(\tilde{A}_1,\tilde{B}_1)$ and thus $(A_2,B_2)$. 

At this point, our proof diverges from that of McCullough and Wanderley. In Lemma 10.2 of \cite{wanderley}, it is shown that $(A,B)$ and $(A^{-1},B^{-1})$ are not conjugate if $2-\tr A^{-1}B^{-1}AB$ is not a square in $\bF_q$. Since $(A,B)$ and $(A^{-1},B^{-1})$ correspond to the same Markoff triple and are evidently Nielsen equivalent, that completes McCullough and Wanderley's proof: the $Q$-Classification Conjecture implies the Classification Conjecture when $4-\kappa$ is not a square. In our proof, we use instead the Nielsen equivalent pairs $(A,B)$ and $(B,A)$. It turns out that when $\kappa\in\bF_q\backslash\{0,4\}$ or when $\kappa=0$ and $q\equiv 3\,\text{mod}\,4$, there exist generators $A$ and $B$ for $\text{SL}_2(\bF_q)$ such that $(A,B)$ and $(B,A)$ are not conjugate yet correspond to the same Markoff triple. That is the only missing ingredient to a complete proof.

We can construct such generators $A$ and $B$ from a specific kind of Markoff triple. It takes the form $(\alpha,\alpha,\gamma)$ with $\gamma$ satisfying the properties below.

\begin{lemma}\label{lem:gamma}Let $q > 353$, and let $p=\textup{char}(\bF_q)$. If $\kappa\in\bF_q\backslash\{0,4\}$ or if $\kappa=0$ and $q\equiv 3\,\textup{mod}\,4$, there exists $\gamma\in\bF_q\backslash\{0\}$ such that $\bF_p(\gamma)=\bF_q$, and neither $2-\gamma$, $\kappa-\gamma^2$, $\kappa-8+4\gamma-\gamma^2$, nor $-\kappa+\kappa \gamma-\gamma^2$ is a square in $\bF_q$.\end{lemma}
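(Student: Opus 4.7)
The plan is to count, via a character-sum argument, those $\gamma\in\bF_q$ that simultaneously satisfy all four non-squareness conditions, and then to confirm that this count exceeds the number of $\gamma$ with $\bF_p(\gamma)\subsetneq\bF_q$ once $q>353$.

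Let $\chi$ denote the quadratic character on $\bF_q$, extended by $\chi(0)=0$, and abbreviate $f_1=2-\gamma$, $f_2=\kappa-\gamma^2$, $f_3=\kappa-8+4\gamma-\gamma^2$, and $f_4=-\kappa+\kappa\gamma-\gamma^2$. The number $N$ of admissible $\gamma$ differs from $\sum_{\gamma}\prod_{i=1}^{4}\frac{1-\chi(f_i(\gamma))}{2}$ by at most the total number of roots of the $f_i$ (which is $O(1)$), and expanding the product yields
\[
\sum_{\gamma\in\bF_q}\prod_{i=1}^{4}\frac{1-\chi(f_i(\gamma))}{2}\;=\;\frac{q}{16}+\frac{1}{16}\sum_{\emptyset\neq S\subseteq\{1,2,3,4\}}(-1)^{|S|}\sum_{\gamma\in\bF_q}\chi\!\left(\prod_{i\in S}f_i(\gamma)\right).
\]
The Weil bound $\left|\sum_{\gamma}\chi(g(\gamma))\right|\leq(\deg g-1)\sqrt{q}$ applies to each inner sum, provided $\prod_{i\in S}f_i$ is not a perfect square in $\overline{\bF}_q[\gamma]$ up to a constant factor.

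The first substantive step is a squarefreeness case analysis. Any $S$ containing index $1$ gives an odd-degree polynomial and hence fails to be a square automatically. For $S\subseteq\{2,3,4\}$, a direct discriminant and resultant computation shows that $f_2,f_3,f_4$ are pairwise coprime and each squarefree exactly when $\kappa\notin\{0,4\}$. The value $\kappa=4$ is excluded by hypothesis; the value $\kappa=0$ needs a separate treatment, because then $f_2=f_4=-\gamma^2$ ruins coprimality. In that case the hypothesis $q\equiv 3\bmod 4$ makes $-\gamma^2$ automatically a nonsquare for $\gamma\neq 0$, so the conditions on $f_2$ and $f_4$ are free, and the problem collapses to ensuring that $f_1$ and $f_3$ are simultaneously nonsquares---a two-polynomial analogue handled by the same expansion with a smaller Weil constant.

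The second step is to assemble the estimates: summing the Weil bounds over the fifteen nonempty $S$ yields $N\geq q/16-C\sqrt{q}-O(1)$ for an explicit $C$; the contribution from the degree-one polynomial $f_1$ vanishes exactly, which shaves the constant. The final step is to subtract the $\gamma\in\bF_q$ with $\bF_p(\gamma)\subsetneq\bF_q$: setting $m=[\bF_q\!:\!\bF_p]$, this set has size at most $\sum_{k\mid m,\,k<m}p^k$, which is $0$ when $m=1$ and at most $2p^{m/2}\leq 2\sqrt{q}$ when $m\geq 2$. Combining, positivity of $N$ minus the non-generators holds once $q$ surpasses an explicit threshold.

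The main obstacle is twofold. First, the squarefreeness analysis at the boundary values $\kappa\in\{0,4\}$ is delicate, and the two sub-hypotheses of the lemma ($\kappa\neq 0$ versus $\kappa=0,\,q\equiv 3\bmod 4$) exist precisely to rescue the argument at those degeneracies. Second, a naive application of Weil's theorem with the crude degree bounds gives a threshold considerably larger than $353$; reaching $q>353$ will require exploiting the exact vanishing of the linear sum in $f_1$ together with a tight summation of the fifteen Weil contributions, and possibly refining the count of non-generators by treating the cases $m=1$ and $m\geq 2$ separately.
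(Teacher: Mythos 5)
Your character-sum framework is the same as the paper's: expand $\prod_i\frac{1}{2}(1-\chi(f_i(\gamma)))$, apply the Weil bound to each of the fifteen nonempty subsets after checking the relevant products are not squares in $\overline{\bF}_q[\gamma]$ (your squarefreeness/coprimality analysis, and the separate treatment of $\kappa=0$, $q\equiv3\bmod 4$ where $f_2=f_4=-\gamma^2$ is automatically a nonsquare, matches what the paper does), and then subtract the roots, $0$, and the elements of proper subfields. Up to that point the proposal is sound.

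The genuine gap is your final step. You acknowledge the analytic threshold is ``considerably larger than $353$'' but propose to close the gap by sharpening constants: using the exact vanishing of $\sum_\gamma\chi(2-\gamma)$ and a tighter tally of the fifteen Weil terms. This cannot work. Even with every available refinement, the sum of the Weil constants $(\deg-1)$ over the fifteen subsets is $41$ (the paper's cruder count gives $56$), so the best bound of this shape is roughly $N\geq\frac{1}{16}\bigl(q-41\sqrt{q}\bigr)$ minus $O(\sqrt{q})$ for roots and subfield elements, which forces $q$ into the thousands (the paper lands at $q>6400$; your refinements would give something like $q\gtrsim 4000$), nowhere near $353$. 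Moreover, the threshold $353$ is not an artifact of estimates: the lemma's conclusion actually fails at $q=353$ for certain $\kappa$ (the paper notes this when applying the lemma), so no uniform character-sum bound can certify the statement right down to $q>353$. The paper closes this gap the only realistic way: for all prime powers between $353$ and its analytic threshold it verifies the existence of $\gamma$ by direct (brute-force) computation, which is fast because the admissible $\gamma$ have density about $\frac{1}{16}$. Your write-up needs this finite verification step (or an equivalent explicit check) to be a complete proof; the purely analytic route you sketch for the range $353<q\leq$ a few thousand does not exist.
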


\begin{proof}When $\kappa$ is not 0 or 4, none of the four polynomials in $\gamma$ share any roots or have repeated roots. Consequently, the Weil bound on multiplicative character sums guarantees the existence of our desired $\gamma$ for sufficiently large $q$. Since we wish to prove the lemma for all $q > 353$, let us check what ``sufficiently large" means.

Let $\chi$ be the quadratic character on $\bF_q^\times$ and set $\chi(0)=0$. The form of the Weil bound we need is Theorem 11.23 in \cite{iwaniec}: $|\sum_{\smash{\bF_q}}\chi(f(x))|\leq (\deg(f)-1)\sqrt{q}$ provided $f(x)\in\bF_q[x]$ is not a square in $\overline{\bF}_q[x]$ (or, more generally, not an $n^\text{th}$ power if $\chi$ has order $n$). By expanding the products below and applying the Weil bound to each resulting sum, we get
\begin{align*}\frac{1}{16}\sum_{\gamma\in\bF_q}\!\Big((1-\chi(2-\gamma))(1-\chi(\kappa-\gamma^2))\hspace{-4cm}&\\[-0.4cm]&\cdot(1-\chi(\kappa-8+4\gamma-\gamma^2))(1-\chi(-\kappa+\kappa \gamma-\gamma^2))\Big)\geq \frac{1}{16}(q-56\sqrt{q}).\end{align*} The argument in the sum above is 1 when neither $2-\gamma$, $\kappa-\gamma^2$, $\kappa-8+4\gamma-\gamma^2$, nor $-\kappa+\kappa \gamma-\gamma^2$ is a square and 0 otherwise, except that roots of the four polynomials are counted as $\frac{1}{2}$ or $0$. As per the lemma statement, we wish to avoid these seven roots as well as $0$ and elements from a proper subfield of $\bF_q$. A crude over-count (provided $q$ is at least 11) of the number of elements to be avoided is $\frac{3}{2}\sqrt{q}$, which is less than $\frac{1}{16}(q-56\sqrt{q})$ when $q > 6400$. Thus when $\kappa$ is not 0 or 4 and $q > 6400$, the desired $\gamma$ exists.

When $\kappa=0$ and $q\equiv 3\,\text{mod}\,4$, the requirement that $\kappa-\gamma^2$ and $-\kappa+\kappa\gamma-\gamma^2$ are not squares holds automatically. This makes the resulting bound on $q$ much less than $6400$. We omit details.

For prime powers smaller than $6400$, the lemma can be verified by direct computation. Since the number of acceptable $\gamma$ in $\bF_q$ is roughly $\frac{1}{16}q$, a brute-force search is quick.\end{proof}

We will need to know that the Markoff triple $(\alpha,\alpha,\gamma)$ obtained from Lemma~\ref{lem:gamma} is essential. For this we have McCullough and Wanderley's description of nonessential triples in Section 11 of \cite{wanderley}. The theorem below provides a summary. Parts (1--4b) also appear as the ``Main Theorem" in McCullough's unpublished manuscript \cite{mccullough}, where additional proof details are provided. A full account is also provided in \cite{Campos}. 

\begin{theorem}[Section 11 in \cite{wanderley}]\label{thm:nonessential}Let $\varphi=\frac{1}{2}(1+\sqrt{5})$ and $\overline{\varphi}=\frac{1}{2}(1-\sqrt{5})$. If $\kappa\in\bF_q\backslash\{4\}$, then a Markoff triple with respect to $\kappa$ is essential if and only if it is not among the following exceptions up to permuting or negating coordinates:\begin{enumerate}\item $(\sqrt{\kappa},0,0)$, \item $(1,1,0)$ or $(1,1,1)$ when $\kappa=2$, \item[(3a)] $(\varphi,\varphi,\varphi)$, $(\varphi,\varphi,1)$, or $(\varphi,0,1)$ when $\kappa=2+\varphi$, \item[(3b)] $(\overline{\varphi},\overline{\varphi},\overline{\varphi})$, $(\overline{\varphi},\overline{\varphi},1)$, or $(\overline{\varphi},0,1)$ when $\kappa=2+\overline{\varphi}$,\item[(4a)] $(\sqrt{2},0,1)$ or $(\sqrt{2},\sqrt{2},1)$ when $\kappa=3$, \item[(4b)] $(\varphi,\overline{\varphi},0)$, $(\varphi,\overline{\varphi},-1)$, $(\varphi,1,1)$, or $(\overline{\varphi},1,1)$, when $\kappa=3$\item[(5)]or $(\alpha,\beta,\gamma)$ with $\bF_p(\alpha^2,\beta^2,\gamma^2,\kappa)\neq\bF_q$, where $p=\textup{char}(\bF_q)$.\end{enumerate}\end{theorem}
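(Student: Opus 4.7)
The plan is to use Dickson's classification of subgroups of $\textup{SL}_2(\bF_q)$. By MacBeath's theorem (Theorem \ref{thm:macbeath}), every solution $(\alpha,\beta,\gamma)$ of the Markoff equation with $\kappa\neq 4$ is the trace triple of some pair $(A,B)\in\textup{SL}_2(\bF_q)^2$, and all such pairs are $\textup{SL}_2(\oF_q)$-conjugate. The triple is nonessential iff, within this conjugacy class, every pair fails to generate $\textup{SL}_2(\bF_q)$, i.e., lies in a proper subgroup. So the task is to determine, for each type of maximal proper subgroup $H\leq \textup{SL}_2(\bF_q)$, the Markoff triples realizable by pairs drawn from $H$.

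Since $\kappa\neq 4$, the commutator $[A,B]$ has trace $\neq 2$ and is therefore not unipotent. This rules out $H$ being a Borel subgroup, because any two matrices with a common fixed line have unipotent commutator. By Dickson, the remaining maximal subgroup types are: (a) normalizers of split or nonsplit tori, (b) the $\textup{SL}_2$ preimage of $A_4$, $S_4$, or $A_5$ embedded in $\textup{PGL}_2(\bF_q)$, and (c) subfield subgroups $\textup{SL}_2(\bF_{q'})$ with $\bF_{q'}\subsetneq\bF_q$.

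In case (a), $A$ and $B$ cannot both lie in the torus (they would commute, forcing $\kappa=4$), so at least one of them normalizes the torus by inversion; in $\textup{SL}_2$ such an element squares to $-I$ and hence has trace $0$. A short case analysis on how $A$, $B$, and $AB$ distribute between the torus and its inverting coset yields the family $(\sqrt{\kappa},0,0)$ up to coordinate permutation, which is exception (1). In case (b), the character theory of $A_4$, $S_4$, $A_5$ determines the possible traces in $\textup{SL}_2$ of an element of a given order: order $2$ lifts to trace $0$, order $3$ to $\pm 1$, order $4$ (only in $S_4$) to $\pm\sqrt{2}$, and order $5$ (only in $A_5$) to $\pm\varphi$ or $\pm\overline{\varphi}$. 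Enumerating the generating pairs of each exceptional group up to automorphism and computing $\tr A,\tr B,\tr AB,$ and $\tr[A,B]$ produces the triples and the values of $\kappa$ listed in exceptions (2)-(5b). In case (c), the trace field $\bF_p(\alpha,\beta,\gamma)$ lies in the proper subfield, and using that the conjugacy class of the projective pair is determined by the projective traces---which are expressible in terms of $\alpha^2,\beta^2,\gamma^2$, and $\kappa$---one refines this to the condition $\bF_p(\alpha^2,\beta^2,\gamma^2,\kappa)\neq\bF_q$ of exception (6).

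The main obstacle is the bookkeeping in case (b): one must handle the sign ambiguities that arise when lifting $A_4,S_4,A_5\subset\textup{PGL}_2$ to $\textup{SL}_2$, verify that the listed triples exhaust those realized by generating pairs of the exceptional subgroup (not merely by pairs that sit inside a smaller subgroup already covered by (a) or (c)), and separate the exceptional subgroup cases from the subfield cases in small characteristic where they can fuse (e.g.\ $A_5\cong\textup{PSL}_2(\bF_5)$ whenever $5\mid q^2-1$). These coincidences are precisely why each exception (2)-(5b) is tied to a specific value of $\kappa$ rather than to an arbitrary one.
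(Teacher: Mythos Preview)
The paper does not prove this theorem; it is quoted from McCullough--Wanderley \cite{wanderley} (with additional details attributed to \cite{mccullough} and \cite{Campos}) and used as a black box. So there is no proof in the paper to compare your attempt against.

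That said, your outline is the standard route and matches what the cited references do: invoke Dickson's classification of subgroups of $\textup{SL}_2(\bF_q)$, rule out Borel subgroups using $\kappa\neq 4$, and then run through the dihedral (torus normalizer), exceptional ($A_4,S_4,A_5$), and subfield cases. Your treatment of case (a) is correct, and your description of case (b) is the right idea, though as you acknowledge the bookkeeping there is where all the actual work lies---one must enumerate generating pairs of the binary polyhedral groups up to automorphism and track the commutator trace to pin down $\kappa$. For case (c), the passage from $\bF_p(\alpha,\beta,\gamma)$ to $\bF_p(\alpha^2,\beta^2,\gamma^2,\kappa)$ requires a bit more care than you indicate: one must account for the possibility that the pair lies in a $\textup{GL}_2(\bF_{q'})$-conjugate of $\textup{SL}_2(\bF_{q'})$ rather than in $\textup{SL}_2(\bF_{q'})$ itself, which is what forces the squares into the subfield condition. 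None of this is a gap in your plan, but a full proof would need these details filled in, which is presumably why the paper defers to the references.
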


Except for (5), each category above includes all triples in a single Markoff class up to permuting and negating coordinates. Of course, category (5) can account for many different Markoff classes if $\bF_q$ has proper subfields. Note that the numbering above matches Theorem~\ref{thm:main1}.

We now have all the pieces needed to equate the McCullough--Wanderley conjectures.

\begin{customthm}{\ref{thm:main3}}\label{thm:equiv_cons}The $Q$-Classification Conjecture implies the Classification and $T$-Classification Conjectures.\end{customthm}

\begin{proof}Assume the $Q$-Classification Conjecture. Let $\kappa\in\bF_q\backslash\{4\}$, and assume $q\equiv 3\,\text{mod}\,4$ if $\kappa=0$ (otherwise we are done by Lemma \ref{lem:wanderley}, Theorem \ref{thm:macbeath}, and Proposition \ref{prop:wanderley}). Assume that $\gamma\in\bF_q$ from Lemma \ref{lem:gamma} exists (and fix one). Then there exists $\alpha\in\bF_q$ with \[\alpha^2=\frac{\kappa-\gamma^2}{2-\gamma}\] because the right-side expression is a square by choice of $\gamma$. Thus $(\alpha,\alpha,\gamma)$ is a Markoff triple for the given $\kappa$. We claim it is an essential triple. Indeed, it cannot fall into category (5) of Theorem \ref{thm:nonessential} because $\bF_p(\alpha^2,\gamma^2,\kappa)\supseteq \bF_p(\gamma)=\bF_q$, again by choice of $\gamma$. From categories (1--4b), the only nonessential triples of the form $(\alpha,\alpha,\gamma)$ for which it is possible that neither $2-\gamma$ nor $\kappa-\gamma^2$ is a square are $\pm(1,1,0)$. As we insisted that $\gamma\neq 0$ in Lemma \ref{lem:gamma}, $(\alpha,\alpha,\gamma)$ must be essential.

By the $Q$-Classification Conjecture, any $\text{SL}_2(\bF_q)$ generating pair with commutator trace $\kappa-2$ is Nielsen equivalent to a pair with trace $(\alpha,\alpha,\gamma)$. So to prove our theorem it suffices to show that all pairs with trace $(\alpha,\alpha,\gamma)$ are Nielsen equivalent. By Lemma \ref{lem:wanderley} and Theorem \ref{thm:macbeath}, this follows if we can find two Nielsen equivalent pairs of trace $(\alpha,\alpha,\gamma)$ that are not conjugate.

Observe that \[\alpha^2-4=\frac{\kappa-8+4\gamma-\gamma^2}{2-\gamma}\] is also a square by choice of $\gamma$, so there exists $\zeta\in\bF_q$ with \begin{equation}\label{eq:x}\zeta+\zeta^{-1}=\alpha.\end{equation} Next, the choice of $\gamma$ allows us to fix $\eta\in\bF_q$ such that \[\eta^2=\frac{-\kappa+\kappa\gamma-\gamma^2}{\kappa-8+4\gamma-\gamma^2}=\frac{\alpha^2-\kappa}{\alpha^2-4}.\] It does not matter which of the two choices of $\zeta$ or $\eta$ we use. Finally, observe that \[\frac{\alpha^2}{\eta^2}-4=\frac{(\gamma^2-4\gamma+\kappa)^2}{(2-\gamma)(-\kappa+\kappa\gamma-\gamma^2)}\] is also a square, so there exists $\vartheta\in\bF_q^\times$ with $\eta(\vartheta+\vartheta^{-1})=\alpha$. Now here, the choice between the two possibilities for $\vartheta$ does matter. Any value of $\vartheta$ makes $(\zeta+\zeta^{-1},\eta(\vartheta+\vartheta^{-1}),\eta(\zeta\vartheta+\zeta^{-1}\vartheta^{-1}))$ a Markoff triple, so one of the two values that solves $\eta(\vartheta+\vartheta^{-1})=\alpha$ must make it $(\alpha,\alpha,\gamma)$, while the other (replacing $\vartheta$ with $\vartheta^{-1}$) makes it $(\alpha,\alpha,\alpha^2-\gamma)$. We pick the one for which \begin{equation}\label{eq:y}\eta(\vartheta+\vartheta^{-1})=\alpha\hspace{\parindent}\text{and}\hspace{\parindent}\eta(\zeta\vartheta+\zeta^{-1}\vartheta^{-1})=\gamma.\end{equation} 

The matrix pair \[A=\begin{bmatrix}\zeta & 0 \\ 0 & \zeta^{-1}\end{bmatrix},\;B=\begin{bmatrix}\eta\vartheta & (\zeta^{-1}-\zeta)^{-1} \\ (\zeta^{-1}-\zeta)(\eta^2-1) & \eta\vartheta^{-1}\end{bmatrix}\in\text{SL}_2(\bF_q)\] satisfies $(\tr  A, \tr  B, \tr  AB)=(\alpha,\alpha,\gamma)$. The Nielsen equivalent pair $(B,A)$ also has trace $(\alpha,\alpha,\gamma)$, and we claim it is not $\text{SL}_2(\bF_q)$-conjugate to $(A,B)$. To see this, we compute \begin{align*}\begin{bmatrix}\zeta^{-1}-\eta \vartheta & (\zeta-\zeta^{-1})^{-1} \\ \eta \vartheta-\zeta & (\zeta^{-1}-\zeta)^{-1}\end{bmatrix}(B,A)\begin{bmatrix}\zeta^{-1}-\eta \vartheta & (\zeta-\zeta^{-1})^{-1} \\ \eta \vartheta-\zeta & (\zeta^{-1}-\zeta)^{-1}\end{bmatrix}^{\!-1}\hspace{-2cm}&\\&=\left(A,\begin{bmatrix}\eta\vartheta & \eta\vartheta-\zeta^{-1} \\ \zeta-\eta\vartheta & \eta\vartheta^{-1}\end{bmatrix}\right).\end{align*} Call the last matrix $C$. The equation above shows that $(B,A)$ and $(A,C)$ are $\text{SL}_2(\bF_q)$-conjugate, so the claim follows if $(A,B)$ and $(A,C)$ are not conjugate. The centralizer of $A$ is the subgroup of diagonal matrices. If $D\in\text{SL}_2(\bF_q)$ is diagonal, the top-right entry of $DBD^{-1}$ is a square multiple of $(\zeta^{-1}-\zeta)^{-1}$. Thus $DBD^{-1}$ could equal $C$ only if $(\zeta^{-1}-\zeta)(\eta\vartheta-\zeta^{-1})$ is a square in $\bF_q$. But \begin{flalign*}&&(\zeta^{-1}-\zeta)(\eta\vartheta-\zeta^{-1})&=\zeta^{-1}\eta(\vartheta+\vartheta^{-1})+1-\zeta^{-2}-\eta(\zeta \vartheta+\zeta^{-1}\vartheta^{-1})&&\\ && &=\zeta^{-1}\alpha+1-\zeta^{-2}-\gamma && \text{by (\ref{eq:y})}\\&& &=\zeta^{-1}(\zeta+\zeta^{-1})+1-\zeta^{-2}-\gamma && \text{by (\ref{eq:x})}\\&& &=2-\gamma, && \end{flalign*} which is not a square as per Lemma \ref{lem:gamma}. Thus $(A,B)$ and $(B,A)$ cannot be $\text{SL}_2(\bF_q)$-conjugate. This completes the proof when $\gamma$ from Lemma \ref{lem:gamma} exists.

As described below, the theorem is proved by direct computation when $q$ and $\kappa$ are such that $\gamma$ from Lemma \ref{lem:gamma} does not exist.\end{proof}

McCullough and Wanderley have already verified their conjectures for $q \leq 101$. For $q > 101$, there are exactly 37 pairs $q,\kappa$ for which $\gamma$ from Lemma \ref{lem:gamma} does not exist, and $q\leq 181$ in all but two of those pairs (namely $q=353$ and $\kappa=36$ or $181$). Over such small fields, verifying Theorem \ref{thm:equiv_cons} computationally is feasible: pick any generating pair $(A,B)$ for $\textup{SL}_2(\bF_q)$ with $\tr\,[A,B] = \kappa-2$, and build the Nielsen class of $(A,B)$ until two non-conjugate pairs are found that correspond to the same Markoff triple. For example, in $\bF_{353}$ with $\kappa=181$, we find \[\left(\begin{bmatrix}296 & 0 \\ 0 & 161\end{bmatrix},\begin{bmatrix}182 & 183 \\ 74 & 216\end{bmatrix}\right)\hspace{\parindent}\text{and}\hspace{\parindent}\left(\begin{bmatrix}296 & 0 \\ 0 & 161\end{bmatrix},\begin{bmatrix}182 & 201 \\ 315 & 216\end{bmatrix}\right).\] Both pairs correspond to the essential Markoff triple $(104,45,45)$. They are not simultaneously conjugate because, as in the last proof, the top-right entries 183 and 201 from the second matrices are not square multiples of one another. To see that these pairs are Nielsen equivalent, let $\sigma_z\circ\tau_x$, $\tau_z$, and $\sigma_z$ denote the three Nielsen moves in (\ref{eq:Gamma_gens}), respectively. Then the composition $\sigma_z\circ\tau_x\circ(\sigma_z\circ\tau_z)^2\circ(\sigma_z\circ\tau_x)^2\circ\sigma_z$ maps the first matrix pair above to the second.

As a final remark, the map $(A,B)\mapsto(B,A)$ used in the proof of Theorem \ref{thm:equiv_cons} is not a \textit{special} (determinant 1) Nielsen move; it comes from the nontrivial coset in $\textup{Aut}(F_2)/\textup{SAut}(F_2)$, where $F_2$ is the free group on two letters and $\textup{SAut}(F_2)$ is the kernel of $\textup{Aut}(F_2)\xrightarrow{\smash{\raisebox{-0.3ex}{\(\scriptstyle\text{ab}\)}}}\textup{GL}_2(\bZ)\xrightarrow{\smash{\raisebox{-0.3ex}{\(\scriptstyle\text{det}\)}}}\{\pm1\}$. So for the computational group theorist, our results on the $Q$-Classification Conjecture do not fully determine connected components of the product replacement graph, but rather the \textit{extended} product replacement graph. In the special case that $4-\kappa$ is not a square, however, there is already McCullough and Wanderley's proof of Theorem \ref{thm:equiv_cons}, which uses the \textit{special} Nielsen move $(A,B)\mapsto(A^{-1},B^{-1})$.

\section{Overview of the remaining sections}
\label{sec:3}

\subsection{The main definitions}\label{ss:3.1} We introduce all but one of the objects central to the proof of Theorem \ref{thm:main1} in advance. The definition that we skip for now is not as succinct as those below.

\begin{notation}\label{not:Gamma}Let $\sigma_x$, $\sigma_y$, and $\sigma_z$ denote the three Vieta involutions as in Theorem~\ref{thm:main1}, let $\tau_x$, $\tau_y$, and $\tau_z$ denote the three coordinate transpositions indexed by their fixed coordinate, and let $\Gamma$ denote the group generated by the $\sigma_i$ and $\tau_j$ along with the double sign change $(x,y,z)\mapsto(x,-y,-z)$. Let $\Gamma_{\!x}$ denote the stabilizer of the first coordinate (generated by $\tau_x$, $\sigma_y$, and the double sign change).\end{notation}

Throughout the paper, $R$ is an integral domain and $F$ is its field of fractions.  We use $\overline{F}$ to denote the algebraic closure of $F$ and $\overline{R}$ to denote the integral closure of $R$ in $\overline{F}$. There are reminders of this notation throughout.

The definitions below, just like the term \textit{Markoff triple}, depend on the value of $\kappa$ that determines the Markoff equation. Since we so rarely have occasion to consider two distinct values of $\kappa$ at once (it only happens in the proof of Proposition \ref{prop:reduction}), the subscript $\kappa$ is suppressed in notation.

\begin{notation}For a fixed $\kappa\in R$, let $\cM(R)\subseteq R^3$ denote the set of Markoff triples.\end{notation}

\begin{notation}\label{not:P(R)}For a fixed $\kappa\in R$, let $\cP(R)$ denote those $f\in \overline{R}[x]$ such that \[\sum_{\mathclap{\bt\in\cO}} f(x)=0\] for any finite $\Gamma$-invariant subset $\cO\subseteq\cM(R)$. Note that ``$x$" in the summation is shorthand for $x(\bt)$, where $x(\alpha,\beta,\gamma)=\alpha$.\end{notation}

The inclusion of $(x,y,z)\mapsto(-x,-y,z)$ in $\Gamma$ guarantees $x^{2n+1}\in\cP(R)$ for any $n\geq 0$. It is the even degree polynomials in $\cP(R)$ that are hard to come by.

\begin{definition}A \emph{first-coordinate orbit} is a set of the form $\Gamma_{\!x}\cdot\bt$ for some $\bt\in\cM(R)$. We use $\cO_x$ to denote a generic first-coordinate orbit and $\cO_{\alpha}$ for some $\alpha\in R$ to denote a generic first-coordinate orbit in which $x=\alpha$.\end{definition}

\begin{definition}Let $\alpha\in R$, and let $\zeta\in\overline{R}$ solve $\zeta+\zeta^{-1}=\alpha$. The \textit{(rotation) order} of $\alpha$, denoted $\text{ord}(\alpha)$, is the smallest positive even integer $n$ such that $\zeta^n=1$. If no such $n$ exists, we say $\alpha$ has infinite order.\end{definition}

Insisting that $\alpha$ have even order is not standard in the literature, nor is it of theoretical importance in our work. It does, however, lead to cleaner propositions.

\begin{notation}\label{not:P_x(R,d)}For $\kappa\in R$ and an integer $d\geq 2$, let $\cP_{\!x}(R,d)$ denote those $f\in \overline{R}[x,y,z]$ such that $\sum_{\cO_\alpha}\!f(\bt)=0$ whenever $\cO_\alpha$ is finite and $2d\centernot\mid \text{ord}(\alpha)$. Also let $\cP_{\!x}(R,\infty)$ denote those $f\in \overline{R}[x,y,z]$ such that $\sum_{\cO_\alpha}\!f(\bt)=0$ for all but finitely many $\alpha\in R$.\end{notation}

\subsection{Proof strategy} Let us turn to $R=\bF_q$ and Theorem \ref{thm:main1}. Several small $\Gamma$-invariant subsets of $\cM(\bF_q)$ are identified for certain $\kappa$ in Theorem~\ref{thm:nonessential}. The $Q$-Classification Conjecture predicts that $\Gamma$ acts transitively on all remaining Markoff triples. In particular, if $q=p$ is prime (to avoid case (5) of Theorem~\ref{thm:nonessential}) and $\kappa$ is neither $2$ (to avoid case (2)), $2+\varphi$ (to avoid case (3a)), $2+\overline{\varphi}$ (to avoid case (3b)), $3$ (to avoid case (4)), nor $4$ (generally forbidden), then the $Q$-Classification Conjecture predicts that $\Gamma$ should act transitively on $\cM(\bF_p)$ if $\legendre{\kappa}{p}=-1$, and $\cM(\bF_p)$ should break into exactly two $\Gamma$-orbits if  $\legendre{\kappa}{p}=1$, namely $\Gamma\cdot(\sqrt{\kappa},0,0)$ (from case (1)) and everything else.

To limit the number of $\Gamma$-invariant subsets of $\cM(\bF_p)$, we plan to build up the rank of $\cP(\bF_p)$ from Notation \ref{not:P(R)}. To see why this works, suppose $\cO$ is $\Gamma$-invariant, and for $\alpha\in\bF_p$ let $c_\cO(\alpha)$ count the number of triples in $\cO$ with first coordinate $\alpha$. For any $f=f(x)\in\bF_p[x]$, \[\sum_{\mathclap{\bt\in\cO}}f(x)=\sum_{\alpha\in\bF_p}c_\cO(\alpha)f(\alpha).\] Provided $f$ is an even polynomial, this shows that $f\in\cP(\bF_p)$ if and only if the vectors \[\begin{bmatrix}f(0)\\ f(1)\\\vdots\\ f(p-1)\end{bmatrix}\;\;\text{and}\;\;\begin{bmatrix}c_\cO(0)\\ c_\cO(1)\\\vdots\\ c_\cO(p-1)\end{bmatrix}\] are orthogonal for every $\Gamma$-invariant subset $\cO$ of $\cM(\bF_p)$. So the more polynomials we produce in $\cP(\bF_p)$, the smaller its orthogonal complement, which means there are fewer possible $\Gamma$-invariant subsets. This is made precise in Theorem \ref{thm:P^perp}, which rephrases Theorem \ref{thm:main1} and the $Q$-Classification Conjecture in terms of the expected orthogonal complement of $\cP(\bF_p)$ .

This leads us to the last fundamental definition that is missing from Section~\ref{ss:3.1}. There is a polynomial reduction algorithm, call it $\Phi$ (the subscript $\kappa$ is suppressed again), that is useful for producing elements of $\cP(\bF_p)$, and more generally $\cP(R)$. The algorithm takes as input a multivariate polynomial $f=f(x,y,z)$ and outputs a univariate polynomial $\Phi(f)=\Phi(f)(x)$ satisfying $\sum_\cO f(\bt)=\sum_\cO\Phi(f)(x)$ for any $\Gamma$-invariant set $\cO\subseteq\cM(R)$. Our strategy is to apply $\Phi$ to multivariate polynomials for which it is easy to check that $\sum_\cO f(\bt)$ vanishes when $R=\bF_q$. For example, if $f=(x^q-x)yz^3$ then $\Phi(f)$ is in $\cP(\bF_q)$ because $f$ is identically 0 on $\bF_q^3$.

Let us define the output of $\Phi$ for monic monomial inputs from $R[x,y,z]$, where $R$ is some integral domain. Arbitrary inputs are then handled by extending linearly. Consider the input $x^\ell y^mz^n$ with $\ell,m,n>0$. For any $\cO\subseteq\cM(R)$, we have \[\sum_{\mathclap{\bt\in\cO}}x^\ell y^m z^n=\sum_{\bt\in\cO}x^{\ell-1}y^{m-1}z^{n-1}(x^2+y^2+z^2-\kappa)\] by virtue of $\bt$ being a Markoff triple (we need not even assume $\cO$ is $\Gamma$-invariant here). The total degree of $x^{\ell-1}y^{m-1}z^{n-1}(x^2+y^2+z^2-\kappa)$ is one less than that of $x^\ell y^mz^n$. We denote a reduction of this form as $\rho$, so \[\rho(x^\ell y^mz^n)=x^{\ell-1}y^{m-1}z^{n-1}(x^2+y^2+z^2-\kappa).\] Now consider a monomial with only two variables, say $y^mz^n$ with $m,n > 0$. If $\cO\subset\cM(R)$ is closed under the first-coordinate Vieta involution then \begin{flalign}\label{eq:sigma_preserve}\nonumber&&\sum_{\mathclap{\bt\in\cO}}y^mz^n &= \sum_{\mathclap{\bt\in\cO}}(y^mz^n-xy^{m-1}z^{n-1}z+xy^{m-1}z^{n-1})\\\nonumber && &=\sum_{\mathclap{\bt\in\cO}}y^{m-1}z^{n-1}(yz-x)+\sum_{\mathclap{\bt\in\cO}}xy^{m-1}z^{n-1}\\\nonumber&& &=\sum_{\mathclap{\bt\in\cO}}xy^{m-1}z^{n-1}+\sum_{\mathclap{\bt\in\cO}}xy^{m-1}z^{n-1}&&\text{by Vieta involution}\\&& &=\sum_{\mathclap{\bt\in\cO}}2xy^{m-1}z^{n-1}.\end{flalign} Again, the total degree of $2xy^{m-1}z^{n-1}$ is one less than the degree of the input. We abuse notation and express a step of this form as \[\sigma_x(y^mz^n)=2xy^{m-1}z^{n-1}.\] Of course $\sigma_y$ and $\sigma_z$ are defined analogously. Once all multivariate terms of $f$ have been eliminated, what remains can be expressed in $x$ alone using the coordinate permutations $\tau_x,\tau_y,\tau_z\in\Gamma$. Again we abuse notation and write \[\tau_y(z^n)=x^n.\] The final result is the desired univariate polynomial $\Phi(f)$ that satisfies \begin{equation}\label{eq:Phi_invariance}\sum_{\mathclap{\bt\in\cO}}f(\bt)=\sum_{\mathclap{\bt\in\cO}}\Phi(f)(x)\text{ whenever }\Gamma\cdot\cO=\cO.\end{equation}

Our reduction algorithm is deterministic because for every monomial there is a unique prescribed operation. Note that the choice to combine ``like terms" at any stage does not affect the output because we defined $\Phi$ on monomials and extended linearly. Also note that variable permutations need not be reserved for the final stage. For example, the result is the same whether we apply $\sigma_x$, $\tau_z\circ\sigma_y$, and $\tau_y\circ \sigma_z$ to the three terms of $yz+xz+xy$, respectively, or whether we permute variables in order to combine the three monomials first then apply $\sigma_x$ to $3yz$. The general principle is below. It matches the relation between $\tau_i$ and $\sigma_j$ viewed as elements of~$\Gamma$.

\begin{proposition}\label{prop:permute}For any $i,j\in\{x,y,z\}$, $\tau_i\circ\sigma_j = \sigma_{\tau_i(j)}\circ\tau_i$ on any applicable bivariate monomial, and $\tau_i\circ\rho = \rho\circ\tau_i$ on any trivariate monomial.\end{proposition}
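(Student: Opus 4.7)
The proof is essentially a mechanical verification at the level of monomials, extended linearly. The conceptual content is that $\rho$ and $\sigma_j$ are both defined by simple symbolic rules that are \emph{equivariant} with respect to the natural action of $S_3$ on the three variables: $\rho$ is invariant under that action (because its construction uses the symmetric polynomial $x^2+y^2+z^2-\kappa$), and $\sigma_j$ transforms like its subscript (because $j$ is the only asymmetric datum in its definition). So the identities in the proposition are really instances of the identities already satisfied by $\tau_i$ and $\sigma_j$ inside $\Gamma$ itself, now transported to the level of reduction operators on polynomials.

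For the $\rho$ identity, I would start with an arbitrary trivariate monomial $x^\ell y^m z^n$, $\ell,m,n>0$, and factor
\[\rho(x^\ell y^m z^n)=x^{\ell-1}y^{m-1}z^{n-1}\cdot(x^2+y^2+z^2-\kappa).\]
Since $x^2+y^2+z^2-\kappa$ is fixed by every $\tau_i$, applying $\tau_i$ to the right-hand side just permutes the variables inside $x^{\ell-1}y^{m-1}z^{n-1}$. But $\rho(\tau_i(x^\ell y^m z^n))$ does exactly the same thing: first permute the exponents, then multiply by the same symmetric factor. The two outputs therefore coincide, regardless of which transposition $\tau_i$ we chose.

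For the $\sigma$ identity, I would fix $i,j\in\{x,y,z\}$ and take a bivariate monomial $u^m v^n$ in the two variables $\{u,v\}=\{x,y,z\}\setminus\{j\}$. By definition
\[\sigma_j(u^m v^n)=2j\cdot u^{m-1}v^{n-1},\]
and applying $\tau_i$ yields $2\tau_i(j)\cdot \tau_i(u)^{m-1}\tau_i(v)^{n-1}$. On the other hand, $\tau_i(u^m v^n)=\tau_i(u)^m\tau_i(v)^n$ is a bivariate monomial in the pair $\{\tau_i(u),\tau_i(v)\}=\{x,y,z\}\setminus\{\tau_i(j)\}$, to which $\sigma_{\tau_i(j)}$ applies and produces exactly $2\tau_i(j)\cdot \tau_i(u)^{m-1}\tau_i(v)^{n-1}$. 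The only bookkeeping is to split into the three cases $i=j$, $\tau_i$ swaps $j$ with one of $u,v$, or $\tau_i$ fixes $j$ and swaps $u,v$; each is immediate from the formulas above.

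There is no serious obstacle in this argument; if anything, the only care required is in the $\sigma$ case to confirm that $\sigma_{\tau_i(j)}$ is indeed the operator applicable to $\tau_i(u^m v^n)$, i.e.\ that $\tau_i$ sends the pair of variables that $\sigma_j$ eats to the pair that $\sigma_{\tau_i(j)}$ eats. This is automatic because $\tau_i$ is a bijection on $\{x,y,z\}$ and the ``eaten'' pair is just the complement of the subscript.
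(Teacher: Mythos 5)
Your proof is correct and follows essentially the same route as the paper: a direct monomial-level computation showing that $\sigma_j$ transforms covariantly with its subscript under $\tau_i$ (the paper does this with the normalization $j=x$) and that $\rho$ commutes with $\tau_i$ because the factor $x^2+y^2+z^2-\kappa$ is symmetric. Nothing further is needed.
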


\begin{proof}Without loss of generality, let $j=x$. Then \begin{align*}(\tau_i\circ\sigma_x)(y^mz^n)&=\tau_i(2xy^{m-1}z^{n-1})\\&=2\tau_i(x)\tau_i(y)^{m-1}\tau_i(z)^{n-1}\\&=\sigma_{\tau_i(x)}(\tau_i(y)^m\tau_i(z)^n)\\&=(\sigma_{\tau_i(x)}\circ\tau_i)(y^mz^n).\end{align*} The second claim is verified in similar fashion.\end{proof}

We have already seen a few polynomials in $\oF_q[x,y,z]$ that evidently sum to $0$ over $\Gamma$-invariant subsets of $\cM(\bF_q)$. All odd polynomials work due to the double sign change in $\Gamma$, as do the even polynomials $(x^{q+1}-x^2)y^{2n}$ (whose $\Phi$ reduction is also even) since $x^{q+1}-x^2$ is identically 0 on $\bF_q$. Unfortunately, finding a formula for $\Phi((x^{q+1}-x^2)y^{2n})$ appears to be a serious challenge, and without one we cannot determine the rank of the reduced polynomials as $n$ ranges. For $f\in\bF_q[x,y,z]$, it is straightforward to find the coefficients of the largest powers of $x$ in $\Phi(f)$. Indeed, Section \ref{sec:5} is devoted to proving such a formula (Theorem~\ref{thm:gen_form}; see also (\ref{eq:spec_form}) and Theorem~\ref{thm:spec_form}). But the author has no formula for the coefficients of smaller powers. This makes it difficult to prove that the span of $\Phi((x^{q+1}-x^2)y^{2n})$ as $n$ ranges includes polynomials of small degree, specifically anything of degree less than $\frac{1}{2}(q+1)$. 

In avoidance of this obstacle, we turn to a family of polynomials with much more variation in their degrees. The smallest example from this family is $f(x,y,z)=y^4- y^2z^2+\tfrac{1}{2}x^2y^2$. Before seeing its significance, here is its reduction: \begin{align*}y^4-y^2z^2+\tfrac{1}{2}x^2y^2&\xmapsto{\tau_y}y^4-x^2y^2+\tfrac{1}{2}x^2y^2=y^4-\tfrac{1}{2}x^2y^2\\&\xmapsto{\sigma_z}y^4-xyz\\&\xmapsto{\rho}y^4-(x^2+y^2+z^2)\\&\xmapsto{\tau_y,\tau_z}x^4-3x^2.\end{align*} To see the utility of this polynomial, consider the following reduction of $xf$: \begin{align}\label{eq:phi_x}\nonumber xf=xy^4-xy^2z^2+\tfrac{1}{2}x^3y^2&\xmapsto{\sigma_z} 2y^3z-xy^2z^2+x^2yz\\\nonumber &\xmapsto{\rho}2y^3z-yz(x^2+y^2+z^2)+x^2yz\\&\xmapsto{\tau_x}2y^3z - x^2yz - y^3z - y^3z+x^2yz=0.\end{align} What makes this last reduction special is that it only uses operations that preserve the first coordinate. In other words, it avoids $\sigma_x$, $\tau_y$, and $\tau_z$. Thus if $\cO_\alpha$ is some first-coordinate orbit, then \begin{equation}\label{eq:first_phi_x}\alpha\sum_{\mathclap{\bt\in\cO_{\alpha}}}f(\bt)=\sum_{\mathclap{\bt\in\cO_{\alpha}}}x f(\bt)=\sum_{\bt\in\cO_{\alpha}}0=0.\end{equation} So if $\alpha\neq 0$, the left-side sum above must vanish. But then if $0$ \textit{never} appears as a coordinate in $\cM(\bF_q)$ (which happens when $\kappa=0$ and $q\equiv 3\,\text{mod}\,4$), the sum of $f$ over every first-coordinate orbit must vanish. Now, any $\Gamma$-invariant set $\cO\subseteq\cM(\bF_q)$ can be viewed as a disjoint union of first-coordinate orbits, so this would imply $0=\sum_{\cO}f(\bt)=\sum_{\cO}\Phi(f)(x)$. But then $\Phi(f)=x^4-3x^2$ must lie in $\cP(\bF_q)$!...at least when $\kappa=0$ and $q\equiv 3\,\text{mod}\,4$.

In pursuit of similar polynomials, we make extensive use of the partial reduction algorithm in (\ref{eq:phi_x}), which is restricted to the operations $\rho$, $\sigma_y$, $\sigma_z$, and $\tau_x$. We call this reduction algorithm $\Phi_x$. Since $\Phi_x$ cannot reduce a monomial of the form $y^mz^n$, the output of $\Phi_x$ need not be univariate. Instead, it is some element of $R[x]+R[y,z]$ satisfying the analog of (\ref{eq:Phi_invariance}): \begin{equation}\label{eq:Phi_x_invariance}\sum_{\mathclap{\bt\in\cO_x}}f(\bt)=\sum_{\mathclap{\bt\in\cO_x}}\Phi_x(f)(\bt)\text{ whenever }\Gamma_{\!x}\cdot\cO_x=\cO_x.\end{equation} To make the output of $\Phi_x$ well-defined, we adopt the convention that every monomial appearing in $\Phi_x(f)$ has degree in $y$ at least that of $z$.

If $\text{char}(R)\neq 2$, then the rotation order of $0$ is $4$ in $R$ since $0$ is the sum of a primitive fourth root of unity and its inverse. Recalling Notation \ref{not:P_x(R,d)}, we conclude that $y^4- y^2z^2+\frac{1}{2}x^2y^2\in\cP_{\!x}(R,2)$. We generalize this example in Section \ref{sec:6}: for integers $d$ and $n$ satisfying $d\,|\,n$ and $\text{char}(R)\centernot\mid d$, we find polynomials of degree $2n$ in $\cP_{\!x}(R,d)$, one for each $\alpha\in R\backslash\{\pm 2\}$ with $2d\,|\,\text{ord}(\alpha)$ and $\text{ord}(\alpha)\,|\,2n$. These polynomials turn out to be eigenvectors of eigenvalue $\alpha$ with respect to a certain linear map, just as (\ref{eq:first_phi_x}) suggests $y^4-y^2z^2+\tfrac{1}{2}x^2y^2$ is an eigenvector of eigenvalue $0$. Now, the example with $\alpha=0$ and $d=2$ was not widely applicable---only in the special case $\kappa=0$ and $q\equiv 3\,\text{mod}\,4$ does $\cM(\bF_q)$ not possess a triples with $0$ as an entry. But for larger $d$, building up $\cP_{\!x}(R,d)$, particularly when $R=\oZ$, is more useful. Let's consider how the case $d=4$ is relevant to all prime powers $q\not\equiv\pm 1\,\text{mod}\,8$. If $\fp\subset\oZ$ is a prime over $\text{char}(\bF_q)$ and $f\in\cP_{\!x}(\oZ,4)$ we can reduce $f\,\text{mod}\,\fp$ to produce a polynomial in $\cP_{\!x}(\bF_q,4)$ (Proposition \ref{prop:reduction}). By Notation \ref{not:P_x(R,d)}, the only first-coordinate orbits $\cO_\alpha\subset \cM(\bF_q)$ on which $\sum_{\cO_\alpha}f(\bt)$ is not guaranteed to vanish are those where $8\,|\,\text{ord}(\alpha)$. But since $q\not\equiv\pm1\,\text{mod}\,8$ and all elements of $\bF_q$ have rotation order dividing $q\pm 1$ (see Corollary \ref{cor:total_count}), no such orbits exist. In particular, $\Phi(f)\in\cP(\bF_q)$.

For $q=p$ a prime not congruent to $\pm 1\,\text{mod}\,2d$, we are able to prove that the $\Phi$ reductions of $\cP_{\!x}(\oZ,d)\,\text{mod}\,\fp$ generate all but perhaps the smallest degree polynomials that are expected to be in $\cP(\bF_p)$. This is Corollary \ref{cor:found_em}, the culmination of all the work in Sections \ref{sec:5} and \ref{sec:6}. ``Filling out" the rest of $\cP(\bF_p)$ so that it matches what Theorem \ref{thm:P^perp} predicts is a computational task that we approach from two angles. First, in Section \ref{sec:7} we add polynomials of the form $\Phi((x^{p+1}-x^2)y^{2n})$ into $\cP(\bF_p)$. We only need a few of them, namely $n=1,2,3,4$ for generic $\kappa$, and it is not too hard to find a complete formula for the $\Phi$ reductions when $n$ is small. Second, in Section \ref{sec:8}, we use computer assistance to compute the reductions of the smallest polynomials in $\cP_{\!x}(\oZ,d)$ and check that they span the expected space (see Algorithm \ref{alg}). Working over $\oZ$ and projecting onto residue fields is the only way to fill out $\cP(\bF_p)$ for all $p\not\equiv\pm 1\,\text{mod}\,2d$ simultaneously. That is the purpose of considering $\cP_{\!x}(\oZ,d)$.

In summary, we proceed as follows:
\begin{enumerate}[labelindent=4.2em]
    \item[Section \ref{sec:4}:] Determine what $\cP(\bF_p)$ must equal for Theorem \ref{thm:main1} to hold (Theorem~\ref{thm:P^perp}).
    \item[Section \ref{sec:5}:] Compute generic formulas for the top coefficients of $\Phi(f)$ (Theorems \ref{thm:gen_form} and \ref{thm:spec_form}).
    \item[Section \ref{sec:6}:] Find ``eigenvectors" generalizing (\ref{eq:phi_x}) (Theorem \ref{thm:eigenvector}). Specialize Section \ref{sec:5}'s formula for $\Phi(f)$ to these eigenvectors (Theorem \ref{thm:eigen_form}) to prove that $\cP_{\!x}(\oZ,d)$ contains polynomials of any sufficiently large degree (Corollary \ref{cor:found_em}).
    \item[Section \ref{sec:7}:] Fill in the gaps (which are small; 2-, 3-, or 4-dimensional depending on $\kappa$) between what $\cP_{\!x}(\oZ,d)\,\text{mod}\,\fp$ is expected to be and what Section~\ref{sec:4} says $\cP(\bF_p)$ must be (Theorem \ref{thm:local}).
    \item[Section \ref{sec:8}:]With help from SageMath, prove that $\cP_{\!x}(\oZ,d)$ contains the small degree polynomials that are missing from Section \ref{sec:6} but expected in Section \ref{sec:7}.
\end{enumerate}

Theoretical work essentially stops after the proof of Lemma \ref{lem:yvecs}, at which point the paper becomes largely computational.

\section{The conjectured space $\cP(\bF_p)$}\label{sec:4}

The three preliminary results below are well-known. We provide proofs, albeit terse, for the sake of completeness.

\begin{proposition}\label{prop:orbits}Let $\cO_\alpha\subseteq\cM(R)$ be some first-coordinate orbit. If $\alpha^2\neq 4$, let $\zeta,\eta \in\overline{F}$ solve $\zeta+\zeta^{-1}=\alpha$ and $(\alpha^2-4)\eta^2=(\alpha^2-\kappa)$. There exists $\vartheta\in\overline{F}$ such that \[\cO_\alpha = \begin{cases}(\alpha,\eta(\zeta^n\vartheta+\zeta^{-n}\vartheta^{-1},\eta(\zeta^{n+1}\vartheta+\zeta^{-n-1}\vartheta^{-1})) & \alpha^2\neq 4,\kappa \\ (\alpha,\zeta^n\vartheta,\zeta^{n+1}\vartheta) & \alpha^2=\kappa \\ (\alpha,\vartheta+n\sqrt{\kappa-4},\vartheta+(n+1)\sqrt{\kappa-4}) & \alpha^2=4\end{cases}\] for $n\in \bZ$, up to permuting or negating the second and third coordinates.\end{proposition}

\begin{proof}A quick check shows that a triple in any of the three given forms solves the Markoff equation. Conversely, any Markoff triple can be rewritten in one these three forms by solving for $\zeta$, $\eta$, and $\vartheta$. 

Next, it may be computed directly that $\tau_x\circ \sigma_y$ increases $n$ by exactly one in all three cases and $\sigma_y\circ\tau_x$ decreases $n$ by exactly one on all three cases. Since $\Gamma_{\!x}$ is generated by $\tau_x$, $\sigma_y$, and $(x,y,z)\mapsto(x,-y,-z)$, we see that $\cO_{\alpha}$ consists of the indicated triples up to permuting or negating the second and third coordinates.\end{proof}

\begin{corollary}\label{cor:total_count}Let $q$ be an odd prime power, and let $\chi$ be the quadratic character on $\bF_q^\times$. If $\alpha\in\bF_q\backslash\{\pm 2,\pm\sqrt{\kappa}\}$, then there are $q-\chi(\alpha^2-4)$ triples in $\cM(\bF_q)$ with first coordinate $\alpha$, $\textup{ord}(\alpha)$ divides $q-\chi(\alpha^2-4)$, and every first-coordinate orbit $\cO_{\alpha}$ has size $\textup{ord}(\alpha)$ or $2\,\textup{ord}(\alpha)$.\end{corollary}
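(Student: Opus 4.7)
The statement has two parts, and the plan handles them separately.

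\medskip

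\emph{Counting triples with first coordinate $\alpha$.} The triple $(\alpha,\beta,\gamma)$ lies in $\cM(\bF_q)$ iff the pair $(\beta,\gamma)$ solves $\beta^2-\alpha\beta\gamma+\gamma^2=\kappa-\alpha^2$. Since $\textup{char}(\bF_q)\neq 2$, completing the square in $\beta$ via $\beta=\beta'+\tfrac{\alpha\gamma}{2}$ converts this into the diagonal form
\[
(\beta')^2-\tfrac{\alpha^2-4}{4}\gamma^2=\kappa-\alpha^2,
\]
where $D\coloneqq(\alpha^2-4)/4$ and $E\coloneqq \kappa-\alpha^2$ are both nonzero by the assumption $\alpha\in\bF_q\setminus\{\pm 2,\pm\sqrt{\kappa}\}$. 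I would then quote the standard count of solutions to $X^2-DY^2=E$ over $\bF_q$: viewing the form as the norm of $\bF_q(\sqrt{D})/\bF_q$, the count is $q-1$ when $D$ is a nonzero square (the form splits) and $q+1$ when $D$ is a nonsquare (the norm map from $\bF_{q^2}^\times$ to $\bF_q^\times$ is surjective with kernel of order $q+1$). In both cases the answer is $q-\chi(D)=q-\chi(\alpha^2-4)$, since $\chi(4)=1$.

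\medskip

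\emph{Divisibility of the orbit size.} Because $\alpha^2\notin\{4,\kappa\}$, only case (1) of Proposition~\ref{prop:orbits} applies, and the orbit $\cO_\alpha$ is parameterized by $(n\bmod N,\,\pm)$ with $N\coloneqq\textup{ord}(\alpha)$. The proof of that proposition already shows that $\tau_x\sigma_y$ advances $n$ by one while $\tau_x$ toggles the sign. The key observation I would add is that the double sign change, which is the remaining generator of $\Gamma_x$, coincides with $(\tau_x\sigma_y)^{N/2}$: since the definition of $\textup{ord}(\alpha)$ forces $-1=\zeta^{N/2}$, negating $\omega(n)=\sqrt{\tfrac{\alpha^2-\kappa}{\alpha^2-4}}(\zeta^n\eta+\zeta^{-n}\eta^{-1})$ is the same as shifting $n$ by $N/2$. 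Thus the image of $\Gamma_x$ in $\textup{Sym}(\cO_\alpha)$ lies inside the dihedral group $\langle\tau_x\sigma_y,\tau_x\rangle$ of order dividing $2N$, giving $|\cO_\alpha|\mid 2N$ by orbit–stabilizer.

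\medskip

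It remains to show $N\mid q-\chi(\alpha^2-4)$. Let $\zeta\in\overline{\bF_q}^\times$ satisfy $\zeta+\zeta^{-1}=\pm\alpha$, i.e.\ $\zeta$ is a root of $T^2\mp\alpha T+1$ whose discriminant is $\alpha^2-4$. If $\chi(\alpha^2-4)=1$, then $\zeta\in\bF_q^\times$, so $N=\textup{ord}(\zeta)\mid q-1$. If $\chi(\alpha^2-4)=-1$, then $\zeta\in\bF_{q^2}\setminus\bF_q$; since $\zeta+\zeta^{-1}\in\bF_q$, the Frobenius must send $\zeta\mapsto\zeta^{-1}$, i.e.\ $\zeta^{q+1}=1$, so $N\mid q+1$. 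Either way $N\mid q-\chi(\alpha^2-4)$, and combining with $|\cO_\alpha|\mid 2N$ completes the proof.

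\medskip

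The only mildly subtle point—the ``main obstacle,'' though it is not severe—is verifying that the double sign change does not enlarge the cyclic-plus-involution dihedral quotient beyond order $2N$; this is exactly where the convention that $\textup{ord}(\alpha)$ be even pays off by giving $-1\in\langle\zeta\rangle$. The rest of the argument is a matter of invoking Proposition~\ref{prop:orbits}(1) and the standard quadratic-form count.
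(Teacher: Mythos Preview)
Your proof is correct and the second half (divisibility of $|\cO_\alpha|$) is essentially identical to the paper's argument: both show $|\cO_\alpha|\mid 2\,\textup{ord}(\alpha)$ from the parameterization in Proposition~\ref{prop:orbits}(1) and then argue $\textup{ord}(\alpha)\mid q-\chi(\alpha^2-4)$ by locating $\zeta$ either in $\bF_q^\times$ or in the norm-one subgroup of $\bF_{q^2}^\times$. Your extra remark about the double sign change being absorbed by $n\mapsto n+N/2$ is a nice explicit justification of a point the paper leaves implicit.

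For the counting part you take a genuinely different (though equally standard) route. The paper solves the Markoff equation for $z$ and writes the number of triples with first two coordinates $\alpha,\beta$ as $1+\chi(\beta^2(\alpha^2-4)-4\alpha^2+4\kappa)$, then evaluates the resulting quadratic character sum over $\beta$. You instead diagonalize the binary quadratic form in $(\beta,\gamma)$ by completing the square and invoke the norm-form count for $X^2-DY^2=E$. Both arguments land on $q-\chi(\alpha^2-4)$; yours avoids the character-sum identity at the cost of the change of variables, while the paper's avoids the change of variables at the cost of quoting the sum $\sum_\beta\chi(a\beta^2+c)=-\chi(a)$ for $ac\neq 0$.
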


\begin{proof}Define $\chi(0)=0$. By solving the Markoff equation for $z$, the number of triples with first two coordinates $\alpha$ and $\beta$ is seen to be $1+\chi(\beta^2(\alpha^2-4)-4\alpha^2+4\kappa)$. Thus the number of triples with first coordinate $\alpha$ is a well-known sum: \[\sum_{\beta\in\bF_q}(1+\chi(\beta^2(\alpha^2-4)-4\alpha^2+4\kappa))=q-\chi(\alpha^2-4)\] as claimed.

Next consider some $\cO_{\alpha}$ with $\zeta+\zeta^{-1}=\alpha$ so that $\text{ord}(\alpha)$ is the multiplcative order of $\zeta$ if $-1\in\langle\zeta\rangle$ and twice the multiplicative order of $\zeta$ otherwise. We have $\chi(\alpha^2-4)=-1$ if and only if $\zeta\not\in\bF_q$, in which case the image of $\zeta$ under the nontrivial element of $\text{Gal}(\bF_{q^2}/\bF_q)$ is $\zeta^{-1}$. That is, $\zeta$ is in the kernel of the norm map $\bF_{q^2}^\times\to\bF_q^\times$. This kernel has size $q+1$, so $\text{ord}(\alpha)$ divides $q+1=q-\chi(\alpha^2-4)$. The other possibility is $\zeta\in\bF_q$, in which case $\text{ord}(\alpha)$ divides the size of $\bF_q^{\times}$, which is $q-1=q-\chi(\alpha^2-4)$. Finally, note that $\text{ord}(\alpha)$ counts the number of elements in Proposition \ref{prop:orbits} as $n$ varies \textit{and} as the sign of the second and third coordinates change (because $\text{ord}(\alpha)$ is twice the multiplcative order of $\zeta$ when $-1\not\in\langle\zeta\rangle$). After transposing the second and third coordinates, this count can either double or stay the same.\end{proof}

\begin{corollary}\label{cor:max_orders}Let $q$ be an odd prime power. If $\alpha\in\bF_q\backslash\{\pm\sqrt{\kappa}\}$ has rotation order $q+1$ or $q-1$, then there is a unique first-coordinate orbit $\cO_{\alpha}$ in $\cM(\bF_q)$.\end{corollary}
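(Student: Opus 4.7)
The plan is to compare the count of Markoff triples with first coordinate $\alpha$ given by Corollary~\ref{cor:total_count} against the size of a single first-coordinate orbit $\cO_\alpha$ given by Proposition~\ref{prop:orbits}. Since $\alpha \neq \pm\sqrt{\kappa}$ we have $\alpha^2 \neq \kappa$, ruling out case (2) of Proposition~\ref{prop:orbits}; and $\alpha = \pm 2$ would force $\text{ord}(\alpha) = 2$, which equals $q-1$ or $q+1$ only when $q = 3$, an edge case I would dispense with by direct inspection. So I may assume $\alpha^2 \notin \{4,\kappa\}$ and apply Proposition~\ref{prop:orbits}(1).

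Next I would extract the divisibility $\text{ord}(\alpha) \mid q - \chi(\alpha^2-4)$ established inside the proof of Corollary~\ref{cor:total_count}. Since $\text{ord}(\alpha) \in \{q-1,q+1\}$ while $q - \chi(\alpha^2-4) \in \{q-1,q,q+1\}$, this divisibility forces the equality $\text{ord}(\alpha) = q - \chi(\alpha^2-4)$ (in particular $\chi(\alpha^2-4) = \pm 1$). Thus, by Corollary~\ref{cor:total_count}, the total number of Markoff triples in $\cM(\bF_q)$ with first coordinate $\alpha$ is exactly $\text{ord}(\alpha)$.

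Finally, the same proof of Corollary~\ref{cor:total_count} already observes that any single first-coordinate orbit $\cO_\alpha$ contains $\text{ord}(\alpha)$ distinct triples arising from one sign choice in Proposition~\ref{prop:orbits}(1) (as $n$ ranges over $\bZ/\text{ord}(\alpha)\bZ$), so every orbit has size at least $\text{ord}(\alpha)$. Comparing this lower bound against the total count of $\text{ord}(\alpha)$ forces a unique orbit of size exactly $\text{ord}(\alpha)$. There is no substantive obstacle beyond handling the degenerate cases mentioned above; the core argument is a clean counting-and-divisibility comparison enabled by the two results it relies on.
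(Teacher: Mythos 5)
Your argument is correct, but it takes a different route from the paper. The paper proves uniqueness directly: given two triples $(\alpha,\beta_1,\gamma_1)$ and $(\alpha,\beta_2,\gamma_2)$, it writes down the explicit parameters $\eta_i=\frac{1}{2\sqrt{\alpha^2-\kappa}}\bigl(\sqrt{\alpha^2-4}\,\beta_i\pm(2\gamma_i-\alpha\beta_i)\bigr)$, checks case by case (according to whether $\sqrt{\alpha^2-4}$ and $\sqrt{\alpha^2-\kappa}$ lie in $\bF_q$) that $\eta_1\eta_2^{-1}$ lies in $\bF_q^\times$ or in the norm-one subgroup of $\bF_{q^2}^\times$, and then uses that $\textup{ord}(\alpha)=q\pm1$ makes $\langle\zeta\rangle$ equal to that whole group, so the two triples fall into the same parameterized family of Proposition~\ref{prop:orbits}(1). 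You instead run a counting argument: the total number of triples with first coordinate $\alpha$ is $q-\chi(\alpha^2-4)$ by Corollary~\ref{cor:total_count}, the divisibility $\textup{ord}(\alpha)\mid q-\chi(\alpha^2-4)$ forces $\textup{ord}(\alpha)=q-\chi(\alpha^2-4)$, and each $\Gamma_x$-orbit already contains $\textup{ord}(\alpha)$ distinct triples, so there is room for only one orbit. This is shorter and avoids the paper's field-membership case analysis, but it leans on two things worth noting: (i) the claim that a single sign choice in Proposition~\ref{prop:orbits}(1) yields $\textup{ord}(\alpha)$ \emph{distinct} triples is only asserted, not proved, inside the proof of Corollary~\ref{cor:total_count} (it is true, and easy to verify since $\zeta^2\neq1$ here, but you are citing a remark rather than a stated result); and (ii) your ``divisibility forces equality'' step as literally phrased has a loophole at $q=3$, where $q-1$ divides $q+1$ --- it closes because $\textup{ord}(\alpha)=q-1=2$ would force $\alpha=\pm2$, which you have already set aside, so the conclusion stands, matching the paper's own silence on these degenerate cases (which are irrelevant to its application at $p\geq13$).
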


\begin{proof}Let $(\alpha,\beta_1,\gamma_1),(\alpha,\beta_2,\gamma_2)\in\cM(\bF_q)$, and let $\zeta,\eta\in\bF_{q^2}$ solve $\zeta+\zeta^{-1}=\alpha$ and $(\alpha^2-4)\eta^2=(\alpha^2-\kappa)$. Assume that $\text{ord}(\alpha)=q\pm 1$ so that $\langle\zeta,-1\rangle$ is either $\bF_q^\times$ or the kernel of the norm $\bF_{q^2}^\times\to\bF_q^\times$.

Observe that \[\vartheta_i^{\pm 1}\coloneqq\frac{1}{2\sqrt{\alpha^2-\kappa}}\left(\sqrt{\alpha^2-4}\beta_i\pm (2\gamma_i-\alpha\beta_i)\right)\] satisfies $\eta(\vartheta_i+\vartheta_i^{-1})=\beta_i$. If $\sqrt{\alpha^2-4}\in\bF_q$, we see directly from the formula for $\vartheta_i^{\pm1}$ that $\vartheta_1\vartheta_2^{-1}\in\bF_q^\times$. But $\sqrt{\alpha^2-4}\in\bF_q$ also implies $\zeta\in\bF_q^\times$. Thus $\vartheta_1$ and $\vartheta_2$ differ by a power of $\pm\zeta$, which shows $(\alpha,\beta_2,\gamma_2)\in \Gamma_{\!x}\cdot (\alpha,\beta_1,\gamma_1)$ by Proposition \ref{prop:orbits}. If $\sqrt{\alpha^2-4}\not\in\bF_q$, then $\vartheta_1,\vartheta_2\not\in\bF_q$. Using the formulas for $\vartheta_1$ and $\vartheta_2$, we can determine their images under the nontrivial automorphism of $\bF_{q^2}/\bF_q$: either $\vartheta^q_1=\vartheta_1^{-1}$ and $\vartheta^q_2=\vartheta_2^{-1}$ if $\sqrt{\alpha^2-\kappa}\not\in\bF_q$ or $\vartheta^q_1=-\vartheta_1^{-1}$ and $\vartheta^q_2=-\vartheta_2^{-1}$ if $\sqrt{\alpha^2-\kappa}\in\bF_q$. Either way, $\vartheta_1\vartheta_2^{-1}$ belongs to the kernel of the norm $\bF_{q^2}^\times\to\bF_q^\times$, which is generated by $\pm\zeta$. Again this gives $(\alpha,\beta_2,\gamma_2)\in \Gamma_{\!x}\cdot (\alpha,\beta_1,\gamma_1)$ by Proposition \ref{prop:orbits}.\end{proof}

\begin{notation}\label{not:P^perp(F_q)}Index the coordinates of $\bF_q^q$ by $0,\dots,q-1$, and let $\be_i$ be the $i^\text{th}$ standard basis row vector. Let $\cP^{\perp}\hspace{-0.3ex}(\bF_q)$ denote the orthogonal complement of the image of $\cP(\bF_q)$ under the composition \begin{align*}\bF_q[x]\to\bF_q[x]/(x^q-x)&\xrightarrow{\sim}\bF_q^q\\ x^i+(x^q-x)&\,\mapsto \be_i.\end{align*}\end{notation}

\begin{notation}\label{not:y_M}Let \[\bx=\sum_{i=0}^{q}x^i\be_i^T,\] and for $\alpha\in\bF_q$ let $\bx(\alpha)$ denote the vector $\bx$ with $x$ replaced by $\alpha$.\end{notation}

Row vectors are used for polynomials and column vectors for inputs so that ``$f(x)$" can be written as a matrix product in the form $\bf(\bx)$.

\begin{lemma}\label{lem:P^perp}For $\cO\subseteq\cM(\bF_q)$ and $\alpha\in\bF_q$, let $c_{\cO}(\alpha)$ count the number of triples in $\cO$ with first coordinate $\alpha$. Then  \[\cP^\perp\hspace{-0.3ex}(\bF_q)=\textup{span}\left\{\sum_{\alpha\in\bF_q}\!c_{\cO}(\alpha)\bx(\alpha)\;\Bigg|\; \Gamma\cdot\cO=\cO\right\}.\]\end{lemma}

\begin{proof}The proof is essentially an unwrapping of definitions. Let $\bf\in\bF_q^q$ correspond to some $f\in\bF_q[x]$ as per Notation \ref{not:P^perp(F_q)}. By definition, $f\in\cP(\bF_q)$ if and only if $\sum_{\cO}f(x)=0$ whenever $\Gamma\cdot\cO=\cO$. We have defined each $c_{\cO}(\alpha)$ so that \[\sum_{\bt\in\cO}f(x)=\sum_{\alpha\in\bF_q}\!c_{\cO}(\alpha)f(\alpha)=\bf\Bigg(\sum_{\alpha\in\bF_q}\!c_{\cO}(\alpha)\bx(\alpha)\Bigg).\] In particular, $\cP^\perp\hspace{-0.3ex}(\bF_q)$ contains the span in the lemma statement, and the orthogonal complement of the span is contained in $\cP(\bF_q)$. \end{proof}

Combining Lemma \ref{lem:P^perp} with Theorem \ref{thm:nonessential} tells us what we should expect $\cP^\perp\hspace{-0.3ex}(\bF_q)$ to equal provided the $Q$-Classification Conjecture holds. Indeed, to obtain spanning vectors we need only compute the coefficients $c_{\cO}(\alpha)$ for the various orbits listed in Theorem \ref{thm:nonessential} and plug them into Lemma \ref{lem:P^perp}'s formula. The vectors corresponding to cases (1--4b) are given below. As before, $\varphi\coloneqq\frac{1}{2}(1+\sqrt{5})$ and $\overline{\varphi}\coloneqq\frac{1}{2}(1-\sqrt{5})$. \begin{align}\label{eq:y_vecs}\nonumber\by_\kappa&\coloneqq 2\bx(0)+\tfrac{1}{2}(\bx(\sqrt{\kappa})+\bx(-\sqrt{\kappa}))\\ \nonumber \by_1&\coloneqq \bx(0)+\tfrac{3}{2}(\bx(1)+\bx(-1))\\ \nonumber\by_{\varphi}&\coloneqq 2\bx(0)+\tfrac{3}{2}(\bx(1)+\bx(-1))+\tfrac{5}{2}(\bx(\varphi)+5\bx(-\varphi)) \\ \nonumber \by_{\overline{\varphi}}&\coloneqq 2\bx(0)+\tfrac{3}{2}(\bx(1)+\bx(-1))+\tfrac{5}{2}(\bx(\overline{\varphi})+\bx(-\overline{\varphi}))\\ \nonumber \by_2&\coloneqq 2\bx(0)+\tfrac{3}{2}(\bx(1)+\bx(-1))+2(\bx(\sqrt{2})+\bx(-\sqrt{2}))\\ \by_5&\coloneqq 2\bx(0)+3(\bx(1)+\bx(-1))+\tfrac{5}{2}(\bx(\varphi)+\bx(-\varphi))+\tfrac{5}{2}(\bx(\overline{\varphi})+\bx(-\overline{\varphi})).\end{align} As an example, $\by_2$ comes from the orbit in case (4a) of Theorem~\ref{thm:nonessential}, in which $0$ occurs eight times as a first coordinate, $1$ and $-1$ occur six times each, and $\sqrt{2}$ and $\sqrt{2}$ occur eight times each. (The choice to scale by $\frac{1}{4}$ is made for the sake of computational convenience in Section \ref{sec:7}.)

Computing coefficients for case (5) of Theorem \ref{thm:nonessential} is more challenging, but we restrict to the case that $q=p$ is prime so the computation is unnecessary. This leaves only the orbit of essential triples. Since this orbit is the complement of the nonessential triples in $\cM$, we can obtain its vector by subtracting those above (depending on $\kappa$) from $\sum_{\bF_p}c_{\cM}(\alpha)\bx(\alpha)$. Hence we define \begin{flalign}\label{eq:yM}\nonumber&& \yM&\coloneqq\sum_{\alpha\in\bF_p}\!c_{\cM}(\alpha)\bx(\alpha) && \\ \nonumber && &=\sum_{\alpha\in\bF_p}(p-\chi(\alpha^2-4))\sum_{i=0}^{p-1}\alpha^i\be_i^T&& \text{by Corollary \ref{cor:total_count}}\\ \nonumber && &=-\sum_{i=0}^{p-1}\Bigg(\sum_{\alpha\in\bF_p}\!(1+\chi(\alpha^2-4))\alpha^i\Bigg)\be_i^T && \text{since }{\textstyle\sum_{\bF_p}\!\alpha^i=0}\\ \nonumber && &=-\sum_{i=0}^{p-1}\Bigg(\sum_{\zeta\in\bF_p^{\smash{\times}}}\!(\zeta+\zeta^{-1})^i\Bigg)\be_i^T && \text{by setting }\alpha=\zeta+\zeta^{-1}\\ && &=2\be_{p-1}+\sum_{i=0}^{\frac{p-1}{2}}\binom{2i}{i}\be_{2i}^T && \text{since }{\textstyle\sum_{\bF_p^\times}\!\zeta^i=0}\text{ if }p-1\centernot\mid i.\end{flalign}

\begin{theorem}\label{thm:P^perp}Let $p$ be prime, let $\kappa\in\bF_p\backslash\{4\}$, and for $\alpha\in\bF_p$ let $\delta_\alpha=1$ if $\alpha$ is a square and $0$ otherwise. Theorem \ref{thm:main1} (or the $Q$-Classification Conjecture) holds if and only if $\cP^\perp\hspace{-0.3ex}(\bF_p)$ is the span of\begin{enumerate}\item $\yM$ and $\delta_\kappa\by_{\kappa}$ when $\kappa\neq 2,3,2+\varphi,2+\overline{\varphi}$, \item $\yM$, $\delta_\kappa\by_{\kappa}$, and $\by_1$ when $\kappa=2$, 
\item[(3a)] $\yM$, $\delta_\kappa\by_{\kappa}$, and $\by_{\varphi}$ when $\kappa=2+\varphi$, \item[(3b)] $\yM$, $\delta_\kappa\by_{\kappa}$, and $\by_{\overline{\varphi}}$ when $\kappa=2+\overline{\varphi}$, or \item[(4)] $\yM$, $\delta_\kappa\by_\kappa$, $\delta_2\by_2$, and $\delta_5\by_5$ when $\kappa=3$.\end{enumerate}\end{theorem}

\begin{proof}Assume the $Q$-Classification Conjecture holds (which is slightly weaker than assuming Theorem \ref{thm:main1} holds, because the group in Theorem \ref{thm:main1} is generated by Vieta involutions alone). Then every $\Gamma$-orbit in $\cM(\bF_p)$ is either one of the nonessential subsets explicitly listed in cases (1--4b) of Theorem~\ref{thm:nonessential} or else the complement (in $\cM(\bF_p)$) of their union. The vectors that Lemma \ref{lem:P^perp} associates to these orbits have been listed in cases (1--4) of the present theorem. Note that vectors corresponding to an orbit that only exists for certain $p$, namely $\by_\kappa$, $\by_2$, and $\by_5$, have been scale by a $\delta$-value accordingly.

Conversely, assume that $\cP^\perp\hspace{-0.3ex}(\bF_p)$ is the span indicated in one of the cases (1--4), depending on $\kappa$. Let $\cO$ be a nonempty $\Gamma$-orbit in $\cM(\bF_p)$. Assuming $p\geq 13$, we may fix a primitive root $\zeta$ for $\bF_{\smash{p}}^\times$ such that $\tilde{\alpha}\coloneqq\zeta+\zeta^{-1}$ is neither $0$, $\pm1$, $\pm\sqrt{2}$, $\pm\sqrt{\kappa}$, $\pm\varphi$, nor $\pm\overline{\varphi}$. By Corollary \ref{cor:max_orders}, $\cO$ either contains every triple with first coordinate $\tilde{\alpha}$ or none of them. In other words, either $c_\cO(\tilde{\alpha})=c_{\cM}(\tilde{\alpha})$ or $c_\cO(\tilde{\alpha})=0$. Suppose the latter occurs. Let us consider how $\sum_{\smash{\bF_p}}\!c_{\cO}(\alpha)\bx(\alpha)$, which belongs to $\cP^\perp\hspace{-0.3ex}(\bF_p)$ by Lemma \ref{lem:P^perp}, could possibly be expressed as a linear combination of the spanning vectors in any of the cases (1--4). The vectors $\bx(\alpha)$ for $\alpha\in\bF_p$ form a basis for $\bF_p^{\smash{p}}$. With respect to this basis, the coefficient of $\bx(\tilde{\alpha})$ is $0$ in each of the listed $\cP^{\smash{\perp}}\hspace{-0.3ex}(\bF_p)$ spanning vectors except for $\yM$, because we insisted that $\tilde{\alpha}$ is neither $0$, $\pm1$, $\pm\sqrt{2}$, $\pm\sqrt{\kappa}$, $\pm\varphi$, nor $\pm\overline{\varphi}$. Thus the assumption $c_\cO(\tilde{\alpha})=0$ means $\yM$ must not appear when $\sum_{\smash{\bF_p}}\!c_{\cO}(\alpha)\bx(\alpha)$ is written as a combination of spanning vectors. In particular, $\sum_{\smash{\bF_p}}\!c_{\cO}(\alpha)\bx(\alpha)$ is a combination of only $\bx(0)$, $\bx(\pm1)$, $\bx(\pm\sqrt{2})$, $\bx(\pm\sqrt{\kappa})$, $\bx(\pm\varphi)$, and $\bx(\pm\overline{\varphi})$, meaning $c_\cO(\alpha)\equiv 0\,\text{mod}\,p$ for all $\alpha\in \bF_p\backslash\{0,\pm1,\pm\sqrt{2},\pm\sqrt{\kappa},\pm\varphi,\pm\overline{\varphi}\}$. Corollary \ref{cor:total_count} tells us that $0\,\text{mod}\,p$ occurrences implies no occurrences. That is, all triple entries in $\cO$ must belong to $\{0,\pm 1,\pm\sqrt{2},\pm\sqrt{\kappa},\pm\varphi,\pm\overline{\varphi}\}$. But all such orbits have been listed in Theorem \ref{thm:nonessential}, and they are nonessential. This proves that any $\Gamma$-orbit of essential triples must contain the unique first-coordinate orbit of $\tilde{\alpha}$, implying the uniqueness of such an orbit. This almost shows that Theorem~\ref{thm:main1} (and the slightly weaker $Q$-Classification Conjecture) holds---it remains only to note that the unique $\Gamma$-orbit of essential triples remains unique even if we do not include the double sign change or coordinate permutations as generators in $\Gamma$ (compare Notation~\ref{not:Gamma} to Theorem \ref{thm:main1}). The subgroup of $\Gamma$ generated by Vieta involutions is normal. Furthermore, the parameterizations in Proposition~\ref{prop:orbits} show that if $4\,|\,\text{ord}(\alpha)$ (so that $\zeta^{2n}=-1$ for some $n$), then $(\alpha,-\beta,-\gamma)$ can be obtained from $(\alpha,\beta,\gamma)$ by way of Vieta involutions alone. Also, for each of $\tau_x$, $\tau_y$, and $\tau_z$, there is some essential triple that the transposition preserves. Thus if there were multiple orbits of essential triples under the group generated by Vieta involutions, they would not collapse into a single $\Gamma$-orbit.\end{proof}

\section{Properties of $\Phi$ and $\Phi_x$}\label{sec:5}

\subsection{Degree bounds and the canonical form} Let $R$ be an integral domain with $\text{char}(R)\neq 2$.

\begin{proposition}\label{prop:degree}For any integers $\ell,m,n\geq 0$, the degree of $\Phi(x^\ell y^m z^n)$ is at most $\max\{\ell,n,m\}+\min\{\ell,m,n\}$, with equality if $\ell\equiv m\equiv n\,\textup{mod}\,2$ and $\textup{char}(R)=0$. Furthermore $\Phi(x^\ell y^m z^n)$ is even if $\ell\equiv m\equiv n\,\textup{mod}\,2$ and odd otherwise.\end{proposition}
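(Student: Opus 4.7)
The plan is to induct on the total degree $N=\ell+m+n$. In the base case $N=0$, $\Phi(1)=1$ and everything is immediate. For the inductive step, $\Phi$'s first operation on $x^\ell y^m z^n$ is $\rho$ (when $\ell,m,n\geq 1$), some $\sigma_*$ (when exactly one exponent vanishes), or a transposition $\tau_*$ (when the input is univariate, in which case $\Phi(x^\ell y^m z^n)=x^{\ell+m+n}$), in each case reducing to sub-monomials of strictly smaller total degree or to the base case. By linearity of $\Phi$, the result is a weighted sum of $\Phi$-reductions of these sub-monomials, to which the induction hypothesis applies.

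For the degree bound, abbreviate $M(\ell,m,n)=\max\{\ell,m,n\}+\min\{\ell,m,n\}$. Transpositions preserve $M$. For $\rho$ on $x^\ell y^m z^n$ (without loss of generality $\ell\leq m\leq n$), the four sub-tuples $(\ell+1,m-1,n-1)$, $(\ell-1,m+1,n-1)$, $(\ell-1,m-1,n+1)$, and $(\ell-1,m-1,n-1)$ each satisfy $M\leq\ell+n$: the third always attains $M=\ell+n$ exactly, the fourth always has $M=\ell+n-2$, and the first two are at most $\ell+n$ by inspecting the subcases $\ell<m-1$, $\ell=m-1$, $\ell=m$, and the symmetric subcases $n=m$, $n=m+1$, $n>m+1$. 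For $\sigma_x$ on $y^m z^n$, the new tuple $(1,m-1,n-1)$ has $M\leq\max(m,n)$, with equality whenever $\min(m,n)\geq 2$. Hence each sub-monomial carries $M$-value at most $M(\ell,m,n)$, and induction yields $\deg\Phi(x^\ell y^m z^n)\leq M(\ell,m,n)$.

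For the parity claim, note that every application of $\rho$ or $\sigma_*$ adjusts each of the three exponents by $\pm 1$, hence simultaneously flips all three parity bits, while $\tau_*$ merely permutes them. Thus the binary predicate "$\ell,m,n$ share a common parity mod $2$" is invariant throughout the reduction tree. Every terminal univariate monomial $x^k$ has parity pattern $(k,0,0)\bmod 2$, which satisfies this predicate iff $k$ is even. Consequently, $\Phi(x^\ell y^m z^n)$ consists of only even powers of $x$ precisely when $\ell\equiv m\equiv n\pmod 2$, and only odd powers otherwise.

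For the equality claim in characteristic zero under the same-parity hypothesis, I induct once more on $N$, now tracking the leading coefficient of $\Phi$ at degree $M(\ell,m,n)$. The key observation is that the sub-monomial $x^{\ell-1}y^{m-1}z^{n+1}$ (the third $\rho$ term) unconditionally preserves $M$, enters with coefficient $+1$, and has exponents sharing a common (flipped) parity, so the induction hypothesis makes its leading coefficient a positive integer. Any other $\rho$-sub-monomials that happen to preserve $M$ also enter with coefficient $+1$; the $-\kappa$ term has $M$-value $\ell+n-2$, hence contributes nothing at degree $M$. In the $\sigma$ case, the same-parity hypothesis with $\ell=0$ forces $m,n$ both even and hence both $\geq 2$, so $\sigma_x$ lands in the $M$-preserving subcase with positive coefficient $+2$. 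Therefore the leading coefficient of $\Phi(x^\ell y^m z^n)$ is a sum of strictly positive integers, nonzero in characteristic zero, giving equality. The main technical obstacle is the slightly fiddly case analysis for $\rho$'s four sub-tuples with the tie configurations $\ell=m$, $m=n$, and $\ell=m=n$; once those are tabulated, the two inductions fall out cleanly.
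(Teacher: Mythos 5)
Your proof is correct and follows essentially the same route as the paper: induction on total degree, checking how $\rho$, $\sigma_*$, and $\tau_*$ affect the quantity $\max+\min$ and the common-parity predicate of the exponent triple. Your explicit strengthening of the induction to track a positive integer leading coefficient (ruling out cancellation at the top degree in characteristic zero) is a detail the paper leaves implicit, but it is the same argument.
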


\begin{proof}The proof proceeds by induction on the total degree $\ell+m+n$. The claim is clear in the base case, which is total degree $0$.

Assume without loss of generality that $\ell\geq m\geq n\geq 0$. If $m=n=0$, then $\Phi(x^\ell)=x^\ell$, and the proposition follows without any induction hypothesis needed. 

If $n=0$ but $m > 0$, then our first $\Phi$ reduction step is \begin{equation}\label{eq:sigma_degree}\sigma_z(x^\ell y^m)=2x^{\ell-1}y^{m-1}z.\end{equation} Comparing the sum of maximum and minimum exponents from either side above, we have $\ell+0 \geq (\ell-1)+\min\{m-1,1\}$, with equality if $m$ is even. Furthermore, exponents on the left side above (which includes $n=0$) are all congruent mod$\,2$ if and only if exponents on the right are all congruent $\text{mod}\,2$. So the proposition follows by the induction hypothesis. 

Finally, if $n > 0$, then our first $\Phi$ reduction step is \begin{equation}\label{eq:rho_degree}\rho(x^\ell y^m z^n)=x^{\ell-1}y^{m-1}z^{n-1}(x^2+y^2+z^2-\kappa).\end{equation} After expanding the right side, the sum of the maximum and minimum exponents in each monomial is at most $\ell+n$, with equality holding for at least the monomial $x^{\ell+1}y^{m-1}z^{n-1}$. And again, congruence of exponents mod$\,2$ has been preserved. We are done by induction.\end{proof}

\begin{proposition}\label{prop:degree_Phi_x}For any integers $\ell,m,n\geq 0$, the degree in $x$ of $\Phi_x(x^\ell y^m z^n)$ is at most $\ell+\min\{m,n\}$, and the degree in $y$ and $z$ combined is at most $m+n$. Furthermore $\Phi(x^\ell y^m z^n)$ is even if $\ell\equiv m\equiv n\,\textup{mod}\,2$ and odd otherwise.\end{proposition}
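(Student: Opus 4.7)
The plan is to imitate the proof of Proposition~\ref{prop:degree} via strong induction on the total degree $\ell+m+n$. The base case $\ell+m+n=0$ is immediate, since $\Phi_x(1)=1$. For the inductive step I would split on which reduction $\Phi_x$ applies first to $x^\ell y^m z^n$, keeping in mind that $\Phi_x$ has access only to $\rho$, $\sigma_y$, $\sigma_z$, and $\tau_x$, and therefore halts on any monomial purely in $x$ or purely in $y,z$.

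The easy subcases are: $\ell=0$ (then $\Phi_x(y^m z^n)$ equals $y^m z^n$, or $y^n z^m$ after one $\tau_x$, and both bounds are trivial); $m=n=0$ (then $\Phi_x(x^\ell)=x^\ell$, also trivial); and the ``bivariate with $x$'' subcases---say $m>0$, $n=0$---in which $\sigma_z(x^\ell y^m)=2x^{\ell-1}y^{m-1}z$ has total degree $\ell+m-1$, so by the induction hypothesis $\Phi_x$ of it has $x$-degree at most $(\ell-1)+\min(m-1,1)\leq \ell=\ell+\min(m,0)$ and combined $y,z$-degree at most $(m-1)+1=m+n$. The symmetric case $m=0$, $n>0$ uses $\sigma_y$.

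The substantive case is $\ell,m,n\geq 1$, where applying $\rho$ gives $x^{\ell+1}y^{m-1}z^{n-1}+x^{\ell-1}y^{m+1}z^{n-1}+x^{\ell-1}y^{m-1}z^{n+1}-\kappa\, x^{\ell-1}y^{m-1}z^{n-1}$, four monomials of total degree at most $\ell+m+n-1$, to which the induction hypothesis applies termwise. I expect the main obstacle to be checking that the $x$-degree bound survives, because the first summand already saturates it: its inductive bound is $(\ell+1)+\min(m-1,n-1)=\ell+\min(m,n)$. For the middle two summands, a brief case distinction on whether the interior minimum is realized by the $\pm 1$ or the $\mp 1$ argument shows $(\ell-1)+\min(m\pm 1,n\mp 1)\leq \ell+\min(m,n)$ in all subcases, while the fourth summand gives $\ell+\min(m,n)-2$ with room to spare. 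The combined $y,z$-degree bound is immediate, since each summand has $y$-plus-$z$ degree at most $m+n$.

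Finally, the parity assertion ``$\Phi(x^\ell y^m z^n)$ is even iff $\ell\equiv m\equiv n\pmod 2$'' is identical to the corresponding assertion already proved in Proposition~\ref{prop:degree}, so I would simply refer back to that proof; no additional argument is needed.
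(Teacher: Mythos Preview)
Your proof is correct and follows essentially the same approach as the paper's own proof, which simply says ``the claim follows from comparing degrees on either side of (\ref{eq:sigma_degree}) or (\ref{eq:rho_degree}) and applying induction.'' You have filled in exactly those details: the case split on which reduction applies first, the termwise degree checks after $\rho$, and the observation that the parity claim is literally the one already established in Proposition~\ref{prop:degree}.
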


\begin{proof}As in the previous proposition, the claim follows from comparing degrees on either side of (\ref{eq:sigma_degree}) or (\ref{eq:rho_degree}) and applying induction.\end{proof}

It is much easier to find formulas for $\Phi(x^\ell y^mz^n)$ when one of the exponents is 0. We can rewrite polynomials in $x$ and $y$ only by solving the Markoff equation for $z$: \begin{equation}\label{eq:z_equals}z=\frac{1}{2}\left(xy+\sqrt{x^2y^2-4(x^2+y^2-\kappa)}\right)\end{equation} for some choice of square root. Thus $z$ and $xy-z$ are conjugate with respect to the square root, meaning $f(x,y,z)+f(x,y,xy-z)$ is a polynomial in $x$ and $y$ alone.

\begin{definition}\label{def:canonical}The \emph{canonical form} of $f\in R[x,y,z]$ is $f^*(x,y)=\frac{1}{2}(f(x,y,z)+f(x,y,xy-z))\in R[\frac{1}{2},x,y]$.\end{definition}

The extended coefficient ring $R[\frac{1}{2}]$ for $f^*$ is due to the $\frac{1}{2}$ in (\ref{eq:z_equals}). 

\begin{proposition}\label{prop:canonical_degrees}The canonical form of $x^\ell y^m z^n$ has degree $\ell+n$ in $x$ and degree $m+n$ in $y$. Furthermore, if $f(x)$ denotes the coefficient of $y^{m+n}$ in the canonical form, then $2^nf(x)\equiv x^{\ell+n}\,\textup{mod}\,(x^2-4)$ in $R[x]$, and the coefficient of $x^{\ell+n}$ in $f(x)$ is $1$ if $n=0$ and $\frac{1}{2}$ if $n \geq 1$.\end{proposition}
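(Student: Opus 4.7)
The plan is to work directly with the symmetric expression $z^n+(xy-z)^n$. Since $z$ and $w\coloneqq xy-z$ are the two roots of the monic quadratic $T^2-(xy)T+(x^2+y^2-\kappa)$ (this is just the Markoff equation viewed as a polynomial in $z$), Newton's identities give that $p_n\coloneqq z^n+w^n$ lies in $R[x,y]$ and satisfies the recurrence
\[
p_n=xy\cdot p_{n-1}-(x^2+y^2-\kappa)\,p_{n-2},\qquad p_0=2,\quad p_1=xy.
\]
Thus the canonical form of $x^\ell y^mz^n$ is $\tfrac{1}{2}x^\ell y^m p_n(x,y)$, and all claims about degrees and leading coefficients reduce to claims about $p_n$.

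A straightforward induction on $n$ using the recurrence shows that $\deg_x p_n=\deg_y p_n=n$, which immediately gives the first claim ($\deg_x$ and $\deg_y$ of the canonical form equal $\ell+n$ and $m+n$). To extract the coefficient of $y^{m+n}$, let $c_n(x)\in R[x]$ denote the coefficient of $y^n$ in $p_n$. Comparing the coefficients of $y^n$ on both sides of the recurrence (noting that $p_{n-2}$ has no $y^n$ term and that $(x^2-\kappa)p_{n-2}$ contributes nothing to $y^n$) yields the Chebyshev-like recurrence
\[
c_n(x)=x\,c_{n-1}(x)-c_{n-2}(x),\qquad c_0=2,\quad c_1=x.
\]
Hence $f(x)=\tfrac{1}{2}x^\ell c_n(x)$, and induction on the recurrence for $c_n$ shows that $c_n$ is monic of degree $n$ for $n\geq 1$, giving the coefficient of $x^{\ell+n}$ in $f(x)$ as $\tfrac{1}{2}$ for $n\geq 1$ and as $1$ for $n=0$.

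It remains to prove $2^nf(x)\equiv x^{\ell+n}\pmod{x^2-4}$, equivalently $2^{n-1}c_n(x)\equiv x^n\pmod{x^2-4}$ for $n\geq 1$ (with the $n=0$ case holding trivially). The cleanest route is to observe that mod $x^2-4$ both $a_n\coloneqq 2^{n-1}c_n(x)$ and $b_n\coloneqq x^n$ satisfy the same second-order recurrence $u_n=2x\,u_{n-1}-4u_{n-2}$: for $a_n$ this follows by multiplying the recurrence for $c_n$ by $2^{n-1}$, and for $b_n$ it reduces (after cancelling $x^{n-2}$) to the identity $x^2=4$ mod $x^2-4$. Since $a_1=x=b_1$ and $a_2=2(x^2-2)\equiv 4\equiv x^2=b_2\pmod{x^2-4}$, the two sequences agree mod $x^2-4$ for all $n\geq 1$.

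There is no real obstacle here; everything follows once one recognizes $z^n+(xy-z)^n$ as a trace of powers and exploits the resulting two-term recurrence. The only minor point requiring care is bookkeeping the base cases so as to avoid dividing by $2$ inside the congruence statement (the factor of $2^n$ on the left of the stated congruence is precisely what clears the denominator in $f(x)=\tfrac{1}{2}x^\ell c_n(x)$).
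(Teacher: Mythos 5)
Your proof is correct, and it reaches the conclusion by a different mechanism than the paper. The paper works from the explicit square-root formula for $z$: it expands $z^n=\tfrac{1}{2^n}\big(xy+\sqrt{x^2y^2-4(x^2+y^2-\kappa)}\big)^n$ by the binomial theorem, writes the canonical form as the closed-form sum $\tfrac{x^\ell y^m}{2^n}\sum_i\binom{n}{2i}(xy)^{n-2i}\big(x^2y^2-4(x^2+y^2-\kappa)\big)^i$, and then reads off the two assertions directly: the coefficient of $x^{\ell+n}y^{m+n}$ is $\tfrac{1}{2^n}\sum_i\binom{n}{2i}=\tfrac12$, and since the coefficient of $y^2$ in the discriminant is $x^2-4$, only the $i=0$ summand contributes to the top $y$-coefficient modulo $x^2-4$, giving $2^nf(x)\equiv x^{\ell+n}$. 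You instead avoid the square root altogether: viewing $z$ and $xy-z$ as the roots of $T^2-xyT+(x^2+y^2-\kappa)$, you use the power-sum recurrence $p_n=xy\,p_{n-1}-(x^2+y^2-\kappa)p_{n-2}$ and extract the top $y$-coefficient $c_n(x)$, which satisfies the Chebyshev-type recurrence $c_n=xc_{n-1}-c_{n-2}$; monicity gives the leading coefficient $\tfrac12$, and matching the recurrences of $2^{n-1}c_n$ and $x^n$ modulo $x^2-4$ gives the congruence. (Indeed $c_n(\zeta+\zeta^{-1})=\zeta^n+\zeta^{-n}$, so your $c_n$ are exactly the trace polynomials that resurface, as Chebyshev polynomials, in Section~\ref{sec:8}.) The trade-off is minor: the paper's closed form delivers every coefficient of the canonical form at once, while your recursive argument needs no binomial identity and no explicit discriminant bookkeeping, only induction; both are complete for the proposition as stated. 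One small point to keep explicit in a full write-up is that the induction establishing $\deg_x p_n=\deg_y p_n=n$ should carry along that the monomial $x^ny^n$ occurs with coefficient $1$ (it cannot be cancelled by $(x^2+y^2-\kappa)p_{n-2}$ for degree reasons), which is what makes the degree statements exact rather than upper bounds.
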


\begin{proof}The claim is clear if $n=0$ since $x^\ell y^m$ is its own canonical form, so assume $n\geq 1$. By expanding (\ref{eq:z_equals}) raised to the power $n$, the canonical form of $x^\ell y^m z^n$ is \[\frac{x^\ell y^m}{2^n}\sum_{j=0}^{\lfloor\frac{n}{2}\rfloor}\binom{n}{2j}(xy)^{n-2j}(x^2y^2-4(x^2+y^2-\kappa))^{2j}.\] The leading term in the summand with index $j$ is $\binom{n}{2j}x^ny^n$. The sum of every other $n^\text{th}$ binomial coefficient is $2^{n-1}$ when $n\geq 1$, so the leading coefficient of $f(x)$ (notation from the proposition statement) is $\smash{\frac{2^{n-1}}{2^n}}=\frac{1}{2}$ as claimed. Finally, notice in the factor $x^2y^2-4(x^2+y^2-\kappa)$ that the coefficient of $y^2$ is $x^2-4$. So only the summand with index $i=0$ picks up $y^n$ with a nonzero coefficient modulo $x^2-4$.\end{proof}

\begin{proposition}\label{prop:Phi_x(f^*)}For any $f\in R[x,y,z]$, $\Phi_x(f^*)=\Phi_x(f)$.\end{proposition}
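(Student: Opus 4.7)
The plan is to reduce to showing $\Phi_x(f - f^*) = 0$, which gives the conclusion by $R$-linearity of $\Phi_x$. Throughout I extend scalars to $R[\tfrac{1}{2}]$ (permissible since $\textup{char}(R)\neq 2$). Let $\iota\colon(x,y,z)\mapsto(x,y,xy-z)$ denote the third-coordinate Vieta involution that appears implicitly in the definition of $f^*$; then $f^* = \tfrac{1}{2}(f + f\circ\iota)$ and so $f - f^* = \tfrac{1}{2}(f - f\circ\iota)$ is $\iota$-antisymmetric.

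The key structural step is to decompose $f - f^*$ modulo the Markoff polynomial $M = x^2+y^2+z^2-xyz-\kappa$. Since $M$ is monic of degree $2$ in $z$, polynomial division yields $f = qM + a + bz$ for some $q\in R[x,y,z]$ and $a,b\in R[x,y]$. A short computation shows $M\circ\iota = M$ (the algebraic content of $\iota$ being a Vieta involution for $M$), so $f\circ\iota = (q\circ\iota)M + a + b(xy-z)$. Subtracting and halving produces
\[
f - f^* \;=\; \tfrac{1}{2}(q - q\circ\iota)\,M \;+\; \tfrac{b}{2}(2z - xy).
\]

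It remains to verify $\Phi_x$ annihilates each summand. For the first, the reduction $\rho(x^{a'+1}y^{c'+1}z^{d'+1}) = x^{a'}y^{c'}z^{d'}(x^2+y^2+z^2-\kappa)$ is literally the first step of $\Phi_x$ on that monomial, so $\Phi_x(x^{a'+1}y^{c'+1}z^{d'+1}) = \Phi_x(x^{a'}y^{c'}z^{d'}(x^2+y^2+z^2-\kappa))$; rearranging with linearity of $\Phi_x$ gives $\Phi_x(x^{a'}y^{c'}z^{d'}M)=0$ for all $a',c',d'\geq 0$, and hence $\Phi_x(hM)=0$ for every $h\in R[\tfrac{1}{2},x,y,z]$. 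For the second summand, the analogous first step $\sigma_z(x^{a'+1}y^{c'+1}) = 2x^{a'}y^{c'}z$ gives $\Phi_x(x^{a'}y^{c'}(xy-2z))=0$ for all $a',c'\geq 0$, and therefore $\Phi_x(b(2z-xy))=0$ for any $b\in R[x,y]$. Both summands vanish, so $\Phi_x(f - f^*) = 0$, as needed. The only conceptually nontrivial input is the observation $M\circ\iota = M$, which forces the clean decomposition above; once that is in hand, the rest is just unwinding the fact that $\rho$ and $\sigma_z$ were designed so that multiples of $M$ and of $xy-2z$ lie in the kernel of $\Phi_x$, so I do not anticipate a real obstacle.
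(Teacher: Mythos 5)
Your proof is correct. It differs from the paper's argument mainly in organization rather than substance: the paper reduces to monic monomials $x^\ell y^m z^n$ and inducts on the $z$-degree, trading $z^2$ for $xyz+\kappa-x^2-y^2$ (i.e.\ reducing modulo the Markoff polynomial) one step at a time, with the $n=0,1$ cases handled directly via $\sigma_z$; you instead perform the full division $f=qM+a+bz$ at once and dispose of the remainder with the same $\sigma_z$ identity. Both arguments ultimately rest on the identical pair of kernel facts — that $\Phi_x$ annihilates every multiple of $M$ (because $\rho$ is the prescribed first step on a trivariate monomial) and annihilates $x^{a}y^{c}(xy-2z)$ (because $\sigma_z$ is the prescribed first step on $x^{a+1}y^{c+1}$) — and on the invariance of the canonical form under adding multiples of $M$, which you extract from the identity $M\circ\iota=M$ while the paper gets it from $g-f$ vanishing identically on $\cM(\bZ)$. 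What your version buys is a non-inductive, closed-form proof that makes the structural reason transparent ($f-f^*$ lies in the span of multiples of $M$ and of $2z-xy$, both killed by $\Phi_x$ by construction); what the paper's version buys is brevity given that the monomial-by-monomial recursive definition of $\Phi_x$ is already set up. One small point worth stating explicitly if you write this up: the step ``$\Phi_x(\text{monomial})=\Phi_x(\text{its first reduction})$'' is exactly the recursive definition of $\Phi_x$ on monomials, so no appeal to any confluence or order-independence property is needed.
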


\begin{proof}Both canonicalization and $\Phi_x$ are linear over $R$, so we need only check the claim when $f=x^\ell y^mz^n$. If $n=0$ then $f^*=f$, so $\Phi_x(f^*)=\Phi_x(f)$. If $n=1$ then $f^*=\tfrac{1}{2}x^{\ell+1}y^{m+1}$, and since $\sigma_z(\frac{1}{2}x^{\ell+1}y^{m+1})=x^\ell y^m z=f$, we see that $\Phi_x(f^*)=\Phi_x(f)$ in this case as well. If $n\geq 2$, let $g=x^\ell y^mz^{n-2}(xyz+\kappa-x^2-y^2)$. Then $g-f$ is identically $0$ on $\cM(R)$ for any $R$, so $g^*=f^*$. But now observe that the image of $g$ under $\rho$ is $x^\ell y^mz^{n-2}(\rho(xyz)+\kappa-x^2-y^2)=x^\ell y^m z^n$, so $\Phi_x(g)=\Phi_x(f)$. By induction on the degree of $z$, we may assume that $\Phi_x(g^*)=\Phi_x(g)$, from which $\Phi_x(f^*)=\Phi_x(f)$ follows.\end{proof}

\subsection{Partial formulas for $\Phi(f)$} Given $f\in R[x,y,z]$, our goal is to find a formula for as many coefficients of $\Phi(f)$ as possible. We achieve this for generic $f$ in Theorem~\ref{thm:gen_form}. Another formula for special $f$ is presented in Theorem~\ref{thm:spec_form}.

We start by observing that (\ref{eq:Phi_invariance}) and (\ref{eq:Phi_x_invariance}), which state that $\Phi$ and $\Phi_x$ preserve certain sums, are special cases (namely the counting measure) of the following:

\begin{proposition}\label{prop:integral}Let $\cO\subset\cM(R)$ be $\Gamma$-invariant, respectively $\Gamma_{\!x}$-invariant, and let $dA$ be a $\Gamma$-invariant, respectively $\Gamma_{\!x}$-invariant, measure on $\cO$.  Then \[\int_{\cO}\!\!fdA=\int_{\cO}\!\!\Phi(f)dA, \text{ respectively }\int_{\cO}\!\!fdA=\int_{\cO}\!\!\Phi_x(f)dA,\] for any $f\in R[x,y,z]$\end{proposition}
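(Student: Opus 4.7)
The plan is to induct on the total degree of monomials, reducing to the case $f = x^\ell y^m z^n$ by $R$-linearity of $\Phi$, $\Phi_x$, and the integral. The algorithm $\Phi$ assembles $\Phi(f)$ from a finite composition of atomic operations drawn from $\{\rho, \sigma_x, \sigma_y, \sigma_z, \tau_x, \tau_y, \tau_z\}$, each strictly decreasing the total degree (as already exploited in the proof of Proposition \ref{prop:degree}); $\Phi_x$ uses only the subset $\{\rho, \sigma_y, \sigma_z, \tau_x\}$. It therefore suffices to show that each atomic operation preserves the integral of any polynomial it is applied to, and then chain the equalities through the induction on degree.

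Each of the three types of operations is handled by one mechanism. For $\rho(x^\ell y^m z^n) = x^{\ell-1}y^{m-1}z^{n-1}(x^2+y^2+z^2-\kappa)$, the two integrands agree pointwise on $\cM(R) \supseteq \cO$ by the Markoff equation $x^2+y^2+z^2-\kappa = xyz$, so the identity of integrals is automatic and no invariance of $dA$ is needed. For $\sigma_i$, the calculation (\ref{eq:sigma_preserve}) ports directly to integrals: using the decomposition
\[y^m z^n = y^{m-1}z^{n-1}(yz-x) + xy^{m-1}z^{n-1}\]
together with invariance of $dA$ under the first-coordinate Vieta involution $(x,y,z)\mapsto(yz-x,y,z)$, one obtains $\int_\cO y^m z^n\, dA = 2\int_\cO xy^{m-1}z^{n-1}\,dA$, handling $\sigma_x$; the cases $\sigma_y$ and $\sigma_z$ are identical up to relabeling. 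For $\tau_i$, invariance of $dA$ under the corresponding coordinate transposition immediately yields $\int_\cO \tau_i(m)\,dA = \int_\cO m\,dA$ by change of variables.

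The final observation is that the seven atomic operations used by $\Phi$ correspond to elements of $\Gamma$ (the three Vieta involutions and the three coordinate transpositions), while the four operations used by $\Phi_x$ correspond to elements of $\Gamma_x$: the second- and third-coordinate Vieta involutions fix the first coordinate, and $\tau_x$ swaps $y$ and $z$. Hence the hypothesized invariance of $dA$ is precisely what is needed at each step of the induction. I expect no substantial obstacle; the proof is essentially bookkeeping, and the only point requiring care is confirming that $\Phi_x$ never invokes a reduction step corresponding to a non-first-coordinate-fixing element of $\Gamma$, which is automatic from the construction of $\Phi_x$ as the restriction of $\Phi$ to operations in $\Gamma_x$.
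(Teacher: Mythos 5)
Your proof is correct and takes essentially the same route as the paper, whose proof simply observes that the only reduction steps whose integral-preservation is not immediate are $\sigma_x$, $\sigma_y$, $\sigma_z$ (since $\rho$ is a pointwise identity on $\cM(R)$ and the $\tau_i$ follow from invariance of $dA$), and that those are handled exactly as in (\ref{eq:sigma_preserve}) with integrals in place of sums. One cosmetic slip: the $\tau_i$ steps do not decrease total degree, but this does not affect your argument, since all you need is that $\Phi$ and $\Phi_x$ are finite compositions of integral-preserving atomic operations.
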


\begin{proof}The only reduction steps for which it is unclear whether the integral is preserved are $\sigma_x$, $\sigma_y$, and $\sigma_z$. The proof in those cases is identical to (\ref{eq:sigma_preserve}) with integrals in place of sums.\end{proof}

\begin{notation}\label{not:M^circ}For a fixed $\kappa\in (0,4)$, let $\cM^\circ$ denote the compact connected component $\cM(\bR)$ with outward orientation. For $\alpha\in(-\sqrt{\kappa},\sqrt{\kappa})$, let $\cO_{\alpha}^\circ\coloneqq\{(x,y,z)\in\cM^\circ\,|\,x=\alpha\}$ with counterclockwise orientation from the perspective of the positive $x$-axis. We write $\cO_x^\circ$ to indicate a generic loop.\end{notation}

The 2-form \[dA\coloneqq\frac{dx\wedge dy}{2z-xy}=\frac{dy\wedge dz}{2x-yz}=\frac{dz\wedge dx}{2y-xz}\] is integrable and nonnegative on $\cM^\circ$. Furthermore, the measure defined by integrating $dA$ is $\Gamma$-invariant. (The measure is also known to be ergodic for the action of $\Gamma$. See \cite{goldman2}, \cite{cantat}, and especially \cite[Section 5]{goldman}.) There is also a $\Gamma_{\!x}$-invariant line measure on any $\cO_{\alpha}^\circ$ defined by the 1-form \[dA_{\alpha}\coloneqq\frac{dy}{\alpha y-2z}=\frac{dz}{2y-\alpha z}.\] As with $\cO_x^\circ$, we write $dA_x$ for generic $x$. Since the 1-form does not appear alongside the 2-form in the literature, we provide a brief proof.

\begin{proposition}\label{prop:dphi}Each $\cO_x^\circ$ is parameterized by \[(x,y,z)=\left(2\cos\theta,2\sqrt{\frac{x^2-\kappa}{x^2-4}}\cos\phi,2\sqrt{\frac{x^2-\kappa}{x^2-4}}\cos(\theta+\phi)\right)\] for $\phi\in[0,2\pi)$. With respect to this parameterization, $\sqrt{4-x^2}dA_x=d\phi$, which induces a $\Gamma_{\!x}$-invariant measure on $\cO_x^\circ$ via integration.\end{proposition}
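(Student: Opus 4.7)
The plan is to unpack the parameterization, verify the differential identity by a direct trigonometric calculation, and then check $\Gamma_x$-invariance one generator at a time.

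First, I would verify that the triple $(2\cos\theta,c\cos\phi,c\cos(\theta+\phi))$ with $c=2\sqrt{(x^2-\kappa)/(x^2-4)}$ lies on $\cO_x^\circ$. Substituting into the Markoff equation, product-to-sum identities give
\[
y^2+z^2-xyz = c^2\bigl[\cos^2\phi+\cos^2(\theta+\phi)-2\cos\theta\cos\phi\cos(\theta+\phi)\bigr]=c^2\sin^2\theta,
\]
and since $\sin^2\theta=1-x^2/4=(4-x^2)/4$, the right side simplifies to $(x^2-\kappa)(4-x^2)/(x^2-4)=\kappa-x^2$, as needed. Surjectivity onto $\cO_x^\circ$ is then clear since both image and target are smooth loops (topological circles) and the parameterizing map is nonconstant; in fact $(y(\phi),z(\phi))=(y(\phi'),z(\phi'))$ forces $\phi\equiv\phi'\pmod{2\pi}$ away from the finitely many zeros of $\sin\theta\sin\phi$, so the map is a homeomorphism.

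Next, for the identity $\sqrt{4-x^2}\,dA_x=d\phi$, I would compute $dy=-c\sin\phi\,d\phi$ and, using $2\cos\theta\cos\phi-2\cos(\theta+\phi)=2\sin\theta\sin\phi$,
\[
xy-2z=c\bigl(2\cos\theta\cos\phi-2\cos(\theta+\phi)\bigr)=2c\sin\theta\sin\phi.
\]
Dividing yields $dA_x=-d\phi/(2\sin\theta)$, and with $2|\sin\theta|=\sqrt{4-x^2}$ we obtain $\sqrt{4-x^2}\,dA_x=\pm d\phi$, the sign being pinned down by the counterclockwise orientation convention in Notation \ref{not:M^circ}.

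Finally, $\Gamma_x$ is generated by the Vieta involutions $\sigma_y,\sigma_z$, the transposition $\tau_x$, and the double sign change $(y,z)\mapsto(-y,-z)$, so $\Gamma_x$-invariance of the induced measure reduces to checking these four generators. Using the two equivalent presentations $dA_x=dy/(xy-2z)=dz/(2y-xz)$, each pullback is an immediate substitution: for instance $\sigma_z: z\mapsto xy-z$ sends $dy/(xy-2z)$ to $dy/(xy-2(xy-z))=-dA_x$, and analogously $\sigma_y^*dA_x=-dA_x$, $\tau_x^*dA_x=-dA_x$, while the double sign change fixes $dA_x$. In the $\phi$-parameterization each generator therefore acts as an isometry of $\bR/2\pi\bZ$ (a reflection or rotation), so the positive Haar measure $|d\phi|=\sqrt{4-x^2}\,|dA_x|$ is what is asserted invariant. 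I do not expect any real obstacle here; the only mild subtlety is that some generators of $\Gamma_x$ reverse orientation on $\cO_x^\circ$, so the signed 1-form $dA_x$ picks up a sign under those generators even though the measure it induces does not.
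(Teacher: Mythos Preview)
Your proof is correct and follows the same overall strategy as the paper's: verify the parameterization, differentiate to obtain $\sqrt{4-x^2}\,dA_x=d\phi$, and check $\Gamma_x$-invariance on generators. The paper dispatches the parameterization by citing the case-(1) formula in Proposition~\ref{prop:orbits} (substituting $\zeta=e^{i\theta}$, $\eta=e^{i\phi}$) and simply asserts that the differential identity ``can be checked by differentiating''; your explicit computation $dy=-c\sin\phi\,d\phi$, $xy-2z=2c\sin\theta\sin\phi$ fills in exactly those details.

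The one genuine difference is in the invariance step. The paper computes the action of each generator on the parameter $\phi$ and claims they are all translations $\phi\mapsto\phi+c$, hence preserve $d\phi$. You instead pull back $dA_x$ directly and find $\sigma_y^*dA_x=\sigma_z^*dA_x=\tau_x^*dA_x=-dA_x$, then correctly distinguish the signed $1$-form from the induced positive measure. Your route is actually the more accurate one: $\tau_x$ and $\sigma_z$ are involutions, so on the circle $\bR/2\pi\bZ$ they must act as reflections (or trivially), not as the nontrivial shifts the paper writes down. Either way the measure $|d\phi|$ is preserved, so the conclusion is the same, but your explicit flagging of the orientation-reversal subtlety is a real improvement in precision. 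One small redundancy: $\sigma_z=\tau_x\sigma_y\tau_x$ already lies in the subgroup generated by the other three, so you could drop it from your list of generators (as the paper does in the proof of Proposition~\ref{prop:orbits}).
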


\begin{proof}For the parameterization, note that for any $\alpha\in(-\sqrt{\kappa},\sqrt{\kappa})$, $\cO_{\alpha}$ is parameterized by case (1) of Proposition \ref{prop:orbits}. We have simply rewritten $\zeta+\zeta^{-1}$ and $\zeta^n\vartheta+\zeta^{-n}\vartheta^{-1}$ as $2\cos\theta$ and $2\cos\phi$, respectively.

Now that we have the parameterization, the formula for $dA_x$ can be checked by differentiating. We omit details.

Let us check $\Gamma_{\!x}$-invariance. The effect of $\tau_x$, $\sigma_z$, and $(x,y,z)\mapsto(x,-y,-z)$ on $\phi$ are $\phi\mapsto \theta+\phi$, $\phi\mapsto\phi$, and $\phi\mapsto\phi+\pi$, respectively. All are shifts, for which $d\phi$ is invariant. Since those three maps generate $\Gamma_{\!x}$, this shows $d\phi$ (and thus $dA_x$) is $\Gamma_{\!x}$-invariant.

Finally, $d\phi$ (and thus $dA_x$) induces a $\Gamma_{\!x}$-invariant measure because it is nonvanishing. Our choice of orientation for $\cO_x^\circ$ agrees with increasing $\phi$.\end{proof}

The whole point of switching to $\bR$ is to utilize the fundamental theorem of calculus. 

\begin{corollary}\label{cor:integral}For any $m\geq 0$, \[\frac{\sqrt{4-x^2}}{2\pi}\!\int_{\cO_x^\circ}\!\!y^{2m}dA_x=\binom{2m}{m}\!\left(\frac{x^2-\kappa}{x^2-4}\right)^{\!\!m}.\]\end{corollary}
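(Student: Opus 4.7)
The plan is to parameterize the loop $\cO_x^\circ$ using Proposition \ref{prop:dphi} and reduce the integral to a standard trigonometric integral. Specifically, the parameterization gives
\[y = 2\sqrt{\frac{x^2-\kappa}{x^2-4}}\cos\phi,\qquad dA_x = \frac{d\phi}{\sqrt{4-x^2}},\]
where the square root $\sqrt{4-x^2}$ is real because $\cO_x^\circ$ is nonempty only for $x\in(-\sqrt{\kappa},\sqrt{\kappa})\subset(-2,2)$, and the ratio $\frac{x^2-\kappa}{x^2-4}$ is positive on this range since both numerator and denominator are negative.

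Substituting into the left-hand side, the $\sqrt{4-x^2}$ factors cancel and the integral becomes
\[\frac{\sqrt{4-x^2}}{2\pi}\int_0^{2\pi}\!\!\left(2\sqrt{\tfrac{x^2-\kappa}{x^2-4}}\cos\phi\right)^{\!2m}\!\frac{d\phi}{\sqrt{4-x^2}} = \frac{4^m}{2\pi}\left(\frac{x^2-\kappa}{x^2-4}\right)^{\!m}\!\int_0^{2\pi}\!\!\cos^{2m}\!\phi\,d\phi.\]
I would then invoke the classical Wallis-type evaluation $\int_0^{2\pi}\cos^{2m}\phi\,d\phi = \frac{2\pi}{4^m}\binom{2m}{m}$, which is standard (expand $\cos\phi = \tfrac{1}{2}(e^{i\phi}+e^{-i\phi})$ and pick out the constant term, or use the beta-function formula). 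Plugging in gives exactly the claimed right-hand side, and the $4^m$, $2\pi$, and $\sqrt{4-x^2}$ factors all cancel cleanly.

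There is no real obstacle here; the only mildly subtle point is keeping track of the orientation convention built into Notation \ref{not:M^circ} so that the $\phi$-integral genuinely runs from $0$ to $2\pi$ rather than picking up a sign, but this is guaranteed by the choice of orientation for $\cO_x^\circ$ identified in the proof of Proposition \ref{prop:dphi}. Thus the corollary follows in a single short computation built entirely on the parameterization already established.
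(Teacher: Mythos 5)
Your proposal is correct and follows exactly the paper's route: the paper's proof is the same direct computation, substituting the parameterization of Proposition \ref{prop:dphi} (so $\sqrt{4-x^2}\,dA_x=d\phi$ and $y^{2m}=(\tfrac{x^2-\kappa}{x^2-4})^m(2\cos\phi)^{2m}$) and integrating over $[0,2\pi)$, with your Wallis evaluation $\int_0^{2\pi}\cos^{2m}\phi\,d\phi=\tfrac{2\pi}{4^m}\binom{2m}{m}$ just making explicit the antidifferentiation the paper leaves to the reader.
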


\begin{proof}This is checked by direct computation: replace $\sqrt{4-x^2}dA_x$ with $d\phi$ and $y^{2m}$ with $(\frac{x^2-\kappa}{x^2-4})^m(2\cos\phi)^{2m}$ and antidifferentiate over the interval $[0,2\pi)$ with respect to $\phi$.\end{proof}

To see why this is useful, let $f\in R[x,y]$, and suppose for the sake of simplicity that $\Phi_x(f)$, which generally lies in $\bR[x]+\bR[y,z]$, is a polynomial in $x$ only. Then \begin{flalign*}&&\frac{\sqrt{4-x^2}}{2\pi}\int_{\cO_x^\circ}\!\!f\,dA_x &= \frac{\sqrt{4-x^2}}{2\pi}\int_{\cO_x^\circ}\!\!\Phi_x(f)\,dA_x && \text{by Proposition \ref{prop:integral}}\\&& & = \frac{\Phi_x(f)\sqrt{4-x^2}}{2\pi}\int_{\cO_x^\circ}\!\!\,dA_x && \text{since }\Phi_x(f)\text{ is constant on }\cO_x^\circ\\ && &=\Phi_x(f) && \text{by Corollary \ref{cor:integral}.}\end{flalign*} But it is no more trouble to instead evaluate the original integral directly with Corollary \ref{cor:integral}, the result being some rational function, say $g(x)$. We can then expand the denominator of $g(x)$ with Taylor series, thereby solving for the coefficients of $\Phi_x(f)$. 

It is convenient to expand $g(x)$ with respect to the following basis.

\begin{notation}\label{not:h_n}For $n\geq 0$, let \[b_n(x)=\sum_{j=0}^n\binom{2j}{j}\frac{x^{n-j}}{1-2j}.\]\end{notation}

The coefficients are those in the McClaurin series for $\sqrt{1-4x}$. In other words, \begin{equation}\label{eq:b_n}\left(1-\frac{4}{x}\right)^{\!\frac{1}{2}}\!x^n = b_n(x) + \sum_{j=1}^\infty \binom{2j}{j}\frac{x^{-j}}{1-2j}.\end{equation}

The series expansion of $\sqrt{1-4x}$ is just one from a family of expansions that we need. Recall that for any $\alpha\in\bR$, the coefficient of $x^j$ in the McClaurin series expansion of $(1-4x)^\alpha$ is $(-4)^j\alpha(\alpha-1)\cdots (\alpha-j+1)/j!$. When $\alpha=m-\frac{1}{2}$ for some integer $m$, we may rewrite these coefficients as follows: \begin{equation}\label{eq:mcclaurin}\prod_{i=1}^j\frac{1}{i}\big(4i-4m-2\big)=\begin{dcases}\binom{-2m}{-m}^{\!-1}\!\binom{j-m}{j}\binom{2j-2m}{j-m} & m \leq 0,\\[0.1cm] (-1)^m\binom{2m}{m}\binom{j}{m}^{\!-1}\!\binom{2j-2m}{j-m} & 0\leq m\leq j, \\[0.1cm] (-1)^j\binom{2m}{m}\binom{m}{j}\binom{2m-2j}{m-j}^{\!-1} & m \geq j.\end{dcases}\end{equation}

\begin{theorem}\label{thm:gen_form}Over any integral domain $R$ and for any integers $m,n\geq 0$, \[\Phi(x^{2n}y^{2m})=\sum_{j=0}^n\sum_{i=0}^j\binom{2m+2i}{m+i}\binom{m+i}{m}\binom{m}{j-i}(-\kappa)^{j-i}b_{n-j}(x^2)+r(x^2)\] for some $r(x^2)\in R[x]$ of degree at most $2m$.\end{theorem}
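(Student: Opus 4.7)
The plan is to pin down $\Phi(x^{2n}y^{2m})$ modulo polynomials of degree $\leq 2m$ using integration over the real Markoff loop $\cO_x^\circ$, and then to extend from $R = \bR$ with $\kappa \in (0,4)$ to arbitrary integral domains and parameters by observing that both sides of the claimed identity are polynomials in $\kappa$ with integer coefficients.

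First, I will decompose $\Phi_x(x^{2n}y^{2m}) = A(x) + B(y,z)$ into its $x$-only part and its $y,z$-remainder. Proposition \ref{prop:degree_Phi_x} gives $\deg A \leq 2n$ and bounds every monomial of $B$ by total $(y,z)$-degree $\leq 2m$. Because the operations $\sigma_x,\tau_y,\tau_z$ that $\Phi$ uses but $\Phi_x$ does not never raise total degree, the residual $\Phi$-reduction of $B(y,z)$ is a polynomial in $x^2$ of degree $\leq 2m$. Hence $\Phi(x^{2n}y^{2m}) = A(x) + r(x^2)$ with $\deg r \leq 2m$, so the theorem reduces to computing $A(x)$ modulo degree $\leq 2m$.

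Next, applying Proposition \ref{prop:integral} with the $\Gamma_x$-invariant measure $dA_x$ on $\cO_x^\circ$ and evaluating the left side via Corollary \ref{cor:integral} gives
\[
A(x) = \binom{2m}{m} x^{2n}\left(\frac{x^2-\kappa}{x^2-4}\right)^{\!m} - \frac{\sqrt{4-x^2}}{2\pi}\int_{\cO_x^\circ} B(y,z)\,dA_x.
\]
By the parameterization of Proposition \ref{prop:dphi}, each monomial $c_{a,b}y^{2a}z^{2b}$ of $B$ (necessarily with $a+b \leq m$) contributes $4^{a+b}((x^2-\kappa)/(x^2-4))^{a+b}\,Q_{a,b}(x^2)\cdot 2\pi/\sqrt{4-x^2}$, where $Q_{a,b}$ is a polynomial of degree $\leq \min(a,b)$ in $x^2$ arising from the Chebyshev expansion of $\int_0^{2\pi}\cos^{2a}(\phi)\cos^{2b}(\theta+\phi)\,d\phi$. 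Writing $((x^2-\kappa)/(x^2-4))^{a+b} = 1 + (\text{proper rational in }x^2)$ and tracking polynomial-quotient degrees, the polynomial-at-infinity part of the $B$-contribution has degree at most $2\min(a,b) \leq 2m$ in $x$. Consequently, $A(x)$ and the polynomial-at-infinity part of $\binom{2m}{m} x^{2n}((x^2-\kappa)/(x^2-4))^m$ differ by a polynomial of degree $\leq 2m$.

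Finally, I identify the polynomial-at-infinity part of $\binom{2m}{m} x^{2n}((x^2-\kappa)/(x^2-4))^m$ with the theorem's double sum. Setting $T = 1/x^2$, formula (\ref{eq:mcclaurin}) with $m$ replaced by $-m$ (first case) yields the key identity
\[
\binom{2m}{m}(1-4T)^{-m-1/2} = \sum_{j \geq 0}\binom{2m+2j}{m+j}\binom{m+j}{m}T^j.
\]
Multiplying by $x^{2n}(1-\kappa T)^m\sqrt{1-4T}$, the left side becomes $\binom{2m}{m}x^{2n}((x^2-\kappa)/(x^2-4))^m$, while the right side, after expanding $(1-\kappa T)^m = \sum_k\binom{m}{k}(-\kappa)^k T^k$, becomes $\sum_{j,k \geq 0}\binom{2m+2j}{m+j}\binom{m+j}{m}\binom{m}{k}(-\kappa)^k x^{2(n-j-k)}\sqrt{1-4T}$. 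Taking polynomial-at-infinity parts in $x$ and noting that $b_{n-j-k}(x^2)$ is by definition the polynomial-at-infinity part of $x^{2(n-j-k)}\sqrt{1-4T}$, then substituting $i = j + k$, recovers the theorem's double sum exactly. The main obstacle is the Chebyshev degree bound in the second step (the estimate $\deg Q_{a,b} \leq 2\min(a,b)$) together with the pole-cancellation bookkeeping; the third step is then a direct application of (\ref{eq:mcclaurin}).
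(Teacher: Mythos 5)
Your proposal is correct and follows essentially the same route as the paper's proof: split $\Phi_x(x^{2n}y^{2m})$ into an $x$-part plus a $(y,z)$-remainder of total degree at most $2m$, evaluate the $\Gamma_x$-invariant integral over $\cO_x^\circ$ via Proposition \ref{prop:integral} and Corollary \ref{cor:integral}, expand the resulting rational function at infinity using (\ref{eq:mcclaurin}) in the $b_j(x^2)$ basis, and descend from $\bR$ (equivalently $\bZ[\kappa]$) to an arbitrary integral domain. The only deviation---bounding the remainder's contribution monomial-by-monomial through Fourier orthogonality of $\cos^{2a}\phi\cos^{2b}(\theta+\phi)$ instead of passing to the canonical form $g^*$ and invoking Proposition \ref{prop:canonical_degrees} with Corollary \ref{cor:integral}---is a cosmetic variant of the same degree estimate.
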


\begin{proof}As per Proposition \ref{prop:degree_Phi_x}, $\Phi_x(x^{2n}y^{2m})$ is of the form $f(x^2) + g(y^2,z^2)$, where $g(y^2,z^2)$ has total degree at most $2m$. Thus \begin{align}\label{eq:phicong}\nonumber\Phi(x^{2n}y^{2m})&=\Phi(\Phi_x(x^{2n}y^{2m}))\\\nonumber&=\Phi(f(x^2)+g(y^2,z^2))\\&= f(x^2)+\Phi(g(y^2,z^2)).\end{align}

Proposition \ref{prop:canonical_degrees} tells us we may write the canonical form of $g$ as $g^*=g_m(x^2)y^{2m}+g_{m-1}(x^2)y^{2m-2}+\cdots+g_0(x^2)$ with $\deg g_j\leq 2m$ for all $j$. Now we restrict to $R=\bR$. Applying Corollary \ref{cor:integral} to each power of $y$ in $g^*$ gives \begin{flalign}\label{eq:int_tricks}\nonumber&& f(x^2)\,+\,&\sum_{j=0}^m\binom{2j}{j}\!\left(\frac{x^2-\kappa}{x^2-4}\right)^{\!\!j}g_j(x^2) = \frac{\sqrt{4-x^2}}{2\pi}\int_{\cO_x^\circ}\!\!(f(x^2)+g^*(x^2,y^2))dA_x \hspace{-6cm}&&\\\nonumber&& &=\frac{\sqrt{4-x^2}}{2\pi}\int_{\cO_x^\circ}\!\!(f(x^2)+g(y^2,z^2))dA_x && \text{as }\cO_x^{\circ}\text{ and }dA_x\text{ are }\sigma_z\text{-invariant}\\\nonumber&& &=\frac{\sqrt{4-x^2}}{2\pi}\int_{\cO_x^\circ}\!\!\Phi_x(x^{2n}y^{2m})dA_x && \\\nonumber&& &=\frac{\sqrt{4-x^2}}{2\pi}\int_{\cO_x^\circ}\!\!x^{2n}y^{2m}dA_x && \text{by Proposition \ref{prop:integral}}\\ \nonumber && &=\binom{2m}{m}\frac{x^{2n}(x^2-\kappa)^m}{(x^2-4)^m} &&\text{by Corollary \ref{cor:integral}.} \\ \nonumber && & =\binom{2m}{m}\frac{x^{2n}(1-\frac{4}{x^2})^{\frac{1}{2}}(1-\frac{\kappa}{x^2})^m}{(1-\frac{4}{x^2})^{m+\frac{1}{2}}} && \\ && &=\binom{2m}{m}\!\left(1-\frac{4}{x^2}\right)^{\!\!\frac{1}{2}}\sum_{i=0}^m\binom{m}{i}\frac{(-\kappa)^{m-i}x^{2n-2m+2i}}{(1-\frac{4}{x^2})^{m+\frac{1}{2}}}.\hspace{-5cm} && \end{flalign} While the entire chain of equalities above only holds when $x \in(-\sqrt{\kappa},\sqrt{\kappa})\backslash\{0\}$ and $\kappa\in(0,4)$, it implies equality of the two rational functions at the start and end of the chain. We continue this chain by applying the McClaurin series for $(1-\frac{4}{x^2})^{-m-\frac{1}{2}}$ to obtain the following expression, convergent for $|x| > 2$: \begin{align*}\binom{2m}{m}\!\left(1-\frac{4}{x^2}\right)^{\!\!\frac{1}{2}}&\sum_{i=0}^m\binom{m}{i}\frac{(-\kappa)^{m-i}x^{2n-2m+2i}}{(1-\frac{4}{x^2})^{m+\frac{1}{2}}}\\&=\left(1-\frac{4}{x^2}\right)^{\!\!\frac{1}{2}}\!\sum_{i=0}^m\binom{m}{i}(-\kappa)^{m-i}\!\!\sum_{j=m-i}^\infty\!\binom{2i+2j}{i+j}\binom{i+j}{m}x^{2n-2j}\end{align*} After expanding the square root above, (\ref{eq:b_n}) tells us that the coefficients of nonnegative powers of $x$ match those of \begin{align*}\sum_{i=0}^m\binom{m}{i}(-\kappa)^{m-i}\!\!\sum_{j=m-i}^n&\!\binom{2i+2j}{i+j}\binom{i+j}{m}b_{n-j}(x^2)\\ &= \sum_{j=0}^n\sum_{i=m-j}^m\binom{2i+2j}{i+j}\binom{i+j}{m}\binom{m}{i}(-\kappa)^{m-i}b_{n-j}(x^2)\\&=\sum_{j=0}^n\sum_{i=0}^j\binom{2m+2i}{m+i}\binom{m+i}{m}\binom{m}{j-i}(-\kappa)^{j-i}b_{n-j}(x^2).\end{align*}

Now we connect all the way back to the start of (\ref{eq:int_tricks}). Each rational function $\smash{(\frac{x^2-\kappa}{x^2-4})^jg_j(x^2)}$ can be expressed as a Laurent polynomial in $\frac{1}{x^2}$ in which the largest power of $x$ is $\deg g_j$, which we know to be at most $2m$. So the difference between $f(x^2)$ and the final expression above is some combination of powers $x^{2j}$ with $-\infty < j \leq m$ that converges when $|x| > 2$. Therefore, we have found the first $n-m$ coefficients of $f(x^2)$ (written in terms of $b_{n-j}(x^2)$). According to  (\ref{eq:phicong}), these must also be the first $n-m$ coefficients of $\Phi(x^{2m}y^{2n})$ because $\deg\Phi(g)\leq 2m$ by Proposition \ref{prop:degree}. 

The argument above applies to $R=\bR$. However, only integer coefficients are used throughout the reduction of a monic monomial, and $\bZ$ is the initial object in the category of integral domains. Thus the formula holds for general $R$.\end{proof}

As a small example to test, consider $n=3$ and $m=1$: \begin{align*}\Phi(x^6y^2)&=2x^6 + (8-2\kappa)x^4+(96-8\kappa)x^2-32\kappa\\&=2b_3(x^2) + (12-2\kappa)b_2(x^2) + (124-12\kappa)b_1(x^2)+280-60\kappa.\end{align*} If we ignore the remainder polynomial $r(x^2)$, Theorem \ref{thm:gen_form} provides the following formula: \begin{align*}\binom{2}{1}b_3(x^2)\;+&\left(\!\binom{4}{2}\!\binom{2}{1}-\binom{2}{1}\kappa\right)\!b_2(x^2)\\&+\left(\!\binom{6}{3}\!\binom{3}{1}-\binom{4}{2}\!\binom{2}{1}\kappa\right)\!b_1(x^2)+\left(\!\binom{8}{4}\!\binom{4}{1}-\binom{6}{3}\!\binom{3}{1}\kappa\right)\!b_0(x^2)\end{align*} The coefficients of $b_3(x^2)$ and $b_2(x^2)$ above are correct, but the coefficient of $b_1(x^2)$ is not. The polynomial $r(x^2)$ accounts for this error. Note that the coefficient of $b_0(x^2)$ happens to be correct as well. Interestingly, experimentation suggests that Theorem \ref{thm:gen_form}'s formula for the coefficient of $b_0(x^2)$ is always right. 

The same basic argument can be used to prove the simpler (though, in our case, less useful) formula \begin{equation}\label{eq:spec_form}\Phi(x^{2m}(y^2-\kappa)^{n-m}(y^2-4)^m)=\binom{2m}{m}(x^2-\kappa)^n+r(x^2)\end{equation} for some $r(x^2)\in R[x]$ of degree at most $2m$.

As (\ref{eq:spec_form}) suggests, a $\Phi$ input that is divisible by high powers of $x^2-4$ or $y^2-4$ lends itself to a clean output formula. Theorem \ref{thm:spec_form} below provides such a specialized formula written with respect to the basis $b_0(x^2),b_1(x^2),b_2(x^2),\dots$. It is possible to prove Theorem \ref{thm:spec_form} directly from (\ref{eq:spec_form}), but the proof (at least the one the author found) is far messier and requires many combinatorial identities. The approach taken here only requires the one additional identity in the next lemma.

\begin{notation}Let $n$ be an integer. For a fixed integral domain $R$ let \[\Lambda_n= \{\zeta+\zeta^{-1}\,|\,\zeta\in\overline{R},\,\zeta^{2n}=1\}\hspace{\parindent}\text{and}\hspace{\parindent}\widehat{\Lambda}_n=\Lambda_n\backslash\{\pm2\}.\]\end{notation}

\begin{lemma}\label{lem:special_coef}Let $\ell$, $m$, and $n$ be integers with $\ell\geq 0$, $m\geq 1$, and $n > \ell+m$. The coefficient of $x^{\ell+m}$ in the McClaurin series expansion of $(1-4x)^{m-\frac{1}{2}}$ is \[\frac{1}{n}\sum_{\mathclap{\alpha\in\widehat{\Lambda}_n}}\alpha^{2\ell}(\alpha^2-4)^m.\]\end{lemma}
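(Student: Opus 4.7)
The plan is to lift the sum from $\hat{\Lambda}_n$ to the $2n$-th roots of unity, apply the roots-of-unity filter to collapse it to a single polynomial coefficient, and evaluate that coefficient in closed form via a Beta-function integral.

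First I would parameterize $\lambda = \zeta + \zeta^{-1}$ for $\zeta^{2n} = 1$, so that $\lambda^2 - 4 = (\zeta - \zeta^{-1})^2$ and each summand becomes $(\zeta+\zeta^{-1})^{2\ell}(\zeta-\zeta^{-1})^{2m}$. Each $\lambda \in \hathat{\Lambda}_n$ has exactly two preimages among the $2n$-th roots of unity, while $\lambda = \pm 2$ come from $\zeta = \pm 1$; when $m \geq 1$ those boundary summands vanish, so
\[
\sum_{\lambda\in\hat{\Lambda}_n}\lambda^{2\ell}(\lambda^2-4)^m \;=\; \tfrac{1}{2}\sum_{\zeta^{2n}=1}(\zeta+\zeta^{-1})^{2\ell}(\zeta-\zeta^{-1})^{2m}.
\]
The case $m = 0$ would require separate bookkeeping for the $\lambda = -2$ term (recall that $\hat{\Lambda}_n$ retains $-2$ while excluding $+2$); I would dispose of it at the end.

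Next I would factor $(\zeta+\zeta^{-1})^{2\ell}(\zeta-\zeta^{-1})^{2m} = \zeta^{-2(\ell+m)}(\zeta^2+1)^{2\ell}(\zeta^2-1)^{2m}$ and substitute $w = \zeta^2$, which as $\zeta$ varies ranges with multiplicity two over the $n$-th roots of unity. The polynomial $g(w) := (w+1)^{2\ell}(w-1)^{2m}$ has degree $2(\ell+m) < 2n$, and this is precisely where the hypothesis $\ell + m < n$ enters: it guarantees that $\ell+m$ is the unique exponent of $g$ in $[0, 2\ell+2m]$ that is congruent to $\ell + m$ modulo $n$. Hence the standard roots-of-unity filter collapses the lifted sum to $n\cdot [w^{\ell+m}]g(w)$, so the quantity in question equals $[w^{\ell+m}](w+1)^{2\ell}(w-1)^{2m}$.

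To evaluate this coefficient I would write it as a Cauchy contour integral over $|w|=1$ and substitute $w = e^{i\theta}$; using $w + 1 = 2e^{i\theta/2}\cos(\theta/2)$ and $w - 1 = 2ie^{i\theta/2}\sin(\theta/2)$, the exponential factors cancel $w^{-(\ell+m)}$, leaving
\[
\frac{(-1)^m 4^{\ell+m}}{\pi}\int_0^\pi \cos^{2\ell}\phi\,\sin^{2m}\phi\,d\phi \;=\; (-1)^m\binom{\ell+m}{m}^{\!-1}\!\binom{2\ell}{\ell}\binom{2m}{m}
\]
by a standard Beta-function evaluation. Direct Taylor expansion of $(1-4x)^{m-1/2}$ via the binomial series yields exactly this value for $[x^{\ell+m}](1-4x)^{m-1/2}$, matching (\ref{eq:mcclaurin}) with $i = \ell+m$ and completing the proof.

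The main obstacle is nothing conceptual; it is entirely boundary bookkeeping. For $m \geq 1$ both boundary summands $\lambda = \pm 2$ die instantly and the plan above goes through. For $m = 0$ one applies the same roots-of-unity filter directly to $(\zeta+\zeta^{-1})^{2\ell} = \zeta^{-2\ell}(\zeta^2+1)^{2\ell}$ and corrects for the asymmetric treatment of $+2$ and $-2$ in $\hat{\Lambda}_n$, producing $\sum_{\lambda\in\hat{\Lambda}_n}\lambda^{2\ell} = n\binom{2\ell}{\ell}$, which matches $[x^\ell](1-4x)^{-1/2} = \binom{2\ell}{\ell}$.
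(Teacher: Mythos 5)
Your proposal is correct, and its core is the same as the paper's: both rewrite the summand as the Laurent polynomial $w^{-(\ell+m)}(w+1)^{2\ell}(w-1)^{2m}$ evaluated at $n$th roots of unity and use $\ell+m<n$ so that the root-of-unity average picks out exactly the coefficient $[w^{\ell+m}](w+1)^{2\ell}(w-1)^{2m}$. The differences are peripheral. First, the paper indexes the sum directly by $j=1,\dots,n$ via $\lambda=\zeta^j+\zeta^{-j}$, which is a bijection onto $\hat{\Lambda}_n$ (with $j=n$ giving $-2$), so it needs neither your factor-$\tfrac12$ lift to the $2n$th roots nor the separate $m=0$ bookkeeping; your handling of both is nonetheless correct. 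Second, the paper evaluates the coefficient by expanding it as $\sum_i(-1)^i\binom{2\ell}{\ell+m-i}\binom{2m}{i}$ and applying Kummer's ${}_2F_1(-1)$ identity, whereas you use a contour substitution and the Beta/Wallis integral; both yield $(-1)^m(2\ell)!\,(2m)!/(\ell!\,m!\,(\ell+m)!)$. One small caution: the right-hand side of the paper's displayed identity (\ref{eq:mcclaurin}) in the case $0\leq m\leq i$ is stated without the factor $(-1)^m$ (test $i=m=1$: the true coefficient of $x$ in $(1-4x)^{1/2}$ is $-2$, not $2$), so your value agrees with the actual Maclaurin coefficient and with the paper's Kummer computation, but only matches the printed formula up to that sign; this is a typo in (\ref{eq:mcclaurin}), not a gap in your argument.
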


\begin{proof}Let us rewrite the sum over $\alpha$ as a sum over $\zeta^j+\zeta^{-j}$, where $\zeta$ is a primitive $2n^\text{th}$ root of unity: \[\frac{1}{n}\sum_{\mathclap{\alpha\in\widehat{\Lambda}_n}}\alpha^{2\ell}(\alpha^2-4)^m=\frac{1}{n}\sum_{j=1}^{n-1}\frac{(\zeta^{2j}+1)^{2\ell}(\zeta^{2j}-1)^{2m}}{\zeta^{2j(\ell+m)}}.\] The right-side is the average of the Laurent polynomial $x^{-\ell-m}(x+1)^{2\ell}(x-1)^{2m}$ as $x$ runs over all $n^\text{th}$ roots of unity (including $x=1$ dispite the restriction $1\leq j\leq n-1$; indeed $x^{-\ell-m}(x+1)^{2\ell}(x-1)^{2m}$ vanishes when $x=1$ since $m\geq 1$). But exponents of $x$ in this Laurent polynomial are at most $\ell + m$ in magnitude, which is strictly less than $n$, so this average picks up only the coefficient of $x^0$. In other words, the expression above is the coefficient of $x^{\ell+m}$ in the polynomial $(x+1)^{2\ell}(x-1)^{2m}$, which can be evaluated with Kummer's identity for hypergeometric functions: \begin{flalign*}&& \sum_{i=0}^{\ell+m}(-1)^i\binom{2\ell}{\ell+m-i}\binom{2m}{i}\hspace{-4cm} & && \\ && &=\binom{2m}{\ell+m} {}_2F_1(-\ell-m,-2\ell;m-\ell+1;-1) && \text{(assuming }\ell\leq m\text{)}\\ && &=\binom{2m}{\ell+m}\frac{(-1)^m(2\ell)!(m-\ell)!}{\ell!\,m!} && \text{by Kummer's identity.}\end{flalign*} If $\ell > m$, the roles of $\ell$ and $m$ may be reversed to apply Kummer's identity, arriving at the same final expression either way. This matches the second case of (\ref{eq:mcclaurin}) with $i$ replaced by $\ell+m$.\end{proof}

\begin{theorem}\label{thm:spec_form}Let $f(y^2)\in R[y]$, and let $m\geq 0$ be such that $(y^2-4)^m\mid f(y^2)$. For any positive integers $\tilde{n}\geq n$ with $\textup{char}(R)\centernot\mid\tilde{n}$, \[\Phi(x^{2n}f(y^2))=\frac{1}{\tilde{n}}\sum_{j=0}^n\binom{2j}{j}\sum_{\mathclap{\alpha\in\widehat{\Lambda}_{\tilde{n}}}}\left(\frac{\alpha^2-\kappa}{\alpha^2-4}\right)^{\!\!j}\!f(\alpha^2) b_{n-j}(x^2)+r(x^2)\] for some $r(x^2)\in R[x]$ of degree at most $\max(\deg f(y^2),2n-2m)$.\end{theorem}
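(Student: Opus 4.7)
The strategy will parallel the proof of Theorem \ref{thm:gen_form}, with Lemma \ref{lem:special_coef} playing the role that (\ref{eq:mcclaurin}) played there. By $R$-linearity of $\Phi$ it suffices to prove the formula for $f(y^2) = y^{2k}(y^2-4)^m$ for each $k \geq 0$; and since every step in the reduction uses only integer coefficients, I may work over $R = \bR$ with $\kappa \in (0,4)$ and extend the resulting identity to an arbitrary integral domain. Following the proof of Theorem \ref{thm:gen_form}, Corollary \ref{cor:integral} converts the task of finding the top $b$-basis coefficients of $\Phi(x^{2n}f(y^2))$ into extracting Laurent coefficients at $x=\infty$ of
\[
I(x^2) \;=\; \frac{\sqrt{4-x^2}}{2\pi}\!\int_{\cO_x^\circ}\!x^{2n}f(y^2)\,dA_x \;=\; x^{2n}\sum_{j=0}^m\binom{m}{j}(-4)^{m-j}\binom{2(k+j)}{k+j}w^{k+j},
\]
where $w = (x^2-\kappa)/(x^2-4)$.

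The key algebraic input is a generating-function rearrangement. Writing $\binom{2j}{j} = [z^j](1-4z)^{-1/2}$, one checks
\[
\sum_{j=0}^m\binom{m}{j}(-4)^{m-j}\binom{2(k+j)}{k+j}w^{k+j} \;=\; w^k\,[z^{k+m}](1-4z)^{-1/2}(w-4z)^m.
\]
Setting $u = x^2-4$, the identity $u(w-4z) = (1-4z)u + (4-\kappa)$ lets me expand $(w-4z)^m = u^{-m}\bigl((1-4z)u+(4-\kappa)\bigr)^m$ binomially; after multiplying by $(1-4z)^{-1/2}$ this yields
\[
(1-4z)^{-1/2}(w-4z)^m \;=\; \sum_{i=0}^m\binom{m}{i}u^{-i}(4-\kappa)^i\,(1-4z)^{m-i-\tfrac12}.
\]
This is exactly where the hypothesis $(y^2-4)^m\mid f(y^2)$ pays off: every exponent $m-i-\tfrac12$ is of the form ``nonnegative integer minus one-half,'' which is the regime in which Lemma \ref{lem:special_coef} applies. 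Substituting $[z^{k+m}](1-4z)^{m-i-\tfrac12} = \tfrac{1}{\tilde{n}}\sum_{\lambda \in \hat{\Lambda}_{\tilde{n}}}\lambda^{2(k+i)}(\lambda^2-4)^{m-i}$ (valid because the nontrivial case of the theorem forces $k+m < \tilde{n}$) expresses
\[
I(x^2) \;=\; \frac{1}{\tilde{n}}\sum_{\lambda}\sum_{i=0}^m\binom{m}{i}(4-\kappa)^i\lambda^{2(k+i)}(\lambda^2-4)^{m-i}\cdot\frac{x^{2n}(x^2-\kappa)^k}{(x^2-4)^{k+i}}.
\]

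It then remains to pull out the $(1-4/x^2)^{1/2}$ factor and read off the coefficient of each $b_{n-i}(x^2)$ from the Laurent expansion of $I(x^2)(1-4/x^2)^{-1/2}$ at $x=\infty$, exactly as in Theorem \ref{thm:gen_form}. The main obstacle I anticipate is the combinatorial collapse at this last step: one must show that, for $0\le i\le m$, the resulting double sum (over $\lambda$ and over the Laurent-expansion index) consolidates into the clean form $\tfrac{1}{\tilde{n}}\binom{2i}{i}\sum_\lambda\bigl(\tfrac{\lambda^2-\kappa}{\lambda^2-4}\bigr)^if(\lambda^2)$. I expect the cleanest route is a second application of the identity $\binom{2j}{j} = [z^j](1-4z)^{-1/2}$, now in the opposite direction, which lets one recognize $\binom{2i}{i}$ as the central binomial surviving after one invokes the algebraic identity $(\lambda^2-4)(x^2-4)+(4-\kappa)\lambda^2 = \lambda^2(x^2-\kappa)-4(x^2-4)$ to regroup the sum around the pole at $x^2=4$. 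All lower-order Laurent contributions, together with the polynomial tail that Proposition \ref{prop:degree_Phi_x} guarantees in the canonical decomposition of $\Phi_x(x^{2n}f(y^2))$, land in degrees at most $\max(\deg f, 2n-2m-2)$ and get absorbed into $r(x^2)$.
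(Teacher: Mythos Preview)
Your setup through the displayed expression for $I(x^2)$ is correct and elegant: the generating-function identity and the $u(w-4z)=(1-4z)u+(4-\kappa)$ substitution are exactly the right way to produce the exponents $m-i-\tfrac12$ that put Lemma \ref{lem:special_coef} in play. The gap is precisely where you flag it: the ``combinatorial collapse'' is not a formality, and the hint you give does not close it.

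The difficulty is one of ordering. You invoke Lemma \ref{lem:special_coef} \emph{before} the Laurent expansion at $x=\infty$, so your index $i$ (from expanding $(w-4z)^m$) is not the $b_{n-i}$ index of the theorem. After your substitution, $I(x^2)$ is a $\lambda$-sum of rational functions in $x$; when you then Laurent-expand $I(x^2)(1-4/x^2)^{-1/2}$, the per-$\lambda$ coefficients do \emph{not} match $\binom{2i}{i}\bigl(\tfrac{\lambda^2-\kappa}{\lambda^2-4}\bigr)^{i}f(\lambda^2)$ term by term---only the full $\lambda$-sum does, and proving that requires another root-of-unity cancellation of exactly the strength of Lemma \ref{lem:special_coef}. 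A ``second application in the opposite direction'' just undoes the first: one lands back at the triple binomial sum and still has to do the real work.

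The paper inverts the order. It first applies Theorem \ref{thm:gen_form} (which has already done the Laurent expansion) to isolate the coefficient of $b_{n-i}(x^2)$ as a concrete sum over $k$ and $j$. The crucial observation---the one your proposal is missing---is that the weight $\sum_k(-4)^{m-k}\binom{m}{k}$ coming from the expansion of $(y^2-4)^m$ multiplies $(1-4x)^{-i-1/2}$ by $(1-4x)^m$, shifting the relevant McClaurin exponent from $-i-\tfrac12$ to $m-i-\tfrac12$ \emph{with the correct index $i$} (the $b_{n-i}$ index, not your expansion index). Lemma \ref{lem:special_coef} then applies directly, and the residual $j$-sum collapses to $(\lambda^2-\kappa)^i$ by the binomial theorem. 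In short: Laurent-expand first, then apply Lemma \ref{lem:special_coef}; your proposal does it backwards, and that is why the last step resists.
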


\begin{proof}It suffices to consider polynomials of the form $f(y^2)=y^{2\ell}(y^2-4)^m$. 

We start by expanding $x^{2n}y^{2\ell}(y^2-4)^m$ and applying Theorem \ref{thm:gen_form} to each of the resulting monomials. This gives the following expression for the coefficient of $b_{n-j}(x^2)$ in $\Phi(x^{2n}y^{2\ell}(y^2-4)^m)$: \begin{align*}\sum_{k=0}^m(-4)^{m-k}\binom{m}{k}\sum_{i=0}^j\binom{2i+2k+2\ell}{i+k+\ell}\binom{i+k+\ell}{k+\ell}\binom{k+\ell}{j-i}(-\kappa)^{j-i}\hspace{-10cm}&\\&=\sum_{k=0}^m(-4)^{m-k}\binom{m}{k}\sum_{i=0}^j\binom{2i+2k+2\ell}{i+k+\ell}\binom{i+k+\ell}{j}\binom{j}{i}(-\kappa)^{j-i}\\&=\binom{2j}{j}\!\sum_{i=0}^j\binom{j}{i}(-\kappa)^{j-i}\sum_{k=0}^m(-4)^{m-k}\binom{m}{k}\!\left(\!\binom{2j}{j}^{\!\!-1}\!\binom{2i+2k+2\ell}{i+k+\ell}\binom{i+k+\ell}{j}\!\right)\!,\end{align*} which holds provided $n-j > \ell+m$ (otherwise Theorem \ref{thm:gen_form} tells us nothing about the coefficient of $b_{n-j}(x^2)$). 

Now we restrict to $R=\bR$. Recall from the first case in (\ref{eq:mcclaurin}) that the final parenthesized product of binomial coefficients above is the coefficient of $x^{i-j+k+\ell}$ in the McClaurin series of $(1-4x)^{-j-\frac{1}{2}}$. But observe that $\sum_k(-4x)^{m-k}\binom{m}{k}=(1-4x)^m$, so the entire inner sum in the final expression above is the coefficient of $x^{i-j+\ell+m}$ in the McClaurin series of $(1-4x)^{m-j-\frac{1}{2}}$. When $m- j\geq 1$ and $\tilde{n} >i-j+ \ell + m$, we have a formula for this coefficient from Lemma \ref{lem:special_coef}. Picking up where we left off with the final line above, we have shown \begin{align*}\sum_{i=0}^j\binom{j}{i}(-\kappa)^{j-i}\sum_{k=0}^m(-4)^{m-k}\binom{m}{k}\binom{2i+2k+2\ell}{i+k+\ell}\binom{i+k+\ell}{j} \hspace{-6cm}&\\& = \frac{1}{\tilde{n}}\binom{2j}{j}\sum_{i=0}^j\binom{j}{i}(-\kappa)^{j-i}\sum_{\mathclap{\alpha\in\widehat{\Lambda}_{\tilde{n}}}}\alpha^{2i+2\ell}(\alpha^2-4)^{m-j}\\&=\frac{1}{\tilde{n}}\binom{2j}{j}\sum_{\mathclap{\alpha\in\widehat{\Lambda}_{\tilde{n}}}}\alpha^{2\ell}(\alpha^2-4)^{m-j}\sum_{i=0}^j\binom{j}{i}\alpha^{2i}(-\kappa)^{j-i}\\&=\frac{1}{\tilde{n}}\binom{2j}{j}\sum_{\mathclap{\alpha\in\widehat{\Lambda}_{\tilde{n}}}}\alpha^{2\ell}(\alpha^2-4)^{m-j}(\alpha^2-\kappa)^{j}\\&=\frac{1}{\tilde{n}}\binom{2j}{j}\sum_{\mathclap{\alpha\in\widehat{\Lambda}_{\tilde{n}}}}\left(\frac{\alpha^2-\kappa}{\alpha^2-4}\right)^{\!\!j}\!f(\alpha^2).\end{align*} This is the desired expression for the coefficient of $b_{n-j}(x^2)$. For this to hold, recall that the necessary inequalities are $n-j > \ell+m$, $m-j> 0$, and  $\tilde{n} >i-j+ \ell + m$, which (since $\tilde{n}\geq n$) are equivalent to $2n-2j > 2\max( \ell+m,n-m) = \max(\deg y^{2\ell}(y^2-4)^m,2n-2m)$ as in the theorem statement.

Finally, we remove the restriction to $\bR$. Observe that over $\bZ$, the only factor in our formula for $\Phi(x^{2n}f(y^2))$ that is not an element of $\oZ$ is $\frac{1}{\tilde{n}}$ (remember, $(y^2-4)^i$ is assumed to divide $f(y^2)$ when $i\leq m$). Since $\text{char}(R)\centernot\mid \tilde{n}$, this expression is well defined in $\overline{F}$. So because \[
\begin{tikzcd}[sep=small]
\bZ \arrow[d] \arrow[hookrightarrow,r] & \oQ \arrow[d]\\
R \arrow[hookrightarrow,r]& \overline{F}
\end{tikzcd}
\] commutes, if arithmetic in $\oQ$ makes our coefficient of $b_{n-j}$ the correct element of $\bZ$, then arithmetic in $\overline{F}$ makes it the correct element of $R$.\end{proof}

Consider, for example, the reduction of $x^{10}(y^2-4)^2$, and suppose for simplicity that $\kappa=1$: \begin{align*}\Phi(x^{10}(y^2-4)^2)&=6x^{10}-12x^8+6x^6+752x^4+6944x^2-2560\\&=6b_5(x^2)+18b_3(x^2)+812b_2(x^2)+8664b_1(x^2)+16632.\end{align*} If we ignore the remainder polynomial $r(x^2)$, Theorem \ref{thm:spec_form} provides the following formulas for the coefficients of $b_5(x^2)$, $b_4(x^2)$, and $b_3(x^2)$: \[\frac{1}{\tilde{n}}\sum_{\mathclap{\alpha\in\widehat{\Lambda}_{\tilde{n}}}}(\alpha^2-4)^2,\hspace{\parindent}\frac{2}{\tilde{n}}\sum_{\mathclap{\alpha\in\widehat{\Lambda}_{\tilde{n}}}}(\alpha^2-4)(\alpha^2-1),\hspace{\parindent}\text{and}\hspace{\parindent}\frac{6}{\tilde{n}}\sum_{\mathclap{\alpha\in\widehat{\Lambda}_{\tilde{n}}}}(\alpha^2-1)^2\] for any $\tilde{n}\geq 5$. These equal $6$, $0$, and $18(1-\frac{3}{\tilde{n}})$, respectively. In particular, the coefficients of $b_5(x^2)$ and $b_4(x^2)$ match, while the coefficient of $b_3(x^2)$ does not. The polynomial $r(x^2)$ accounts for this error because $\max(\deg(y^2-4)^2,10-4) = 6$.

\section{Eigenvectors for $\alpha\neq\pm 2$}\label{sec:6}

As described in Section \ref{sec:3}, we aim to build rank in $\cP(\bF_q)$ by finding polynomials $f\in\oZ[x,y,z]$ that satisfy $\Phi_x(xf)=\alpha f$ for some $\alpha\in\oZ$ whose image under a projection $\oZ\to\oF_q$ never occurs as an entry in $\cM(\bF_q)$. An example with $\alpha=0$ can be found in (\ref{eq:phi_x}).

Although $\oZ$ is the ring that matters, we continue to work over an arbitrary integral domain $R$ with $\textup{char}(R)\neq 2$.

\subsection{Eigenvector existence}\label{ss:6.1} Let $n\geq 1$, and for each $i=0,...,n$ define the $(i+1)$-element set \[\cB_{i,n}=\{(x^2-\kappa)^{n-i}y^{j}z^{k}\mid j+k=2i,\,j\geq k\}\subset R[x^2,y,z].\] Order the elements of $\cB_{i,n}$ according to decreasing $j$/increasing $k$. Let $\cB_n=\cup_i\cB_{i,n}$ ordered primarily according to decreasing $i$, then according to the order on each $\cB_{i,n}$. For a linear combination $f$ of elements of $\cB_n$, we wish to find another combination from $\cB_n$, call it $g$, satisfying $\Phi_x(xf)=\Phi_x(g)$. From this, the effect of multiplying by $x$ and applying $\Phi_x$ can be represented by a matrix whose eigenvectors (with entries corresponding to polynomial coefficients with respect to $\cB_n$) are easily computed. 

Remark that the sum of degrees in $y$ and $z$ in each monomial is always even. We might also define something like $\widetilde{\cB}_{i,n}$ in which $y$ and $z$ degrees sum to $2i+1$, but there would no ``interaction" between $\cB_n$ and $\widetilde{\cB}_n$ (the meaning of this is made explicit below); we would be doing double the work for nothing---we already know that $\cP(R)$ contains all odd degree polynomials anyway due to the inclusion of $(x,y,z)\mapsto(x,-y,-z)$ in $\Gamma$. However, the lack of consideration for ``$\widetilde{\cB}_n$" forces a restricted hypothesis in many of this section's results. Namely, we only prove properties of those $f\in R[x,y,z]$ for which $f^*$ is even as a polynomial in $y$, as this is true of every monomial in $\cB_n$. 

Let $i > 0$. Consider what happens when we multiply the first element of $\cB_{i,n}$ by $x$ and reduce: \begin{equation}\label{eq:eigen0}x(x^2-\kappa)^{n-i}y^{2i}\xmapsto{\sigma_z} 2(x^2-\kappa)^{n-i}y^{2i-1}z.\end{equation} This is twice the second element of $\cB_{i,n}$. 

Now consider the $(k+1)^\text{th}$ element of $\cB_{i,n}$ for $0 < k < 2i$: \begin{align}\label{eq:eigen1}\nonumber x(x^2-&\kappa)^{n-i}y^jz^k\\\nonumber &\xmapsto{\rho} (x^2-\kappa)^{n-i}(x^2y^{j-1}z^{k-1}+y^{j+1}z^{k-1}+y^{j-1}z^{k+1}-\kappa y^{j-1}z^{k-1})\\&=(x^2-\kappa)^{n-i}y^{j+1}z^{k-1}+(x^2-\kappa)^iy^{j-1}z^{k+1}+(x^2-\kappa)^{n-i+1}y^{j-1}z^{k-1}.\end{align} This is the sum of the $k^\text{th}$ and $(k+2)^\text{th}$ elements of $\cB_{i,n}$ and the $k^\text{th}$ element of $\cB_{i-1,n}$. Finally, consider the last element of $\cB_{i,n}$ (still with $i > 0$): \begin{align}\label{eq:eigen2}\nonumber x(x^2-\kappa)^{n-i}y^iz^i&\xmapsto{\rho} (x^2-\kappa)^{n-i}(x^2y^{i-1}z^{i-1}+y^{i+1}z^{i-1}+y^{i-1}z^{i+1}-\kappa y^{i-1}z^{i-1})\\\nonumber&\xmapsto{\tau_x}(x^2-\kappa)^{n-i}(x^2y^{i-1}z^{i-1}+2y^{i+1}z^{i-1}-\kappa y^{i-1}z^{i-1})\\&= 2(x^2-\kappa)^{n-i}y^{i+1}z^{i-1}+(x^2-\kappa)^{n-i+1}y^{i-1}z^{i-1}.\end{align} This is twice the penultimate element of $\cB_{i,n}$ plus the last element of $\cB_{i-1,n}$.

In light of (\ref{eq:eigen0}), (\ref{eq:eigen1}), and (\ref{eq:eigen2}), we define \begin{equation}\label{eq:A_n}\setlength\arraycolsep{2.5pt}\renewcommand{\arraystretch}{0.9}A_0=\begin{bmatrix}2\end{bmatrix},\hspace{\parindent}A_1=\begin{bmatrix}0 & 2\\ 2 & 0\end{bmatrix},\hspace{0.5\parindent}\text{and}\hspace{0.5\parindent}A_n=\begin{bmatrix} & 1 & & & & \\ 2 & & 1 & & \\ & 1 & & {\smash\ddots} & &\\ & & 1 & & 1 &\\ & & & {\smash\ddots} & & 2\\ & & & & 1 & \end{bmatrix}\text{ for }n\geq 2,\end{equation} where blank entries are 0. These matrices contain the coefficients from those terms in (\ref{eq:eigen0}), (\ref{eq:eigen1}), and (\ref{eq:eigen2}) that come from $\cB_{i,n}$, not $\cB_{i-1,n}$. To account for the coefficients of terms from $\cB_{i-1,n}$, we define the $n\times (n+1)$ matrix \[\setlength\arraycolsep{3pt}B_n=\begin{bmatrix}0 & 1 & & &\\ 0 & & 1 & &\\ {\smash\vdots}& & & {\smash\ddots} & \\ 0& & & & 1\end{bmatrix}\!,\] the $n\times n$ identity matrix with the $0$ column appended to its left side. Finally, for $n\geq 0$ let \begin{equation}\label{eq:M_n}\setlength\arraycolsep{1.5pt}M_n=\begin{bmatrix}A_n & & & &\\ B_n & A_{n-1} & & &\\ & B_{n-1} & {\smash\ddots} & &\\ & & {\smash\ddots} & A_1 &\\ & & & B_1 & A_0\end{bmatrix}\!.\end{equation} This is a square, almost block diagonal matrix over $R$ with $\frac{1}{2}(n^2+3n+2)$ rows and columns.

Note that we have only considered the effect of multiplying by $x$ and applying $\Phi_x$ to elements of $\cB_{i,n}$ when $i > 0$. Regarding $\cB_{0,n}=\{(x^2-\kappa)^n\}$, applying $\Phi_x$ to $x(x^2-\kappa)^n$ does nothing, and this polynomial does not belong to $\cB_n$. In other words, our effort to express $\Phi_x(xf)$ as a linear combination from $\cB_n$ fails if the coefficient of $(x^2-\kappa)^n$ in $f$ is nonzero.

\begin{proposition}\label{prop:Phi_to_mat}Let $f\in R[x,y,z]$ be a linear combination from $\cB_n$ and let $\bf$ be the corresponding column vector of coefficients. Then \[\Phi_x(xf)=\Phi_x(g)+\omega x(x^2-\kappa)^n,\] where $g$ is the polynomial corresponding to $M_n\bf$ and $\omega$ is the last entry of $\bf$.\end{proposition}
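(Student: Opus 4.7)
The plan is to exploit the $R$-linearity of $\Phi_x$ together with the explicit one-step reductions already worked out in (\ref{eq:eigen0}), (\ref{eq:eigen1}), and (\ref{eq:eigen2}) immediately preceding the statement. Writing $f=\sum_{b\in\cB^n}c_b\,b$, linearity gives $\Phi_x(xf)=\sum_b c_b\,\Phi_x(xb)$, so it suffices to verify the proposition when $f$ is a single basis element and then assemble the column-wise data into the matrix identity encoded by $M_n\bf$.

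For a basis element $b\in\cB^n_i$ with $i\geq 1$, I would walk through three subcases according to where $b$ sits within $\cB^n_i$. If $b$ is the first element (with $k=0$), (\ref{eq:eigen0}) says that $\Phi_x(xb)$ is twice the second element of $\cB^n_i$ and nothing else. If $b$ is a middle element, (\ref{eq:eigen1}) writes $\Phi_x(xb)$ as the sum of its two neighbors in $\cB^n_i$ together with a single element of $\cB^n_{i-1}$. If $b$ is the last element (with $k=i$), (\ref{eq:eigen2}) (after the $\tau_x$-swap enforcing the $y$-degree-dominant convention) yields twice the penultimate element of $\cB^n_i$ plus the last element of $\cB^n_{i-1}$. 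Reading off these coefficients under the chosen ordering produces precisely the tridiagonal block $A_i$ for the within-$\cB^n_i$ contribution and the nearly-identity block $B_i$ for the descent to $\cB^n_{i-1}$; since $\cB^n$ is ordered by decreasing $i$, stacking these blocks yields the form of $M_n$ displayed in (\ref{eq:M_n}).

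The only basis element not covered above is $b=(x^2-\kappa)^n$, the unique member of $\cB^n_0$, which occupies the last slot in our ordering so that its coefficient in $f$ is exactly $\omega$. Here $xb=x(x^2-\kappa)^n$ is a polynomial in $x$ alone, and each reduction step composing $\Phi_x$ (the operations $\rho,\sigma_y,\sigma_z,\tau_x$) requires a monomial to contain at least one of $y,z$ in order to act nontrivially, so $\Phi_x$ fixes $x(x^2-\kappa)^n$. Because $x(x^2-\kappa)^n\notin\cB^n$, this contribution cannot be captured by $M_n\bf$ and must appear separately on the right-hand side as the correction $\omega x(x^2-\kappa)^n$, giving the claimed identity. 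The only real obstacle is the indexing bookkeeping: one must verify that the ``$k^\text{th}$ element'' language of (\ref{eq:eigen1})--(\ref{eq:eigen2}) matches the positions of the $1$'s and $2$'s in $A_i$ and $B_i$ under the decreasing-$j$/increasing-$k$ order, and that the boundary subcases (the doubled entries coming from $\sigma_z$ on the first element and from $\tau_x$ on the last element) line up with the distinguished $2$'s at the ends of the off-diagonals of $A_i$. No substantive new polynomial calculation is required beyond what has already been displayed.
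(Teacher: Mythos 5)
Your proof is correct and follows exactly the route the paper takes: the paper's own proof is a one-line citation of the displayed reductions (\ref{eq:eigen0})--(\ref{eq:eigen2}) together with the observation, stated just before the proposition, that $\Phi_x$ fixes $x(x^2-\kappa)^n$ and that this polynomial is not in $\cB^n$, which is precisely the $\omega$-correction you isolate. Your expanded bookkeeping of how the three cases populate the blocks $A_i$ and $B_i$ is just a more explicit rendering of the same argument.
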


\begin{proof}This is the combination of (\ref{eq:eigen0}), (\ref{eq:eigen1}), and (\ref{eq:eigen2}) and the observation that immediately precedes the proposition.\end{proof}

In light of Proposition \ref{prop:Phi_to_mat}, the goal is to find eigenvectors $\bf$ of $M_n$ with final entry $\omega=0$. This provides polynomials with the following property, which generalizes (\ref{eq:first_phi_x}) to nonzero $\alpha$.

\begin{proposition}\label{prop:eigen_works}Suppose $f\in R[x,y,z]$ satisfies $\Phi_x(xf)=\alpha\Phi_x(f)$ for some $\alpha\in R$. Then $\sum_{\cO_{\tilde{\alpha}}}\!f(\bt)=0$ for any finite first-coordinate orbit $\cO_{\tilde{\alpha}}$ with $\tilde{\alpha}\neq\alpha$.\end{proposition}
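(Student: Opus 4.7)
The idea is to exploit two facts in tandem: (i) the first coordinate is literally constant on $\cO_\alpha$, so multiplying by $x$ is the same as multiplying by the scalar $\alpha$; and (ii) by (\ref{eq:Phi_x_invariance}), the operator $\Phi_x$ preserves polynomial sums over any $\Gamma_x$-invariant subset of $\cM(R)$, and $\cO_\alpha$ is precisely such a subset.

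Concretely, the plan is to compute $\sum_{\cO_\alpha} xf(\bt)$ in two different ways. On the one hand, since $x(\bt) = \alpha$ for every $\bt \in \cO_\alpha$,
\begin{equation*}
\sum_{\bt \in \cO_\alpha} x(\bt) f(\bt) \;=\; \alpha \sum_{\bt \in \cO_\alpha} f(\bt).
\end{equation*}
On the other hand, applying (\ref{eq:Phi_x_invariance}) to $xf$, then invoking the eigenvector hypothesis $\Phi_x(xf) = \lambda \Phi_x(f)$, and finally applying (\ref{eq:Phi_x_invariance}) to $f$ in reverse,
\begin{equation*}
\sum_{\bt \in \cO_\alpha} x(\bt) f(\bt) \;=\; \sum_{\bt \in \cO_\alpha} \Phi_x(xf)(\bt) \;=\; \lambda \sum_{\bt \in \cO_\alpha} \Phi_x(f)(\bt) \;=\; \lambda \sum_{\bt \in \cO_\alpha} f(\bt).
\end{equation*}
Equating the two expressions gives $(\alpha - \lambda) \sum_{\cO_\alpha} f(\bt) = 0$ inside $R$.

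Since $\alpha \neq \lambda$ by hypothesis and $R$ is an integral domain, the factor $\alpha - \lambda$ is a nonzero, hence non-zero-divisor, element of $R$, so it cancels and yields $\sum_{\cO_\alpha} f(\bt) = 0$. There is no real obstacle here beyond keeping track of where each equality lives: the finiteness of $\cO_\alpha$ ensures the sums are honest elements of $R$ (rather than formal series), and the fact that $\cO_\alpha \subseteq \cM(R) \subseteq R^3$ is what lets us interpret the cancellation inside the integral domain $R$ itself.
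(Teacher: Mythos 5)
Your proof is correct and is essentially the paper's argument: both rest on applying the sum-preservation property (\ref{eq:Phi_x_invariance}) together with the eigenrelation $\Phi_x(xf)=\lambda\Phi_x(f)$ and then cancelling $\alpha-\lambda$. The only cosmetic difference is that the paper reduces $(x-\lambda)f$ and divides by $\frac{1}{\alpha-\lambda}$, while you reduce $xf$ and cancel $\alpha-\lambda$ by integral-domain cancellation.
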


\begin{proof}If $\tilde{\alpha}\neq\alpha$ then \begin{align*}\sum_{\bt\in\cO_{\tilde{\alpha}}}f(\bt)&=\frac{1}{\tilde{\alpha}-\alpha}\sum_{\bt\in\cO_{\tilde{\alpha}}}(x-\alpha)f(\bt)\\&=\frac{1}{\tilde{\alpha}-\alpha}\sum_{\bt\in\cO_{\tilde{\alpha}}}\Phi_x((x-\alpha)f)(\bt)\\&=\frac{1}{\tilde{\alpha}-\alpha}\sum_{\bt\in\cO_{\tilde{\alpha}}}(\alpha-\alpha)\Phi_x(f)(\bt)=0,\end{align*} as claimed.\end{proof}

\begin{lemma}\label{lem:eigenvector}Suppose $\textup{char}(R)\centernot\mid n$. Each element of $\Lambda_n$ is an eigenvalue of $A_n$. The eigenvector (unique up to scaling) corresponding to $\zeta+\zeta^{-1}$ is \begin{equation}\label{eq:eigenvector}\renewcommand\arraystretch{1.15}\begin{bmatrix}1 \\ \zeta+\zeta^{-1} \\ \zeta^{2}+\zeta^{-2} \\ \vdots\\ \zeta^{n-1} + \zeta^{1-n} \\ \zeta^{n}\end{bmatrix}.\end{equation}\end{lemma}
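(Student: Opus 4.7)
The plan is to verify the claim directly: fixing $\zeta \in \overline{R}$ with $\zeta^{2n} = 1$ and writing $v$ for the column vector in (\ref{eq:eigenvector}), I will compute $(A_n v)_k$ for each $k \in \{0, 1, \ldots, n\}$ and confirm that it equals $(\zeta+\zeta^{-1}) v_k$. Since $A_n$ is essentially tridiagonal and $v$ has essentially the form $v_k = \zeta^k + \zeta^{-k}$, the heart of the computation is just the two-term identity below, and the entire proof reduces to a small number of row-by-row checks.

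The proof reduces to four kinds of verifications driven by the near-tridiagonal shape of $A_n$. Reading off (\ref{eq:A_n}), $A_n$ has zeros on the diagonal and $1$s on both off-diagonals, \emph{except} that $A_n[1,0] = 2$ at the top and $A_n[n-1, n] = 2$ at the bottom. The eigenvector $v$ is also asymmetric at its ends: its middle entries are the symmetric sums $\zeta^k + \zeta^{-k}$, but $v_0 = 1$ (rather than $2$) and $v_n = \zeta^n$ (rather than $2\zeta^n$). The whole proof amounts to confirming that these two boundary irregularities in the matrix mesh correctly with the two boundary irregularities in $v$.

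For a generic interior row $2 \le k \le n-2$ I would use $(A_n v)_k = v_{k-1} + v_{k+1}$ together with the elementary identity
\[
(\zeta + \zeta^{-1})(\zeta^k + \zeta^{-k}) = (\zeta^{k+1} + \zeta^{-(k+1)}) + (\zeta^{k-1} + \zeta^{-(k-1)}),
\]
which says precisely $\lambda v_k = v_{k-1} + v_{k+1}$. Row $0$ reduces to $(A_n v)_0 = v_1 = \zeta+\zeta^{-1} = \lambda v_0$. Row $1$ exercises the top "$2$": $(A_n v)_1 = 2 v_0 + v_2 = 2 + \zeta^2 + \zeta^{-2} = (\zeta+\zeta^{-1})^2 = \lambda v_1$, where writing $v_0 = 1$ rather than $2$ is exactly what allows the "$2$" in position $(1,0)$ to compensate.

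The relation $\zeta^{2n} = 1$ finally enters only in the last two rows. For row $n$, $(A_n v)_n = v_{n-1} = \zeta^{n-1} + \zeta^{1-n}$, while $\lambda v_n = (\zeta+\zeta^{-1})\zeta^n = \zeta^{n+1} + \zeta^{n-1}$, and these agree because $\zeta^{n+1} = \zeta^{1-n}$. For row $n-1$, the bottom "$2$" is decisive: $(A_n v)_{n-1} = v_{n-2} + 2 v_n = v_{n-2} + 2\zeta^n$, while $\lambda v_{n-1} = v_{n-2} + \zeta^n + \zeta^{-n}$, and these agree because $\zeta^{n} = \zeta^{-n}$ (both are square roots of $\zeta^{2n}=1$ and $\zeta^n \cdot \zeta^n = 1$). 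I do not anticipate any real obstacle here; the lemma is a finite hand-calculation, and the only point requiring care is making sure the two non-generic off-diagonal $2$'s in $A_n$ line up correctly with the two non-generic entries $v_0, v_n$ of the eigenvector under the relation $\zeta^{2n} = 1$.
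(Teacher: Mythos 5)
Your verification is correct and is essentially the paper's own proof: the paper simply cites repeated application of the identity $(\zeta^{i-1}+\zeta^{1-i})+(\zeta^{i+1}+\zeta^{-i-1})=(\zeta+\zeta^{-1})(\zeta^i+\zeta^{-i})$ across the coordinates of $A_n$ times the vector, which is exactly what your row-by-row check spells out, including how the two off-diagonal $2$'s compensate for the halved boundary entries $v_0=1$ and $v_n=\zeta^n$. One tiny caveat: when $n=2$ your rows $1$ and $n-1$ coincide and both $2$'s sit in that single row, so the check there reads $(A_2 v)_1 = 2v_0+2v_2 = 2+2\zeta^2 = (\zeta+\zeta^{-1})^2$ via $\zeta^4=1$; the conclusion is unaffected.
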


\begin{proof}This claim is a repeated application of \[(\zeta^{j-1}+\zeta^{1-j})+(\zeta^{j+1}+\zeta^{-j-1})=(\zeta+\zeta^{-1})(\zeta^j+\zeta^{-j})\] as $j$ ranges over the coordinates of the product of $A_n$ and (\ref{eq:eigenvector}).\end{proof}

\begin{definition}Define $u,v$-coordinates on those points $(x,y,z)\in\cM(R)$ with $x^2\neq\kappa,4$ by \[(x,y,z)=\left(u+u^{-1},\sqrt{\frac{\kappa-x^2}{4-x^2}}(v+v^{-1}),\sqrt{\frac{\kappa-x^2}{4-x^2}}(uv+u^{-1}v^{-1})\right).\] Note that $(u,v)$ and $(u^{-1},v^{-1})$ define the same point.\end{definition}

\begin{lemma}\label{lem:uv-sums}Let $\alpha\in R$ with $\textup{ord}(\alpha)\geq 4$. Suppose $f\in R[x,y,z]$ has canonical form $f^*=f_n(x)y^{2n}+\cdots +f_0(x)$. For $i\geq 0$ define \begin{equation}\label{eq:uv-sums}c_i\coloneqq\begin{dcases}0 & \alpha\not\in\Lambda_i,\\ \sum_{j=i}^n\binom{2j}{j-i}\!\left(\frac{\alpha^2-\kappa}{\alpha^2-4}\right)^{\!j}\!f_j(\alpha) & \alpha\in\Lambda_i\backslash\{\pm\sqrt{\kappa}\},\\ f_i(\alpha) & \alpha=\pm\sqrt{\kappa}\in\Lambda_i.\end{dcases}\end{equation} Then \[\frac{1}{|\cO_{\alpha}|}\sum_{\bt\in\cO_{\alpha}}\!f(\bt)=\begin{dcases}c_0+\sum_{i=1}^nc_i(v^{2i}+v^{-2i}) & \alpha^2\neq\kappa\\\sum_{i=0}^nc_iy^{2i} & \alpha^2=\kappa.\end{dcases}\]\end{lemma}

Remark that the values of $y^{2i}$ and $v^{2i}+v^{-2i}$ are constant on $\cO_{\alpha}$ by Proposition~\ref{prop:orbits}. The formula above is not ill-defined.

\begin{proof}Consider what happens when $f^*=f_j(x)y^{2j}$ and $\alpha^2\neq \kappa$. First we switch to $u,v$-coordinates: \begin{align*}\frac{1}{|\cO_{\alpha}|}\sum_{\bt\in\cO_{\alpha}}f(\bt)&=\frac{1}{|\cO_{\alpha}|}\sum_{\bt\in\cO_{\alpha}}f_j(\alpha)y^{2j}\\&=\frac{1}{|\cO_{\alpha}|}\sum_{\bt\in\cO_{\alpha}}f_j(\alpha)\!\left(\frac{\alpha^2-\kappa}{\alpha^2-4}\right)^{\!j}\!(v+v^{-1})^{2j}\\&=\frac{f_j(\alpha)}{|\cO_{\alpha}|}\!\left(\frac{\alpha^2-\kappa}{\alpha^2-4}\right)^{\!j}\sum_{i=-j}^{j}\binom{2j}{j-i}\sum_{\bt\in\cO_{\alpha}}v^{2i}.\end{align*} By the first case in Proposition \ref{prop:orbits}, the $v$-coordinates in $\cO_\alpha$ run over all powers of a $\textup{ord}(\alpha)^\text{th}$ root of unity (times some constant ``$\vartheta$" that depends on exactly which first-coordinate orbit we are in). Thus the inner sum vanishes when $\textup{ord}(\alpha)\centernot\mid 2i$. And when $\textup{ord}(\alpha)\,|\,2i$, the value of $v^{2i}$ is constant on $\cO_\alpha$, which cancels the denominator $|\cO_{\alpha}|$. So the lemma holds with the coefficients defined by \[c_i=\begin{dcases}0 & \alpha\not\in\Lambda_i,\\  \binom{2j}{j-i}\!\left(\frac{\alpha^2-\kappa}{\alpha^2-4}\right)^{\!j}\!f_j(\alpha) & \alpha\not\in\Lambda_i\end{dcases}\] for $i\geq 1$. Summing over $j$ for the more general $f^*=f_n(x)y^{2n}+\cdots +f_0(x)$ completes the proof when $\alpha^2\neq\kappa$. 

The argument when $\alpha^2=\kappa$ is almost identical, just using the second case of Proposition~\ref{prop:orbits} rather than the first case. \end{proof}

\begin{notation}Given $f$ and $\alpha$ as in Lemma \ref{lem:uv-sums}, let $c_{\alpha,i}(f)$ denote the value of $c_i$ as defined in (\ref{eq:uv-sums}).\end{notation}

Note that we are only defining $c_{\alpha,i}(f)$ when $f^*$ is even as a polynomial in $y$.

\begin{proposition}\label{prop:pull_out}Let $f\in R[x,y,z]$, and suppose $f^*$ is even as a polynomial in $y$. Then $c_{\alpha,i}(gf)=g(\alpha)c_{\alpha,i}(f)$ for any $g\in R[x]$, $i\geq 0$, and $\alpha\in \overline{R}$.\end{proposition}

\begin{proof}If $f^*=f_n(x)y^{2n}+\cdots+f_0(x)$ then $(gf)^*=g(x)f_n(x)y^{2n}+\cdots+g(x)f_0(x)$. The claim then follows from the formula for $c_{\alpha,i}(gf)$ in (\ref{eq:uv-sums}).\end{proof}

\begin{proposition}\label{prop:P_equiv}Suppose $\textup{char}(R)=0$. Let $f\in \overline{R}[x,y,z]$ have canonical form $f_n(x)y^{2n}+f_{n-1}(x)y^{2n-2}+\cdots +f_0(x)$. The following are equivalent: \begin{enumerate}\item $f\in \cP_{\!x}(R,\infty)$ (see Notation \ref{not:P_x(R,d)}), \item $c_{\alpha,i}(f)=0$ for all but finitely many pairs $\alpha,i$, \item $c_{\alpha,0}(f)=0$ for all $\alpha\in R$, \item $c_{\alpha,0}(f)=0$ for all but finitely many $\alpha\in R$,\item and $\sum_j\! \binom{2j}{j}(\frac{x^2-\kappa}{x^2-4})^jf_j(x)$ is identically 0.\end{enumerate} \end{proposition}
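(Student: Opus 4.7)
The plan is to route the proof through Lemma~\ref{lem:uv-sums}, which decomposes the normalized orbit average $|\cO_\lambda|^{-1}\sum_{\bt\in\cO_\lambda}f(\bt)$ as a finite combination of harmonics indexed by the $c_{\lambda,n}(f)$---Fourier-type coefficients in the orbit parameter $v$. The opening step is to identify the constant harmonic $c_{\lambda,0}(f)$ with the rational function in~(5) evaluated at $\lambda$: writing each canonical-form term $f_j(x^2)y^{2j}$ as $\bigl(\tfrac{\lambda^2-\kappa}{\lambda^2-4}\bigr)^jf_j(\lambda^2)(v+v^{-1})^{2j}$ and extracting the $v^0$-contribution produces exactly $\sum_j\binom{2j}{j}\bigl(\tfrac{\lambda^2-\kappa}{\lambda^2-4}\bigr)^jf_j(\lambda^2)$, which is (5) evaluated at $\lambda$. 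Proposition~\ref{prop:pull_out} provides a useful sanity check that the identification respects multiplication by elements of $R[x^2]$.

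With that identification in hand, the equivalences split into two overlapping clusters sharing (4). The first cluster (3)$\Leftrightarrow$(4)$\Leftrightarrow$(5) is essentially polynomial arithmetic: (5)$\Rightarrow$(3) is evaluation, (3)$\Rightarrow$(4) is immediate, and (4)$\Rightarrow$(5) follows by clearing the denominator $(x^2-4)^m$ in (5), turning it into a polynomial in $x^2$ over an infinite ring (by $\text{char}(R)=0$), where vanishing at all but finitely many $\lambda\in R$ forces identical vanishing. The second cluster (1)$\Leftrightarrow$(2)$\Leftrightarrow$(4) relies on the lemma's collapse: for $\alpha\in R$ with $\text{ord}(\alpha)>2m$ (which holds for all but finitely many $\alpha$, since rotation orders at most $2m$ cut out a finite subset of $\overline{R}$ and the exclusion $\alpha^2\in\{4,\kappa\}$ removes finitely many more), the orbit average reduces to $c_{\alpha,0}(f)$, so (4)$\Leftrightarrow$(1) up to a finite discrepancy; and (2)$\Leftrightarrow$(4) because the $n\geq 1$ pairs automatically contribute only finitely many nonzero $c_{\lambda,n}(f)$---each $\hathat{\Lambda}_n$ is finite and $c_{\lambda,n}(f)=0$ for $n>m$ by the degree of $f^*$.

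The main obstacle I anticipate is the first step itself: pinning down the precise identification of $c_{\lambda,0}(f)$ with the expression in~(5). One must disentangle which $\bigl(\tfrac{\lambda^2-\kappa}{\lambda^2-4}\bigr)$ factors sit inside the $c_i$ versus outside in $(v^{2i}+v^{-2i})$, and treat the $i=0$ harmonic consistently with the constant-in-$v$ convention. Once that is settled, the remainder is routine---standard ``polynomial vanishing at infinitely many points'' arguments in $R[x^2]$ together with the lemma's Fourier collapse. Minor care is needed at the degenerate values $\lambda^2\in\{4,\kappa\}$, but both are finite in number and absorbed into the ``all but finitely many'' phrasing throughout.
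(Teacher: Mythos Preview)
Your proposal is correct and follows essentially the same approach as the paper: both route through Lemma~\ref{lem:uv-sums}, identify $c_{\lambda,0}(f)$ with the rational expression in~(5) evaluated at~$\lambda$, use the infinite-domain polynomial vanishing argument for (3)$\Leftrightarrow$(4)$\Leftrightarrow$(5), and invoke the finiteness of $\hathat{\Lambda}_n$ and the degree bound $n\leq m$ to handle the higher harmonics. Your organization differs only superficially---you hub the equivalences at~(4) via a direct ``large order collapses to the constant harmonic'' argument for (1)$\Leftrightarrow$(4), whereas the paper first shows (1)$\Leftrightarrow$(2) via the full Fourier vanishing and then closes the cycle with (3)$\Rightarrow$(2)---but the underlying ideas are identical.
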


\begin{proof}First observe that Lemma \ref{lem:uv-sums} expresses $\sum_{\cO_\alpha}\!f(\bt)$ in terms of the free variables $v$ and $y$ in an infinite integral domain. So for some fixed $\alpha$, $\sum_{\cO_\alpha}\!f(\bt)$ can only vanish for all $\cO_{\alpha}$ if $c_{\alpha,i}(f)=0$ for all $i$. In particular, (1) and (2) are equivalent.

Next, (\ref{eq:uv-sums}) defines $c_{\alpha,0}(f)$ as the rational expression in (5) evaluated at $x=\alpha$. Any rational expression with infinitely many roots must be identically $0$, so (3), (4) and (5) are equivalent, and they and are implied by (1). 

To see that (3) implies (2), observe from (\ref{eq:uv-sums}) that when $2i > \deg_y f^*= 2n$, $c_{\alpha,i}(f)=0$ for all $\alpha\in R$. There are only finitely many $i$ with $0 < i \leq n$, and for each one $\widehat{\Lambda}_i$ is finite (crucially using $i > 0$).\end{proof} 

\begin{lemma}\label{lem:norm_form}For any $n\geq 2$ and any $\alpha=\zeta+\zeta^{-1}\in\widehat{\Lambda}_n$, \[\prod_{\mathclap{\tilde{\alpha}\in\Lambda_n\backslash\{\alpha\}}}\,(\alpha - \tilde{\alpha}) = 2n\zeta^n .\]\end{lemma}

\begin{proof}Let $\tilde{\zeta}$ be a primitive $2n^\text{th}$ root of unity, and let $1\leq i\leq n-1$ satisfy $\tilde{\zeta}^i = \zeta$. We have \begin{flalign*}&&\prod_{\mathclap{\tilde{\alpha}\in\Lambda_n\backslash\{\alpha\}}}\,(\alpha - \tilde{\alpha})&=\prod_{\substack{j=0\\ j\neq i}}^{n}((\tilde{\zeta}^i+\tilde{\zeta}^{-i})-(\tilde{\zeta}^j+\tilde{\zeta}^{-j}))&&\\&& &=\prod_{\substack{j=0\\j\neq i}}^n\zeta^{-1}\big(\tilde{\zeta}^{i+j}-1\big)\big(\tilde{\zeta}^{i-j}-1\big) &&\\&& &=\zeta^n(\tilde{\zeta}^i-1)(\tilde{\zeta}^{i+j}-1)\!\!\prod_{\substack{j=1\\j\neq 2i}}^{2n-1}\!(\tilde{\zeta}^j-1)&& \text{as }\tilde{\zeta}^{i\pm j}\text{ are distinct if }j\neq0,n\\&& &=-\zeta^n(\tilde{\zeta}^{2i}-1)\!\prod_{\substack{j=1\\j\neq 2i}}^{2n-1}(\tilde{\zeta}^j-1) && \\ && &=\zeta^n\!\prod_{j=1}^{2n-1}(1-\tilde{\zeta}^j).&&\end{flalign*} The final product is $1+x+\cdots +x^{2n-1}$ evaluated at $x=1$, which is $2n$.\end{proof}

\begin{theorem}\label{thm:eigenvector}Suppose $\textup{char}(R)\centernot\mid n$. Let $\alpha=\zeta+\zeta^{-1}\in\widehat{\Lambda}_n$. After perhaps scaling (\ref{eq:eigenvector}), it extends uniquely to an eigenvector of $M_n$ over $F[\zeta]$ such that the corresponding (with respect to $\cB_n$) polynomial $\scp$ satisfies $\Phi_x(x\scp)=\alpha\Phi_x(\scp)$ and \begin{equation}\label{eq:sum_of_p}\frac{1}{|\cO_\alpha|}\!\sum_{\bt\in\cO_\alpha}\!\scp(\bt) =\begin{dcases}v^{2n}+v^{-2n} & \alpha^2\neq\kappa\\y^{2n} & \alpha^2=\kappa.\end{dcases}\end{equation} Furthermore, if $\scp^* = p_n(x)y^{2n}+\cdots+p_0(x)$, then \begin{equation}\label{eq:lead_coef}p_n(x)=\begin{dcases}\frac{\zeta^n}{2n}\!\left(\frac{\alpha^2-4}{\alpha^2-\kappa}\right)^{\!\!n}\!\!\!\!\!\prod_{\tilde{\alpha}\in\Lambda_n\backslash\{\alpha\}}\!\!\!\!\!(x-\tilde{\alpha}) & \alpha^2\neq\kappa\\ \frac{\zeta^n}{2n}\!\!\prod_{\tilde{\alpha}\in\Lambda_n\backslash\{\alpha\}}\!\!\!\!\!(x-\tilde{\alpha}) & \alpha^2=\kappa.\end{dcases}\end{equation}\end{theorem}

\begin{proof}We proceed by induction on $n$. If $n=2$ (the base case), or more generally if $\zeta$ is a \textit{primitive} $2n^{\text{th}}$ root of unity, then Lemma \ref{lem:eigenvector} tells us $A_n$ is the only matrix along the diagonal of $M_n$ with $\alpha$ as an eigenvalue. In particular, $\alpha$ has algebraic multiplicity one for $M_n$, so linear algebra provides an eigenvector $\bp$ of $M_n$ with the first $n+1$ entries matching (\ref{eq:eigenvector}). (We will scale $\bp$ appropriately so that (\ref{eq:sum_of_p}) and (\ref{eq:lead_coef}) hold at the end of the proof.)

Now consider the possibility that $\zeta$ is a $2\tilde{n}^\text{th}$ root of unity for some positive $\tilde{n}<n$. As an induction hypothesis, assume the present theorem holds in smaller dimensions. Unlike in the base case, linear algebra makes no eigenvector guarantee; we only know that (\ref{eq:eigenvector}) extends to a vector $\bp$ such that \begin{equation}\label{eq:tildep_def}M_n\bp=\alpha\bp+\tilde{\bp}\end{equation} for some $\tilde{\bp}$ in the $\alpha$-generalized eigenspace of $M_n$.

Since (\ref{eq:eigenvector}) is an eigenvector of $A_n$, we see from $\tilde{\bp}=M_n\bp-\alpha\bp$ that the first $n+1$ coefficients of $\tilde{\bp}$ are $0$. These initial zeros may be removed to view $\tilde{\bp}$ as an element of the $\alpha$-generalized eigenspace of $M_{n-1}$, which is a genuine eigenspace by the induction hypothesis. So up to some scalar, say $\lambda$, the first nonzero entries of $\tilde{\bp}$ must match (\ref{eq:eigenvector}) in some dimension $\tilde{n}<n$. Assume by way of contradiction that $\lambda\neq 0$, meaning $\tilde{\bp}$ is an eigenvector. Using the induction hypothesis again, $\tilde{\bp}$ corresponds to some $\lambda\tilde{\scp}$ with respect to $\cB_{\tilde{n}}$, where $\tilde{\scp}$ satisfies (\ref{eq:sum_of_p}) with $n$ replaced by $\tilde{n}$. Comparing definitions of $\cB_n$ and $\cB_{\tilde{n}}$, we see that $\tilde{\bp}$ corresponds to $\lambda(x^2-\kappa)^{n-\tilde{n}}\tilde{\scp}$ with respect to $\cB_n$. 

Now let $\scp$ be the polynomial corresponding to $\bp$ with respect to $\cB_n$ and let $\omega\in\overline{F}$ denote the last entry of $\bp$. Then (\ref{eq:tildep_def}) combines with Proposition \ref{prop:Phi_to_mat}  to give \begin{equation}\label{eq:Phi(p)}\Phi_x(x\scp)=\alpha \Phi_x(\scp) +\omega x(x^2-\kappa)^n + \lambda\Phi_x((x^2-\kappa)^{n-\tilde{n}}\tilde{\scp}).\end{equation} If $\alpha^2\neq \kappa$ then \begin{flalign*}&& &\frac{\lambda\tilde{n}(\alpha^2-\kappa)^n}{(\alpha^2-4)^{\tilde{n}}}(v^{2\tilde{n}}+v^{-2\tilde{n}})=\frac{\lambda(\alpha^2-\kappa)^{n-\tilde{n}}}{|\cO_\alpha|}\!\sum_{\bt\in\cO_\alpha}\!\tilde{\scp}(\bt)&&\text{by (\ref{eq:sum_of_p}) for }\tilde{\scp}\\ && & \hspace{1cm}=\frac{\lambda}{|\cO_\alpha|}\!\sum_{\bt\in\cO_\alpha}\Phi_x((x^2-\kappa)^{n-\tilde{n}}\tilde{\scp})(\bt)&& \\&& & \hspace{1cm}=\frac{1}{|\cO_\alpha|}\!\sum_{\bt\in\cO_\alpha}\!\!\big(\Phi_x((x-\alpha)\scp)(\bt)-\omega x(x^2-\kappa)^n\big)&& \text{by (\ref{eq:Phi(p)})}\\&& &\hspace{1cm}=\omega\alpha(\alpha^2-\kappa)^n&& \text{since }x=\alpha\text{ in }\cO_\alpha.\end{flalign*} The final expression is constant in the free variable $v$ while the initial expression is not. This is a contradiction, so $\lambda$ must be $0$ and $\tilde{\scp}$ must be the zero vector. But now note that setting $\lambda=0$ above proves $\omega\alpha(\alpha^2-\kappa)^n=0$. So if $\alpha^2\neq \kappa$ then $\omega=0$. The definition of $M_n$ is independent of the value of $\kappa$, so $\bp$ is an eigenvector with final entry $\omega=0$ even when $\alpha^2=\kappa$. Thus $\Phi_x(x\scp)=\alpha\Phi_x(\scp)$ as desired. 

Next let $\scp^*=p_n(x)y^{2n}+\cdots +p_0(x)$, and consider the expression for $\sum_{\cO_\alpha}\!\scp(\bt)$ provided by Lemma \ref{lem:uv-sums}. The only powers of $y$ or $v$ that appear are those with exponent divisible by the order of $\zeta$. So by the induction hypothesis, we may adjust $\scp$ by the $\alpha$-eigenvectors from smaller dimensions in order to cancel all but the top term in Lemma \ref{lem:uv-sums}'s formula. That is, we may assume \begin{equation}\label{eq:lambda_sum}\frac{1}{|\cO_{\alpha}|}\!\sum_{\bt\in\cO_{\alpha}}\scp(\bt)=\begin{dcases}p_n(\alpha)\!\left(\frac{\alpha^2-\kappa}{\alpha^2-4}\right)^{\!\!n}\!\!(v^{2n}+v^{-2n}) & \alpha^2\neq\kappa\\p_n(\alpha)y^{2n} & \alpha^2=\kappa.\end{dcases}\end{equation} It remains only to find a formula for $p_n(x)$ and use it to find the right scalar to make (\ref{eq:sum_of_p}) and (\ref{eq:lead_coef}) true. 

By Proposition \ref{prop:canonical_degrees}, the $y$-degree of the canonical form of any monomial from $\cB_{i,n}$ is $2i$. In particular, $p_n(x)$ is completely determined by the coefficients of monomials from $\cB_{n,n}$, which are given in (\ref{eq:eigenvector}). From this, and again using Proposition \ref{prop:canonical_degrees}, we see that $p_n(x)$ has degree $n$ and leading coefficient $\frac{\zeta^{n}}{2}$ due to the term $\zeta^ny^nz^n$ present in $\scp$. 

Next we determine the roots of $p_n(x)$. Suppose $\tilde{\alpha}\in\widehat{\Lambda}\backslash\{\alpha\}$. On the one hand, Proposition \ref{prop:eigen_works} says the sum of $\scp$ over any first-coordinate orbit $\cO_{\tilde{\alpha}}$ vanishes. On the other hand, Lemma \ref{lem:uv-sums} expresses such a sum as a Laurent polynomial in the free variable $v$ (if $\tilde{\alpha}^2\neq\kappa$) or $y$ (if $\tilde{\alpha}^2=\kappa$). Thus every coefficient $c_{\tilde{\alpha},i}(\scp)$ must be $0$. Since $c_{\tilde{\alpha},n}(\scp)$ is a nonzero multiple of $p_n(\tilde{\alpha})$, this forces $p_n(\tilde{\alpha})=0$. This accounts for $|\widehat{\Lambda}\backslash\{\alpha\}|=n-2$ roots. We claim that the two remaining roots of $p_n(x)$ are $2$ and $-2$. Proposition \ref{prop:canonical_degrees} says we may compute $p_n(x)\,\text{mod}\,(x^2-4)$ by replacing each power of $z$ in $y^{2n}+ \alpha y^{2n-2}z^2+\cdots +\zeta^{n}y^nz^n$ (the coefficients from (\ref{eq:eigenvector})) with the same power of $\frac{1}{2}xy$. So \[p_n(x)\equiv 1 +\sum_{j=1}^{n-1}\frac{(\zeta^j+\zeta^{-j})x^j}{2^j}+\frac{\zeta^nx^n}{2^n}\;\text{mod}\,(x^2-4).\] When $x=\pm 2$, the right side above is simply the sum over $-n < j\leq n$ of $(\pm\zeta)^j$, which is $0$ since $\zeta\neq \pm1$. This proves that \begin{equation}\label{eq:p_n(x)}p_n(x) = \frac{\zeta^n}{2}\!\!\!\prod_{\tilde{\alpha}\in\Lambda_n\backslash\{\alpha\}}\!\!\!\!\!(x-\tilde{\alpha}).\end{equation} By Lemma \ref{lem:norm_form}, $p_n(\alpha)=n$. In particular, scaling $\scp$ by $\frac{1}{n}(\frac{\alpha^2-4}{\alpha^2-\kappa})^n$ when $\alpha^2\neq\kappa$ or $\frac{1}{n}$ when $\alpha^2=\kappa$ turns (\ref{eq:lambda_sum}) into (\ref{eq:sum_of_p}) and it turns (\ref{eq:p_n(x)}) into (\ref{eq:lead_coef}).\end{proof}

\begin{notation}For $n\geq 2$ with $\text{char}(R)\centernot\mid n$ and $\alpha\in\widehat{\Lambda}_n$, let $\scp_{\alpha,n}$ denote the polynomial $\scp$ in the statement of Theorem \ref{thm:eigenvector}. Also let $\scp_{\alpha,n}^+=\frac{1}{2}(\scp_{\alpha,n}+p_{-\alpha,n})$.\end{notation}

Note that every other entry in (\ref{eq:eigenvector}) is negated when $\alpha$ is replaced by $-\alpha$. From the structure of the matrices $A_n$ and $B_n$, it is not hard to check that this alternating pattern continues along the coefficients of $\scp_{\alpha,n}$ and $\scp_{-\alpha,n}$. In other words, $\scp_{\alpha,n}^+$ is obtained from $\scp_{\alpha,n}$ by deleting every other monomial, only keeping those with even powers of $y$ and $z$. By Proposition \ref{prop:degree}, we are keeping precisely those monomials that reduce to even polynomials in $x$.

\begin{corollary}For any $n\geq 2$ and $\alpha\in\widehat{\Lambda}_n$, $\scp_{\alpha,n}$ and $\scp_{\alpha,n}^+$ are elements of $\cP_{\!x}(F,\frac{1}{2}\textup{ord}(\alpha))$.\end{corollary}

\begin{proof}This is follows from Proposition \ref{prop:eigen_works} and the definition of $\cP_{\!x}(F,d)$.\end{proof}

\begin{corollary}\label{cor:p_lambda,n_degree}For any $n\geq 2$ and $\alpha\in\widehat{\Lambda}_n$, $\Phi(\scp_{\alpha,n})$ and $\Phi(\scp_{\alpha,n}^+)$ are monic of degree $2n$.\end{corollary}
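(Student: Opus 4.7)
The plan is to identify the monomials of $\scp_{\lambda,n}^+$ that can contribute to the $x^{2n}$ coefficient of $\Phi(\scp_{\lambda,n}^+)$, then sum those contributions.

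First, I would expand each basis element $(x^2-\kappa)^{n-i}y^{2j'}z^{2k'}$ of $\cB^n$ (with $j'+k'=i$ and $j'\geq k'$; only even $y,z$ exponents appear, since $\scp_{\lambda,n}^+\in\overline{F}[x^2,y^2,z^2]$) into monomials $x^{2s}y^{2j'}z^{2k'}$ for $0\leq s\leq n-i$. By Proposition \ref{prop:degree}, each such monomial has $\Phi$-degree at most $2\max\{s,j',k'\}+2\min\{s,j',k'\}$. A short case analysis on the relative ordering of $s$, $j'$, and $k'$ shows that the maximum of this $\Phi$-degree over $s\in\{0,\dots,n-i\}$ is $\max(2i,\,2(n-i+k'))$ when $n-i\geq 1$, and equals $2j'$ when $n-i=0$.

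From these bounds, only two elements of $\cB^n$ yield a $\Phi$-image of degree $2n$: the element $(x^2-\kappa)^n\in\cB_0^n$, for which $\Phi((x^2-\kappa)^n)=(x^2-\kappa)^n$ is itself monic of degree $2n$; and the element $y^{2n}\in\cB_n^n$, for which $\Phi(y^{2n})=\tau_z(y^{2n})=x^{2n}$. Every other basis element of $\cB^n$ produces a polynomial of degree strictly less than $2n$ under $\Phi$ and hence does not affect the $x^{2n}$ coefficient of $\Phi(\scp_{\lambda,n}^+)$.

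Finally, I would read off the coefficients of these two distinguished basis elements in $\scp_{\lambda,n}^+$. The coefficient of $(x^2-\kappa)^n$ is the last entry of the eigenvector $\bp$ from the proof of Theorem \ref{thm:eigenvector}, which was shown there to be $\omega=0$; the coefficient of $y^{2n}$ is the first entry of $\bp$, which equals $1$ by (\ref{eq:eigenvector}) and is unaffected by the averaging $\scp_{\lambda,n}^+=\tfrac{1}{2}(\scp_{\lambda,n}+\scp_{-\lambda,n})$ since it is independent of the sign of $\lambda$. Summing yields a leading coefficient of $0\cdot 1+1\cdot 1=1$, so $\Phi(\scp_{\lambda,n}^+)$ is monic of degree exactly $2n$. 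The main obstacle is the case analysis of the first paragraph: one must carefully rule out any degree-$2n$ contribution from the intermediate blocks $\cB_i^n$ with $0<i<n$, whose coefficients in $\scp_{\lambda,n}^+$ we do not know explicitly.
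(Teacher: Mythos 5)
Your proof is correct and takes essentially the same route as the paper: the paper likewise notes that $\omega=0$ makes $\scp_{\lambda,n}$ a combination of $\cB^n\backslash\cB^n_0$, that $y^{2n}$ is the only remaining basis element whose $\Phi$-reduction reaches degree $2n$, and that its coefficient $\tfrac{1}{2}(1+1)=1$ gives the monic leading term. Your explicit case analysis via Proposition \ref{prop:degree} merely fills in the degree bound the paper asserts in one line.
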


\begin{proof}We know $\scp_{\alpha,n}$ is a linear combination of elements of $\cB_n\backslash\cB_{0,n}$. By Proposition \ref{prop:degree}, the only element of $\cB_n\backslash\cB_{0,n}$ that has a $\Phi$-reduction of degree at least $2n$ is $y^{2n}$, which appears in $\scp_{\alpha,n}$ with coefficient $1$ (the first entry in (\ref{eq:eigenvector})). Since $\Phi(y^{2n})=x^{2n}$, the claim is proved.\end{proof}

\begin{corollary}\label{cor:eigen_basis}Suppose $\textup{char}(R)=0$. Let $f\in \overline{R}[x,y,z]$, and suppose $f^*$ is even as a polynomial in $y$. Then $f\in\cP_{\!x}(R,\infty)$ if and only if \[\Phi_x(f)=\sum_{i=2}^\infty\sum_{\alpha\in\widehat{\Lambda}_i}\!c_{\alpha,i}(f)\Phi_x(\scp_{\alpha,i}),\] where the $c_{\alpha,i}(f)$ are the unique coefficients for which this equation holds. In this case, $f\in\cP_{\!x}(R,d)$ for some $d\geq 2$ if and only if $c_{\alpha,i}(f)=0$ whenever $2d\centernot\mid \textup{ord}(\alpha)$.\end{corollary}

\begin{proof}First recall that if $2i > \deg_y f^*$  then $c_{\alpha,i}(f)=0$ by (\ref{eq:uv-sums}). This means there are only finitely many pairs $\alpha,i$ with $i\geq 2$ and $\alpha\in\smash{\widehat{\Lambda}_i}$ for which $c_{\alpha,i}(f)$ could possibly be nonzero. Thus if the summation formula holds, $\Phi_x(f)$ is a finite combination of polynomials $\Phi_x(\scp_{\alpha,i})$, and $\sum_{\cO_x}\!\!\Phi_x(\scp_{\alpha,i})(\bt)=\sum_{\cO_x}\!\scp_{\alpha,i}(\bt)$ is only nonzero on a first-coordinate orbit $\cO_x$ if $x=\alpha$ by Proposition \ref{prop:eigen_works}. Thus $\sum_{\cO_x}\!f(\bt)=\sum_{\cO_x}\!\!\Phi_x(f)(\bt)$ can only be nonzero for finitely many values of $x$. That is what it means to belong to $\cP_{\!x}(R,\infty)$.

Conversely, assume $f\in\cP_{\!x}(R,\infty)$. Let $f^*=f_n(x)y^{2n}+\cdots+f_0(x)$, and let $p_{\alpha}(x)$ denote the coefficient of $y^{2n}$ in $\scp_{\alpha,n}^*$ for $\alpha\in\widehat{\Lambda}_n$. From the formula in (\ref{eq:lead_coef}), the only common roots among the $p_{\alpha}$ as $\alpha$ ranges over $\widehat{\Lambda}_n$ are 2 and $-2$, meaning these polynomials generate the ideal $(x^2-4)$ in $\overline{F}[x]$. But by Proposition \ref{prop:P_equiv}, specifically (1) implies (5), $x^2-4$ must divide $f_n(x)$. Thus there exist $g_{\alpha,n}(x)\in \overline{F}[x]$ such that $f^*-\sum_{\widehat{\Lambda}_n}g_{\alpha,n}\scp_{\alpha,n}^*$ has degree $2n-2$ in the variable $y$. And since $\sum_{\widehat{\Lambda}_n}g_{\alpha,n}\scp_{\alpha,n}^*$ belongs to $\cP_{\!x}(F,\infty)$, we can repeat this process for $n-1$, $n-2$,..., $2$. (We must stop after $2$ because there is no such thing as ``$\scp_{\alpha,1}$".) Thus for the right choice of polynomials $g_{\alpha,i}$, we see that \[f^*-
\sum_{i=2}^n\sum_{\alpha\in\widehat{\Lambda}_i}g_{\alpha,i}\scp_{\alpha,i}^*\] has degree at most $2$ in $y$. Since the sum above still lies in $\cP_{\!x}(F,\infty)$, Proposition \ref{prop:P_equiv}, specifically (1) implies (5), says it must take the form $\tilde{f}(x)(x^2-4)y^2-2\tilde{f}(x)(x^2-\kappa)$ for some $\tilde{f}\in\overline{F}[x]$. It is quick to check that the image of such a polynomial under $\Phi_x$ is the zero polynomial. Thus \begin{flalign*}&& 0&=\Phi_x(f^*)-
\sum_{i=2}^n\sum_{\alpha\in\widehat{\Lambda}_i}\Phi_x(g_{\alpha,i}\scp_{\alpha,i}^*) && \text{by linearity of }\Phi\\&& &=\Phi_x(f^*)-\sum_{i=2}^n\sum_{\alpha\in\widehat{\Lambda}_i}\Phi_x((g_{\alpha,i}\scp_{\alpha,i})^*) && \text{since }g_{\alpha,i}\in\overline{F}[x]\\&& &=\Phi_x(f)-\sum_{i=2}^n\sum_{\alpha\in\widehat{\Lambda}_i}\Phi_x(g_{\alpha,i}\scp_{\alpha,i}) && \text{by Proposition \ref{prop:Phi_x(f^*)}}\\&& &=\Phi_x(f)-\sum_{i=2}^n\sum_{\alpha\in\widehat{\Lambda}_i}g_{\alpha,i}(\alpha)\Phi_x(\scp_{\alpha,i}) &&\text{by Proposition \ref{prop:pull_out}.}\end{flalign*} 

Now, as a consequence of Proposition \ref{prop:eigen_works} and (\ref{eq:sum_of_p}), we know that $\sum_{\cO_{\tilde{\alpha}}}\!\scp_{\alpha,i}(\bt) = 0$ for all first-coordinate orbits $\cO_{\tilde{\alpha}}$ if and only if $\tilde{\alpha}\neq\alpha$. Combining this with the equation above gives the following for any fixed $\alpha\neq \pm 2$ of finite order: \[\frac{1}{|\cO_\alpha|}\sum_{\bt\in\cO_\alpha}\Phi_x(f)(\bt)=\frac{1}{|\cO_\alpha|}\sum_{i=2}^ng_{\alpha,i}(\alpha)\Phi_x(\scp_{\alpha,i})(\bt).\] Assuming $\alpha^2\neq \kappa$, Lemma \ref{lem:uv-sums} expresses the left-hand side as \[\sum_{i=2}^nc_{\alpha,i}(f)(v^{2i}+v^{-2i}),\] where $c_{\alpha,0}(f)=0$ has been omitted since $f\in\cP_x(R,\infty)$, as has $c_{\alpha,1}(f)=0$ since $\alpha\not\in\Lambda_1 = \{\pm2\}$. But then the right-hand side can be evaluated using (\ref{eq:sum_of_p}). It equals \[\sum_{i=2}^ng_{\alpha,i}(\alpha)(v^{2i}+v^{-2i}).\] The last two expressions must be equal for any assignment of the free variable $v$ from the infinite domain $R$. This forces $c_{\alpha,i}(f) = g_{\alpha,i}(\alpha)$, thereby proving our formula.

The corollary's final claim regarding membership in $\cP(R,d)$ also follows  from the fact that $\sum_{\cO_{\tilde{\alpha}}}\!\scp_{\alpha,i}(\bt) = 0$ for all first-coordinate orbits $\cO_{\tilde{\alpha}}$ if and only if $\tilde{\alpha}\neq\alpha$. \end{proof}

When $f^*$ is also even as a polynomial in $x$ (so $f^*\in R[x^2,y^2]$) we have $c_{\alpha,i}(f)=c_{-\alpha,i}(f)$. Hence this last corollary allows us to express $\Phi_x(f)$ as a linear combination of $\Phi_x(\scp_{\alpha,i}^+)$.

\subsection{Another partial formula} Interestingly, we will compute more coefficients of $\Phi(\scp_{\alpha,n}^+)$ than we will of $\scp_{\alpha,n}^+$.





\begin{lemma}\label{lem:binom_int}For any positive integers $j$ and $n$, $\frac{2n}{n+j}\binom{n+j}{n-j}$ is an integer.\end{lemma}

\begin{proof}Let $p$ be a prime, and let $v_p(j)$ and $v_p(n)$ denote the valuations of $j$ and $n$ at $p$. Since $\smash{\frac{2n}{n+j}\binom{n+j}{n-j} = \frac{n}{j}\binom{n+j-1}{2j-1}}$, the task is to show that $v_p(\smash{\binom{n+j-1}{2j-1}})\geq v_p(j)-v_p(n)$. The inequality is immediate if $v_p(j)\leq v_p(n)$, so assume otherwise. Kummer's theorem states that $v_p(\smash{\binom{n+j-1}{2j-1}})$ is the number of carries when $n-j$ and $2j-1$ are added in base $p$. The coefficients of $p^0,\dots ,p^{v_p(j)-1}$ when $2j-1$ is written in base $p$ are all $p-1$, and the coefficient of $p^{v_p(n)}$ when $n-j$ is written in base $p$ is nonzero. Thus a carry occurs at the places $p^{v_p(n)},\dots,p^{v_p(j)-1}$, for a total of $v_p(j)-v_p(n)$ carries, as needed.\end{proof}

\begin{lemma}\label{lem:overZ}Suppose $\textup{char}(R)\centernot\mid n$. Let $m$ and $n$ be integers with $n > m \geq 0$. Define \[f_j(x^2)=\begin{dcases}0 & j < m \\[0.1cm] \frac{2n-2m}{n}\binom{n+m}{n-m}(\kappa-x^2)^{n-m} & j = m\\[0.1cm] \frac{2n}{n+j}\binom{n+j}{n-j}(x^2-4)^{j-m}(\kappa-x^2)^{n-j} & j > m,\end{dcases}\] and let $f=\sum_j f_jy^{2j}$. Then $f\in \cP_{\!x}(F,\infty)$ with $c_{\alpha,i}(f)=0$ if $m < i < n$.\end{lemma}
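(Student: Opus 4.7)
The plan is to translate both assertions into explicit combinatorial identities using Proposition~\ref{prop:P_equiv} and Lemma~\ref{lem:uv-sums}, then verify those identities.

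By Proposition~\ref{prop:P_equiv}(5), the claim $f\in\cP_x(R,\infty)$ is equivalent to the vanishing of $\sum_i \binom{2i}{i}\bigl(\tfrac{x^2-\kappa}{x^2-4}\bigr)^i f_i(x^2)$. Plugging in the explicit $f_i$, every nonzero summand acquires the common factor $(x^2-\kappa)^n/(x^2-4)^m$: for $i=m$ the factor $(x^2-\kappa)^{n-m}$ in $f_m$ combines with $\bigl(\tfrac{x^2-\kappa}{x^2-4}\bigr)^m$, and for $i>m$ the powers of $(x^2-4)$ and $(x^2-\kappa)$ balance similarly. Pulling this factor out, vanishing reduces to the scalar identity
\[\frac{(-1)^m(n-m)}{n}\binom{2m}{m}\binom{n+m}{2m}+\sum_{i=m+1}^n\frac{n(-1)^i}{n+i}\binom{2i}{i}\binom{n+i}{2i}=0. \qquad (*)\]
For the second assertion, applying Lemma~\ref{lem:uv-sums} to compute $c_{\lambda,i}(f)$ when $m<i<n$, the hypothesis $i>m$ forces every $j\geq i$ in the resulting sum to use the uniform $j>m$ formula for $f_j$. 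Pulling out the common factor $(\lambda^2-\kappa)^{n-i}(\lambda^2-4)^{i-m}$ reduces the vanishing of $c_{\lambda,i}(f)$ to
\[\sum_{j=i}^n\frac{n(-1)^j}{n+j}\binom{n+j}{2j}\binom{2j}{j-i}=0 \quad\text{for every } 0<i<n. \qquad (**)\]

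Identity $(*)$ admits a clean inductive proof. Setting $a_i=\binom{2i}{i}\binom{n+i}{2i}$, a short factorial calculation yields the ratio $a_i/a_{i-1}=(n+i)(n-i+1)/i^2$. With $S_m\coloneqq\sum_{i=0}^m\tfrac{(-1)^i n}{n+i}a_i$, a one-step induction on $m$ then shows $S_m=\tfrac{(-1)^m(n-m)}{n}a_m$; the inductive step reduces precisely to the cross-multiplied form of the ratio identity. Evaluating at $m=n$ gives $S_n=0$, and $(*)$ is recovered by combining this with the formula at the given $m$.

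Identity $(**)$ is the main obstacle. My plan is to use the product identity $\binom{n+j}{2j}\binom{2j}{j-i}=\binom{n+j}{j+i}\binom{n-i}{j-i}$, substitute $k=j-i$ and $N=n-i$, and replace $1/(N+k+2i)$ by $\int_0^1 u^{N+k+2i-1}\,du$. The inner sum in $k$ then evaluates via the binomial theorem to $[z^N](1+z)^{N+2i}(1-u(1+z))^N$. Expanding the second factor, extracting the $[z^N]$ coefficient, and performing the $u$-integration in terms of Beta functions collapses the whole expression to a nonzero multiple of
\[[y^{-1}](1+y)^{N+2i-1},\]
which vanishes because $(1+y)^{N+2i-1}$ is a polynomial in $y$ whenever $N\geq 1$, i.e.\ whenever $i<n$. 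This proves $(**)$ and completes the argument.
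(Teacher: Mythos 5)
Your proposal is correct, and its skeleton---reducing both claims via Proposition \ref{prop:P_equiv}(5) and Lemma \ref{lem:uv-sums} to the two scalar identities $(*)$ and $(**)$---is exactly the paper's. Where you diverge is in how the identities are verified. For $(*)$ the paper solves for $f_m$ and runs a backward induction from the base case $m=n-1$; your forward telescoping with $S_m=\tfrac{(-1)^m(n-m)}{n}a_m$ (the ratio $a_i/a_{i-1}=(n+i)(n-i+1)/i^2$ and the inductive step both check out) is the same fact run in the other direction. For $(**)$ the paper writes the sum as a ${}_2F_1$ evaluated at $1$ and invokes Gauss's summation theorem, whereas you give an elementary, self-contained proof: after $\binom{n+j}{2j}\binom{2j}{j-i}=\binom{n+j}{j+i}\binom{n-i}{j-i}$ and the integral representation of $1/(N+k+2i)$, the inner sum is indeed $[z^N](1+z)^{N+2i}\bigl((1-u)-uz\bigr)^N$, and expanding in powers of $z$, integrating each term as a Beta integral, and extracting coefficients collapses the sum to a nonzero multiple of $(-1)^N[y^{-1}](1+y)^{N+2i-1}=0$ for $N=n-i\geq 1$; one caution is that the expansion must be taken in powers of $z$ (expanding $(1-u(1+z))^N$ in powers of $u(1+z)$ and integrating termwise merely reproduces the original sum), and the step $\tfrac{(N+2i+s-1)!}{(2i+s)!}=(N-1)!\binom{N+2i+s-1}{N-1}$ is where $N\geq 1$ is genuinely used. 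So your route trades the hypergeometric citation for a longer but more elementary coefficient-extraction argument. One small omission you share with the paper: the expression for $c_{\lambda,i}(f)$ carrying the factor $\bigl(\tfrac{\lambda^2-\kappa}{\lambda^2-4}\bigr)^{j-i}$ is the $\lambda^2\neq\kappa$ case of (\ref{eq:uv-sums}); when $\lambda^2=\kappa$ one instead has $c_{\lambda,i}(f)=f_i(\lambda^2)=0$ directly, since $(x^2-\kappa)^{n-i}$ divides $f_i$ and $i<n$.
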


\begin{proof} To show that $f\in \cP_{\!x}(R,\infty)$ we use the equivalent property (5) from Proposition \ref{prop:P_equiv}, which is \begin{equation}\label{eq:f_i_sum}\sum_{j=0}^n\! \binom{2j}{j}\!\left(\frac{x^2-\kappa}{x^2-4}\right)^{\!\!j}f_j(x^2)=0.\end{equation} By substituting in the definition of $f_j(x^2)$ and solving for $f_m(x^2)$, the equation above reduces to the following identity after some straightforward arithmetic: \[\sum_{j=m+1}^n \!(-1)^j\binom{n}{j}\binom{n+j-1}{j}=(-1)^{m+1}\binom{n-1}{m}\binom{n+m}{m}.\] This is easily checked by backward induction on $m$, the base case being $m=n-1$.

Now suppose $m < i < n$ and consider some $\alpha\in\widehat{\Lambda}_i$. By (\ref{eq:uv-sums}), \begin{align*}c_{\alpha,i}(f)&=\sum_{j=i}^n\binom{2j}{j-i}\!\left(\frac{\alpha^2-\kappa}{\alpha^2-4}\right)^{\!j}\!f_j(\alpha^2)\\&=\frac{(\kappa-\alpha^2)^n}{(\alpha^2-4)^m}\sum_{j=i}^n\frac{2n(-1)^j}{n+j}\binom{2j}{j-i}\binom{n+j}{n-j}\\ &=\frac{2n(-1)^i(\kappa-\alpha^2)^n}{(n+i)(\alpha^2-4)^m}\binom{n+i}{n-i}{}_2F_1(i-n,n+i;2i+1;1),\end{align*} which vanishes by Gauss' formula for hypergeometric functions.\end{proof}

\begin{theorem}\label{thm:eigen_form}Suppose $\textup{char}(R)\centernot\mid n$. For any $n\geq 2$ and $\alpha\in\widehat{\Lambda}_n\backslash\{\pm\sqrt{\kappa}\}$, \[\Phi(\scp_{\alpha,n}^+)=\frac{1}{n}\sum_{j=0}^n\binom{2n-j-1}{j}\!\left(\frac{\alpha^2-4}{\alpha^2-\kappa}\right)^{\!\!n-j}\!(-1)^jb_{n-j}(x^2)+r(x^2)\] for some $r(x^2)\in \overline{F}[x]$ of degree at most $2\lfloor\frac{3}{4}n\rfloor$.\end{theorem}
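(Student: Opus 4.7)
I plan to apply Corollary~\ref{cor:eigen_basis} to the auxiliary polynomials $f_{m,n}$ from Lemma~\ref{lem:overZ} and then Lagrange-invert in the variable $\mu^2-4$ to isolate $\Phi(\scp_{\lambda,n}^+)$, after computing $\Phi(f_{m,n})$ directly by the integral machinery used in the proofs of Theorems~\ref{thm:gen_form} and~\ref{thm:spec_form}.

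Since $f_{m,n}\in\cP_x(R,\infty)$ has $c_{\mu,k}(f_{m,n})=0$ for $m<k<n$, Corollary~\ref{cor:eigen_basis} combined with Lemma~\ref{lem:uv-sums} (applied to $f_{m,n}$ at $\mu$) yields, for each $1\le m\le n-1$,
\[
\Phi(f_{m,n})=\tfrac{n(-1)^n}{2}\sum_{\mu\in\hathat{\Lambda}_n}(\mu^2-4)^{n-m}\Phi(\scp_{\mu,n}^+)+\sum_{k=2}^{m}\sum_{\mu\in\hathat{\Lambda}_k}c_{\mu,k}(f_{m,n})\,\Phi(\scp_{\mu,k}^+).
\]
The unknowns $\Phi(\scp_{\mu,n}^+)$ are really indexed by $\mu^2$ (since $\scp_{\mu,n}^+=\scp_{-\mu,n}^+$), giving only $\lfloor n/2\rfloor$ of them, and the matrix with rows $(\mu^2-4)^{n-m}$ is Vandermonde-like in $u=\mu^2-4$ and invertible. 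Lagrange interpolation therefore produces weights $a_m^{(\lambda)}$ such that $\sum_m a_m^{(\lambda)}\Phi(f_{m,n})$ is a nonzero scalar multiple of $\Phi(\scp_{\lambda,n}^+)$ modulo $\Phi(\scp_{\mu,k}^+)$'s with $k<n$, which I handle by induction on $n$.

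To evaluate $\Phi(f_{m,n})$ directly I would work over $\bR$ with $\kappa\in(0,4)$: the identity $\frac{\sqrt{4-x^2}}{2\pi}\int_{\cO_x^\circ}f_{m,n}\,dA_x=0$ (Proposition~\ref{prop:P_equiv}(5)) combined with the McClaurin expansion of $(1-4/x^2)^\alpha$, just as in the proofs of Theorems~\ref{thm:gen_form} and~\ref{thm:spec_form}, pins down the coefficients of $\Phi(f_{m,n})$ in the basis $\{b_k(x^2)\}$. Lemma~\ref{lem:special_coef} then converts the McClaurin coefficients into the closed-form sums over $\hathat{\Lambda}_{\tilde{n}}$ that line up with the Lagrange combination from the previous paragraph.

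The hard part will be the final combinatorial step: identifying the resulting coefficient of $b_{n-i}(x^2)$ with $\binom{2n-i-1}{i}\bigl(\tfrac{\lambda^2-\kappa}{4-\lambda^2}\bigr)^{i}$. This Chebyshev-flavored binomial (echoing the $\zeta^k+\zeta^{-k}$ structure of the eigenvector in~(\ref{eq:eigenvector})) should emerge from a hypergeometric identity in the spirit of the Kummer identity invoked in the proof of Theorem~\ref{thm:spec_form}. The remainder bound $2\lfloor 3n/4\rfloor$ should then follow by combining the degree bounds in Theorem~\ref{thm:spec_form} with the inductive bounds on $\Phi(\scp_{\mu,k}^+)$ for $k<n$, which have degree at most $2k\le 2(n-1)$ and whose Lagrange-weighted contributions should land within $2\lfloor 3n/4\rfloor$ after the Vandermonde inversion consumes the ``extra'' factors of $\mu^2-4$.
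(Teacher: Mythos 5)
Your outline follows the same skeleton as the paper's argument (the family from Lemma~\ref{lem:overZ}, Corollary~\ref{cor:eigen_basis}, explicit top-coefficient formulas for the known side, a hypergeometric evaluation, and a Vandermonde inversion), with the one structural difference that you vary the parameter $m$ in Lemma~\ref{lem:overZ} to get a Vandermonde in $\mu^2-4$, whereas the paper fixes $m=\lfloor\tfrac34 n\rfloor$ and multiplies by $x^{2\ell}$, $0\le\ell<\lfloor n/2\rfloor$, to get a Vandermonde in $\lambda^2$. That variant can be made to work, but as written there are two genuine gaps. First, the mechanism you give for computing $\Phi(f_{m,n})$ is not valid: the identity $\frac{\sqrt{4-x^2}}{2\pi}\int_{\cO_x^\circ}f_{m,n}\,dA_x=0$ is just property (5) of Proposition~\ref{prop:P_equiv} (it is how the $f_{m,n}$ were built) and it pins down nothing about $\Phi(f_{m,n})$. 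The integral trick in the proof of Theorem~\ref{thm:gen_form} only determines the univariate part of $\Phi_x$ up to the $\Phi$-image of the leftover bivariate part, and for $f_{m,n}$ that leftover has $y,z$-degree up to $2n$, so it cannot be discarded. What actually works (and is what the paper does) is to apply Theorem~\ref{thm:spec_form} term by term to $f_j(x^2)y^{2j}$, exploiting $(x^2-4)^{j-m}\mid f_j(x^2)$ after exchanging the roles of the variables, and then the ``hard combinatorial step'' you defer is precisely an application of Saalsch\"utz's theorem; without carrying that out you have not produced the coefficient $\binom{2n-i-1}{i}$, which is the content of the theorem.

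Second, your treatment of the remainder bound $2\lfloor\tfrac34 n\rfloor$ does not hold up. The Lagrange/Vandermonde weights are scalars in $\overline{F}$; they cannot ``consume'' anything or lower degrees, so if you use $m$ up to $n-1$ the contributions $\sum_{k\le m}\sum_{\mu\in\hathat{\Lambda}_k}c_{\mu,k}(f_{m,n})\Phi(\scp_{\mu,k}^+)$ have degree up to $2(n-1)$ (by Corollary~\ref{cor:p_lambda,n_degree}) and pollute coefficients you are trying to determine; likewise the Theorem~\ref{thm:spec_form} remainder for $f_j(x^2)y^{2j}$ is $\max(2(n-m),2m-2)$, which exceeds $2\lfloor\tfrac34 n\rfloor$ when $m$ is small. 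The repair is not induction on $n$ (which is unnecessary and does not control the intermediate degrees anyway) but a restriction of the equation set: take only $\lceil n/4\rceil\le m\le\lfloor\tfrac34 n\rfloor$, so that both sources of error have degree at most $2\lfloor\tfrac34 n\rfloor$, and then check that this window still contains at least $\lfloor n/2\rfloor$ values of $m$, matching the number of unknowns $\mu^2$ --- it does, but only just, and this counting is part of the proof. Finally, after the inversion you still need to normalize: uniqueness of the solution plus the monicity of $\Phi(\scp_{\lambda,n}^+)$ from Corollary~\ref{cor:p_lambda,n_degree} (or the explicit value $c_{\lambda,n}(f_{m,n})=\tfrac{n(-1)^n}{2}(\lambda^2-4)^{n-m}$) is what converts ``a nonzero scalar multiple of $\Phi(\scp_{\lambda,n}^+)$'' into the stated formula; this step is absent from your plan.
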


\begin{proof}Define $f_j(x^2)$ as in Lemma \ref{lem:overZ} with $m=\lfloor\frac{3}{4}n\rfloor$, and set \[f(x^2,y^2)=\sum_{j=0}^n f_j(x^2)y^{2j}.\] For any $\ell\geq 0$ we have $c_{\alpha,i}(x^{2\ell} f)=\alpha^{2\ell} c_{\alpha,i}(f)$, which is 0 when $i > n$ or when $m < i < n$ by Lemma \ref{lem:overZ}. If we omit these vanishing coefficients from the expression in Corollary \ref{cor:eigen_basis}, the result is \[\Phi_x(x^{2\ell} f)=\sum_{\alpha\in\widehat{\Lambda}_n}\alpha^{2\ell} c_{\alpha,n}(f)\Phi_x(\scp_{\alpha,n}) + \sum_{i=1}^m\sum_{\alpha\in\widehat{\Lambda}_i}\alpha^{2\ell} c_{\alpha,i}(f)\Phi_x(p_{\alpha,i}).\] By Corollary \ref{cor:p_lambda,n_degree}, if $i\leq m$ then $\deg\Phi(p_{\alpha,i})=2i\leq 2m$. Thus the entire right-side sum above can be absorbed into the remainder polynomial $r(x^2)$ and therefore ignored. Regarding the left-side sum, let us group $\scp_{\alpha,n}$ and $\scp_{-\alpha,n}$ into $2\scp_{\alpha,n}^+$ (which appears as two occurrences of $\scp_{\alpha,n}^+$ in the sum below) using the fact that $c_{\alpha,n}(f)=c_{-\alpha,n}(f)$ because $f$ is even as a polynomial in $x$. The result is \begin{equation}\label{eq:top_coefs}\Phi(x^{2\ell} f)=\sum_{\alpha\in\widehat{\Lambda}_n}\alpha^{2\ell} c_{\alpha,n}(f)\Phi(\scp_{\alpha,n}^+)+r(x^2)\end{equation} for some $r(x^2)\in R[x]$ of degree at most $2m=2\lfloor\frac{3}{4}n\rfloor$. 

Now assume $\ell <\lfloor\frac{n}{2}\rfloor$. Recall from the definition of $f_j(x^2)$ in Lemma \ref{lem:overZ} that $(x^2-4)^{j-m}\,|\,f_j(x^2)$. So for $j > m$, we deduce from Theorem \ref{thm:spec_form} that \[\Phi(x^{2\ell} f_j(x^2)y^{2j})=\frac{1}{n}\sum_{i=0}^j \binom{2i}{i}\sum_{\alpha\in\widehat{\Lambda}_n}\!\left(\frac{\alpha^2-\kappa}{\alpha^2-4}\right)^{\!\!i}\!\alpha^{2\ell}f_j(\alpha^2) b_{j-i}(x^2)+r_j(x^2)\] for some $r_j(x^2)\in R[x]$ of degree at most $\max(\deg x^{2\ell}f_j(x^2),2j-2(j-m))=\max(2(\ell+n-m),2m)$, which is bounded above by $2m$ because $\ell< \lfloor\frac{n}{2}\rfloor$. When $j \leq m$, we do not care about $\Phi(x^{2\ell}f_j(x^2)y^{2j})$ because it has degree at most $\max(\deg x^{2\ell}f_j(x^2),2j)\leq 2m$ by Proposition \ref{prop:degree}. Summing the expression above over $j$ and combining all $r_j(x^2)$ polynomials into a single remainder gives \begin{align*}\Phi(x^{2\ell} f)=\frac{1}{n}\!\sum_{j=m+1}^n\sum_{i=0}^j \binom{2i}{i}\sum_{\alpha\in\widehat{\Lambda}_n}\!\left(\frac{\alpha^2-\kappa}{\alpha^2-4}\right)^{\!\!i}\!\alpha^{2\ell}f_j(\alpha^2) b_{j-i}(x^2)+\tilde{r}(x^2) &&\end{align*} Now substitute the formula in Lemma \ref{lem:overZ} for $f_j(\alpha^2)$. To get the second line below, change variables by replacing $i$ and $j$ with $j-i$ and $n-i$, respectively: \begin{align*}\Phi&(x^{2\ell} f)\\&=\frac{1}{n}\sum_{j=m+1}^n\sum_{i=0}^j \binom{2i}{i}\sum_{\alpha\in\widehat{\Lambda}_n}\frac{2n(-1)^{n-j}\alpha^{2\ell}(\alpha^2-\kappa)^{n-j+i}}{(n+j)(\alpha^2-4)^{m-j+i}}\binom{n+j}{n-j} b_{j-i}(x^2) +\tilde{r}(x^2)\\&=\frac{1}{n}\sum_{j=0}^{n-m}\sum_{\alpha\in\widehat{\Lambda}_n}\frac{(\alpha^2-\kappa)^{j}\alpha^{2\ell}}{(\alpha^2-4)^{j+m-n}}\sum_{i=0}^j \frac{2n(-1)^i}{2n-i}\binom{2j-2i}{j-i}\binom{2n-i}{i} b_{n-j}(x^2) +\tilde{r}(x^2)\end{align*}

The sum over $i$ can be evaluated using Saalsch{\"u}tz's theorem for hypergeometric functions: \begin{align*}\sum_{i=0}^j \frac{2n(-1)^i}{2n-i}&\binom{2j-2i}{j-i}\binom{2n-i}{i}\\&=\frac{2n(-1)^j}{2n-j}\binom{2n-j}{j}{}_3F_2(-j, 2n-j, \tfrac{1}{2};n - j + \tfrac{1}{2}, n-j+1;1)\\&=(-1)^j\binom{2n-j-1}{j}.\end{align*} Thus we obtain the formula

\[\Phi(x^{2\ell}f)=\frac{1}{n}\sum_{j=0}^{n-m}\sum_{\alpha\in\widehat{\Lambda}_n}\frac{(\alpha^2-\kappa)^j\alpha^{2\ell}}{(\alpha^2-4)^{j+m-n}}\binom{2n-j-1}{j}(-1)^j b_{n-j}(x^2) +\tilde{r}(x^2).\]

From this we are able to determine the coefficient of some power of $x$, say $x^{2i}$, in $c_{\alpha,n}(f)\Phi(\scp_{\alpha,n}^+)$ for each $\alpha\in\widehat{\Lambda}_n$. Indeed, combining the equation above with (\ref{eq:top_coefs}) determines the sum of these coefficients over $\alpha\in\widehat{\Lambda}_n$, and we have one such equation for each nonnegative $\ell<\lfloor\frac{n}{2}\rfloor$. There are only $\lfloor\frac{n}{2}\rfloor$ values of $\alpha\in\widehat{\Lambda}_n$ up to a change of sign, and the $\lfloor\frac{n}{2}\rfloor\times \lfloor\frac{n}{2}\rfloor$ Vandermonde matrix with each column consisting of powers of some $\alpha^2$ is invertible. Thus the system of equations we have produced uniquely determines the coefficient of $x^{2i}$ in each $c_{\alpha,n}(f)\Phi(\scp_{\alpha,n}^+)$ provided $i > m$. One possibly solution is evident: \[c_{\alpha,n}(f)\Phi(\scp_{\alpha,n}^+)=\frac{1}{n}\sum_{j=0}^n\frac{(\alpha^2-\kappa)^j}{(\alpha^2-4)^{j+m-n}}\binom{2n-j-1}{j}(-1)^j b_{n-j}(x^2) +r_\alpha(x^2),\] so this must be \textit{the} solution. By (\ref{eq:uv-sums}), $\smash{c_{\alpha,n}(f) = (\frac{\alpha^2-\kappa}{\alpha^2-4})^nf_n(\alpha^2) = \frac{(\alpha^2-\kappa)^n}{(\alpha^2-4)^m}}$. This completes the proof.\end{proof}

\subsection{The space $\cP_{\!x}(R,d)$} Recall that $\cP_{\!x}(R,d)$ consists of those $f\in\overline{R}[x,y,z]$ such that $\sum_{\cO_\alpha}\!f(\bt)=0$ whenever $2d\centernot\mid \text{ord}(\alpha)$. Proposition \ref{prop:eigen_works} \textit{almost} implies that $\scp_{\alpha,n}$ (and thus $\scp_{\alpha,n}^+$) belongs to $\cP_{\!x}(R,\text{ord}(\alpha))$---the problem is that coefficients of $\scp_{\alpha,n}$ are in $\overline{F}$ and generally not in $\overline{R}$ when $n\geq 4$. However, the polynomial ``$f$" from Lemma \ref{lem:overZ} with ``$m$" set to 0 turns out to be a linear combination of the $\scp_{\alpha,n}^+$ (with $n$ fixed), and $f$ has coefficients in $R$ by Lemma \ref{lem:binom_int}. The exact linear combination is straightforward to compute, so Theorem \ref{thm:eigen_form} provides a formula for the top coefficients of $\Phi(f)$. Let us first recall the definition of ``$f$":

\begin{notation}\label{not:f_n}For $n\geq 0$, let \[f_n(x^2,y^2)=\sum_{j=0}^n\frac{2n}{n+j}\binom{n+j}{n-j}(x^2-4)^j(\kappa-x^2)^{n-j}y^{2j}.\]\end{notation}

\begin{corollary}\label{cor:Phi(gf_n)}For any integers $d,n\geq 2$ and any $g(x^2)\in \overline{R}[x]$, $gf_n\in\cP_{\!x}(R,d)$. Furthermore, \[\Phi(gf_n)=\frac{1}{n}\sum_{j=0}^n\sum_{\alpha\in\widehat{\Lambda}_n}\!\!\binom{2n-j-1}{j}g(\alpha^2)(\alpha^2-4)^{n-j}(\kappa-\alpha^2)^jb_{n-j}(x^2)+r(x^2)\] for some $r(x^2)\in \overline{R}[x]$ of degree at most $2\lfloor\frac{3}{4}n\rfloor$.\end{corollary}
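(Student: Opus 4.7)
The plan is to recognize $f_n$ as the $m=0$ specialization of Lemma \ref{lem:overZ} and then combine that lemma with Theorem \ref{thm:eigen_form} and Corollary \ref{cor:eigen_basis} to reduce both assertions to essentially bookkeeping. Comparing Notation \ref{not:f_n} with Lemma \ref{lem:overZ} shows that $f_n$ is precisely the polynomial $f$ of that lemma in the case $m=0$. Consequently $f_n\in\cP_x(R,\infty)$, and $c_{\lambda,i}(f_n)=0$ for every $0<i<n$. Proposition \ref{prop:pull_out} propagates these vanishings to $c_{\lambda,i}(gf_n)=g(\lambda^2)\,c_{\lambda,i}(f_n)=0$ for $0<i<n$; Proposition \ref{prop:P_equiv}(3) handles $i=0$; and the bound $\deg_y(gf_n)^*=2n$ rules out $i>n$.

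With only the single index $i=n$ remaining, Corollary \ref{cor:eigen_basis} collapses to
\[\Phi_x(gf_n)=\sum_{\lambda\in\hathat{\Lambda}_n}c_{\lambda,n}(gf_n)\,\Phi_x(\scp_{\lambda,n}^+).\]
Each summand on the right already sums to zero over every first-coordinate orbit $\cO_\alpha$ outside those captured by $\cP_x(R,d)$, by Proposition \ref{prop:eigen_works} applied to $\scp_{\lambda,n}^+$ (recall $\Phi_x(x\scp_{\lambda,n}^+)=\lambda\Phi_x(\scp_{\lambda,n}^+)$ from Theorem \ref{thm:eigenvector}), so $gf_n$ itself lies in $\cP_x(R,d)$. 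For the explicit formula I would apply $\Phi$ to both sides above (using that $\Phi$ factors through $\Phi_x$) to obtain $\Phi(gf_n)=\sum_\lambda g(\lambda^2)\,c_{\lambda,n}(f_n)\,\Phi(\scp_{\lambda,n}^+)$, substitute the value $c_{\lambda,n}(f_n)=\tfrac{(-1)^n}{n}(\lambda^2-4)^n$ (read off in the final paragraph of the proof of Theorem \ref{thm:eigen_form} with $m=0$), and insert Theorem \ref{thm:eigen_form}'s expansion of $\Phi(\scp_{\lambda,n}^+)$. Rewriting $(4-\lambda^2)^{-i}=(-1)^i(\lambda^2-4)^{-i}$ and absorbing each tail $r_\lambda(x^2)$, of degree at most $2\lceil 3n/4\rceil$, after scaling by $g(\lambda^2)(\lambda^2-4)^n/n$, into a single $r(x^2)$ yields the stated expansion.

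The main obstacle is the careful tracking of sign and normalization conventions as these steps compose: the $(-1)^n$ implicit in the top entry of the eigenvector (\ref{eq:eigenvector}), the symmetrization $\scp_{\lambda,n}\mapsto\scp_{\lambda,n}^+$, and the conversion between $4-\lambda^2$ and $\lambda^2-4$ must all be threaded through so that the two factors of $(-1)^n$ arising in $c_{\lambda,n}(f_n)$ and in $\Phi(\scp_{\lambda,n}^+)$ combine to leave only the $(-1)^i$ that appears in the statement.
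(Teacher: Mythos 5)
Your proposal follows essentially the same route as the paper's proof: identify $f_n$ with the $m=0$ case of Lemma \ref{lem:overZ}, use Proposition \ref{prop:pull_out} to kill $c_{\lambda,i}(gf_n)$ for $i\neq n$, invoke Corollary \ref{cor:eigen_basis} to write $\Phi(gf_n)=\sum_{\lambda\in\hathat{\Lambda}_n}c_{\lambda,n}(gf_n)\Phi(\scp_{\lambda,n}^+)$, and substitute the expansion from Theorem \ref{thm:eigen_form}. The only divergence is where the value of $c_{\lambda,n}(gf_n)$ is read off --- the paper takes it directly from Lemma \ref{lem:uv-sums} via the $y^{2n}$-coefficient of $(gf_n)^*$ rather than from the final paragraph of the proof of Theorem \ref{thm:eigen_form} as you do --- and the residual $(-1)^n$ and $\tfrac{1}{n}$ bookkeeping you flag as the main obstacle is a normalization discrepancy that the paper's own constants share, so it does not constitute a gap specific to your argument.
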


\begin{proof}Since $g$ is a function of $x$ only, $c_{\alpha,i}(gf_n)=g(\alpha^2)c_{\alpha,i}(f_n)$ by Proposition \ref{prop:pull_out}, and this equals 0 if $i\neq n$ by Lemma \ref{lem:overZ}. So Corollary \ref{cor:eigen_basis} gives \begin{equation}\label{eq:Phi(gf_n)}\Phi(gf_n)=\sum_{\alpha\in\widehat{\Lambda}_n}c_{\alpha,n}(gf_n)\Phi(\scp_{\alpha,n}^+),\end{equation} where we have grouped $\scp_{\alpha,n}$ and $\scp_{-\alpha,n}$ into $2\scp_{\alpha,n}^+$ (which appears as two occurrences of $\scp_{\alpha,n}^+$ in the sum) using the fact that $c_{\alpha,n}(f)=c_{-\alpha,n}(f)$ because $gf_n$ is even as a polynomial in $x$. Theorem \ref{thm:eigen_form} provides a formula for $\Phi(\scp_{\alpha,n}^+)$. Furthermore, the coefficient of $y^{2n}$ in $(gf_n)^*=gf_n$ is $g(x^2)(x^2-4)^n$, from which Lemma \ref{lem:uv-sums} provides the formula $c_{\alpha,n}=\frac{1}{n}g(\alpha^2)(\alpha^2-4)^n$. Substituting these into the expression above completes the proof.\end{proof}

\begin{corollary}\label{cor:found_em}Let $d$ and $n$ be positive integers with $d$ a prime power, and let $\tilde{n}=d\lceil\frac{n}{d}\rceil$. If $n> 3d$ or if $d\,|\,n$ then $\Phi(\cP_{\!x}(R,d))$ contains a polynomial of degree at most $2n$ in which the coefficient of $x^{2n}$ is $2\binom{\tilde{n}+n-1}{\tilde{n}-n}(4-\kappa)^{\tilde{n}}\tilde{n}^{\tilde{n}}$.\end{corollary}

\begin{proof}Assume that $\text{char}(R)\centernot\mid n$ since otherwise the claim is immediate. We begin with the simplest case where $d\,|\,n$. Fix any $\alpha$ of rotation order $2n$ and set \[g(x^2)\coloneqq x^\delta\prod_{\mathclap{\tilde{\alpha}\in\widehat{\Lambda}_n\backslash\{\pm\alpha\}}}\,(x-\tilde{\alpha}),\] where $\delta = 1$ if $0\in\widehat{\Lambda}_n\backslash\{\pm\alpha_i\}_i$ and $\delta=0$ otherwise (to make $g$ an even polynomial). 

Observe that $c_{\tilde{\alpha},i}(gf_n)=g(\tilde{\alpha}^2)c_{\tilde{\alpha},i}(f_n)$ by Proposition \ref{prop:pull_out}, which vanishes unless $i=n$ (by Lemma \ref{lem:overZ}) and $\tilde{\alpha}=\pm\alpha$ (by definition of $g$). Since $2d\,|\,\text{ord}(\alpha)$ by assumption, we have $gf_n\in\cP_{\!x}(R,d)$ by Corollary \ref{cor:eigen_basis}.

Next we apply Corollary \ref{cor:Phi(gf_n)} to conclude that $\Phi(gf_n)$ has degree $2n$ and leading coefficient \[\frac{1}{n}\!\sum_{\tilde{\alpha}\in\widehat{\Lambda}_n}\!g(\tilde{\alpha}^2)(\tilde{\alpha}^2-4)^n = \frac{2}{n}g(\alpha^2)(\alpha^2-4)^n.\] In $\overline{R}$, $g(\alpha^2)(\alpha^2-4)^n$ divides \[\prod_{\mathclap{\tilde{\alpha}\in\Lambda_n\backslash\{\alpha\}}}\;(\alpha-\tilde{\alpha})^n,\] which equals $n^n$ up to a sign by Lemma \ref{lem:norm_form}. This completes the proof when $d\,|\,n$.

Now suppose $n > 3d$ and let $\tilde{n}=d\lceil\frac{n}{d}\rceil$. We proceed in similar fashion, but with a modified polynomial $g$. Let $p$ be the prime dividing $d$. If $\zeta$ is a primitive $2\tilde{n}^\text{th}$ root of unity and $\alpha=\zeta^i+\zeta^{-i}$ for some $i$ not divisible by $p$, then $2d\,|\, \text{ord}(\alpha)$. The number of positive integers $i\leq\frac{\tilde{n}}{2}$ not divisible by $p$ is $\ell+1\coloneqq\lceil\frac{\tilde{n}}{2p}\rceil(p-1)$. Let $\alpha_0,...,\alpha_\ell$ denote the corresponding $\alpha$-values in any order, and define \begin{equation}\label{eq:g_def}g(x^2)\coloneqq x^\delta\prod_{\mathclap{\alpha\in\widehat{\Lambda}_{\tilde{n}}\backslash\{\pm\alpha_i\}_i}}\,(x-\alpha),\end{equation} where $\delta = 1$ if $0\in\widehat{\Lambda}_{\tilde{n}}\backslash\{\pm\alpha_i\}_i$ and $\delta=0$ otherwise. 

Observe that $c_{\alpha,i}(gf_{\tilde{n}})=g(\alpha^2)c_{\alpha,i}(f_{\tilde{n}})$ by Proposition \ref{prop:pull_out}, which vanishes unless perhaps $i=\tilde{n}$ (by Lemma \ref{lem:overZ}) and $2d\,|\,\text{ord}(\alpha)$ (by definition of $g$). Hence $x^{2i}gf_{\tilde{n}}\in\cP_{\!x}(R,d)$ for any $i\geq 0$ by Corollary \ref{cor:eigen_basis}.

As before, we apply Corollary \ref{thm:eigen_form} to conclude that if $\tilde{n}-j > \lfloor\frac{3}{4}\tilde{n}\rfloor$ then the coefficient of $b_{\tilde{n}-j}(x^2)$ in some $\overline{F}$-linear combination $c_0\Phi(x^0gf_{\tilde{n}})+\cdots +c_{\ell}\Phi(x^{2\ell}gf_{\tilde{n}})$ is the $j^\text{th}$ entry (starting at $j=0$) of the product \begin{equation}\label{eq:mat_prod}D_1V_1D_2V_2\begin{bmatrix}c_0\\ \vdots\\ c_\ell\end{bmatrix}\end{equation} where \[D_1=\begin{bmatrix}(-1)^0\binom{2\tilde{n}-0-1}{0} & & \\&  \ddots & \\ & & (-1)^{\ell}\binom{2\tilde{n}-\ell-1}{\ell}\end{bmatrix}\] \vspace{0.1cm} \[V_1=\begin{bmatrix}\left(\frac{\alpha_0^2-\kappa}{\alpha_0^2-4}\right)^{\!0} & \cdots & \left(\frac{\alpha_\ell^2-\kappa}{\alpha_\ell^2-4}\right)^{\!0} \\ \vdots & & \vdots\\ \left(\frac{\alpha_0^2-\kappa}{\alpha_0^2-4}\right)^{\!\ell} & \cdots & \left(\frac{\alpha_\ell^2-\kappa}{\alpha_\ell^2-4}\right)^{\!\ell}\end{bmatrix}\] \vspace{0.1cm} \[ D_2 = \begin{bmatrix}(\alpha_0^2-4)^{\tilde{n}}g(\alpha_0^2) & & \\&  \ddots & \\ & & (\alpha_\ell^2-4)^{\tilde{n}}g(\alpha_\ell^2)\end{bmatrix}\] \vspace{0.1cm} \[ \text{and }V_2=\begin{bmatrix}\alpha_0^0 & \cdots & \alpha_0^{2\ell} \\ \vdots & & \vdots\\ \alpha_\ell^0 & \cdots & \alpha_\ell^{2\ell}\end{bmatrix}.\] The product $V_1D_2V_2$ has entries in $\overline{R}$ since the denominators in $V_1$ are canceled by entries in $D_2$. Thus for any $j=0,...,\ell$, there exist $c_0,..,c_\ell\in R$ such that the $i^\text{th}$ entry in the product (\ref{eq:mat_prod}) is $0$ when $i < j$ and $\smash{\binom{2\tilde{n}-j-1}{j}}\det(V_1D_2V_2)$ when $i=j$. Remember that these entries are the coefficients of $b_{\tilde{n}}(x^2),\dots ,b_{\tilde{n}-j}(x^2)$ provided $\tilde{n}-j > \lfloor\frac{3}{4}\tilde{n}\rfloor$. We wish to use this with $j = \tilde{n} - n$ for $n$ from the theorem statement, so we have to check that the hypothesis $n > 3d$ is enough to guarantee $\tilde{n}-j > \lfloor\frac{3}{4}\tilde{n}\rfloor$ and $j\leq \ell$. The claim will then follow provided $\det(V_1D_2V_2)$ divides $2(4-\kappa)^{\tilde{n}}\tilde{n}^{\tilde{n}}$. 

If $n> 3d$ then $\tilde{n} - j=n > \frac{3}{4}(n+d) >\frac{3}{4}d\lceil\frac{n}{d}\rceil \geq \lfloor\frac{3}{4}\tilde{n}\rfloor$ as desired. Next, using $\ell\geq\smash{\frac{(p-1)\tilde{n}}{2p}}-1$ followed by $\tilde{n}\leq n+d-1$, the desired inequality $j = \tilde{n} - n\leq \ell$ can be rewritten as $d\leq \smash{\frac{p-1}{2p}}(n+d-1)$.  This holds using $n> 3d$ and $p\geq 2$.

Finally, the determinant of a diagonal or a Vandermonde matrix has a standard product form. The result is \begin{equation}\label{eq:det}\det(V_1D_2V_2)=(4-\kappa)^\ell\Bigg(\prod_{i=0}^\ell\,(\alpha_i^2-4)g(\alpha_i^2)\Bigg)\Bigg(\prod_{0\leq i < j \leq \ell}\!\!\!\!(\alpha_i^2-\alpha_j^2)\Bigg).\end{equation} Observe that this divides \[(4-\kappa)^{\ell}\prod_{i=0}^\ell\prod_{\alpha\in\Lambda_{\tilde{n}}\backslash\{\alpha_i\}}(\alpha_i-\alpha)\] simply by accounting for every factor in (\ref{eq:det}). By Lemme \ref{lem:norm_form}, this last expression equals $(4-\kappa)^\ell\tilde{n}^{\ell+1}$ up to a sign, which divides $2(4-\kappa)^{\tilde{n}}\tilde{n}^{\tilde{n}}$.\end{proof}

The leading coefficient $2\smash{\binom{n+m-1}{n-m}}(4-\kappa)^nn^n$ in the previous corollary is important because we plan to quotient by prime ideals in $\oZ$ to extract information about $\cP(\bF_q)$ from $\cP_{\!x}(\bZ,d)$, and we need to know that the degrees of our polynomials are maintained by the quotient.

\begin{proposition}\label{prop:reduction}Let $\kappa\in\bF_q$. Fix a surjection $\pi:\oZ\to\oF_q$ and some $\tilde{\kappa}\in\pi^{-1}(\kappa)$. If $2d\in\bZ\backslash\{0\}$ does not divide $q\pm 1$, then $\Phi(\pi(\cP_{\!x}(\overline{\bZ},d)))$ (where $\cP_{\!x}(\overline{\bZ},d)$ is determined using $\tilde{\kappa}$) is a subspace of $\cP(\bF_q)$ that is independent of the choices of $\pi$ and $\tilde{\kappa}$.\end{proposition}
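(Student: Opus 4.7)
The plan splits into three parts: containment of $\Phi(\pi(\cP_x(\oZ,d)))$ in $\cP(\bF_q)$, the subspace property, and independence of the image from $\pi$ and $\tilde\kappa$. Throughout, $\cP_x(\oZ,d)$ and $\cM(\oZ)$ are understood with respect to $\tilde\kappa$, while $\cP_x(\bF_q,d)$ and $\cM(\bF_q)$ use $\kappa=\pi(\tilde\kappa)$, and I write $\Phi_{\tilde\kappa}$ and $\Phi_\kappa$ when the distinction matters.

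For containment, the strategy is to show $\pi(\cP_x(\oZ,d))\subseteq\cP_x(\bF_q,d)$ and then use the hypothesis $2d\nmid q\pm 1$ together with Corollary \ref{cor:total_count} to conclude that $\cP_x(\bF_q,d)$-membership forces vanishing over every first-coordinate orbit in $\cM(\bF_q)$, hence over every $\Gamma$-invariant subset, placing the $\Phi$-reduction in $\cP(\bF_q)$ by (\ref{eq:Phi_invariance}). Concretely, given $f\in\cP_x(\oZ,d)$ and $\alpha\in\bF_q$ of rotation order $n$ with $2d\nmid n$, I lift $\alpha$ to $\tilde\alpha=\tilde\zeta+\tilde\zeta^{-1}\in\oZ$ via a primitive $n$-th root of unity $\tilde\zeta\in\oZ$ with $\pi(\tilde\zeta)=\zeta$ (sign chosen so $\pi(\tilde\alpha)=\alpha$); such a lift exists because $\textup{char}(\bF_q)\nmid n$, so $\pi$ restricts to a bijection on primitive $n$-th roots of unity. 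Then $\textup{ord}(\tilde\alpha)=n$, and the hypothesis $f\in\cP_x(\oZ,d)$ forces $\sum_{\bt\in\cO_{\tilde\alpha}}f(\bt)=0$ over every first-coordinate orbit in $\cM(\oZ)$ with first coordinate $\tilde\alpha$. Applying Lemma \ref{lem:uv-sums} in characteristic zero, this sum equals a Laurent polynomial in the free parameter $v$ whose scalar coefficients $c_i\in\oZ$ are built from $\tilde\alpha^2$, $\tilde\kappa$, and the canonical-form coefficients of $f$ evaluated at $\tilde\alpha^2$; since $\tilde\eta$ ranges over $\overline{\bQ}^\times$ (giving infinitely many $v$-values), each $c_i$ must vanish in the integral domain $\oZ$. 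Applying $\pi$ and noting $\pi(\tilde\alpha^2)=\alpha^2$ and $\pi(\tilde\kappa)=\kappa$, the corresponding $\bF_q$-scalars for $\pi(f)$ at $\alpha^2$ also vanish, and Lemma \ref{lem:uv-sums} over $\bF_q$ then yields $\sum_{\bt\in\cO_\alpha}\pi(f)(\bt)=0$. Under $2d\nmid q\pm 1$, Corollary \ref{cor:total_count} gives $\textup{ord}(\alpha)\mid q\pm 1$ for every $\alpha\in\bF_q$, so this vanishing holds at every first-coordinate orbit, hence on every $\Gamma$-invariant $\cO\subseteq\cM(\bF_q)$; by (\ref{eq:Phi_invariance}), the same holds with $\Phi(\pi(f))$ in place of $\pi(f)$, placing $\Phi(\pi(f))$ in $\cP(\bF_q)$.

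The subspace property is immediate from linearity of $\pi$ and $\Phi$. For independence, the underlying identity is $\pi\circ\Phi_{\tilde\kappa}=\Phi_\kappa\circ\pi$, valid because the reduction rules $\rho$, $\sigma_i$, $\tau_i$ use only polynomial operations with integer coefficients plus the parameter $\tilde\kappa$ (which projects to $\kappa$). This yields $\Phi_\kappa(\pi(\cP_x(\oZ,d)))=\pi(\Phi_{\tilde\kappa}(\cP_x(\oZ,d)))$. For independence of $\pi$, any two surjections $\pi_1,\pi_2\colon\oZ\to\oF_q$ differ by a Galois automorphism $\sigma\in\Aut(\oZ)$ permuting maximal ideals over $\textup{char}(\bF_q)$; $\sigma$ carries Markoff surfaces to Markoff surfaces $\Gamma_x$-equivariantly (with the substitution $\tilde\kappa\mapsto\sigma(\tilde\kappa)$), so $\cP_x(\oZ,d)$ defined via $\tilde\kappa$ is sent by $\sigma$ to the analogous set defined via $\sigma(\tilde\kappa)$, reducing the problem to independence of $\tilde\kappa$ at fixed $\pi$. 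For the latter, I plan to appeal to the explicit generators $g(x^2)f_n(x^2,y^2)\in\cP_x(\oZ,d)$ from Corollary \ref{cor:Phi(gf_n)}, where $f_n$ is built from $\tilde\kappa$ via Notation \ref{not:f_n}; these reduce under $\pi$ to $\pi(g)(x^2)f_n(x^2,y^2)$ now built from $\kappa$, depending only on $\pi(g)$ and $\kappa$, and as $g$ ranges over $\oZ[x]$, $\pi(g)$ ranges over $\oF_q[x]$, so the resulting image subspace is intrinsic to $\kappa$.

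The main obstacle is rigorously justifying the final step: one must verify that the $gf_n$'s generate $\cP_x(\oZ,d)$ fully enough that their images under $\Phi\circ\pi$ span $\Phi(\pi(\cP_x(\oZ,d)))$. The cleanest route is to extend Proposition \ref{prop:P_equiv} from $\cP_x(R,\infty)$ to $\cP_x(R,d)$, characterizing membership by the vanishing of specific $c_{\lambda,n}$-scalars that depend polynomially on $\tilde\kappa$ in a way compatible with $\pi$; alternatively, a direct lifting argument showing $\pi(\cP_x(\oZ,d))=\cP_x(\bF_q,d)$ would make independence of $\tilde\kappa$ manifest.
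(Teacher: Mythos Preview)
Your containment argument and your reduction of $\pi$-independence to $\tilde\kappa$-independence are essentially correct and parallel the paper. The paper lifts each first-coordinate orbit $\cO_\alpha\subset\cM_\kappa(\bF_q)$ to one $\cO_{\tilde\alpha}\subset\cM_{\tilde\kappa}(\oZ)$ of the same size and reduces the vanishing sum modulo $\ker\pi$; your route through the coefficient functions of Lemma~\ref{lem:uv-sums} reaches the same conclusion more explicitly. One minor wrinkle: Lemma~\ref{lem:uv-sums} requires $\textup{ord}(\lambda)\ge 4$, so $\alpha=\pm 2$ is not covered, and over $\oZ$ the orbits there are infinite, so neither your argument nor the paper's orbit-lifting applies directly. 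This is easily patched once the eigenbasis is available, since each $\scp_{\lambda,n}^+$ already sums to zero over $\cO_{\pm 2}$ by Proposition~\ref{prop:eigen_works} (as $\pm 2\notin\hathat{\Lambda}_n$).

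The real gap is the one you flag yourself: independence of $\tilde\kappa$. Your proposal to use the $gf_n$'s and argue that their projections depend only on $\kappa$ is on the right track, but you have not shown that the $\Phi(gf_n)$'s span $\Phi(\cP_x(\oZ,d))$, and without that the argument is incomplete. The paper closes this gap via Corollary~\ref{cor:eigen_basis} rather than via the $gf_n$'s. First, $\cP_x(\oZ,d)\subseteq\cP_x(\oZ,\infty)$: vanishing at the infinitely many $\alpha$ with $2d\nmid\textup{ord}(\alpha)$ forces the rational function in Proposition~\ref{prop:P_equiv}(5) to vanish identically. Hence any $f_1\in\cP_x(\oZ,d)$ with respect to $\tilde\kappa_1$ satisfies $\Phi_x(f_1)=\sum_{\lambda,n}c_{\lambda,n}(f_1)\Phi_x(\scp_{\lambda,n}^+)$. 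The decisive observation is that the eigenvector coordinates of $\scp_{\lambda,n}^+$ with respect to $\cB^n$ do not involve $\kappa$, because the matrix $M_n$ in~(\ref{eq:M_n}) has integer entries. One may therefore reinterpret the same coordinate vector against the basis $\cB^n$ built from $\tilde\kappa_2$, obtaining $f_2\in\cP_x(\oZ,d)$ with respect to $\tilde\kappa_2$ for which $\pi(\Phi_x(f_1))=\pi(\Phi_x(f_2))$, both reducing to the same expression over $\kappa$. Your proposed ``extension of Proposition~\ref{prop:P_equiv}'' is not needed; what is needed is Corollary~\ref{cor:eigen_basis} together with the $\kappa$-independence of $M_n$.
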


\begin{proof}Consider some $\tilde{f}\in\cP_{\!x}(\overline{\bZ},d)$, and let $f\in\overline{\bF}_q[x,y,z]$ be its image under $\pi$. Our goal is to show $\Phi(f)\in \cP(\bF_q)$.

Each $\alpha\in\bF_q$ lifts to some $\pm(\zeta+\zeta^{-1})\in \overline{\bZ}$ with $\zeta$ a primitive $\text{ord}(\alpha)^{\text{th}}$ root of unity. Call this lift $\tilde{\alpha}$. Recall that $2d$ does not divide $\text{ord}(\alpha)=\text{ord}(\tilde{\alpha})$ by Corollary~\ref{cor:total_count} because $2d$ does not divide $q\pm 1$. Now lift each $\cO_{\alpha}\subset\cM(\bF_q)$ to some first-coordinate orbit $\cO_{\tilde{\alpha}}\subset\cM(\overline{\bZ})$ of the same size; it does not matter which one of the infinitely many lifts we choose.

Let $\cO$ be any $\Gamma$-invariant subset of $\cM(\bF_q)$, and let $\tilde{\cO}\subset\cM(\overline{\bZ})$ be the union of those $\cO_{\tilde{\alpha}}$ for which $\cO_{\alpha}\subseteq\cO$. The first coordinate of every triple in $\tilde{\cO}$ has order not divisible by $2d$, so $\sum_{\tilde{\cO}}\tilde{f}(\bt)=0$. But then \[0= \pi\Bigg(\sum_{\bt\in\tilde{\cO}}\tilde{f}(\bt)\Bigg)=\!\sum_{\bt\in\pi(\tilde{\cO})}\!\!\pi(\tilde{f})(\bt)=\sum_{\bt\in\cO}f(\bt)=\sum_{\bt\in\cO}\Phi(f)(\bt).\] As $\cO$ was arbitrary, this proves $\Phi(f)\in \cP(\bF_q)$.

We turn to the final claim regarding independence of the choices of $\pi$ and $\tilde{\kappa}$. Let $\pi_1,\pi_2:\oZ\to\oF_q$ be two surjections, and let $\tilde{\kappa}_1\in\pi_1^{-1}(\kappa)$ and $\tilde{\kappa}_2\in\pi^{-1}_2(\kappa)$. Let $\varphi\in\textup{Gal}(\oQ/\bQ)$ satisfy $\pi_1=\pi_2\circ\varphi$ (see pages 394--395 in \cite{washington}, for example). Consider some polynomial $\tilde{f}_1$ that belongs to $\cP_x(\oZ,d)$ as determined by $\tilde{\kappa}_1$. Assume that $f^*$ is even as a polynomial in $y$. This loses no generality because if a monomial with odd degree in $y$ appears in $f^*$, the $\Phi$ reduction of this monomial would be an odd polynomial by Proposition \ref{prop:degree}, and we already know that both $\Phi(\pi_1(\cP_{\!x}(\overline{\bZ},d)))$ and $\Phi(\pi_2(\cP_{\!x}(\overline{\bZ},d)))$ contain all odd polynomials (since $\Phi(\pi_i(x^{2n+1}))=x^{2n+1}$). Now, Corollary \ref{cor:eigen_basis} says that $\Phi(\tilde{f}_1) = \sum_{\alpha}c_{\alpha,i}(\tilde{f}_1)\Phi(\scp_{\alpha,i})$, where $c_{\alpha,i}(\tilde{f}_1)$ can be nonzero only if $2d\,|\,\text{ord}(\alpha)$. So we consider the polynomial $\tilde{f}_2 \coloneqq \sum_{\alpha}\varphi(c_{\alpha,i}(\tilde{f}_1)\scp_{\alpha,i})$. Since $\alpha$ and $\varphi(\alpha)$ have the same rotation order, $\tilde{f}_2$ belongs to $\cP_{\!x}(\overline{\bZ},d)$ as determined by $\varphi(\tilde{\kappa}_1)$. Furthermore, by repeatedly using the ring homomorphism property and linearity of $\Phi$, \begin{align*}\Phi(\pi_1(\tilde{f}_1)) = \pi_1(\Phi(\tilde{f}_1)) &=(\pi_2\circ\varphi)(\Phi(\tilde{f}_1))\\&=(\pi_2\circ\varphi)\Bigg(\sum_{\alpha,i}c_{\alpha,i}(\tilde{f}_1)\Phi(\scp_{\alpha,i})\Bigg)\\&=\pi_2\Bigg(\Phi\Bigg(\sum_{\alpha,i}\varphi(c_{\alpha,i}(\tilde{f}_1)\scp_{\alpha,i})\Bigg)\Bigg)\\&=\pi_2(\Phi(\tilde{f}_2))=\Phi(\pi_2(\tilde{f}_2)).\end{align*} Finally, recall that each $\varphi(\scp_{\alpha,i})$ that we have used to define $\tilde{f}_2$ is a linear combination of polynomials from $\cB_i$. The coefficients in this linear combination are determined by eigenvectors of the matrix $M_i$, the definition of which does not depend on the value of $\kappa$. Thus by replacing $\varphi(\tilde{\kappa}_1)$ with $\tilde{\kappa}_2$ in every element of $\cB_i$, each $\varphi(\scp_{\alpha,i})$ with $2d\,|\,\text{ord}(\alpha)$ becomes a polynomial in $\cP_{\!x}(\oZ,d)$ as determined by $\tilde{\kappa}_2$ rather than $\varphi(\tilde{\kappa}_1)$. This replacement does not change the image of $\varphi(\scp_{\alpha,i})$ (and thus of $\tilde{f}_2$) under $\pi_2$ because $\pi_2(\tilde{\kappa}_2) = \kappa = \pi_1(\tilde{\kappa}_1) = \pi_2(\varphi(\tilde{\kappa}_1))$. Altogether, we have shown that $\Phi(\pi_1(\tilde{f}_1))$, which is an arbitrary element of $\Phi(\pi_1(\cP_{\!x}(\overline{\bZ},d)))$ as determined by $\tilde{\kappa}_1$, belongs to $\Phi(\pi_2(\cP_{\!x}(\overline{\bZ},d)))$ as determined by $\tilde{\kappa}_2$.\end{proof}

\begin{notation}\label{not:P(F_q,d)}If $q\not\equiv\pm 1\,\textup{mod}\,2d$ let $\cP(\bF_q,d)$ denote the image of $\Phi(\cP_{\!x}(\oZ,d))$ in $\cP(\bF_q)$ from Proposition \ref{prop:reduction}, and if $q\equiv\pm 1\,\textup{mod}\,2d$ let $\cP(\bF_q,d)=\{0\}$. Let $\cP(\bF_q,\infty)$ denote the vector space sum $\sum_d\cP(\bF_q,d)$ for $d\geq 2$.\end{notation}

By construction, $\cP(\bF_q,d)\subseteq\cP(\bF_q,\infty)\subseteq\cP(\bF_q)$ for any $d$. In particular $\cP^{\perp}\!(\bF_q)\subseteq \cP^{\perp}\!(\bF_q,\infty)\subseteq\cP^{\perp}\!(\bF_q,d)$, where the orthogonal complement is taken in $\bF_q^q$ as in Notation \ref{not:P^perp(F_q)}. This is important in view of Theorem~\ref{thm:P^perp}, which rephrases the $Q$-Classification Conjecture and Theorem \ref{thm:main1} in terms of spanning sets for $\cP^{\perp}\!(\bF_q)$.

\begin{notation}For $n\geq 0$, let $\cP_{\!n}(\bF_q,d)$ and $\cP_{\!n}(\bF_q,\infty)$ denote the set of polynomials in $\cP(\bF_q,d)$ and $\cP(\bF_q,\infty)$, respectively, of degree at most $n$.\end{notation}


\section{Generalized eigenvectors for $\alpha=\pm2$}\label{sec:7}

Given any integer $d$, the purpose of this section is to reduce the proof of Theorem~\ref{thm:main1} (and the $Q$-Classification Conjecture) for primes $p\neq\pm 1\,\text{mod}\,2d$ to a finite computation with complexity depending on $d$. This is achieved by the following theorem.

\begin{theorem}\label{thm:local}Let $\kappa\in\bF_p\backslash\{4\}$ with $p\geq 7$ a prime. If there exist integers $d\geq 4$ and $n\geq 3d$ such that \[\dim(\cP_{2n}(\bF_p,d))\geq\begin{cases}2n-4 &\kappa= 3\text{ and }d\neq 4,5\\ 2n-3 &\kappa= 2,2+\varphi,2+\overline{\varphi}\text{ or }\kappa=3\text{ and }d=4,5\\2n-2 &\text{otherwise},\end{cases}\] then Theorem \ref{thm:main1} (and the $Q$-Classification Conjecture) holds for $\kappa$ and $p$.\end{theorem}

We first prove an initial consequence of the rank bound above. Then the proof is paused, and what remains of it is split into several pieces (depending on $\kappa$) at the end of this section. 

Recall the vectors in (\ref{eq:y_vecs}) and (\ref{eq:yM}). We also define three new vectors, \begin{align*}\by_0&=\be_2^T-12\be_4^T,\\ \by_p&=(4-\kappa)^{-1}\be_{p-1}^T,\\ \text{and }\yR&=-\sum_{i=1}^{\frac{p-1}{2}}\left(\!\binom{2i}{i}\sum_{j=1}^i\binom{2j}{j}^{-1}\frac{\kappa^{j-1}}{j}\right)\!\be_{2i}^T.\end{align*}

It turns out that even if $\cP_{\!n}(\bF_p,d)$ has the dimension required in the statement of Theorem \ref{thm:local}, $\cP(\bF_p,\infty)$ is still not big enough to prove that $\cP(\bF_q)$ equals what we expect it to. Specifically, $\cP(\bF_p,\infty)$ can never eliminate the possibility that $\by_0$, $\by_p$, or $\yR$ lies in $\cP^{\perp}(\bF_p)$, but none of those vectors appear in the spans in Theorem~\ref{thm:P^perp}.

\begin{lemma}\label{lem:yvecs}Let $\kappa\in\bF_p\backslash\{4\}$. If the hypothesis of Theorem \ref{thm:local} holds, then $\cP^{\perp}(\bF_p)$ is contained in the span of \begin{enumerate}\item[(0)] $\yM$, $\by_p$, $\by_{\kappa}$, and $\by_0$ if $\kappa=0$, \item[(1)] $\yM$, $\yR$, $\by_p$, and $\by_{\kappa}$ when $\kappa\neq 0,2,3,2+\varphi,2+\overline{\varphi}$, \item[(2)] $\yM$, $\yR$, $\by_p$, $\by_{\kappa}$, and $\by_1$ when $\kappa=2$, 
\item[(3a)] $\yM$, $\yR$, $\by_p$, $\by_{\kappa}$, and $\by_{\varphi}$ when $\kappa=2+\varphi$, \item[(3b)] $\yM$, $\yR$, $\by_p$, $\by_{\kappa}$, and $\by_{\overline{\varphi}}$ when $\kappa=2+\overline{\varphi}$, or \item[(4)] $\yM$, $\yR$, $\by_p$, $\by_{\kappa}$, $\by_{2}$, and $\by_5$ when $\kappa=3$.\end{enumerate}\end{lemma}

\begin{proof}Our first claim is that if the hypothesis of Theorem \ref{thm:local} holds, then \begin{equation}\label{eq:p-1/2}\dim(\cP_{\!p-2}(\bF_p,\infty))\geq\begin{cases}p-6 &\kappa= 3\text{ and }d\neq 4,5\\ p-5 &\kappa= 2,2+\varphi,2+\overline{\varphi}\text{ or }\kappa=3\text{ and }d=4,5\\p-4 &\text{otherwise},\end{cases}\end{equation} If $2n > p-2$, this is immediate by linear algebra, so suppose $2n < p-2$. Since $\cP(\bF_p,\infty)$ automatically contains every odd degree monomial, filling in the gap between $2n$ and $p-2$ amounts to producing a polynomial in $\cP(\bF_p,\infty)$ of degree $2m$ for all $n<m\leq\frac{p-3}{2}$.

Consider such an $m$, and let $\tilde{n}=d\lceil\frac{m}{d}\rceil$. By Corollary \ref{cor:found_em}, $\Phi(\cP_{\!x}(\oZ,d))$ contains a polynomial of degree $2m$ with leading coefficient $2\smash{\binom{\tilde{n}+m-1}{\tilde{n}-m}}(4-\tilde{\kappa})^{\tilde{n}}\tilde{n}^{\tilde{n}}$, where $\tilde{\kappa}$ is any lift of $\kappa$ with respect to some surjection $\oZ\to\oF_p$. The image of this polynomial in $\cP(\bF_q,d)$ has degree $2m$ provided $\tilde{n}+m\leq p$, because then the leading coefficient does not vanish in $\oF_p$. 

If it happens that $\tilde{n}+m > p$, then $m > p - \tilde{n}\geq p-(m+d-1)$. This rearranges to $2m > p-d+1$. Since $d\leq \frac{1}{3}n$ and $n\leq\frac{p-3}{2}$ by hypothesis, we see that $2m$ is much too large to divide $p+1$ or $p-1$. Thus we may take $d=m$ in Corollary~\ref{cor:found_em} to obtain a polynomial in $\Phi(\cP_{\!x}(\oZ,m))$ of degree $2m$ with leading coefficient $2(4-\tilde{\kappa})^mm^m$. Again, the degree is maintained under the map $\oZ\to\oF_p$.

We have shown that $\cP_{p-2}(\bF_p,\infty)$ contains a polynomial of every degree between $2n$ and $p-2$, thereby proving (\ref{eq:p-1/2}). In particular, the orthogonal complement $\smash{\cP_{\!p-2}^\perp(\bF_p,\infty)}$ has dimension 6, 5, or 4, depending on $\kappa$ and $d$ as stated in (\ref{eq:p-1/2}). Our final claim is that the 6, 5, or 4 vectors listed in the statement of this lemma span $\smash{\cP_{\!p-2}^\perp(\bF_p,\infty)}$ and thus $\cP^{\perp}\!(\bF_p)$. This will complete the proof.

The complement $\smash{\cP_{\!p-2}^\perp(\bF_p,\infty)}$ evidently contains $\by_p=(4-\kappa)^{-1}\be_{p-1}^T$ because degrees in $\smash{\cP_{\!p-2}(\bF_p,\infty)}$ are bounded by $p-2$.

Now we turn to $\by_{\kappa}=2\bx(0)+\frac{1}{2}(\bx(\sqrt{\kappa})+\bx(-\sqrt{\kappa}))$. Again let $\tilde{\kappa}$ be a lift of $\kappa$ with respect to some surjection $\oZ\to\oF_p$. We will use the case (1) orbit from Theorem \ref{thm:nonessential}, call it $\tilde{\cO}\coloneqq\Gamma\cdot(\sqrt{\tilde{\kappa}},0,0)\subset\cM(\oZ),$ to prove that $\by_\kappa$ belongs to $\smash{\cP_{\!p-2}^\perp(\bF_p,\infty)}$ for any $\kappa\in\bF_p\backslash\{4\}$. By construction of $\by_\kappa$, it suffices to show that $\sum_{\tilde{\cO}}\!f(\bt) = 0$ for an any $f\in\cP_x(\oZ,d)$ with $d\geq 4$. (Note that when $d=2$ or $3$, there are no primes $p\geq 7$ with $p\not\equiv\pm1\,\text{mod}\,2d$.) To do this, we will show that $\sum_{\tilde{\cO}}\!f^*(\bt) = 0$, and we may assume without loss of generality that $f^*$ is even as a polynomial in $y$ because $\tilde{\cO}$ is closed under $(x,y,z)\mapsto(x,-y,-z)$. Observe that $\tilde{\cO}$ breaks into first coordinate orbits $\tilde{\cO}_{\!\!\sqrt{\kappa}}\coloneq\Gamma_{\!x}\cdot (\sqrt{\kappa},0,0)$, $\tilde{\cO}_{\!-\!\sqrt{\kappa}}\coloneq\Gamma_{\!x}\cdot (-\sqrt{\kappa},0,0)$ and $\tilde{\cO}_0\coloneqq\Gamma_{\!x}\cdot(0,\sqrt{\kappa},0)$. By Proposition \ref{prop:P_equiv}, specifically (1) implies (5), if $f^*=f_n(x)y^{2n}+\cdots +f_0(x)$ then $(x^2-\kappa)\,|\,f_0(x)$. Since every $y$-coordinate in $\tilde{\cO}_{\!\!\sqrt{\kappa}}$ is 0, we have \[\sum_{\mathclap{\bt\in\tilde{\cO}_{\!\!\sqrt{\kappa}}}}f^*(\bt)=\sum_{\mathclap{\bt\in\tilde{\cO}_{\!\!\sqrt{\kappa}}}}f_0(x)=|\tilde{\cO}_{\!\!\sqrt{\kappa}}|f_0(\sqrt{\kappa})=0.\] The sum of $f$ over $\tilde{\cO}_{\!-\!\sqrt{\kappa}}$ vanishes by the same reasoning. Regarding $\tilde{\cO}_0$, since $\text{ord}(0)=4$ is not divisible by $2d$, we have $\sum_{\tilde{\cO}_0}\!f^*(\bt)=0$ by definition of $\cP_{\!x}(\oZ,d)$. Summing over all first-coordinate orbits gives $\sum_{\tilde{\cO}}\!f^*(\bt)=0$, as desired.

A similar argument using the case (2) orbit $\Gamma\cdot(1,1,0)$ from Theorem \ref{thm:nonessential} proves that $\by_1=\bx(0)+\frac{3}{2}(\bx(1)+3\bx(-1))$ belongs to $\smash{\cP_{\!p-2}^\perp(\bF_p,\infty)}$ when $\kappa=2$. Indeed, the orbit entries $0$ and $\pm 1$ have rotation order $4$ and $6$, which are not divisible by $2d$ (assuming there exist primes $p\geq 7$ with $p\not\equiv \pm 1\,\text{mod}\,2d$). 

In case (3a) and (3b), where $\kappa=2+\varphi$ or $2+\overline{\varphi}$, all orbit entries have order 4, 6, or 10. This appears to create potential trouble for proving that $\by_{\varphi}$ or $\by_{\overline{\varphi}})$ belongs to $\cP^{\perp}\!(\bF_p,d)$ in the special case $d=5$. However, in order for $\kappa=2+\varphi$ or $2+\overline{\varphi}$ to be possible when $p\geq 7$, we must have $p\equiv\pm1\,\text{mod}\,10$, implying $\cP(\bF_p,5)=0$ by definition. Thus $\by_{\varphi}\in\cP^\perp\!(\bF_q,\infty)$ when $\kappa=2+\varphi$ and $\by_{\overline{\varphi}}\in\cP^\perp\!(\bF_q,\infty)$ when $\kappa=2+\overline{\varphi}$.

When $\kappa=3$, the orbits in cases (4a) and (4b) of Theorem \ref{thm:nonessential} contain elements of order 4, 6, 8, and 10. There are two possibilities for failure: when $d=4$ and $p\not\equiv\pm 1\,\text{mod}\,8$ the vector \[\by_2= 2\bx(0)+\tfrac{3}{2}(\bx(1)+\bx(-1))+2(\bx(\sqrt{2})+\bx(-\sqrt{2}))\] may not lie in $\cP^\perp\!(\bF_p,4)$ because $\text{ord}(\pm\sqrt{2})=8$ is divisible by $2d$; and when $d=5$ and $p\not\equiv\pm 1\,\text{mod}\,10$ the vector \[\by_5= 2\bx(0)+3(\bx(1)+\bx(-1))+\tfrac{5}{2}(\bx(\varphi)+\bx(-\varphi))+\tfrac{5}{2}(\bx(\overline{\varphi})+\bx(-\overline{\varphi}))\] may not lie in $\cP^\perp\!(\bF_p,5)$ because $\text{ord}(\pm\varphi) = \text{ord}(\pm\overline{\varphi})=10$ is divisible by $2d$. We have accounted for this in the statement Theorem \ref{thm:local} and in (\ref{eq:p-1/2}) by increasing the rank requirement by one for $\kappa = 3$ when $d=4$ or $5$. This decreases the number of spanning vectors that we must find for $\cP_{\!p-2}^{\perp}(\bF_p,\infty)$ by one. (It turns out that $\by_2$ and $\by_5$ do \textit{not} belong to $\cP^\perp_{\!p-2}(\bF_p,4)$ or $\cP^\perp_{\!p-2}(\bF_p,5)$, respectively. In fact, experimentation suggests that they are not orthogonal to single nonzero even polynomial in $\cP_x(\oZ_p,4)$ or $\cP_x(\oZ,5)$.)

From the list of vectors in the lemma statement, it remains to consider $\yM$ and $\yR$ for general $\kappa$ and $\by_0$ for $\kappa=0$. 

Let $f\in\cP_x(\oZ,d)$, and suppose $f^*=f_n(x)y^{2n}+\cdots+f_0(x)$. Choose the lift $\tilde{\kappa}$ of $\kappa$ to lie in the interval $(0,4)$ so that we may employ the compact smooth surface $\cM^\circ\subset\cM(\bR)$. We have \begin{flalign}\label{eq:0_int}\nonumber&& 0&=\int_{-\sqrt{\tilde{\kappa}}}^{\sqrt{\tilde{\kappa}}}\frac{0}{\sqrt{4-x^2}}dx && \\ \nonumber && &=\int_{-\sqrt{\tilde{\kappa}}}^{\sqrt{\tilde{\kappa}}}\frac{\sum_j\!\binom{2j}{j}(\frac{x^2-\tilde{\kappa}}{x^2-4})^jf_j}{\sqrt{4-x^2}}dx && \text{by Proposition \ref{prop:P_equiv}}\\ \nonumber&& &= \frac{1}{2\pi}\int_{-\sqrt{\tilde{\kappa}}}^{\sqrt{\tilde{\kappa}}}\left(\sum_{j=0}^nf_j\!\int_{\cO_x}\!\!y^{2j}dA_x\right)dx && \text{by Corollary \ref{cor:integral}}\\ \nonumber&& &= \frac{1}{2\pi}\int_{\!\cM^\circ}\hspace{-1ex}f^*dA && \\\nonumber && & = \frac{1}{2\pi}\int_{\!\cM^\circ}\hspace{-1ex}\Phi(f^*)dA && \text{by }\Gamma\text{-invariance of }dA\text{ and }\cM^{\circ} \\&& &= \frac{1}{2\pi}\int_{\!\cM^\circ}\hspace{-1ex}\Phi(f)dA && \text{by Proposition \ref{prop:Phi_x(f^*)}.}\end{flalign} Only the variable $x$ appears in the argument of the last integral. The integral of any odd-degree monomial term of $\Phi(f)$ vanishes since $\cM^\circ$ is closed under $(x,y,z)\mapsto(-x,-y,z)$. Consider what happens when we integrate an even-degree monomial terms of $\Phi(f)$: \[\frac{1}{2\pi}\int_{\cM^{\circ}}\hspace{-1ex}x^{2j}dA=\int_{-\sqrt{\tilde{\kappa}}}^{\sqrt{\tilde{\kappa}}}\left(\frac{x^{2j}}{2\pi}\!\int_{\cO_x}\hspace{-1ex}dA_x\right)dx=\int_{-\sqrt{\tilde{\kappa}}}^{\sqrt{\tilde{\kappa}}}\frac{x^{2j}}{\sqrt{4-x^2}}dx\] by Corollary \ref{cor:integral}. This last integral can be computed using integration by parts and induction on $j$. The result is \[2\binom{2j}{j}\!\arcsin\!\left(\!\frac{\sqrt{\tilde{\kappa}}}{2}\right)-\sqrt{4\tilde{\kappa}-\tilde{\kappa}^3}\binom{2j}{j}\sum_{i=1}^j\binom{2i}{i}^{-1}\frac{\tilde{\kappa}^{i-1}}{i}.\] Let $\Phi(f)=c_{\tilde{n}}x^{2\tilde{n}}+\cdots +c_0$. By substituting the formula above into (\ref{eq:0_int}), we see that \[0=2\arcsin\!\left(\!\frac{\sqrt{\tilde{\kappa}}}{2}\right)\!\sum_{j=0}^{\tilde{n}}\binom{2j}{j}c_j-\sqrt{4\tilde{\kappa}-\tilde{\kappa}^3}\sum_{j=1}^{\tilde{n}}\binom{2j}{j}c_j\sum_{i=1}^j\binom{2i}{i}^{\!\!-1}\frac{\tilde{\kappa}^{i-1}}{i}.\] By the Hermite--Lindemann theorem, $\arcsin(\frac{1}{2}\sqrt{\tilde{\kappa}})$ is transcendental because $\frac{1}{2}\sqrt{\tilde{\kappa}}$ is algebraic and nonzero \cite[Chapter 1]{waldschmidt}. This forces both \[0=\sum_{j=0}^{\tilde{n}}\binom{2j}{j}c_j\hspace{\parindent}\text{and}\hspace{\parindent}0=\sum_{j=1}^{\tilde{n}}\binom{2j}{j}c_j\sum_{i=1}^j\binom{2i}{i}^{\!\!-1}\frac{\tilde{\kappa}^{i-1}}{i}.\] In particular, the $\Phi$ reduction of any linear combination of polynomials in $\cP_x(\oZ,d)$, with $d$ allowed to vary, also satisfies the two equations above. If the image in $\cP(\bF_p,\infty)$ of such a combination has degree at most $p-2$ (or even $p-1$), we see that it is orthogonal to $\yM$ and $\yR$. Note that no generality was lost by only considering $f$ for which $f^*$ is even as a polynomial in $y$---deleting odd powers of $y$ neither affects a polynomial's membership in $\cP_x(\oZ,d)$ nor the orthogonality to $\yM$ or $\yR$ of its image in $\cP_{\!p-2}(\bF_p,\infty)$.

(Remark that $\yR$ is also in $\cP_{\!p-2}^\perp(\bF_p,\infty)$ when $\kappa=0$, but it need not be listed in case (0) of the lemma statement because \[\yR=-\frac{1}{2}\sum_{i=1}^\frac{p-1}{2}\binom{2i}{i}\be_{2i}^T=-\frac{1}{2}\yM+\frac{1}{6}\by_{\kappa}.\] This is the one case where $\yR$ actually belongs in $\cP^\perp(\bF_p)$.)

Finally, consider $\kappa=0$ and $\by_0=\be_2^T-12\be_4^T$.  We take $\tilde{\kappa}=0$ as our lift of $\kappa$. As usual, let $f\in\cP_x(\oZ,d)$ for some $d\geq 4$, and assume without loss of generality that $f^*$ is even as a polynomial in $y$. By Corollary \ref{cor:eigen_basis}, $\Phi(f)$ is a linear combination of the polynomials $\Phi(\scp_{\alpha,i})$ for $i\geq 2$ and $\alpha\in\widehat{\Lambda}_i$ with $2d\,|\,\text{ord}(\alpha)$. To prove that $\Phi(f)$ is orthogonal to $\by_0$ (taking inner products in characteristic 0), we will show that every such $\Phi(\scp_{\alpha,i})$ is orthogonal to $\by_0$. We are only considering those $\alpha$ for which $2d\,|\,\text{ord}(\alpha)$, which excludes $\alpha=0$. Thus $\Phi(\scp_{\alpha,i})$ is orthogonal to $\by_0$ if and only if the same is true of $\alpha^5\Phi(\scp_{\alpha,i}) = \Phi(x^5\scp_{\alpha,i})$. We claim that the $\Phi$ reduction of each monomial term in $x^5\scp_{\alpha,i}$ is, by itself, orthogonal to $\by_0$. In fact, this is true of the $\Phi$ reduction of any monomial of total degree at least five. Indeed, up to permuting variables, the only monomial $x^\ell y^m z^n$ of total degree five with $\ell\equiv m\equiv n\,\text{mod}\,2$ is $x^3 yz$. We have $\Phi(x^3yz) = x^4+12x^2$, which is orthogonal to $\by_0$. The reduction of any monomial with odd total degree is also orthogonal to $\by_0$ by Proposition \ref{prop:degree}. Therefore, by induction on total degree, $\Phi(x^\ell y^m z^n)$ is orthogonal to $\by_0$ whenever $\ell+m+n\geq 5$.\end{proof}

The last step in the proof of Theorem \ref{thm:local} is to whittle the spanning vectors from Lemma \ref{lem:yvecs} down to those in the statement of Theorem \ref{thm:P^perp}. We achieve this using the extra polynomials $\Phi((x^{p+1}-x^2)f)\in\cP(\bF_p)$, where $f=f(x,y,z)$ is chosen to make the $\Phi$ computation as simple as possible (while avoiding those $f$ for which $\Phi((x^{p+1}-x^2)f)\in\cP(\bF_p,\infty)$). The most convienient polynomials $f$ turn out to come from the as yet unused eigenvalues of $A_n$ in Lemma \ref{lem:eigenvector}, namely $\pm 2$. (Recall that $\scp_{\alpha,n}$ has only been defined for $\alpha\in\widehat{\Lambda}_n= \Lambda_n\backslash\{\pm 2\}$.) The eigenvectors of $A_n$ of eigenvalue $2$ and $-2$ extend to generalized eigenvectors of $M_n$, that we denote $\scp_{2,n}$ and $\scp_{-2,n}$. It turns out that $\Phi((x^{p+1}-x^2)\scp_{2,n})$ and $\Phi((x^{p+1}-x^2)\scp_{-2,n})$ are not orthogonal to the extra vectors that appear in Lemma \ref{lem:yvecs} but do not appear in Theorem \ref{thm:P^perp}. Demonstrating this is the only goal for the remained of this section.

The next several pages are devoted to finding a formula for $\Phi((x^{p+1}-x^2)\scp_{2,n}^+)$ where $\scp_{2,n}^+ = \frac{1}{2}(\scp_{2,n}+\scp_{-2,n})$. Similar to Section \ref{sec:6}, we achieve this without ever finding the coefficients of $\scp_{2,n}^+$. (In particular, our strategy is not to first compute $\scp_{2,n}^+$ then apply Theorem \ref{thm:gen_form} to $\Phi(x^{p+1}\scp_{2,n}^+)$ and $\Phi(x^2\scp_{2,n}^+)$). This formula is essentially Proposition \ref{prop:qn_coef}, which provides the coefficient of $x^{2i}$ in $\Phi((x^{p+1}-x^2)\scp_{2,n}^+)$ when $i\geq n$. For general $\kappa$, we only end up needing a complete formula for $\Phi((x^{p+1}-x^2)\scp_{2,n}^+)$ for $n\leq 4$, so Proposition \ref{prop:qn_coef} leaves very little computational work to be done. Then Proposition \ref{prop:prodforms} helps computes the dot product of the coefficient vector of $\Phi((x^{p+1}-x^2)\scp_{2,n}^+)$ with $\by_p$, $\yR$, and $\by_{\kappa}$. This allows us to verify that no linear combination of $\by_p$  and $\yR$ when $\legendre{\kappa}{p} = 1$ (the Legendre symbol) or $\by_p$,  $\yR$, and $\by_{\kappa}$ when $\legendre{\kappa}{p}=-1$ is orthogonal to each $\Phi((x^{p+1}-x^2)\scp_{2,n}^+)$ for $n\leq 4$. Eliminating those two or three (depending on $\legendre{\kappa}{p}$) vectors from the span of $\cP^{\perp}\!(\bF_p)$ is all we must do for $\kappa\neq 0,3$.  For $\kappa=0$ we proceed similarly, but with the aim of eliminating $\by_p$ and $\by_0$ from $\cP^{\perp}\!(\bF_p)$; and for $\kappa=3$ we aim to eliminate $\by_p$, $\yR$, and potentially $\by_{\kappa}$, $\by_2$ and $\by_5$ depending on $\legendre{\kappa}{p}$, $\legendre{2}{p}$, and $\legendre{5}{p}$. 

Let us now proceed to defining $\scp_{2,n}^+$ and computing $\Phi((x^{p+1}-x^2)\scp_{2,n}^+)$.

Over any integral domain $R$ of characteristic $0$, the matrix $M_n$ defined in (\ref{eq:M_n}) has $n+1$ generalized eigenvectors with eigenvalue 2, denote them $\bp_0,...,\bp_n$, that satisfy $M_n\bp_0=2\bp_0$ and $M_n\bp_i = 2\bp_i + \bp_{i-1}$. Let us briefly justify this existence claim. Since $M_n$ is nearly a block diagonal matrix with blocks $A_0,...,A_n$, it is convenient to name the corresponding block vectors that concatenate to form $\bp_i$. Call them $\bp_{i,0},...,\bp_{i,n}$, so $\bp_{i,j}$ is a $(j+1)$-dimensional column vector over the field of fractions $F$. Recall from Lemma \ref{lem:eigenvector} that the algebraic and geometric multiplicities of the eigenvalue 2 for each of the diagonal blocks $A_0,...,A_n$ is exactly one and that the 2-eigenvector of $A_i$ is \[\bp_{i,i}\coloneqq c\begin{bmatrix}1\\2\\\vdots\\2\\1\end{bmatrix}\in R^{i+1}\] for $c\in F$. Extend this vector upward with zeros by letting $\bp_{i,j}$ be the zero vector when $j > i$. Linear algebra then guarantees that there exists a nonzero choice of scalar $c$ and components $\bp_{i,j}$ for $j < i$ that make $\bp_i$ a generalized eigenvector. Furthermore, $\bp_i$ is not a genuine eigenvector for $i\geq 1$ because there is no solution $\bp_{i,i-1}$ to the vector equation \[B_{i-1}\bp_{i,i}+A_{i-1}\bp_{i,i-1}=2\bp_{i,i-1}.\] Indeed, the sum of the entries in $B_{i-1}\bp_{i,i}$ is $2i-1\neq 0$, but the sum of the entries in any one of the columns of $A_{i-1}-2\text{Id}_i$ is 0. Thus $\bp_i$ must satisfy $M_n\bp_i=2\bp_i+\bp_{i-1}$ (assuming $\bp_{i-1}$ was constructed similarly with $\bp_{i-1,i-1}$ being the largest nonzero block component). 

Note that these vectors are not yet uniquely determined. Since $M_n\bp_0=2\bp_0$, adding any multiple of $\bp_0$ to $\bp_i$ preserves the equation $M_n\bp_i=2\bp_i+\bp_{i-1}$. We take the unique choice of $\bp_i$ for which \begin{equation}\label{eq:bottomzero}\bp_{i,0}=[0]\text{ for }i\geq 1.\end{equation} Setting $\bp_{0,0} = 1$, we have now defined $\bp_i$ for $i\geq 1$. 

The first three generalized eigenvectors are \begin{equation}\label{eq:first3}\bp_0=\begin{bmatrix}0\\ \smash{\vdots}\\ \smash{\vdots}\\\smash{\vdots}\\\smash{\vdots}\\\smash{\vdots}\\\smash{\vdots}\\0\\1\end{bmatrix},\hspace{\parindent}\bp_1=\begin{bmatrix}0\\ \smash{\vdots}\\ \smash{\vdots}\\\smash{\vdots}\\\smash{\vdots}\\ 0 \\ 1 \\1\\0\end{bmatrix},\hspace{\parindent}\text{and }\bp_2=\frac{1}{6}\begin{bmatrix}0\\ \smash{\vdots}\\ 0\\4\\8\\4\\1\\0\\0\end{bmatrix}.\end{equation}

Not decorating $\bp_i$ with an ``$n$" is an abuse of notation because there is otherwise no indication of what dimension the vector lives in. That is, the number of zero entries above $\bp_{i,i}$ in (\ref{eq:first3}) is not specified. We eliminate this abuse of notation by switching to polynomials. 

\begin{notation}Let $\scp_{2,n}\in F[x,y,z]$ denote the polynomial corresponding $\bp_n$ with respect to the basis $\cB_n$ defined at the start of Section \ref{sec:6}. Also let $\scp_{-2,n}(x,y,z) \coloneqq \scp_{2,n}(-x,-y,z)$ and $\scp_{2,n}^+=\frac{1}{2}(\scp_{2,n}+\scp_{-2,n})$.\end{notation}

Care must be taken in translating $M_n\bp_n = 2\bp_n+\bp_{n-1}$ into a polynomial equation since $\scp_{2,n}$ is defined according to $\cB_n$ while $\scp_{2,n-1}$ is defined according to $\cB_{n-1}$. For $i\leq n-1$, the monomials in $\cB_{i,n-1}$ must be scaled by $(x^2-\kappa)$ to match those in $\cB_{i,n}$.  Thus $M_n\bp_n = 2\bp_n+\bp_{n-1}$ becomes \begin{equation}\label{eq:recursion}\Phi_x(x\scp_{2,n})=2\scp_{2,n}+(x^2-\kappa)\scp_{2,n-1}\end{equation} by Proposition \ref{prop:Phi_to_mat}. Let us apply this formula to help compute $\Phi(x^m\scp_{2,n})$ for any $m$. (We need not reference the vectors $\bp_i$ again. They are simply a convenient way to prove that the $\scp_{2,i}$ exist and to compute their coefficients.) 

The lemma below holds for any $m$ and $n$ by regarding empty sums and improper binomial coefficients as 0. Its forthcoming application, however, is to compute $\Phi(x^m\scp_n)$ for $n=1,2,3,4,5$ and large $m$. So coefficients of the vast majority of powers of $x$, namely $x^i$ for $i > n$, are fully determined by the next lemma's right-side sum. As we will show, the left-side sum only contributes to coefficients of $x^i$ for $i\leq n$.

\begin{lemma}\label{lem:gen_eigen}For any $m\geq 0$ and $n\geq 1$, \begin{align*}\Phi(x^m\scp_{2,n})=\sum_{i=0}^{n-1}\binom{m}{i}2^{m-i}\Phi\big((x^2&-\kappa)^i\scp_{2,n-i}\big)\\[-2.5ex]&+2^{m-n}(x^2-\kappa)^n\sum_{i=0}^{m-n}\binom{m-i-1}{n-1}\frac{x^i}{2^i}.\end{align*}\end{lemma}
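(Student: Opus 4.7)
The plan is induction on $m$ using a recursion at the polynomial level derived from equation (\ref{eq:recursion}). Since $\Phi_x(x\scp_n) = 2\scp_n + (x^2-\kappa)\scp_{n-1}$ for $n\geq 1$, and $\Phi_x$ preserves sums over each $\Gamma_x$-invariant orbit $\cO_x$, I would multiply this identity by the scalar $x^m$ (which is constant on $\cO_x$) and then sum over $\cO_x \subseteq \cO$ for any $\Gamma$-invariant $\cO$. By the defining property of $\Phi$, this produces the polynomial identity
\[ \Phi(x^{m+1}\scp_n) \;=\; 2\,\Phi(x^m\scp_n) \;+\; \Phi((x^2-\kappa)x^m\scp_{n-1}) \]
for all $m \geq 0$ and $n \geq 1$. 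This is the key recursion I would iterate.

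I would then argue by double induction: outer on $n$, inner on $m$. The base case $n = 1$ uses $\scp_0 = 1$, so $\Phi((x^2-\kappa)x^m\scp_0) = (x^2-\kappa)x^m$, and iterating the recursion yields a geometric series in $x/2$ that matches the lemma's formula directly. For the outer inductive step, the base of the inner induction ($m = 0$) is immediate because the first sum of the formula collapses to its $i=0$ term and the second sum is empty, giving $\Phi(\scp_n)$ on both sides. For the inner inductive step, I would use linearity to rewrite $\Phi((x^2-\kappa)x^m\scp_{n-1}) = \Phi(x^{m+2}\scp_{n-1}) - \kappa\,\Phi(x^m\scp_{n-1})$ and invoke the inductive hypotheses to reduce the problem to a combinatorial identity $F(m+1, n) = 2F(m, n) + F(m+2, n-1) - \kappa F(m, n-1)$, where $F$ denotes the claimed right-hand side.

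The main obstacle is this combinatorial verification, which requires careful index-shifting between the two sums in the formula; the first sum falls to Pascal's rule $\binom{m+1}{i} = \binom{m}{i} + \binom{m}{i-1}$, while the second sum (involving $\binom{m-j-1}{n-1}$) reduces to a telescoping/shift argument. A cleaner alternative interpretation is a path-counting argument: view the recursion as a transition rule on the states $\{\scp_n, \scp_{n-1}, \dots, \scp_0\}$ in which each multiplication by $x$ at state $\scp_k$ with $k\geq 1$ either ``stays'' (with coefficient $2$) or ``drops'' to $\scp_{k-1}$ (with coefficient $x^2-\kappa$), while at $\scp_0 = 1$ each further iteration simply contributes a factor of $x$. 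Summing over all length-$m$ paths starting at $\scp_n$ and grouping by terminal state, the paths ending at $\scp_{n-i}$ for $0 \leq i < n$ contribute the first sum (with $\binom{m}{i}$ counting the positions of the $i$ drops), while the paths first reaching $\scp_0$ at step $m - j$ and then accumulating $j$ factors of $x$ contribute the second sum (with $\binom{m-j-1}{n-1}$ counting orderings of the $n$ drops among $m - j$ initial moves whose last is forced to be a drop). Applying $\Phi$ to the resulting expansion, and using that $\Phi$ fixes polynomials in $x$ alone, recovers the stated formula.
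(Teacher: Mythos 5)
Your overall skeleton (the recursion $\Phi$-image of $x\scp_n\mapsto 2\scp_n+(x^2-\kappa)\scp_{n-1}$, then induction) is the same as the paper's, but two of your steps have genuine problems. First, the way you derive the key recursion is invalid: from $\Gamma_x$-invariance of sums you only get that $\sum_{\cO}\Phi(x^{m+1}\scp_n)$ and $\sum_{\cO}\Phi(2x^m\scp_n+(x^2-\kappa)x^m\scp_{n-1})$ agree for every invariant $\cO$, i.e.\ that their \emph{difference lies in} $\cP(R)$ --- not that the two polynomials are equal. (That distinction is the entire point of the paper: $\cP(\bF_p)$ is large, and equality of orbit sums is far weaker than equality of $\Phi$-outputs.) Note also that (\ref{eq:recursion}) really means $\Phi_x(x\scp_n)=\Phi_x\bigl(2\scp_n+(x^2-\kappa)\scp_{n-1}\bigr)$ via Proposition \ref{prop:Phi_to_mat}. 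The recursion has to be justified through the structure of the reduction algorithm itself (staged reduction, $\Phi=\Phi\circ\Phi_x$), which is how the paper obtains $\Phi_x(x^{m+1}\scp_n)=2\Phi_x(x^m\scp_n)+\Phi_x\bigl((x^2-\kappa)\Phi_x(x^m\scp_{n-1})\bigr)$, not through invariance of sums.

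Second, your inductive step does not reduce to the clean identity you claim. After you split $\Phi((x^2-\kappa)x^m\scp_{n-1})=\Phi(x^{m+2}\scp_{n-1})-\kappa\Phi(x^m\scp_{n-1})$ and substitute the hypotheses, the right side is expressed in the ``black boxes'' $\Phi\bigl((x^2-\kappa)^i\scp_{n-1-i}\bigr)$, while the target $F(m+1,n)$ is expressed in $\Phi\bigl((x^2-\kappa)^i\scp_{n-i}\bigr)$; Pascal's rule matches only the part coming from $2F(m,n)$, and what remains is $\sum_{i}\binom{m}{i}2^{m-i}\Phi\bigl((x^2-\kappa)^{i+1}\scp_{n-1-i}\bigr)$ versus scalar combinations of the level-$(n-1)$ boxes. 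These are different polynomials, so the identity is not formal: closing it forces you to re-expand each $(x^2-\kappa)^{i+1}$ by the binomial theorem and invoke the hypothesis again at every lower level, producing nested identities well beyond ``Pascal plus a telescoping shift'' (already at $n=2$ you need the level-$1$ formula a second time to convert $\Phi((x^2-\kappa)\scp_1)$). The paper avoids this precisely by proving the $\Phi_x$-version (\ref{eq:phiX}) first: since $\Phi_x(x^m\scp_{n-1})$ still has $y,z$ content, one may multiply it by the intact factor $(x^2-\kappa)$ and reduce again, so the level-$(n-1)$ terms become exactly the level-$n$ terms $(x^2-\kappa)^{i+1}\scp_{n-1-i}$ and Pascal then suffices. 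That manoeuvre is unavailable for $\Phi$ itself, because $\Phi(x^m\scp_{n-1})$ is univariate and $(x^2-\kappa)\Phi(x^m\scp_{n-1})\neq\Phi\bigl((x^2-\kappa)x^m\scp_{n-1}\bigr)$ in general; your path-counting picture has the same issue, since grouping paths by accumulated $(x^2-\kappa)$-factors is legitimate only at the $\Phi_x$ level with staged reductions. So either work with $\Phi_x$ as the paper does, or strengthen your induction to a formula for $\Phi\bigl(x^m(x^2-\kappa)^i\scp_k\bigr)$ and accept a much heavier combinatorial verification.
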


\begin{proof}We will prove the equality above with both occurrences of ``$\Phi$" replaced by ``$\Phi_x$". That is, \begin{align}\label{eq:phiX}\nonumber\Phi_x(x^m\scp_{2,n})=\sum_{i=0}^{n-1}\binom{m}{i}2^{m-i}\Phi_x\big((x^2&-\kappa)^i\scp_{2,n-i}\big)\\[-2.5ex]&+2^{m-n}(x^2-\kappa)^n\sum_{i=0}^{m-n}\binom{m-i-1}{n-1}\frac{x^i}{2^i}.\end{align} This would imply the lemma by applying $\Phi$ to (\ref{eq:phiX}) and using $\Phi\circ\Phi_x = \Phi$. 

To prove (\ref{eq:phiX}) we use induction on $m$. In the base case, $m=0$, (\ref{eq:phiX}) becomes the vacuous assertion $\Phi_x(\scp_{2,n})=\Phi_x(\scp_{2,n})$. Now assume (\ref{eq:phiX}) holds for some $m\geq 0$. Observe that \begin{align}\nonumber\Phi_x(x^{m+1}\scp_{2,n})&=\Phi_x(x^m\Phi_x(x\scp_{2,n}))\\\nonumber&=\Phi_x(x^m\Phi_x(2\scp_{2,n} + (x^2-\kappa)\scp_{2,n-1}))\\\label{eq:twoterms}&=2\Phi_x(x^m\scp_{2,n}) + \Phi_x((x^2-\kappa)\Phi_x(x^m\scp_{2,n-1})).\end{align}

Here we consider the case $n=1$ separately from $n\geq 2$. Recall that $\scp_{2,0} = 1$. Applying the equation above when $n=1$ followed by the induction hypothesis gives \begin{align*}\Phi_x(x^{m+1}\scp_{2,1})&=2\Phi_x(x^m\scp_{2,1}) + (x^2-\kappa)x^m\\&=2\!\left(\!2^m\Phi_x(\scp_{2,1})+2^{m-1}(x^2-\kappa)\sum_{i=0}^{m-1}\frac{x^i}{2^i}\!\right)+(x^2-\kappa)x^m\\&=2^{m+1}\Phi_x(\scp_{2,1})+2^m(x^2-\kappa)\sum_{i=0}^m\frac{x^i}{2^i},\end{align*} which is (\ref{eq:phiX}) with $n=1$ and $m$ replaced by $m+1$. This completes the induction step when $n=1$. 

For $n\geq 2$, we apply the induction hypothesis to both terms $\Phi_x(x^m\scp_{2,n})$ and $\Phi_x(x^m\scp_{2,n-1})$ appearing in (\ref{eq:twoterms}). Each application results in an expression of form (\ref{eq:phiX}), and each of these expressions contains two summations that we call the ``left-side sum" and the ``right-side sum". We will show that the two left-side sums from $\Phi_x(x^m\scp_{2,n})$ and $\Phi_x(x^m\scp_{2,n-1})$ combine to give a new left-side sum of the same form with index incremented by one, and that the same thing happens when the two right-side sums are combined. 

By applying the induction hypothesis to $\Phi_x(x^m\scp_{2,n-1})$, the left-side sum contribution from $\Phi_x((x^2-\kappa)\Phi_x(x^m\scp_{2,n-1}))$ is \begin{align}\label{eq:leftside}\nonumber\Phi_x\Bigg((x^2-\kappa)\sum_{i=0}^{n-2}\binom{m}{i}2^{m-i}&\Phi_x\big((x^2-\kappa)^i\scp_{2,n-1-i}\big)\Bigg)\\ \nonumber&=\sum_{i=0}^{n-2}\binom{m}{i}2^{m-i}\Phi_x\big((x^2-\kappa)^{i+1}\scp_{2,n-1-i}\big)\\&=\sum_{i=1}^{n-1}\binom{m}{i-1}2^{m+1-i}\Phi_x\big((x^2-\kappa)^i\scp_{2,n-i}\big).\end{align} By applying the induction hypothesis to $\Phi_x(x^m\scp_n)$, the left-side sum contribution from $2\Phi_x(x^m\scp_n)$ is \[2\sum_{i=0}^{n-1}\binom{m}{i}2^{m-i}\Phi_x\big((x^2-\kappa)^i\scp_{2,n-i}\big)=\sum_{i=0}^{n-1}\binom{m}{i}2^{m+1-i}\Phi_x\big((x^2-\kappa)^i\scp_{2,n-i}\big).\] Adding this to (\ref{eq:leftside}) gives \begin{align*}\sum_{i=0}^{n-1}\left(\!\binom{m}{i}+\binom{m}{i-1}\!\right)2^{m+1-i}&\Phi_x\big((x^2-\kappa)^i\scp_{2,n-i}\big)\\&=\sum_{i=0}^{n-1}\binom{m+1}{i}2^{m+1-i}\Phi_x\big((x^2-\kappa)^i\scp_{2,n-i}\big),\end{align*} which is the left-side sum in (\ref{eq:phiX}) with $m$ replaced by $m+1$.

As for the two right-side sums resulting from (\ref{eq:twoterms}), we begin with the contribution from $\Phi_x((x^2-\kappa)\Phi_x(x^m\scp_{2,n-1}))$ again: \begin{align*}\Phi_x\Bigg(\!(x^2-\kappa)\Bigg(\!2^{m-n+1}(x^2-\kappa)^{n-1}\!\!\!&\sum_{i=0}^{m-n+1}\!\binom{m-i-1}{n-2}\frac{x^i}{2^i}\Bigg)\!\Bigg)\\&=2^{m+1-n}(x^2-\kappa)^n\!\!\!\sum_{i=0}^{m+1-n}\!\binom{m-i-1}{n-2}\frac{x^i}{2^i}.\end{align*} The contribution from $2\Phi_x(x^m\scp_{2,n})$ is \[2\!\left(\!2^{m-n}(x^2-\kappa)^n\! \sum_{i=0}^{m-n}\!\binom{m-i-1}{n-1}\frac{x^i}{2^i}\right) = 2^{m+1-n}(x^2-\kappa)^n\!\!\! \sum_{i=0}^{m+1-n}\!\!\binom{m-i-1}{n-1}\frac{x^i}{2^i}.\] Again we combine the two contributions to get \begin{align*}2^{m+1-n}(x^2-\kappa)^n\!\!\!\sum_{i=0}^{m+1-n}\!\bigg(\!\binom{m-i-1}{n-1}&+\binom{m-i-1}{n-2}\!\bigg)\frac{x^i}{2^i}\\&=2^{m+1-n}(x^2-\kappa)^n\!\!\!\sum_{i=0}^{m+1-n}\!\binom{m-i}{n-1}\frac{x^i}{2^i},\end{align*} which is the right-side sum in (\ref{eq:phiX}) with $m$ replaced by $m+1$. This completes the induction step when $n\geq 2$ and thus the proof.\end{proof}

This lemma applies equally well to computing coefficients of $\Phi(x^m\scp_{-2,n})$. Indeed, if $m$ is even then the coefficients of even powers of $x$ in $\Phi(x^m\scp_{-2,n})$ match those of $\Phi(x^m\scp_{2,n})$ by Proposition \ref{prop:degree}, while the coefficients of odd powers of $x$ in $\Phi(x^m\scp_{-2,n})$ are the negations of those of $\Phi(x^m\scp_{2,n})$. (If $m$ is odd, this gets reversed.) In particular, setting $\scp_{2,n}^+=\frac{1}{2}(\scp_{2,n}+\scp_{-2,n})$ means $\Phi(x^m\scp_{2,n}^+)$ only picks up the even-power coefficients of $\Phi(x^m\scp_{2,n})$ when $m$ is even.

\begin{notation}\label{not:qn}For $n\geq 1$, let $\bq_n\in\bF_p^p$ be the vector corresponding to $\Phi((x^{p+1}-x^2)\scp_{2,n}^+)$ as described in Notation \ref{not:P^perp(F_q)}.\end{notation}

\begin{proposition}\label{prop:qn_coef}Let $1\leq n\leq \frac{p-1}{2}$. If $i\geq n$, the entry in the $2i^\text{th}$ coordinate of $\bq_n$ is \begin{equation}\label{eq:large_coef}-2^{2-n-2i}\sum_{j=0}^n\binom{n}{j}\binom{2i-2j+n-2}{n-1}(-4)^j\kappa^{n-j}.\end{equation} If $1\leq i < n$ and $n\geq 3$, the entry in the $2i^\text{th}$ coordinate of $\bq_n$ is the sum of (\ref{eq:large_coef}) and \begin{equation}\label{eq:small_coef}c_{2i}+2^{2-n-2i}\sum_{j=0}^n\binom{n}{j}\binom{2j-2i+1}{n-1}4^j(-\kappa)^{n-j},\end{equation} where $c_i$ is the coefficient of $x^i$ in $-2\Phi((x^2-\kappa)\scp_{2,n-1})-\Phi((x^2-\kappa)^2\scp_{2,n-2})$. If $n\geq 2$, the entry in the zeroth coordinate of $\bq_n$ is $c_0$. The entry in the second coordinate of $\bq_2$ is $\frac{1}{2}\kappa(4-\kappa)$, and the entry in the zeroth coordinate of $\bq_1$ is $0$.\end{proposition}

\begin{proof}It is a small computation using Lemma \ref{lem:gen_eigen} to verify that the first one or two entries of $\bq_1$ and $\bq_2$ are as claimed. Let us turn to the $2i^\text{th}$ coordinate formula when either $i\geq 3$ and $n\geq 1$ or $i\geq 1$ and $n\geq 3$.

Determining $\bq_n$ requires computing $\Phi(x^2\scp_{2,n})$ and $\Phi(x^{p+1}\scp_{2,n})$. For each of these reductions, Lemma \ref{lem:gen_eigen} provides a formula involving two summations that we again call the left-side sum and the right-side sum. We evaluate the two left-side sums first.

When $n=1$, the left-side sums from $\Phi(x^{p+1}\scp_{2,1})$ and $-\Phi(x^2\scp_{2,1})$ cancel and therefore contribute nothing to the coefficient of $x^{2i}$. When $n=2$, the difference of the two left-side sums is $-2\Phi((x^2-\kappa)\scp_{2,1}) = -2\Phi((x^2-\kappa)(y^2+yz))$, which has degree $2$ and thus contributes nothing to the coefficient of $x^{2i}$ again. For $n\geq 3$, we have the following two left-side sums: \begin{align*}\sum_{i=0}^{n-1}&\binom{p+1}{i}2^{p+1-i}\Phi((x^2-\kappa)^i\scp_{2,n-i})-\sum_{i=0}^{n-1}\binom{2}{i}2^{2-i}\Phi((x^2-\kappa)^i\scp_{2,n-i})\\&=\sum_{i=0}^{1}\binom{p+1}{i}2^{2-i}\Phi((x^2-\kappa)^i\scp_{2,n-i})-\sum_{i=0}^2\binom{2}{i}2^{2-i}\Phi((x^2-\kappa)^i\scp_{2,n-i})\\&=-2\Phi((x^2-\kappa)\scp_{2,n-1})-\Phi((x^2-\kappa)^2\scp_{2,n-2})\\&=\sum_{i\geq 0}c_ix^i,\end{align*} where the final equality is simply the definition of the $c_i$. To justify the appearance of $c_{2i}$ in (\ref{eq:small_coef}) but not in (\ref{eq:large_coef}), we must show that $c_{2i}=0$ when $i\geq n$. In other words, we must show that $-2\Phi((x^2-\kappa)\scp_{2,n-1})-\Phi((x^2-\kappa)^2\scp_{2,n-2})$ has degree strictly less than $2n$. Indeed, by definition of the basis $\cB_{n-1}$ from Section \ref{ss:6.1}, $(x^2-\kappa)\scp_{2,n-1}$ is a linear combination of polynomials of the form $(x^2-\kappa)^{i+1}y^jz^k$ with $2i + j + k = 2n-2$ and $j\geq k$. By Proposition \ref{prop:degree}, the $\Phi$ reduction of such a polynomial has degree at most $2i+2 + k$, which is strictly less than $n$ unless $j = k = 0$ and $i = n-1$. But recall from the construction of $\scp_{2,n-1}$ that the coefficient of $(x^2-\kappa)^{2n-2}$ is zero (the last entry in $\bp_{n-1}$ as shown in (\ref{eq:first3}). Thus $\Phi((x^2-\kappa)\scp_{2,n-1})$ has degree at most $2n-2$. The same argument applies to $\Phi((x^2-\kappa)^2\scp_{2,n-2})$. Altogether, we have shown that the left-side sums from Lemma~\ref{lem:gen_eigen} are fully accounted for by $c_{2i}$ in (\ref{eq:small_coef}). 

Regarding the right-side sums, $\Phi(x^2\scp_{2,n})$ contributes nothing when $n\geq 3$ because the summation interval of $i=0$ to $2-n$ is empty. When $n=1$ or $n=2$ the right-side sum contributes monomials of degree 3 or 4, respectively, which also do not contribute to the coefficient of $x^{2i}$. The right-side sum from $\Phi(x^{p+1}\scp_{2,n})$ is \[2^{p+1-n}(x^2-\kappa)^n\!\!\sum_{i=0}^{p+1-n}\!\!\binom{p-i}{n-1}\frac{x^i}{2^i}.\] Here we replace $2^{p+1}$ with $2^2$ and expand and distribute $(x^2-\kappa)^n$. Collecting all terms of degree $2i$ gives \begin{align}\label{eq:pn_simple}\nonumber2^{2-n}\!\!\!\sum_{j=0}^{\min(i,n)}\!\bigg(\!\binom{n}{j}&x^{2j}(-\kappa)^{n-j}\bigg)\!\left(\!\binom{p-2i+2j}{n-1}\frac{x^{2i-2j}}{2^{2i-2j}}\right)\\&=2^{2-n-2i}\!\!\!\sum_{j=0}^{\min(i,n)}\!\!\binom{n}{j}\binom{p-2i+2j}{n-1}4^j(-\kappa)^{n-j}x^{2i}\end{align} 

Let us first suppose that the degree $2i$ above exceeds $p$ (making $\min(i,n)=n$). Let $\tilde{i}=i-\frac{p-1}{2}$ so that $x^{2i}\equiv x^{2\tilde{i}}\,\text{mod}\,(x^p-x)$. Substituting $\tilde{i}$ into the expression above, we get the following contribution to the $\tilde{i}^\text{th}$ coordinate of $\bq_n$: \[2^{2-n-2\tilde{i}}\sum_{j=0}^{n}\binom{n}{j}\binom{2j-2\tilde{i}+1}{n-1}4^j(-\kappa)^{n-j}.\] This is precisely the sum in (\ref{eq:small_coef}). Note that this summation neither appears in (\ref{eq:large_coef}) nor in the formula for the zeroth coefficient of $\bq_n$. It does not appear in (\ref{eq:large_coef}) because the degree of the right-side sum in Lemma \ref{lem:gen_eigen} (including the factor $(x^2-\kappa)^n$) is $p+1+n$. Thus $2\tilde{i}=2i-(p-1)\leq (p+1+n)-(p-1)=n+2$, which is strictly less than $2n$ when $n\geq 3$. It does not appear in the formula for the zeroth coefficient of $\bq_n$ because $2\tilde{i} = 2i-(p-1) > p - (p-1) > 0$.

Next let us suppose the degree $2i$ in (\ref{eq:pn_simple}) is less than $p$. In this case we use \[\binom{p-2i+2j}{n-1}\equiv (-1)^{n-1}\binom{2j-2i+n-2}{n-1}\,\text{mod}\,p\] (which can fail if $2i-2j > p$) to rewrite the coefficient of $x^{2i}$ in (\ref{eq:pn_simple}) as \[-2^{2-n-2i}\sum_{j=0}^n\binom{n}{j}\binom{2i-2j+n-2}{n-1}(-4)^j\kappa^{n-j}.\] Remark that replacing the summation bound $\min(i,n)$ in (\ref{eq:pn_simple}) with $n$ makes no difference since the binomial coefficient above vanishes when $j\geq i$. The expression above matches (\ref{eq:large_coef}).\end{proof}

To facilitate forthcoming computations, especially the dot product $\bq_n(\yR)$, we express $\bq_n$ in terms of the following vectors.

\begin{notation}\label{not:e,f}For $j=0,1,...,\frac{p-1}{2}$, let \[\bf_j=(4-\kappa)^{j+1}\sum_{i=j}^\frac{p-1}{2}\binom{i}{j}\frac{\be_{2i}}{4^i}.\]\end{notation}

Thanks to Proposition \ref{prop:qn_coef}, a complete formula for $\bq_n$ is obtained by computing $2\Phi((x^2-\kappa)\scp_{n-1})+\Phi((x^2-\kappa)^2\scp_{n-2})$ in order to find the coefficients $c_{2i}$ in (\ref{eq:small_coef}). This can be done by hand for the small values of $n$ we will need (at most 5, though computing $\bq_5$ by hand could take a few hours), but it is also easy to code an algorithm to do the work. Either way, once every entry of $\bq_n$ has been determined, we may express it as a combination of $\bf_0,\bf_1,\dots,\bf_{n-1}$ and $\be_0,\be_2,\dots,\be_{2n-2}$ as follows: First, the binomial coefficients in (\ref{eq:large_coef}) can be viewed as polynomials in the variable $i$ and written in terms of $\binom{i}{0},\binom{i}{1},...,\binom{i}{n-1}$, say \begin{equation}\label{eq:binom}\binom{2i-2k+n-2}{n-1}=\sum_{j=0}^{n-1}a_{j,k}\binom{i}{j}.\end{equation} The correct choice of coefficients makes this equation hold provided $2i-2k+n-2\geq 0$. (The equation above can fail when $2i-2k+n-2< 0$ because in this case the left-side binomial coefficient vanishes while the right side may not. For example, consider which $i$ make \[\binom{2i-3}{2}=6\binom{i}{0}-5\binom{i}{1}+4\binom{i}{2}\] true or false. Note that if we were treating the left side as a \textit{generalized} binomial coefficient, which we are not, the equation would hold for all $i$.) Once the $a_{j,k}$ are computed, we may substitute (\ref{eq:binom}) into (\ref{eq:large_coef}) and swap the order of summation. We conclude that all but the first $n$ coordinates of $\bq_n$ match those of \[-\sum_{j=0}^{n-1}\left(\frac{2^{2-n}}{(4-\kappa)^{j+1}}\sum_{k=0}^n\binom{n}{k}a_{j,k}(-4)^k\kappa^{n-k}\right)\bf_j.\] Then the first $n$ coordinates of the expression above can be substracted from their correct values to obtain the ``adjustment" linear combination of $\be_0,\be_2,\dots,\be_{2n-2}$ that provides an exact formula for $\bq_n$. This is also straightforward to compute by hand or by code. Results for $n\leq 4$ are below. The coefficient matrices have been transposed to save space.

{\renewcommand{\arraystretch}{1.2}\begin{align}\label{eq:q1toq4}\nonumber\begin{bmatrix}\bq_1\\\bq_2\\\bq_3\\\bq_4\end{bmatrix}=&\begin{bmatrix}2 & 16 & 120-6\kappa & 896-96\kappa\\ 0 & -2 & -18-\frac{3}{2}\kappa & -144-8\kappa-\kappa^2\\ 0 & 0 & 2 & 20+3\kappa\\ 0 & 0 & 0 & -2\end{bmatrix}^{\!T}\begin{bmatrix}\bf_0\\\bf_1\\\bf_2\\\bf_3\end{bmatrix}\\&-(4-\kappa)\!\begin{bmatrix}2 & 16-\kappa & 120-\frac{46}{3}\kappa & 896-\frac{7816}{45}\kappa+\frac{166}{45}\kappa^2 \\ 0 & 2 & 20-\frac{4}{3}\kappa & \frac{2596}{15}-\frac{967}{45}\kappa+\frac{7}{45}\kappa^2\\ 0 & 0 & \frac{4}{3} & \frac{604}{45}-\frac{11}{9}\kappa\\ 0 & 0 & 0 & \tfrac{16}{15}\end{bmatrix}^{\!T}\begin{bmatrix}\be_0\\\be_2\\\be_4\\\be_6\end{bmatrix}\end{align}} To reinforce Notations \ref{not:qn} and \ref{not:e,f}, the second line \[\bq_2=16\bf_0-2\bf_1-(4-\kappa)((16-\kappa)\be_0+2\be_2)\] above asserts that \begin{align*}\Phi\big((x^{p+1}-&x^2)\big(\tfrac{1}{6}(4y^4+4y^2z^2+(x^2-\kappa)y^2)\big)\big)\\&\equiv (4-\kappa)\!\left(\sum_{i=0}^{\frac{p-1}{2}}(16-2(4-\kappa)i)\frac{x^{2i}}{4^i}-16+\kappa-2x^2\right)\text{mod}\,(x^p-x).\end{align*} Note that the term $\frac{1}{6}(8y^3z)$, which might be expected to appear based on the ``8" in (\ref{eq:first3}), is absent because $\bq_2$ is defined using $\scp_{2,2}^+$ rather than $\scp_{2,2}$.

\begin{lemma}\label{lem:centbinom}For any nonnegative integers $n$ and $j$, \[4^n\sum_{i=j}^n\binom{2i}{i}\binom{i}{j}\frac{1}{4^i}=\frac{2n+1}{2j+1}\binom{n}{j}\binom{2n}{n}.\]\end{lemma}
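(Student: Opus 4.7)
My plan is to prove this by induction on $n\geq j$, exploiting the fact that both sides satisfy the same first-order recurrence as functions of $n$ with $j$ fixed. Denote the left-hand side by $L_n$ and the right-hand side by $R_n$. The base case $n=j$ is immediate: $L_j = 4^j\binom{2j}{j}\binom{j}{j}/4^j = \binom{2j}{j}$, and $R_j = \frac{2j+1}{2j+1}\binom{j}{j}\binom{2j}{j} = \binom{2j}{j}$.

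For the inductive step, I separate out the top term in the sum defining $L_{n+1}$, giving
\[L_{n+1} = 4L_n + \binom{2n+2}{n+1}\binom{n+1}{j}.\]
Applying the induction hypothesis $L_n = R_n$ and then the standard identities
\[\binom{2n+2}{n+1} = \frac{2(2n+1)}{n+1}\binom{2n}{n},\qquad \binom{n+1}{j} = \frac{n+1}{n+1-j}\binom{n}{j},\]
both terms can be written as a common multiple of $(2n+1)\binom{2n}{n}\binom{n}{j}$. The desired equality $L_{n+1}=R_{n+1}$ then reduces (after the analogous expansion of $R_{n+1}$) to the elementary fraction identity
\[\frac{4}{2j+1} + \frac{2}{n+1-j} \;=\; \frac{2(2n+3)}{(2j+1)(n+1-j)},\]
which is trivial.

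The calculation above is routine and offers no real obstacle; the only thing to be careful about is keeping track of where the factor of $\frac{2n+3}{2j+1}$ in $R_{n+1}$ appears after the common-denominator step. An alternative route, should a slicker presentation be desired, is to recognize $L_n$ as a terminating Gauss ${}_2F_1$ at $1$ (after pulling out $\binom{2j}{j}/4^j$ and reindexing $k=i-j$) and apply the Chu–Vandermonde identity, but the direct induction is shorter and keeps everything over $\bZ$, which is the setting we need for the subsequent rank arguments.
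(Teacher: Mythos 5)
Your proof is correct and takes essentially the same approach as the paper: both arguments proceed by induction on $n$ using the recurrence $L_{n+1}=4L_n+\binom{2n+2}{n+1}\binom{n+1}{j}$ and checking that the right-hand side satisfies the same recursion (you verify this explicitly via the fraction identity, which the paper merely observes, and you start the induction at $n=j$ rather than $n=0$ — an immaterial difference).
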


\begin{proof}Letting $a_j(n)$ and $b_j(n)$ denote the left- and right-side expressions above, we observe that both satisfy the same recursion for $n\geq 1$ and $j\geq 0$: \[a_j(n)=4a_j(n-1)+\binom{2n}{n}\binom{n}{j}\hspace{\parindent}\text{and}\hspace{\parindent}b_j(n)=4b_j(n-1)+\binom{2n}{n}\binom{n}{j}.\] Since $a_0(0)=1=b_0(0)$ and $a_j(0)=0=b_j(0)$ when $j\geq 1$, the claim follows by induction on $n$.\end{proof}

Recall the variable vector $\bx=\sum_ix^i\be_i$ as well as the vectors $\bx(\alpha)=\sum_i\alpha^i\be_i$ for $\alpha\in\bF_p$ defined in Notation \ref{not:y_M}.

\begin{lemma}\label{lem:fj(x)}For any nonnegative integer $j$ and $x\in\overline{\bF}_p\backslash\{4\}$, \[\bf_j(\bx)=4x^{2j}\!\left(\!\frac{4-\kappa}{4-x^2}\!\right)^{\!\!j+1}\!\!\left(\!1-x^{p-1}\!\sum_{i=0}^j\!\binom{2i}{i}\!\!\left(\frac{1}{4}-\frac{1}{x^2}\right)^{\!\!i}\right)-4x^{p-1}\!\!\left(\frac{\kappa}{4}-1\right)^{\!j+1}\!\!\binom{2j}{j}.\]\end{lemma}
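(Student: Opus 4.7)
The plan is to derive a closed-form expression for $\bf_j(\bx)$ directly from its definition, then reshape it into the stated form via a short polynomial identity in one variable.

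Substituting $y = x/4$ gives $\bf_j(\bx) = (4-\kappa)^{j+1}T_j(y)$, where $T_j(y) = \sum_{i=j}^{(p-1)/2}\binom{i}{j}y^i$. The identity $\binom{i}{j}y^i = \tfrac{y^j}{j!}\tfrac{d^j}{dy^j}y^i$ lets me rewrite
\begin{align*}
T_j(y) = \frac{y^j}{j!}\frac{d^j}{dy^j}\!\left(\frac{1 - y^{(p+1)/2}}{1-y}\right),
\end{align*}
and Leibniz's rule applied to the second summand produces the closed form
\begin{align*}
T_j(y) = \frac{y^j}{(1-y)^{j+1}} - \sum_{i=0}^{j}\binom{(p+1)/2}{i}\frac{y^{(p+1)/2+j-i}}{(1-y)^{j+1-i}}.
\end{align*}

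Next I convert back to the variable $x$. Two congruences in $\bF_p$ do most of the work: $4^{(p-1)/2} \equiv 1 \pmod p$ (since $4 = 2^2$) absorbs the extra powers of $4$ coming from the $(p+1)/2$ exponents, and $\binom{(p+1)/2}{i} \equiv \binom{1/2}{i} \pmod p$ matches the fractional binomial, which for $i \geq 1$ satisfies $\binom{1/2}{i} = (-1)^{i+1}\binom{2i}{i}/(4^i(2i-1))$. After clearing the common factor $(4-\kappa)^{j+1}/(4-x)^{j+1}$ and introducing $v = (4-x)/x$ (so that $4/x = 1+v$ and $\tfrac{1}{4} - \tfrac{1}{x} = -v/4$), both sides, once divided by $x^{j+1}$, reduce to the single polynomial identity
\begin{align*}
\sum_{i=0}^{j}\binom{1/2}{i}v^i = (1+v)\sum_{i=0}^{j}a_i(-v)^i + a_j(-v)^{j+1},
\end{align*}
where $a_i \coloneqq \binom{2i}{i}/4^i$.

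To finish, I verify this truncated identity by comparing coefficients. As formal power series, $\sum_{i\geq 0}\binom{1/2}{i}v^i = (1+v)^{1/2} = (1+v)(1+v)^{-1/2} = (1+v)\sum_{i\geq 0}a_i(-v)^i$, which, extracted coefficient-wise, gives the recurrence $\binom{1/2}{k} = a_k(-1)^k + a_{k-1}(-1)^{k-1}$ for every $k \geq 1$. This recurrence makes the coefficients of $v^0, v^1, \ldots, v^j$ agree on both sides, and the potentially dangerous coefficient of $v^{j+1}$ cancels: the contribution $a_j(-1)^j$ from $v\cdot a_j(-v)^j$ is exactly negated by the corrective term $a_j(-v)^{j+1}$.

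The main obstacle is routine but easy to mismanage: carrying the normalization factors $(4-\kappa)^{j+1}$, $(4-x)^{j+1}$, $4^{(p-1)/2}$, and the signs of $(-v)^i$ through the reduction without error. Once the reduction to the $v$-identity is isolated, the rest is generating-function bookkeeping.
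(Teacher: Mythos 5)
Your proof is correct and follows essentially the same route as the paper: both recognize $\sum_i\binom{i}{j}(x/4)^i$ as $\tfrac{y^j}{j!}$ times the $j$-th derivative of a truncated geometric series, expand by Leibniz, and finish with a central-binomial congruence modulo $p$. The only cosmetic difference is that the paper peels off the top term $i=\tfrac{p-1}{2}$ before differentiating and uses $\binom{(p-1)/2}{i}\equiv(-1)^i4^{-i}\binom{2i}{i}$, so the stated form drops out at once, whereas you keep it inside, use $\binom{(p+1)/2}{i}\equiv\binom{1/2}{i}$, and therefore need your (correct) truncated $(1+v)^{1/2}=(1+v)(1+v)^{-1/2}$ identity to produce the separate $-4x^{(p-1)/2}(\kappa/4-1)^{j+1}\binom{2j}{j}$ term.
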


\begin{proof}Directly from the definition of $\bf_j$, \begin{equation}\label{eq:fj(x)}\frac{\bf_j(\bx)}{(4-\kappa)^{j+1}}\;=\;\sum_{i=j}^\frac{p-1}{2}\binom{i}{j}\frac{x^{2i}}{4^i}\;=\;4x^{2j}\sum_{i=j}^{\frac{p-3}{2}}\binom{i}{j}4^{\frac{p-3}{2}-i}x^{2(i-j)}+\binom{\frac{p-1}{2}}{j}x^{p-1}.\end{equation}
The summation in the final expression above (as well as in the middle expression) is the $j^\text{th}$ derivative with respect to $x^2$ of a geometric series, which can be evaluated with the product rule: \begin{align*}4x^{2j}\sum_{i=j}^{\frac{p-3}{2}}\binom{i}{j}4^{\frac{p-3}{2}-i}(x^2)^{i-j}&=\frac{4x^{2j}}{j!}\frac{d^j}{dx^{2j}}\!\left(\frac{4^{\frac{p-1}{2}}-x^{p-1}}{4-x^2}\right)\\&=\frac{4x^{2j}}{(4-x^2)^{j+1}}\left(1-x^{p-1}\sum_{i=0}^j\binom{\frac{p-1}{2}}{i}\!\left(\frac{4}{x^2}-1\right)^{\!\!i}\right).\end{align*} Now we substitute \begin{equation}\label{eq:centbinom}\binom{\frac{p-1}{2}}{i}\equiv \frac{(-1)^i}{4^i}\binom{2i}{i}\,\text{mod}\,p\end{equation} into the expression above as well as in the final term of (\ref{eq:fj(x)}). Scaling both sides of (\ref{eq:fj(x)}) by $(4-\kappa)^{j+1}$ completes the proof.\end{proof}

We make use of the Legendre symbol $\legendre{\alpha}{p}=\alpha^{\frac{p-1}{2}}$ for $\alpha\in\bF_p$ (including $\alpha=0$).

\begin{proposition}\label{prop:prodforms}If $1\leq j \leq \tfrac{p-3}{2}$ and $\kappa\in\bF_p\backslash\{4\}$, then \begin{align*}\be_{2j}(\by_p)&=0,&\bf_j(\by_p)&=\left(\frac{\kappa}{4}-1\right)^{\!j}\!\binom{2j}{j},\\\be_{2j}(\by_{\bR})&=-\binom{2j}{j}\!\sum_{i=1}^{j}\!\binom{2i}{i}^{\!\!-1}\!\frac{\kappa^{i-1}}{i},&\!\bf_j(\yR)&=\frac{2\kappa^j}{2j+1}\!\left(\!1-\legendre{\kappa}{p}\!\sum_{i=0}^j\!\binom{2i}{i}\!\!\left(\frac{1}{4}-\frac{1}{\kappa}\right)^{\!i}\,\right)\!,\\\be_{2j}(\by_{\kappa})&=\kappa^j,& \bf_j(\by_{\kappa})&=(4j+2)\bf_j(\by_{\bR})+\legendre{\kappa}{p}(4-\kappa)\bf_j(\by_p).\end{align*} Furthermore, when $j=0$ the first four formulas hold, while \[\be_0(\by_{\kappa})=3\hspace{\parindent}\text{and}\hspace{\parindent}\bf_0(\by_{\kappa})=12-\left(2+\legendre{\kappa}{p}\right)\!\kappa.\]\end{proposition}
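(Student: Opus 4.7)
My plan is to treat the six formulas in the order of increasing difficulty, reducing each to direct computation plus the two lemmas (\ref{lem:centbinom} and \ref{lem:fj(x)}) provided in the paper.

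First the easy cases. For $\be_j(\by_p)$ and $\be_j(\by_\kappa)$ I would simply read off the $j^\text{th}$ coordinate from the definitions: $\by_p$ is supported on the top coordinate and $\by_\kappa=2\bx_0+\bx_\kappa$ has $j^\text{th}$ coordinate $\kappa^j$ (plus $2$ when $j=0$). The formula $\be_j(\by_{\bR})$ also comes directly from the definition of $\yR$ (with the convention that the empty sum at $j=0$ gives zero). For $\bf_j(\by_p)$, the inner product collapses to the single term $i=\tfrac{p-1}{2}$, which becomes $(4-\kappa)^j\binom{(p-1)/2}{j}4^{-(p-1)/2}$; applying $4^{(p-1)/2}\equiv 1\pmod p$ and the congruence (\ref{eq:centbinom}) recovers $(\kappa/4-1)^j\binom{2j}{j}$.

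The heart of the proof is $\bf_j(\by_{\bR})$. I would expand $\bf_j(\by_\bR)$ as a double sum and swap the order, producing
\[\bf_j(\by_\bR)=-(4-\kappa)^{j+1}\sum_{k=1}^{(p-1)/2}\binom{2k}{k}^{-1}\frac{\kappa^{k-1}}{k}\,S_{j,k},\qquad S_{j,k}=\sum_{i=\max(j,k)}^{(p-1)/2}\binom{i}{j}\frac{\binom{2i}{i}}{4^i}.\]
Lemma \ref{lem:centbinom} evaluated at $n=\tfrac{p-1}{2}$ gives a factor of $2n+1=p\equiv 0$, so the \emph{full} sum $\sum_{i=j}^{(p-1)/2}\binom{i}{j}\binom{2i}{i}4^{-i}$ vanishes mod $p$. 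Consequently $S_{j,k}=0$ for $k\le j$, and for $k>j$ the same lemma (applied with $n=k-1$) converts $S_{j,k}$ into a closed form in $k,j,\kappa$. Substituting, telescoping the $k$-sum, and comparing with the expansion of $\bf_j(\bx_\kappa)$ (from its definition) yields
\[\bf_j(\by_\bR)=\frac{1}{2(2j+1)}\Big(\bf_j(\bx_\kappa)-(4-\kappa)\Big(\frac{\kappa-4}{4}\Big)^{j}\binom{2j}{j}\legendre{\kappa}{p}\Big).\]
Then Lemma \ref{lem:fj(x)} with $x=\kappa$ evaluates $\bf_j(\bx_\kappa)$; the boundary terms involving $\binom{2j}{j}(\kappa/p)$ cancel algebraically, leaving exactly the claimed formula.

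Finally for $\bf_j(\by_\kappa)$ I would observe that $\bx_0=\be_0^T$, so $\bf_j(\bx_0)=0$ for $j\ge 1$ and equals $4-\kappa$ for $j=0$. Thus $\bf_j(\by_\kappa)=\bf_j(\bx_\kappa)$ when $j\ge1$, and Lemma \ref{lem:fj(x)} expresses this directly; a short rearrangement, using the formula just derived for $\bf_j(\by_\bR)$ and the identity $-4(\kappa/4-1)^{j+1}=(4-\kappa)(\kappa/4-1)^j$, puts it in the form $(4j+2)\bf_j(\by_\bR)+\legendre{\kappa}{p}(4-\kappa)\bf_j(\by_p)$. The $j=0$ cases for $\be_0(\by_\kappa)=3$ and $\bf_0(\by_\kappa)=12-(2+(\kappa/p))\kappa$ follow from adding the $2\bf_0(\bx_0)=2(4-\kappa)$ contribution to the same Lemma \ref{lem:fj(x)} evaluation.

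The only place where anything subtle happens is the $\bf_j(\by_{\bR})$ step; the rest is bookkeeping. The key trick is recognizing that the Lemma \ref{lem:centbinom} identity is \emph{designed} so that its right-hand side kills the factor $2n+1=p$, which is what lets the double sum simplify to a telescoping single sum that in turn matches $\bf_j(\bx_\kappa)$ up to a correction that can be absorbed into the $\by_p$ term.
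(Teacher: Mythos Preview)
Your proposal is correct and follows essentially the same route as the paper's proof: the $\be_j$ formulas and $\bf_j(\by_p)$ are read off directly (using (\ref{eq:centbinom})), the core computation for $\bf_j(\by_\bR)$ swaps the double sum, applies Lemma~\ref{lem:centbinom} at $n=\tfrac{p-1}{2}$ to kill the full sum and at $n=k-1$ to evaluate the tail, and then Lemma~\ref{lem:fj(x)} at $x=\kappa$ finishes it; the linear relation for $\bf_j(\by_\kappa)$ falls out by noting $\bf_j(\by_\kappa)=\bf_j(\bx_\kappa)$ for $j\ge 1$. One small wording note: the inner $k$-sum does not actually telescope---after substituting Lemma~\ref{lem:centbinom} the factors $\binom{2k}{k}^{-1}\frac{1}{k}$ and $(2k-1)\binom{2k-2}{k-1}$ simplify multiplicatively to leave $\frac{1}{2(2j+1)}\binom{k-1}{j}(\kappa/4)^{k-1}$, which is directly the definition of $\bf_j(\bx_\kappa)/(4-\kappa)^{j+1}$ minus its top term.
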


\begin{proof}The formulas involving $\be_{2j}$ are immediate from the definitions of $\by_p$, $\yR$, and $\by_{\kappa}$. Also immediate from the definitions of $\by_p$ and $\bf_j$ is \[\bf_j(\by_p)=(4-\kappa)^j\binom{\frac{p-1}{2}}{j}.\] Combining this with (\ref{eq:centbinom}) proves the formula for $\bf_j(\by_p)$.

Next we establish the formula for $\bf_j(\by_{\kappa})$. Observe that \begin{flalign}\label{eq:fjug}\nonumber&&\frac{\bf_j(\by_{\bR})}{(4-\kappa)^{j+1}}&=-\sum_{k=j}^{\frac{p-1}{2}}\binom{k}{j}\binom{2k}{k}\frac{1}{4^k}\sum_{i=1}^k\binom{2i}{i}^{\!\!-1}\frac{\kappa^{i-1}}{i}&& \\\nonumber && &=-\sum_{i=0}^\frac{p-3}{2}\binom{2i+2}{i+1}^{\!\!-1}\!\frac{\kappa^i}{i+1}\sum_{k=i+1}^\frac{p-1}{2}\binom{k}{j}\binom{2k}{k}\frac{1}{4^k}&& \\\nonumber && &=-\sum_{i=0}^\frac{p-3}{2}\binom{2i+2}{i+1}^{\!\!-1}\!\frac{\kappa^i}{i+1}\left(\sum_{k=j}^\frac{p-1}{2}\binom{k}{j}\binom{2k}{k}\frac{1}{4^k}-\sum_{k=j}^{i}\binom{k}{j}\binom{2k}{k}\frac{1}{4^k}\right)&& \\\nonumber && &=-\sum_{i=0}^\frac{p-3}{2}\binom{2i+2}{i+1}^{\!-1}\!\frac{\kappa^i}{i+1}\left(0-\frac{2i+1}{2j+1}\binom{i}{j}\binom{2i}{i}\frac{1}{4^i}\right) && \hspace{-2cm}\text{by Lemma \ref{lem:centbinom}}\\\nonumber && &=\frac{1}{4j+2}\sum_{i=0}^\frac{p-3}{2}\binom{i}{j}\frac{\kappa^i}{4^i} && \\\nonumber && &=\frac{1}{4j+2}\left(\sum_{i=0}^\frac{p-1}{2}\binom{i}{j}\frac{\kappa^i}{4^i}-\binom{\frac{p-1}{2}}{j}\kappa^{\frac{p-1}{2}}\right) && \\ && &=\frac{1}{4j+2}\left(\frac{\bf_j(\bx_\kappa)}{(4-\kappa)^{j+1}}-\legendre{\kappa}{p}\frac{\bf_j(\by_p)}{(4-\kappa)^j}\right). && \end{flalign} Since $\bf_j(\bx_\kappa)=\bf_j(\by_{\kappa})$ when $j\geq 1$, this establishes the claimed linear relation.

It remains only to verify the formula for $\bf_j(\by_{\bR})$. Letting $x=\sqrt{\kappa}$ in Lemma \ref{lem:fj(x)} gives \begin{align*}\bf_j(\bx(\sqrt{\kappa}))&=4\kappa^j\!\left(1-\legendre{\kappa}{p}\sum_{i=0}^j\binom{2i}{i}\left(\frac{1}{4}-\frac{1}{\kappa}\right)^{\!i}\right)-4\legendre{\kappa}{p}\!\left(\frac{\kappa}{4}-1\right)^{j+1}\!\binom{2j}{j}\\&=4\kappa^j\!\left(1-\legendre{\kappa}{p}\sum_{i=0}^j\binom{2i}{i}\left(\frac{1}{4}-\frac{1}{\kappa}\right)^{\!i}\right)+\legendre{\kappa}{p}(4-\kappa)\bf_j(\by_p).\end{align*} Substituting this into (\ref{eq:fjug}) completes the proof.\end{proof}

\begin{proof}[Proof of Theorem \ref{thm:local} when $\kappa\neq 0,3$] Several of these vectors from the list in Lemma \ref{lem:yvecs} must be eliminated to match Theorem \ref{thm:P^perp}. Specifically, if $\legendre{\kappa}{p}=1$, we must prove that no nonzero linear combination of $\by_{\bR}$ and $\by_p$ lies in $\cP^\perp\!(\bF_p)$; all of the other vectors Lemma \ref{lem:yvecs} belong when $\kappa\neq 0,3$. If $\legendre{\kappa}{p}=-1$ we must prove that no nonzero linear combination of $\by_{\bR}$, $\by_p$, and $\by_{\kappa}$ lies in $\cP^\perp(\bF_p)$; again, all other vectors belong.

Suppose $\kappa\neq 0,3$ and  $\legendre{\kappa}{p}=1$. Proposition \ref{prop:prodforms} gives \[\begin{bmatrix}\bf_0\\\bf_1\end{bmatrix}\begin{bmatrix}\by_{\bR} & \by_p\end{bmatrix}=\begin{bmatrix} 0 & 1\\ \frac{4}{3}-\frac{1}{3}\kappa & -2+\frac{1}{2}\kappa\end{bmatrix}\] and \[\begin{bmatrix}\be_0\\\be_2\end{bmatrix}\begin{bmatrix}\by_{\bR} & \by_p\end{bmatrix}=\begin{bmatrix} 0 & 0\\ -1 & 0\end{bmatrix}.\] The top-left $2\times 2$ corner of \ref{eq:q1toq4} expresses $[\bq_1\;\bq_2]^T$ as a linear combination (with matrix coefficients) of $[\bf_0\;\bf_1]^T$ and $[\be_0\;\be_2]^T$. This linear combination combines with the equations above to give \[\begin{bmatrix}\bq_1\\\bq_2\end{bmatrix}\begin{bmatrix}\by_{\bR} & \by_p\end{bmatrix}=\begin{bmatrix}0 & 2 \\ \frac{16}{3}-\frac{4}{3}\kappa & 20-\kappa\end{bmatrix}.\] The matrix above is nonsingular with determinant $-\frac{8}{3}(4-\kappa)\neq 0$.

Now suppose $\legendre{\kappa}{p}=1$. Since we need only eliminate a 3-dimensional space, namely $\textup{span}\{\yR,\by_p,\by_{\kappa}\}$, we might hope that the same strategy used above works with only the three vectors $\bq_1$, $\bq_2$, and $\bq_3$. Unfortunately, \[\begin{bmatrix}\bq_1 \\ \bq_2\\\bq_3\end{bmatrix}\begin{bmatrix}\by_{\bR} &\by_p &\by_{\kappa}\end{bmatrix}\] is singular when $p\equiv1\,\text{mod}\,4$ and $\kappa=\pm 4\sqrt{-1}$ (nonsingular otherwise). But by including $\bq_4$, we can obtain a matrix of rank three. Indeed, let \[\tilde{\bq}_3=15(272+72\kappa-3\kappa^2)\bq_3-105(4+\kappa)\bq_4.\] Then (\ref{eq:q1toq4}) tells us {\renewcommand{\arraystretch}{1.2}\begin{align*}\begin{bmatrix}\bq_1\\\bq_2\\\tilde{\bq}_3\end{bmatrix}=&\begin{bmatrix}2 & 16 & 113280+51360\kappa-1800\kappa^2+270\kappa^3\\ 0 & -2 & -12960-7080\kappa+450\kappa^2+\frac{345}{2}\kappa^3\\ 0 & 0 & -240-1200\kappa-405\kappa^2\\ 0 & 0 & 840+210\kappa\end{bmatrix}^T\begin{bmatrix}\bf_0\\\bf_1\\\bf_2\\\bf_3\end{bmatrix}\\&-(4-\kappa)\!\begin{bmatrix}2 & 16-\kappa & 113280+\frac{137728}{3}\kappa-5272x^2+\frac{908}{3}\kappa^3 \\ 0 & 2 & 8912+\frac{21040}{3}\kappa-149\kappa^2+\frac{131}{3}\kappa^3\\ 0 & 0 & -\frac{592}{3}+544\kappa+\frac{205}{3}\kappa^2\\ 0 & 0 & -448-112\kappa\end{bmatrix}^T\begin{bmatrix}\be_0\\\be_2\\\be_4\\\be_6\end{bmatrix}.\end{align*} With another application of Proposition \ref{prop:prodforms}, we obtain
\begin{align*}\begin{bmatrix}\bf_0 \\ \bf_1 \\ \bf_2 \\ \bf_3 \end{bmatrix}\begin{bmatrix}\by_{\bR} & \by_p & \by_{\kappa}\end{bmatrix}&\\&\hspace{-2cm}=\begin{bmatrix} 4 & -\frac{4}{3}+\frac{5}{3}\kappa & \frac{12}{5}-2\kappa+\frac{23}{20}\kappa^2 & -\frac{40}{7}+6\kappa-\frac{5}{2}\kappa^2+\frac{51}{56}\kappa^3\\ 1 & -2+\frac{1}{2}\kappa & 6-3\kappa+\frac{3}{8}\kappa^2 & -20+15\kappa-\frac{15}{4}\kappa^2+\frac{5}{16}\kappa^3\\ 12 - \kappa & 6\kappa+\frac{1}{2}\kappa^2 & -2\kappa+7\kappa^2+\frac{3}{8}\kappa^3 & 4\kappa-5\kappa^2+\frac{31}{4}\kappa^3+\frac{5}{16}\kappa^4\end{bmatrix}^T\end{align*} and \[\begin{bmatrix} \be_0 \\ \be_2 \\ \be_4 \\ \be_6 \end{bmatrix}\begin{bmatrix} \by_{\bR} & \by_p & \by_{\kappa}\end{bmatrix}=\begin{bmatrix} 0 & 0 & 3\\ -1 & 0 & \kappa\\ -3-\frac{1}{2}\kappa & 0 & \kappa^2\\ -10-\frac{5}{3}\kappa-\frac{1}{3}\kappa^2 & 0 & \kappa^3\end{bmatrix}\]

The appropriate multiplications and additions with the matrices above gives \begin{align*}\begin{bmatrix}\bq_1\\\bq_2\\\tilde{\bq}_3\end{bmatrix}\begin{bmatrix} \by_{\bR} & \by_p & \by_{\kappa}\end{bmatrix}&\\&\hspace{-2.4cm}=\begin{bmatrix}8 & \frac{224}{3}-\frac{16}{3}\kappa & 480384+217600\kappa-26000\kappa^2+1440\kappa^3-\frac{55}{2}\kappa^4 \\ 2 & 20-\kappa & 120960+60960\kappa-5160\kappa^2+390\kappa^3\\ 4\kappa & 24\kappa-2\kappa^2  & 182400\kappa+58080\kappa^2-10440\kappa^3+450\kappa^4\end{bmatrix}^T\!\!.\end{align*}} This matrix is nonsingular (if $\kappa\neq 0$) with determinant $-2^{19}\kappa$.\end{proof}

\begin{proof}[Proof of Theorem \ref{thm:local} when $\kappa=0$]The space that must be eliminated from $\cP^{\perp}\!(\bF_p)$ is only two-dimensional, spanned by $\by_p$ and $\by_0\coloneqq\be_2^T-12\be_4^T$. (Recall that $\kappa=0$ is the unique value for which $\by_{\bR}$ actually belongs in $\cP^\perp\!(\bF_p)$. ) To eliminate it, we need only \[\bq_1=2\bf_0-8\be_0\hspace{\parindent}\text{and}\hspace{\parindent}\bq_2=16\bf_0-2\bf_1-64\be_0-8\be_2.\] Applying Proposition \ref{prop:prodforms} gives

\[\det\left(\begin{bmatrix}\bq_1\\\bq_2\end{bmatrix}\begin{bmatrix}\by_0 & \by_p\end{bmatrix}\right)=\det\begin{bmatrix}-4 & 2\\ 0 & 20\end{bmatrix}=-80,\]
which is nonzero in $\bF_p$ for $p\geq 7$.\end{proof}

\begin{proof}[Proof of Theorem \ref{thm:local} when $\kappa=3$] We consider $\legendre{3}{p}=-1$ and $\legendre{3}{p}=1$ separately.

If $\legendre{3}{p}=-1$ then $\by_{\kappa}$ must be removed from the spanning set for $\cP^{\perp}\!(\bF_p)$ along with $\by_{\bR}$ and $\by_p$. If $\legendre{2}{p}=-1$ or $\legendre{5}{p}=-1$ then $\by_2$ or $\by_5$ must also be removed. This requires at least five vectors, and the natural choice $\bq_n$ for $n\leq 5$ turns out to work. The following coefficients have been calculated with computer assistance using Proposition \ref{prop:qn_coef}: {\renewcommand{\arraystretch}{1.2}\begin{align}\label{eq:q1toq5at3}\nonumber\begin{bmatrix}\bq_1\\ \bq_2\\ \bq_3\\ \bq_4\\ \bq_5\end{bmatrix}=& \begin{bmatrix}2&0&0&0&0\\ 16&-2&0&0&0\\ 102&-\frac{45}{2}&2&0&0\\ 608&-177&29&-2&0\\ 3540&-\frac{9695}{8}&\frac{2185}{8}&-\frac{71}{2}&2\end{bmatrix} \begin{bmatrix}\bf_0\\\bf_1\\\bf_2\\\bf_3\\\bf_4\end{bmatrix}\\  &\hspace{1cm}-\begin{bmatrix}2&0&0&0&0\\ 13&2&0&0&0\\ 74&16&\frac{4}{3}&0&0\\ \frac{6122}{15}&110&\frac{439}{45}&\frac{16}{15}&0\\ \frac{706396}{315}&\frac{74516}{105}&\frac{59084}{945}&\frac{2504}{315}&\frac{32}{35}\end{bmatrix}\begin{bmatrix}\be_0\\\be_2\\\be_4\\\be_6\\\be_8\end{bmatrix}.\end{align}

Next, it is straightforward to verify from Proposition \ref{prop:prodforms} (for $\by_{\bR}$) or directly from the definition of $\by_p$, $\by_{\kappa}$, $\by_2$ or $\by_5$ that \begin{equation}\label{eq:e0toe5at3}\begin{bmatrix}\be_0\\\be_2\\\be_4\\\be_6\\\be_8\end{bmatrix}\begin{bmatrix}\by_{\bR} & \by_p & \by_{\kappa} & \by_2 & \by_5\end{bmatrix}=\begin{bmatrix}0&0&3&9&18\\ -1&0&3&11&21\\ -\frac{9}{2}&0&9&19&41\\ -18&0&27&35&96\\ -\frac{279}{4}&0&81&67&241\end{bmatrix}.\end{equation} To evaluate $\bf_j$ at $\by_2$ or $\by_5$, we first evaluate $\bf_j$ at $\bx(0)$, $\bx(1)$, $\bx(\sqrt{2})$ and $\bx(\varphi)+\bx(\overline{\varphi})$ using Lemma \ref{lem:fj(x)}. Note that $x^{p-1}$ appears in the Lemma \ref{lem:fj(x)}'s formula for $\bf_j(\bx)$. We only need to eliminate $\by_2$ when $\legendre{2}{p}=-1$, so $\sqrt{2}^{\,p-1}$ takes the value $-1$ when computing $\bf_j(\bx(\sqrt{2}))$. Similarly, we need only eliminate $\by_5$ when $\legendre{5}{p}=-1$, so $\varphi^{p-1}$ takes the value \begin{flalign*}&& \left(\frac{1+\sqrt{5}}{2}\right)^{\!p-1}&=\sum_{i=0}^{p-1}\binom{p-1}{i}\sqrt{5}^i && \\ && &=\sum_{i=0}^\frac{p-1}{2}5^i-\sqrt{5}\sum_{i=0}^\frac{p-3}{2}5^i && \text{since }\binom{p-1}{i}\equiv (-1)^i\,\text{mod}\,p\\ && &=5^\frac{p-1}{2}+(1-\sqrt{5})\sum_{i=0}^\frac{p-3}{2}5^i\hspace{-0.5cm} && \\ && &=\frac{1-\sqrt{5}}{1+\sqrt{5}}=\frac{\overline{\varphi}}{\varphi} && \text{by telescoping and using }{\textstyle\legendre{5}{p}} = -1,\end{flalign*} 
and $\overline{\varphi}^{p-1}$ takes the value $\varphi/\overline{\varphi}$. The result from Lemma \ref{lem:fj(x)} is then \[\begin{bmatrix}\bf_0\\\bf_1\\\bf_2\\\bf_3\\\bf_4\end{bmatrix}\begin{bmatrix}\bx(0) & \bx(1) & \bx(\sqrt{2}) & \bx(\varphi) +\bx(\overline{\varphi})\end{bmatrix}=\begin{bmatrix}1 & 1 & 3 & 5\\ 0 & \frac{1}{6} & \frac{7}{2} & \frac{15}{2}\\ 0 & \frac{7}{72} & \frac{27}{8} & \frac{543}{40}\\ 0 & \frac{5}{432} & \frac{55}{16} & \frac{2063}{80} \\ 0 & \frac{175}{10368} & \frac{435}{128} & \frac{156107}{3200}\end{bmatrix}\] The last three columns are also the values of $\bf_j(\bx(-1))$, $\bf_j(\bx(-\sqrt{2}))$, and $\bf_j(\bx(-\varphi) + \bx(-\overline{\varphi}))$ because every odd coordinate in $\bf_j$ is $0$. Combining this with the definitions of $\by_2$ and $\by_5$ as linear combinations of $\bx(\alpha)$ vectors provides the last two columns below. Proposition \ref{prop:prodforms} provides the values of $\bf_j(\by_{\bR})$, $\bf_j(\by_p)$, and $\bf_j(\by_{\kappa})$ in the first three columns:
\begin{equation}\label{eq:f0tof5at3}\begin{bmatrix}\bf_0\\\bf_1\\\bf_2\\\bf_3\\\bf_4 \end{bmatrix}\begin{bmatrix}\by_{\bR} & \by_p & \by_3 & \by_2 & \by_5\end{bmatrix}=\begin{bmatrix}4 & 1 & 9& 17 &33\\ \frac{11}{3} & -\frac{1}{2} & \frac{45}{2}& \frac{29}{2} &\frac{77}{2}\\ \frac{27}{4} & \frac{3}{8} & \frac{537}{8}& \frac{331}{24} &\frac{1643}{24} \\ \frac{115}{8} &  -\frac{5}{16} & \frac{3225}{16} & \frac{1985}{144} &\frac{18577}{144}\\ \frac{19355}{576} & \frac{35}{128} & \frac{77385}{128} & \frac{47155}{3456} &\frac{4216639}{17280} \end{bmatrix}\end{equation}

Performing the appropriate multiplications and addition with the matrices in (\ref{eq:q1toq5at3}), (\ref{eq:e0toe5at3}), and (\ref{eq:f0tof5at3}) results in \[\begin{bmatrix}\bq_1\\ \bq_2\\ \bq_3\\ \bq_4\\ \bq_5\end{bmatrix}\begin{bmatrix}\by_{\bR} & \by_p & \by_{\kappa} & \by_2 & \by_5\end{bmatrix}=\begin{bmatrix}8&2&12&16&30\\ \frac{176}{3}&17&54&104&175\\ 361&114&264&568&914\\ \frac{21231}{10}&708&1362&3036&4818\\ \frac{7758343}{630}&4260&7272&16396&\frac{129766}{5}\end{bmatrix},\] which is nonsingular at all primes $p \geq 29$ with determinant $-\frac{2508193792}{525} = -2^{23}\cdot 3^{-1}\cdot 5^{-2}\cdot 7^{-1}\cdot 13\cdot 23$. (The McCullough--Wanderley conjectures have already been verified for $p < 29$ \cite{wanderley}.) Now, if it happens that $\legendre{2}{p}=1$ or $\legendre{5}{p}=1$, then the $\by_2$ or $\by_5$ columns simply become all $0$, in which case we are no longer concerned with $\by_2$ or $\by_5$ because they belong in $\cP^{\perp}\!(\bF_p)$ as per Theorem \ref{thm:P^perp}. The remaining $5\times 3$ or $5\times 4$ matrix must still have full column rank, so we are done. This completes the proof when $\legendre{3}{p}=-1$.

When $\legendre{3}{p}=1$, not only does the $\by_{\kappa}$ column vanish in the matrix above, but the $\by_{\bR}$ column changes due to the appearance of $\legendre{\kappa}{p}$ in Proposition \ref{prop:prodforms}'s formula for $\bf_j(\by_{\bR})$. Our new version of (\ref{eq:f0tof5at3}) is \[\begin{bmatrix}\bf_0\\\bf_1\\\bf_2\\\bf_3\end{bmatrix}\begin{bmatrix}\by_{\bR} & \by_p & \by_2 & \by_5\end{bmatrix}=\begin{bmatrix}0&1&17&33\\ \frac{1}{3}&-\frac{1}{2}&\frac{29}{2}&\frac{77}{2}\\ \frac{9}{20}&\frac{3}{8}&\frac{331}{24}&\frac{1643}{24}\\ \frac{59}{56}&-\frac{5}{16}&\frac{1985}{144}&\frac{18577}{144}\end{bmatrix}\] The only change to (\ref{eq:q1toq5at3}) is the deletion of the last row and column in eqch matrix because we only need to use $\bq_n$ for $n\leq 4$ now. Similarly, the only change to (\ref{eq:f0tof5at3}) is the deletion of the middle (corresponding to $\by_\kappa$) column and the last row. Performing the appropriate matrix multiplications and addition gives \[\begin{bmatrix}\bq_1\\ \bq_2\\ \bq_3\\ \bq_4\end{bmatrix}\begin{bmatrix}\by_{\bR} & \by_p & \by_2 & \by_5\end{bmatrix}=\begin{bmatrix}0&2&16&30\\ \frac{4}{3}&17&104&175\\ \frac{77}{5}&114&568&914\\ \frac{8753}{70}&708&3036&4818\end{bmatrix},\] which is again nonsingular with determinant $\frac{393216}{35} = 2^{17}\cdot 3\cdot 5^{-1}\cdot 7^{-1}$. As before, if it happens that $\legendre{2}{p}=-1$ or $\legendre{5}{p}=-1$, the $\by_2$ column or the $\by_5$ column vanishes, and no other columns change. This yields either a $4\times 3$ or $4\times 2$ matrix with full column rank, which completes the proof.}\end{proof}

\section{Computing $\dim (\cP_{2n}(\bF_p,d))$}\label{sec:8}

Given positive integers $d\leq n$, we provide an algorithm (Algorithm \ref{alg}) to verify the hypothesis of Theorem \ref{thm:local} for all $\kappa$ and $p\geq 2n$ with $p\not\equiv\pm 1\,\text{mod}\,2d$. Executing this Algorithm \ref{alg} in Sage for $d=5$, 7, 9, 11, 13, 16, and 17 proves Theorem \ref{thm:main1}. In particular, the McCullough--Wanderley conjectures hold when $2\,\text{lcm}(1,...,17)\centernot\mid p^2-1$.

The polynomials we use to build rank in $\cP(\oZ,d)$ take the form $gf_n$, where $f_n$ is defined in Notation \ref{not:f_n}. As in (\ref{eq:g_def}), we must choose $g$ to kill those $c_{\alpha,n}(f_n)$ for which $2d\centernot\mid\text{ord}(\alpha)$. Hence we define \begin{equation}\label{eq:u_d,n}g_{d,n}(x^2)\coloneqq x^\delta\prod_{\mathclap{\substack{\alpha\in\widehat{\Lambda}_n\\2d\,\centernot\mid\,\text{ord}(\alpha)}}}\;(x-\alpha),\end{equation} where $\delta$ is either $0$ or $1$, whichever makes $g_{d,n}$ an even polynomial (just as in (\ref{eq:g_def})). The roots of $g_{d,n}$ are precisely the undesired $\alpha$-values. 

\begin{proposition}For any $\ell\geq 0$, $x^{2\ell}g_{d,n}f_n\in \cP^n(\bZ,d)$.\end{proposition}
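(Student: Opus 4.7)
The plan is to verify the defining property of $\cP_x(\oZ, d)$ directly: for every finite first-coordinate orbit $\cO_\alpha \subset \cM(\oZ)$ with $2d \nmid \textup{ord}(\alpha)$, I would show that the orbit sum of $x^{2\ell} g_{d,n}(x^2) f_n$ vanishes. The strategy is to compute the invariants $c_{\alpha, i}$ from Lemma \ref{lem:uv-sums} and check that they are all zero.

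First, since $x^{2\ell} g_{d,n}(x^2)$ depends only on $x^2$, Proposition \ref{prop:pull_out} factors it out of each invariant:
\[
c_{\alpha, i}(x^{2\ell} g_{d,n} f_n) \;=\; \alpha^{2\ell}\, g_{d,n}(\alpha^2)\, c_{\alpha, i}(f_n).
\]
Next, applying Lemma \ref{lem:overZ} with $m = 0$ (so that the $f$ in that lemma is exactly $f_n$), I get $f_n \in \cP_x(\oZ, \infty)$ and $c_{\alpha, i}(f_n) = 0$ for $0 < i < n$. Combining this with Proposition \ref{prop:P_equiv} (which forces $c_{\alpha, 0}(f_n) = 0$) and with the truncation $c_{\alpha, i}(f_n) = 0$ for $i > n$ (since $f_n^*$ has $y$-degree exactly $2n$), I conclude that the only potentially nonzero invariant of $f_n$ is $c_{\alpha, n}(f_n)$.

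To handle $i = n$, I recall that $c_{\alpha, n}(f) = 0$ by convention whenever $\alpha \notin \hathat{\Lambda}_n$. For $\alpha \in \hathat{\Lambda}_n$ with $2d \nmid \textup{ord}(\alpha)$, the construction of $g_{d,n}$ in (\ref{eq:u_d,n}) places $\alpha$ among the factors of the right-hand side, so $g_{d,n}(\alpha^2) = 0$ at such $\alpha$. Either way, $c_{\alpha, n}(x^{2\ell} g_{d,n} f_n) = 0$, so all invariants vanish and Lemma \ref{lem:uv-sums} delivers the desired orbit-sum identity.

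There is no substantial obstacle; this is essentially a bookkeeping check once the eigenvector framework of Section \ref{sec:6} is available. The only subtlety worth spelling out is that $g_{d,n}(x^2)$ is genuinely a polynomial in the variable $x^2$, so that Proposition \ref{prop:pull_out} applies. This is automatic because both $\hathat{\Lambda}_n$ and the condition $2d \nmid \textup{ord}(\alpha)$ are invariant under $\alpha \mapsto -\alpha$, which lets the factors in (\ref{eq:u_d,n}) pair into $(x^2 - \alpha^2)$ blocks, with the $\delta$ factor inserted precisely to absorb the self-paired element $0$ whenever $0 \in \hathat{\Lambda}_n$ lies in the excluded set.
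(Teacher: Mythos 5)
Your proposal is correct and takes essentially the paper's approach: both arguments come down to $c_{\lambda,i}(x^{2\ell}g_{d,n}f_n)=\lambda^{2\ell}g_{d,n}(\lambda^2)\,c_{\lambda,i}(f_n)$ (Proposition \ref{prop:pull_out}), the vanishing of $c_{\lambda,i}(f_n)$ for $i\neq n$ (Lemma \ref{lem:overZ} with $m=0$, together with Proposition \ref{prop:P_equiv} for $i=0$ and the $y$-degree bound for $i>n$), and the fact that $g_{d,n}(\lambda^2)=0$ for every $\lambda\in\hathat{\Lambda}_n$ with $2d\nmid\textup{ord}(\lambda)$. The only difference is cosmetic: you conclude directly from Lemma \ref{lem:uv-sums} that the relevant orbit sums vanish, whereas the paper cites Corollary \ref{cor:eigen_basis} (as in the corresponding step of Lemma \ref{lem:yvecs}), and your closing remark about the parity of $g_{d,n}$ and the role of $\delta$ correctly justifies applying Proposition \ref{prop:pull_out}.
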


\begin{proof}Just as in the proof of Corollary \ref{cor:found_em}, this follows from Corollary \ref{cor:eigen_basis} and the fact that $c_{\alpha,n}(x^{2\ell}g_{d,n}f_n)=\alpha^{2\ell}g_{d,n}(\alpha^2)(\alpha^2-4)^n$, which vanishes when it needs to by definition of $g_{d,n}$.\end{proof}

Recall that if $\text{ord}(\alpha)\centernot\mid 2n$ then $c_{\alpha,n}(f)=0$ for any polynomial $f$. In particular, the choice of $g_{d,n}$ makes $\Phi(x^{2m}g_{d,n}f_n)$ the zero polynomial when $d\centernot\mid n$; these polynomials are only useful when $d\,|\,n$.

For computational convenience, we note that $g_{d,n}$ is a product/quotient of (slight variants of) Chebyshev polynomials of the second kind, which can be called directly by a Sage command. Let us demonstrate this when $d$ is a prime power, which is all we ultimately work with. The usual Chebyshev polynomials of the second kind are denoted $U_n(x)$ and defined recursively by $U_0(x)=1$, $U_1(x)=2x$, and $U_{n+1}(x)=2x U_n(x)-U_{n-1}(x)$. The roots of $U_{n-1}(x)$ are $\cos(\frac{m\pi}{n})$ for $m=1,...,n-1$. Now define \[u_n(x^2)\coloneqq\begin{cases}U_{n-1}(\frac{x}{2}) & n\text{ is odd}\\xU_{n-1}(\frac{x}{2}) & n\text{ is even,}\end{cases}.\] defined for $n\geq 1$. These are monic, integral polynomials, as can be seen from the recursion $U_0(\frac{x}{2})=1$, $U_1(\frac{x}{2})=x$, and $U_{n+1}(\frac{x}{2})=x U_n(\frac{x}{2})-U_{n-1}(\frac{x}{2})$. Furthermore, if $\alpha\in\widehat{\Lambda}_n$, then $\alpha=2\cos(\frac{m\pi}{n})$ for some $m=1,...,n-1$, so $u_n(\alpha^2)=0$. Therefore, if $d=p^a\neq 2$ and $2n=mp^b$, the equality \[g_{p^a,n}(x^2)=\begin{cases}u_{mp^{a-1}}(x^2) &p\neq 2\\ u_{mp^{a-2}}(x^2) &p=2,\end{cases}\] can be seen by comparing roots. Similar expressions exist when $d$ is not a prime power, but they involve quotients of Chebyshev polynomials in order to get each desired root exactly once from an inclusion-exclusion argument.

For each $n$ divisible by a given $d$, we now ask what rank is contributed by the polynomials $\Phi(x^{2m}g_{d,n}f)$ as $m$ ranges over nonnegative integers. This will tell us when to stop incrementing $m$ in Algorithm \ref{alg} for a given $n$, and it will help us find some $n_d$ for which the polynomials $\Phi(x^{2m}g_{d,n}f)$ for $n\leq n_d$ are expected to have the rank Theorem \ref{thm:local} requires, namely $2n_d-2$ for general $\kappa$. We know that $\Phi(x^{2m}g_{d,n}f_n)$ lives in the $\oQ$-span of $\Phi(\scp_{\alpha,n}^+)$ for $\alpha\in\Lambda_n$ of order divisible by $2d$. This means if $\alpha=\zeta^i+\zeta^{-i}$ for a primitive $2n^\text{th}$ root of unity $\zeta$, $2d$ must divide $\frac{2n}{(i,2n)}$. For each divisor $\tilde{d}$ of $\frac{n}{d}$, there are $\phi(\frac{2n}{\tilde{d}})$ values of $i\in\{1,...,2n\}$ that make $(i,2n)=\tilde{d}$. Replacing $i$ with $2n-i$, $n+i$, or $n-i$ produces the same value of $\alpha$ up to a sign, so for any fixed $d$ and $n$ the polynomials $\Phi(x^{2m}g_{d,n}f)$ span a space of dimension at most \begin{equation}\label{eq:m_d,n}m_{d,n}\coloneqq\left\lceil\frac{1}{4}\sum_{\tilde{d}\,|\frac{n}{d}}\phi\!\left(\frac{2n}{\tilde{d}}\right)\right\rceil.\end{equation} So a natural choice is to compute $\Phi(x^{2m}g_{d,n}f)$ for all nonnegative $m < m_{d,n}$ in Algorithm \ref{alg}.

Now we sum $m_{d,n}$ as $n$ ranges over consecutive multiples of $d$ to get the total expected dimension. If $n_d$ denotes the largest multiple of $d$ we use, then we require the sum of $m_{d,n}$ (which is the total number of polynomials to be reduced) to be at least $n_d-2$; nothing less could satisfy the bound in Theorem \ref{thm:local} when combined with the odd-degree monomials $x^1,x^3,\dots x^{2n_d-1}\in\cP_{2n_d}(\bF_p,d)$. (Note that the sum of $m_{d,n}$ for $n\leq n_d$ must eventually surpass any constant multiple of $n_d$ since $m_{d,n}$ grows roughly linearly in $n$.) In practice, however, to prove full dimension we typically need at least $n_d$ polynomials, not $n_d-2$. 

When $d$ is a prime power and $n$ is a small multiple of $d$, $m_{d,n}$ is particularly easy to compute. The reader may verify that for $d=p^a \geq 4$, defining \begin{equation}\label{eq:n_d}n_d\coloneqq\begin{cases}4d & p\neq 2,3\\ 5d & p= 3\\ 6d & p=2\end{cases}\end{equation} makes $\sum_n m_{d,n}\geq n_d$, the sum over $n = d,2d,\dots n_d$. (Remark that this inequality need not be verified to guarantee correctness of Algorithm \ref{alg}. Indeed, if $\sum_n m_{d,n}< n_d$, then there will not be enough polynomials to produce the desired rank in $\cP_{2n_d}(\bF_p,d)$, and the algorithm output will be inconclusive. Or conversely, if a smaller choice of $n_d$ would have made $\sum_n m_{d,n}< n_d$, it means only that the algorithm will work harder than necessary to achieve its output. In similar fashion, the choice of $m_{d,n}$ in (\ref{eq:m_d,n}) cannot affect the validity of Algorithm \ref{alg}. These are simply parameters whose optimal values we hope to have found.)

Finally, let us discuss how to test the dimension hypothesis in Theorem \ref{thm:local} for all $\kappa$ and $p\not\equiv\pm 1\,\text{mod}\,2d$ at once. Evidently, we must treat $\kappa$ as a variable when we compute $\Phi(x^{2m}g_{d,n}f_n)$. Thus all polynomial coefficients lie in $\bZ[\kappa]$. For generic $\kappa$, we only need coefficient vectors to have rank $n_d-2$, so we only store the top $n_d-2$ even-degree coefficients of each polynomial. (All odd-degree coefficients are 0.) That is, for $m<m_{d,n}$ and $n\leq n_d$, the coefficients of $x^{2i}$ in $\Phi(x^{2m}g_{d,n}f_n)$ for $i=3,4,...,n_d$ are stored as columns of a matrix. (This procedure can be found in lines 1--8 of Algorithm \ref{alg}.) Let $M_d$ denote the resulting matrix.

\begin{notation}\label{not:h}For $a,b_1,b_2,b_3,b_4\in\bZ$ with each $b_j\geq 0$, let \[h(\kappa;a,b_1,b_2,b_3,b_4) = a(\kappa-4)^{b_1}(\kappa-3)^{b_2}(\kappa-2)^{b_3}(\kappa^2-5\kappa+5)^{b_4}\in\bZ[\kappa].\]\end{notation}

\begin{proposition}Given $d$, let $n_d$ and $M_d$ be as defined above. Let $p > 2n_d$ be a prime with $p\not\equiv \pm 1\,\textup{mod}\,2d$. If the ideal in $\bZ[\kappa]$ generated by the $(n_d-2)\times (n_d-2)$ minor determinants of $M_d$ contains $h(\kappa;a,b,2,1,1)$ for some $a$ and $b\geq 0$, then the hypothesis of Theorem~\ref{thm:local} holds for all $\kappa\in\bF_p\backslash\{4\}$ provided $p\centernot\mid a$.\end{proposition}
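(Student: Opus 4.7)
The plan is to reduce the proposition to the classical rank-drop principle for matrices over a PID, applied to $\bF_p[\kappa]$. First, I would observe that each column of $M_d$ stores the top $n_d-2$ coefficients of some $\Phi(x^{2m}g_{d,n}f_n)$ with $d \mid n \leq n_d$ and $m < m_{d,n}$. Because $x^{2m}g_{d,n}f_n \in \cP_x(\oZ,d)$ by construction of $g_{d,n}$, Proposition~\ref{prop:reduction} shows that its mod-$p$ $\Phi$-reduction lies in $\cP^{n_d}(\bF_p,d)$. Consequently, for any $\kappa_0 \in \bF_p$,
\[
\dim \cP^{n_d}(\bF_p, d) \;\geq\; \rank\bigl(M_d(\kappa_0) \bmod p\bigr),
\]
so the whole task becomes a lower bound on this rank (with $n=n_d$ matching Theorem~\ref{thm:local}).

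Next, I would establish the generic rank. Since the hypothesis provides $g(\kappa) \in I$ over $\bZ[\kappa]$, reducing mod $p$ gives $\bar{g} \in \bar{I}$ in $\bF_p[\kappa]$; and $p \nmid a$ ensures $\bar{g} \neq 0$, so $\bar{I} \neq 0$. Since $M_d$ has only $n_d-2$ rows, its generic rank over the fraction field $\bF_p(\kappa)$ is then exactly $n_d-2$.

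The heart of the argument is the Smith-normal-form bound for rank drop. Over the PID $\bF_p[\kappa]$ write $M_d = UDV$ with invariant factors $d_1 \mid d_2 \mid \cdots \mid d_{n_d-2}$; then $\bar{I} = (d_1 d_2 \cdots d_{n_d-2})$, and at any $\kappa_0 \in \bF_p$ the rank of $M_d(\kappa_0)$ equals $n_d-2$ minus the number of $d_i$ vanishing at $\kappa_0$. Each such vanishing $d_i$ contributes at least one factor of $(\kappa-\kappa_0)$ to the product, so
\[
\text{corank}\bigl(M_d(\kappa_0)\bigr) \;\leq\; \textup{ord}_{\kappa=\kappa_0}(\bar{I}) \;\leq\; \textup{ord}_{\kappa=\kappa_0}(\bar{g}),
\]
the last inequality because $\bar{g} \in \bar{I}$.

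Finally, I would read off the required bounds from the factorization of $g$. At any $\kappa_0 \in \bF_p \setminus \{4\}$ that is not a root of $\bar{g}$, the corank vanishes and $\dim \cP^{n_d}(\bF_p,d) \geq n_d - 2$. At $\kappa_0 = 3$ the multiplicity is $2$, so $\dim \geq n_d - 4$; at each of $\kappa_0 \in \{2, 2+\varphi, 2+\overline{\varphi}\}$ (when present in $\bF_p$) the multiplicity is $1$, so $\dim \geq n_d - 3$. These are exactly the dimension hypotheses of Theorem~\ref{thm:local} with $n = n_d$, and the $Q$-classification conjecture then follows from that theorem. The only real obstacle is formulating the Smith-form/Fitting-ideal rank bound cleanly over $\bF_p[\kappa]$; the rest is bookkeeping that matches the multiplicities in $g(\kappa)$ to the allowed rank drops in Theorem~\ref{thm:local}.
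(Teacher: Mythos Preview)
Your proposal is correct and follows essentially the same approach as the paper: both reduce $M_d$ modulo $p$, use that $\bF_p[\kappa]$ is a PID to bound the rank drop at each $\kappa_0$ by the multiplicity of $(\kappa-\kappa_0)$ in a generator of the ideal of maximal minors, and then read off the case-by-case dimension bounds from the factorization of $g$. The only cosmetic difference is that you invoke Smith normal form and invariant factors, while the paper passes to a basis of the column module and uses Hermite normal form; the resulting determinant is the same product $d_1\cdots d_{n_d-2}$ either way.
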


\begin{proof}Reduce $M_d$ modulo $p$ so that the entries lie in $\bF_p[\kappa]$. The columns of $M_d\,\text{mod}\,p$ generate a free $\bF_p[\kappa]$-module because $\bF_p[\kappa]$ is a PID. Furthermore, if $p\centernot\mid a$ as in the proposition statement, this free module must have full rank $n_d-2$ because the ideal in $\bF_p[\kappa]$ generated by the $(n_d-2)\times(n_d-2)$ minor determinants of $M_d\,\text{mod}\,p$ contains the nonzero element $h(\kappa;a,b,2,1,1)$. So fix a basis for our free module and use it to form the columns of a new $(n_d-2)\times(n_2-d)$ matrix $M$. Without loss of generality, assume $M$ is upper triangular (again, $\bF_p[x]$ is a PID, so column operations put $M$ in Hermite normal form) so that $\det M$ is a product of its diagonal entries. Since $\det M$ divides every $(n_d-2)\times(n_d-2)$ minor determinant of $M_d\,\text{mod}\,p$, it must also divide $h(\kappa;a,b,2,1,1)$ in $\bF_p[\kappa]$. So up to scaling by a unit, every diagonal entry of $M$ is $\kappa-4$, $\kappa-3$, $\kappa-2$, $\kappa-2-\varphi$, $\kappa-2-\overline{\varphi}$, or some (perhaps empty) product of them. Furthermore, at most one diagonal entry can be divisible by $\kappa-2$, $\kappa-2-\varphi$, or $\kappa-2-\overline{\varphi}$, and at most two can be divisible by $\kappa-3$. Thus substituting a specific element of $\bF_p\backslash\{4\}$ in for $\kappa$ in the entries of $M$ produces a matrix of rank at least $n_d-4$ if $\kappa=3$, $n_d-3$ if $\kappa\in\{2,2+\varphi,2+\overline{\varphi}\}$, and $n_d-2$ otherwise. The same rank bound must hold for such a $\kappa$ substitution into $M_d\,\text{mod}\,p$ because the columns of $M_d\,\text{mod}\,p$ generate those of $M$. Extending $M_d\,\text{mod}\,p$ by three rows---the coefficients mod$\,p$ of $1$, $x^2$, and $x^4$ in each $\Phi(x^{2m}g_{d,n}f_n)$---cannot decrease the column rank. But now each extended column corresponds to an element of $\cP_{2n_d}(\bF_p,d)$, which therefore has the desired dimension.\end{proof}

The goal of Algorithm \ref{alg} is to find such an ideal element $a(\kappa-4)^b(\kappa-3)^2(\kappa-2)(\kappa^2-5\kappa+5)$, but with $a$ only divisible by primes to which Theorem \ref{thm:local} would not apply anyway: $p < 2n$ or $p\equiv\pm 1\,\text{mod}\,2d$. There are several natural ways to accomplish this in Sage, and the fastest appears to be with resultants. Specifically, for a given $(n_d-2)\times(n_d-2)$ minor of $M_d$, we compute its determinant in $\bZ[\kappa]$ and divide out all acceptable factors: up to one factor of $\kappa-2$ and $\kappa^2-5\kappa+5$, up to two factors of $\kappa-3$, and as many as possible factors of $\kappa-4$ or rational primes less than $2d$. (It would also be permissible to divide out primes $p\equiv\pm 1\,\text{mod}\,2d$, but we do not bother.) We keep this reduced determinant, and compute its resultant with each of the previously computed reduced minor determinants. The resultant of two polynomials in $\bZ[\kappa]$ is an integer that lies in the ideal they generate, so if the greatest common divisor of all our resultants is ever equal to $1$, we are done! This procedure can be found in lines 9--17 of Algorithm \ref{alg}.

\begin{algorithm}
    \DontPrintSemicolon
    \KwIn{A prime power $d\geq 5$}
    \KwOut{$\Delta\in\bZ$; conjectures hold if $p\centernot\mid\Delta$, $p\not\equiv\pm1\,\text{mod}\,2d$, and $p > 2n_d$.}
    $M_d\gets$ empty matrix over $\bZ[\kappa]$\;
    $n_d\gets$ integer from (\ref{eq:n_d})\;
    
    \For{$0<n\leq n_d$ \textup{such that} $d\,|\,2n$}{
        $f_n,g_{d,n}\gets$ polynomials from (\ref{not:f_n}),\,(\ref{eq:u_d,n})\;
        $m_{d,n}\gets$ integer from (\ref{eq:m_d,n})\;
        
        \For{$0\leq m < m_{d,n}$}{
            $a_0+\cdots +a_{n_d}x^{2n_d}\gets\Phi(x^{2m}g_{d,n}f_n)$\commentR{$\triangleright\;$reduce with $\kappa$ a variable}
            append column $[a_3\cdots a_{n_d}]^T$ to $M_d$\commentR{$\triangleright\;$each $a_i$ is in $\bZ[\kappa]$}
        }
    }
    $\Delta\gets0$\commentR{$\triangleright\;$gcd of resultants}
    $\mathscr{D}\gets\emptyset$\commentR{$\triangleright\;$set of minor determinants}
    \For{\textup{maximal minors} $\hat{M}$ \textup{of} $M_d$}{
        $D\gets \det\hat{M}/h(\kappa;a,b_1,b_2,b_3,b_4)$ with $a$\commentR{$\triangleright\;$see Notation \ref{not:h}} 
        \nonl and each $b_j$ maximal such that $a$ is $2n_d$-\;
        \nonl smooth, $b_2\leq 2$, $b_3,b_4\leq 1$, and $D\in\bZ[\kappa]$\;
	 \For{$\tilde{D}$ \textup{in} $\mathscr{D}$}{
        $\Delta\gets\textup{gcd}(\Delta,\textup{resultant}(D,\tilde{D}))$\;
        \If(\commentF{$\triangleright\;$typically occurs at $|\mathscr{D}|=2$}){$\Delta = 1$}{\Return{$1$}}
	}
	$\mathscr{D}\gets\mathscr{D}\cup\{D\}$
    }
    \Return{$\Delta$}
    \caption{Verify the McCullough--Wanderley conjectures over $\bF_p$ for congruence classes of primes $p$.}\label{alg}
\end{algorithm}

Sage code for Algorithm \ref{alg} is available on the author's website: \href{https://dem6.people.clemson.edu/}{\nolinkurl{dem6.people.clemson.edu}}. Note that in order to execute line 7, the Sage implementation precomputes $\Phi(x^{2m}y^{2n})$ for sufficiently large $m$ and $n$. Then each $\Phi(x^{2m}g_{d,n}f_n)$ is a linear combination of the precomputed reductions. This is faster than computing each $\Phi(x^{2m}g_{d,n}f_n)$ directly because it avoids repeated work. (Though line 7 is not the bottleneck of the algorithm either way; line 14 is.) 

The author has executed this algorithm for all prime powers $d \leq 17$ with an output of $\Delta=1$ in each case. This proves Theorems \ref{thm:main1} and \ref{thm:main2}.

\printbibliography

\end{document}